\newcommand{\dom}{{\textrm{Dom\,}}}
\newcommand{\R}{\mathbb{R}}
\newcommand{\N}{\mathbb{N}}
\newcommand{\cH}{\mathcal{H}}
\newcommand{\B}{\mathcal{B}}
\newcommand{\Q}{\mathbb{Q}}
\newcommand{\sgn}{\text{\rm sgn}}
\newcommand{\dist}{\text{\rm dist}}
\newcommand{\supp}{\text{\rm supp}}
\newcommand{\Lip}{\mathrm{Lip}}
\newcommand{\gr}{\textrm{graph}}
\newcommand{\ve}{\varepsilon}
\newcommand{\erre}{\mathbb{R}}
\newcommand{\f}{\varphi}
\newcommand{\T}{\mathcal{T}}
\renewcommand{\L}{\mathcal{L}}
\newcommand{\RCD}{\mathsf{RCD}}
\newcommand{\CD}{\mathsf{CD}}
\newcommand{\Geo}{{\rm Geo}}
\newcommand{\MCP}{\mathsf{MCP}}
\newcommand{\inte}{{\rm int}}
\newcommand{\Ric}{{\rm Ric}}
\newcommand{\abs}[1]{\left\vert#1\right\vert}
\newcommand{\set}[1]{\left\{#1\right\}}
\newcommand{\Real}{\mathbb{R}}
\newcommand{\eps}{\varepsilon}
\renewcommand{\L}{\mathcal{L}}
\renewcommand{\P}{\mathbb P}
\renewcommand{\P}{\mathcal{P}}
\renewcommand{\H}{\mathcal{H}}
\newcommand{\mm}{\mathfrak m}
\newcommand{\qq}{\mathfrak q}
\newcommand{\ee}{{\rm e}}
\newcommand{\QQ}{\mathfrak Q}
\newcommand{\sfd}{\mathsf d}
\newcommand{\sfb}{\mathsf b}
\newcommand{\Opt}{\mathrm{OptGeo}}
\newcommand{\LIP}{\mathrm{LIP}}
\theoremstyle{plain}
\newtheorem{lemma}{Lemma}[section]
\newtheorem{theorem}[lemma]{Theorem}
\newtheorem{proposition}[lemma]{Proposition}
\newtheorem{corollary}[lemma]{Corollary}
\newtheorem*{theorem*}{Theorem}
\newtheorem*{maintheorem*}{Main Theorem}
\theoremstyle{definition}
\newtheorem{definition}[lemma]{Definition}
\newtheorem*{definition*}{Definition}
\newtheorem{remark}[lemma]{Remark}
\numberwithin{equation}{section}
\title{New formulas for the  Laplacian of distance functions \\ and applications}
\author{Fabio Cavalletti \thanks{F. Cavalletti: Mathematics Area, SISSA, Trieste (Italy), email:cavallet@sissa.it.}  and Andrea Mondino\thanks{A. Mondino: Mathematics Institute,  University of Warwick  (UK), email: A.Mondino@warwick.ac.uk.}}
\date{}                                           
\begin{document}
\maketitle

\abstract{
The goal of the paper is to prove an exact representation formula for the Laplacian of the  distance (and more generally for an arbitrary $1$-Lipschitz function)  in the framework of metric measure spaces satisfying Ricci curvature lower bounds in a synthetic sense (more precisely in essentially non-branching $\MCP(K,N)$-spaces). Such a representation formula makes apparent the classical upper bounds together with lower bounds and  a precise description of the singular part.   The exact representation formula for the Laplacian of a general 1-Lipschitz function holds also (and seems new) in a general complete Riemannian manifold.

We apply these results to prove the  equivalence of $\CD(K,N)$ and a dimensional Bochner inequality on signed distance functions.
Moreover we obtain a measure-theoretic Splitting Theorem 
for infinitesimally Hilbertian, essentially non-branching spaces verifying $\MCP(0,N)$.
}

\bibliographystyle{plain}

\tableofcontents


\section{Introduction}

The Laplacian comparison Theorem  for the distance function from a point in a manifold with Ricci curvature bounded from below is one of the most fundamental results in Riemannian geometry. The local version  states that if $(M,g)$ is a smooth Riemannian manifold of dimension $N\geq 2$ satisfying $\Ric_{g}\geq (N-1)g$ then, calling $\sfd_{p}(\cdot):=\sfd(p,\cdot)$ the distance from a point $p\in M$, until the distance function is smooth the next upper bound holds:
\begin{equation}\label{eq:UBLSmoothLoc}
\Delta \sfd_{p} \leq (N-1) \cot \sfd_{p}.
\end{equation}  
Of course here $\Delta$ denotes  the Laplacian (also called Laplace-Beltrami operator) of the Riemannian manifold $(M,g)$ and $\cot$ is the cotangent (for a general lower bound  $\Ric_{g}\geq Kg$ an analogous upper bound holds by replacing the right hand side of \eqref{eq:UBLSmoothLoc} with the suitable (hyperbolic-)trigonometric function).
The result is very classical and can be proved either via Bochner inequality (see for instance \cite[Section 2]{CheegerBook})  or by Jacobi fields computations (see for instance \cite[Chapter 7]{Petersen}). 
\\

It was Calabi \cite{Calabi} who, in 1958, first extended the upper bound \eqref{eq:UBLSmoothLoc} to the whole manifold in the weak sense of barriers. Cheeger-Gromoll \cite{ChGr}, in their celebrated proof of the Splitting Theorem in 1971, then  proved that the upper bound \eqref{eq:UBLSmoothLoc} also holds globally on $M$ in distributional sense (see also \cite[Section 4]{CheegerBook}). Since those classical works, the  Laplacian comparison Theorem has become a fundamental technical tool in the investigation of Riemannian manifolds satisfying Ricci curvature lower bounds (see for instance \cite{CheegerBook, CC96,CC97,CC00a,CC00b,ChGr,Col96a,Col96b,Col97,CN, LY86, Petersen}). 
\\We finally mention that  recently   Mantegazza-Mascellani-Uraltsev \cite{MMU} obtained an exact representation formula for the distributional Hessian (and Laplacian) of the distance function from a point and that Gigli \cite{Gigli12} extended to the non-smooth setting  the upper bound \eqref{eq:UBLSmoothLoc}.   \\
 
The goal of this paper is to sharpen the Laplacian comparison Theorem in several ways. First of all we will give an \emph{exact} representation formula for the Laplacian of a general distance function (and for a general 1-Lipschitz function on its transport set, see later for the details) which describes exactly also the singular part concentrated on the cut locus.
 Such a representation formula will hold on \emph{ every complete } Riemannian manifold, without any curvature assumption. When specialised to Riemannian manifolds with  Ricci curvature bounded below, such an exact representation formula will make apparent not only the celebrated \emph {global upper bound} \eqref{eq:UBLSmoothLoc} but also \emph{a lower bound on the regular part of the Laplacian}. The results will be proved in the much higher generality of (non-necessarily smooth) metric measure spaces   satisfying Ricci curvature lower bounds in a synthetic sense (more precisely, essentially non branching $\MCP(K,N)$-spaces), see the final part of the introduction.  
\\In order to fix the ideas, we start the introduction discussing the smooth setting of Riemannian manifolds.
\\

 Let us introduce some notation in order to state the results.
Given a point $p\in M$, denote by ${\mathcal C}_{p}$ the cut locus of $p$.  The negative gradient flow $g_{t}:M\to M$ of the distance function $\sfd_{p}$ induces a partition $\{X_{\alpha}\}_{\alpha\in Q}$ of $M\setminus(\{p\}\cup {\mathcal C}_{p})$ into minimising geodesics; each $X_{\alpha}$ is called  (transport) ray and  $Q$ is a suitable set of indices. We will denote the initial (resp. final) point of the ray $X_{\alpha}$ as $a(X_{\alpha})$ (resp. $b(X_{\alpha})$); it is not hard to see that   $a(X_{\alpha})\in {\mathcal C}_{p}$ and  $b(X_{\alpha})=p$, for every $\alpha \in Q$. 
Let us stress that in this case the endpoints $a(X_{\alpha}), b(X_{\alpha})$ are not elements of the ray $X_{\alpha}$ (in general, end points may or may not be elements of the ray, depending on the specific case, see also Remark \ref{R:initialpoints}).
Such a partition induces a disintegration (the non-expert reader can think of a kind of ``non-straight Fubini Theorem'') of the Riemannian volume measure $\mm$ into measures $\mm_{\alpha}=h_{\alpha} \cH^{1}\llcorner_{X_{\alpha}}$ concentrated on $X_{\alpha}$:
\begin{equation}\label{eq:disIntro}
\mm= \int_{Q} h_{\alpha} \cH^{1}\llcorner_{X_{\alpha}} \, \qq(d\alpha),  
\end{equation}
where $\qq$ is a suitable probability measure on the set of indices $Q$.  
We refer to Section \ref{Ss:disint} for all the details 
on disintegration formula. Here we only mention that once 
the probability $\qq$ is fixed within a suitable family of probability measures, then the functions $h_{\alpha}$ are uniquely determined.

The fact that $(M,g)$ satisfies $\Ric_{g}\geq (N-1) g$ is inherited by the disintegration as concavity properties of the densities $h_{\alpha}$,   for the details see Section \ref{S:transportset}. For simplicity of notation, we will denote $(\log h_{\alpha})'(x):=\frac{d}{dt}|_{t=0} \log h_{\alpha}(g_{t}(x))$; thanks to the disintegration \eqref{eq:disIntro}  and the (semi-)concavity of $h_{\alpha}$ along $X_{\alpha}$,  the quantity $(\log h_{\alpha})'$ is well defined $\mm$-a.e..

The first main result of the paper is an exact representation formula for the Laplacian of the distance function in non-smooth spaces satisfying synthetic lower bounds on the Ricci curvature (see later in the introduction). In order to fix the ideas, we state it here for smooth Riemannian manifolds.
We denote with $C_{c}(M)$ the space of real valued continuous functions with compact support in $M$ endowed with the final topology and with $(C_{c}(M))'$ its dual space made  of real valued continuous linear functionals on  $C_{c}(M)$.

\begin{theorem} \label{thm:DeltadSmoothGen}
Let $(M,g)$ be a smooth complete  $2\leq N$-dimensional Riemannian manifold.
Fix $p \in M$, consider $\sfd_{p} : = \sfd(p,\cdot)$ and an associated disintegration $\mm=\int_{Q} h_{\alpha} \cH^{1}\llcorner_{X_{\alpha}} \, \qq(d\alpha)$.
\\ Then  $\Delta\sfd_{p}$ is an element of $(C_{c}(M))'$ with the following representation formula: 
\begin{equation}\label{eq:repdeltadpSmooth}
\Delta \sfd_{p} = - (\log h_{\alpha})' \, \mm -   \int_{Q} h_{\alpha} \delta_{a(X_{\alpha})} \,\qq(d\alpha).
\end{equation}
It can be written as sum of the following three Radon measures: 
$$
\Delta \sfd_{p} = \left[\Delta \sfd_{p}\right]_{reg}^{+} - 
\left[\Delta \sfd_{p}\right]_{reg}^{-} + 
\left[\Delta \sfd_{p}\right]_{sing},  
$$
with
$$
 \left[\Delta \sfd_{p}\right]_{reg}^{\pm}
= -[(\log h_{\alpha})' ]^{\pm}\,\mm,
\qquad
\left[\Delta \sfd_{p}\right]_{sing}  =
- \int_{Q} h_{\alpha} \,\delta_{a(X_{\alpha})} \,\qq(d\alpha)\leq 0, 
$$
where $\pm$ stands for the positive and negative part. Here, $\left[\Delta \sfd_{p}\right]_{reg}:= \left[\Delta \sfd_{p}\right]_{reg}^{+} - 
\left[\Delta \sfd_{p}\right]_{reg}^{-}$ is the regular part of $\Delta \sfd_{p}$ (i.e. absolutely continuous with respect to $\mm$), and $\left[\Delta \sfd_{p}\right]_{sing}$ is the singular part.
\\In particular, if $(M,g)$ is compact $\Delta \sfd_{p}$ is a finite signed Borel (and in particular Radon) measure.

Moreover,  if $\Ric_{g}\geq K g$ for some $K\in \R$, the  next comparison results hold true  (for simplicity here we assume $K=N-1$, for the bounds corresponding to a  general $K\in \R$ see \eqref{eq:logh'smoothproof}):
\begin{align}
\Delta \sfd_{p}&\leq (N-1) \,\cot \sfd_{p}  \, \mm ,  \label{eq:DeltadpleqSmooth} \\
\left[\Delta \sfd_{p}\right]_{reg}& =   - (\log h_{\alpha})'  \mm \geq  -  (N-1)\, \cot \sfd_{a(X_{\alpha})} \,   \mm. \label{eq:DeltadpgeqSmooth}
\end{align}
\end{theorem}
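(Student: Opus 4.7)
The plan is to interpret the distributional Laplacian through the disintegration $\mm=\int_{Q} h_{\alpha}\,\H^{1}\llcorner_{X_{\alpha}}\,\qq(d\alpha)$ and reduce the global computation to a one-dimensional integration by parts along each ray. For a test function $\varphi\in C_{c}^{\infty}(M)$, since $\sfd_{p}$ is smooth off the measure-zero set $\{p\}\cup\mathcal{C}_{p}$ with $|\nabla\sfd_{p}|=1$ almost everywhere, I would start from
\begin{equation*}
\langle \Delta\sfd_{p},\varphi\rangle = -\int_{M}\nabla\sfd_{p}\cdot\nabla\varphi\,d\mm.
\end{equation*}
Parameterizing each ray $X_{\alpha}$ by arclength $s\in(0,L_{\alpha})$ in the direction of the negative gradient flow, so that $s=0$ corresponds to $a(X_{\alpha})$ and $s=L_{\alpha}$ to $b(X_{\alpha})=p$, along the ray one has $\nabla\sfd_{p}\cdot\nabla\varphi=-\partial_{s}\varphi$, and Fubinizing via the disintegration rewrites the previous display as $\int_{Q}\int_{0}^{L_{\alpha}}\partial_{s}\varphi\,h_{\alpha}\,ds\,\qq(d\alpha)$.

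A one-dimensional integration by parts in $s$ on each ray then produces an interior term and two boundary terms. Using $\partial_{s}h_{\alpha}=h_{\alpha}(\log h_{\alpha})'$ and reassembling by the disintegration, the interior term becomes $-\int_{M}\varphi\,(\log h_{\alpha})'\,d\mm$. The boundary term at $s=0$ gives $-\int_{Q}\varphi(a(X_{\alpha}))\,h_{\alpha}(0^{+})\,\qq(d\alpha)$, which matches the claimed singular part; the one at $s=L_{\alpha}$ is $\varphi(p)\int_{Q}h_{\alpha}(L_{\alpha}^{-})\,\qq(d\alpha)$, and I would show it vanishes because near the pole $p$ the Riemannian volume in geodesic polar coordinates is $r^{N-1}dr\,d\sigma$ with $r=\sfd_{p}$, which forces $h_{\alpha}(s)\to 0$ as $s\to L_{\alpha}^{-}$ for $\qq$-a.e.\ $\alpha$. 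Putting the three pieces together yields \eqref{eq:repdeltadpSmooth}; splitting $-(\log h_{\alpha})'$ into its positive and negative parts immediately produces the decomposition into regular and singular measures, and finiteness on compact $M$ follows because the density $h_{\alpha}$ and its logarithmic derivative remain controlled on bounded intervals away from the two endpoints.

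For the comparison estimates under $\Ric_{g}\geq Kg$ (taking $K=N-1$), I would invoke the classical Jacobi field calculation: along each ray the density satisfies the Riccati inequality $(\log h_{\alpha})''+\frac{((\log h_{\alpha})')^{2}}{N-1}\leq -(N-1)$. Integrating this ODE inequality backward from the regular pole at $p$, where the polar asymptotic $h_{\alpha}(s)\sim(L_{\alpha}-s)^{N-1}$ supplies the initial condition, produces $(\log h_{\alpha})'\geq -(N-1)\cot\sfd_{p}$, and hence the upper bound \eqref{eq:DeltadpleqSmooth}; the symmetric integration from $s=0$ at the cut endpoint $a(X_{\alpha})$ gives the matching lower bound \eqref{eq:DeltadpgeqSmooth} on the regular part.

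The main obstacle I anticipate is the boundary analysis of $h_{\alpha}$ at the two endpoints of each ray. The vanishing $h_{\alpha}(L_{\alpha}^{-})=0$ at $p$, which is what prevents a spurious $\delta_{p}$ contribution in the formula, relies on the $N$-dimensional polar structure of the volume near $p$; the existence of a finite limit $h_{\alpha}(0^{+})$ at the cut point, which is precisely the mass of the singular measure on the fibre, rests on the (semi-)concavity of $\log h_{\alpha}$ inherited from the lower Ricci bound. In the smooth Riemannian setting both facts are classical, but to transport the argument into the $\MCP(K,N)$ framework pursued later in the paper these boundary properties must be re-established from the curvature-dimension condition alone, and it is there that the main technical work lies.
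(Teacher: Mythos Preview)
Your proposal is correct and follows essentially the same route as the paper: both reduce the computation to a one-dimensional integration by parts along each transport ray via the disintegration, show that the boundary contribution at $p$ vanishes because $h_{\alpha}(p)=0$ (the paper argues this via $\liminf_{r\to 0}\mm(B_{r}(p))/r^{N}<\infty$, which is your polar-coordinates observation in abstract form), and derive the comparison bounds from the differential inequality governing $(\log h_{\alpha})'$. The only difference is organizational: the paper first establishes the representation formula for a general $1$-Lipschitz function in the e.n.b.\ $\MCP(K,N)$ framework (Corollary~\ref{C:main1Smooth}) and then specialises to $\sfd_{p}$, and it reads off the comparison estimates from the pre-packaged bound \eqref{E:logder} on $\CD(K,N)$ densities rather than integrating the Riccati inequality directly --- but these are the same computation.
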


\begin{remark}[On the lower bound \eqref{eq:DeltadpgeqSmooth}]
Denote with ${\mathcal C}_{p}:=\{a(X_{\alpha})\}_{\alpha\in Q}$ the cut locus of $p$ and with $g_{t}$ the negative gradient flow of $\sfd_{p}$ at time $t$. More precisely, $g_{t}$ is defined ray by ray as the translation by $t$ in the direction of the negative gradient of $\sfd_{p}$, for $t\in (0, |X_{\alpha}|)$, where $|X_{\alpha}|$ denotes the length of the transport ray $X_{\alpha}$, i.e. $|X_{\alpha}|=\sfd(a(X_{\alpha}),b(X_{\alpha})) = \sfd(a(X_{\alpha}),p)$. Then for every $\ve>0$ there exists $C_{K,N,\ve}>0$ so that:
\begin{align*}
\left[\Delta \sfd_{p}\right]_{reg} &\geq -C_{K,N,\ve} \mm  \quad \text{on } \{x=g_{t}(a_{\alpha})\,:\, t\geq \ve\}\supset \{x \in X\,:\, \sfd(x, {\mathcal C}_{p} )\geq \ve\}.
\end{align*}
Let us stress that such a lower bound depends just on the dimension $N$, on the lower bound $K\in \R$ over the Ricci tensor, and on the distance $\ve>0$ from the cut locus ${\mathcal C}_{p}$, but is independent of the specific manifold $(M,g)$.
\end{remark}

We will prove the next more general statement  for any signed distance function. Let us first give some definition: given a continuous function $v : M \to \R$ so that $\set{v = 0} \neq \emptyset$, the function
\begin{equation}\label{E:levelsetsIntro}
d_{v} : M \to \R, \qquad d_{v}(x) : = \sfd(x, \{ v = 0 \} )\; \sgn(v),
\end{equation}
is called the \emph{signed distance function} (from the zero-level set of $v$). 
With a slight abuse  of notation, we denote with $\sfd$ both the distance between points and the induced distance between sets; more precisely 
$$
 \sfd(x, \{ v = 0 \} ):=\inf \left\{  \sfd(x,y)\,:\, y\in  \{ v = 0 \} \right\}.
$$
Analogously to $\sfd_{p}$, a signed distance function $d_{v}$ induces a partition of $M$ (up to a set of measure zero) into rays $\{X_{\alpha}\}_{\alpha\in Q}$ and a corresponding  disintegration of the Riemannian volume measure $\mm$. The orientation of the rays is analogous. More precisely, if $X_{\alpha}$ is a transport ray associated with $d_{v}$ and $a(X_{\alpha}), b(X_{\alpha})$ are its starting and the final point, then 
$
d_{v}(b(X_{\alpha})) \leq 0, 
d_{v}(a(X_{\alpha})) \geq 0,
$
so that transport rays are oriented from $\{v \geq 0 \}$ towards $\{ v \leq 0\}$.

\begin{theorem}\label{T:Deltadv2Smooth}
 Let $(M,g)$ be a smooth complete $2\leq N$-dimensional Riemannian manifold. 
\\Consider the signed distance function $d_{v}$ 
for some continuous function $v : X\to\R$ and an associated disintegration $\mm=\int_{Q} h_{\alpha} \cH^{1}\llcorner_{X_{\alpha}} \, \qq(d\alpha)$.

Then $\Delta d_{v}^{2}$ is an element of $(C_{c}(M))'$ with the following representation formula: 
\begin{equation}\label{eq:repdeltadpSmooth}
\Delta d_{v}^{2} = 2 (1  - d_{v} (\log h_{\alpha})' )\mm - 2\int_{Q}( h_{\alpha}d_{v})[\delta_{a(X_{\alpha})} - \delta_{b(X_{\alpha})}] \,\qq(d\alpha).
\end{equation}
It can be written as the sum of three Radon measures: 
$$
\Delta d_{v}^{2}  = \left[\Delta d_{v}^{2}\right]_{reg}^{+} - 
\left[\Delta d_{v}^{2}\right]_{reg}^{-} + 
\left[\Delta d_{v}^{2}\right]_{sing},  
$$
with
$$
 \left[\Delta d_{v}^{2}\right]_{reg}^{\pm}
:= 2 (1  - d_{v} (\log h_{\alpha})' )^{\pm}\,\mm,
\quad
\left[\Delta d_{v}^{2}\right]_{sing}  :=
- 2\int_{Q} ( h_{\alpha}d_{v})[\delta_{a(X_{\alpha})} - \delta_{b(X_{\alpha})}] \,\qq(d\alpha) \leq 0\ ,
$$
where $\pm$ stands for the positive and negative part; in particular if, $(M,g)$ is compact, $\Delta d_{v}^{2}$ is a finite signed Borel (and in particular Radon)  measure.

\noindent
Moreover, if $\Ric_{g}\geq K g$ for some $K\in \R$, the next comparison results hold true (for simplicity here we assume $K=N-1$, for the bounds corresponding to a  general $K\in \R$ see \eqref{eq:Deltadv2leq}, \eqref{eq:Deltadv2geq}):
\begin{align}
\left[\Delta d_{v}^{2}\right]^{+}_{reg} \leq & ~ 
2\mm+ 2(N-1)\sfd(\{ v= 0 \},x) \left( \cot\sfd_{b(X_{\alpha})}  \, \mm\llcorner_{\{v\geq 0\}}  +   \cot \sfd_{a(X_{\alpha})} \,  \mm\llcorner_{\{v< 0\}} \right),
\label{eq:Deltadv2leqSmooth}  \\
\left[\Delta d_{v}^{2}\right]^{-}_{reg} \leq & ~ 
2\mm- 2 (N-1) \sfd(\{ v =0 \},\cdot)  \left(\cot \sfd_{a(X_{\alpha})} \, \mm\llcorner_{\{v\geq 0\}}  + \cot \sfd_{b(X_{\alpha})}\,  \mm\llcorner_{\{v< 0\}} \right) \ 
\label{eq:Deltadv2geqSmooth}.
\end{align}
\end{theorem}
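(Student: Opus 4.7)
The approach is to bootstrap from the representation formula for $\Delta d_v$ (the 1-Lipschitz analogue of Theorem \ref{thm:DeltadSmoothGen}) using the product-rule identity for the Laplacian of a square. For a general 1-Lipschitz function the transport rays can terminate in the interior at both ends, so both endpoints contribute to the singular part, and the analogue of Theorem \ref{thm:DeltadSmoothGen} applied to $d_v$ should read
\[
\Delta d_v \;=\; -(\log h_\alpha)'\, \mm \;-\; \int_Q h_\alpha \bigl[\delta_{a(X_\alpha)} - \delta_{b(X_\alpha)}\bigr]\, \qq(d\alpha).
\]
Since $d_v$ is 1-Lipschitz with $|\nabla d_v|=1$ almost everywhere (the rays are unit-speed geodesics along which $d_v$ is affine with slope $-1$), one has the identity
\[
\Delta(d_v^{2}) \;=\; 2|\nabla d_v|^{2}\, \mm \;+\; 2 d_v\, \Delta d_v \;=\; 2\mm + 2 d_v\, \Delta d_v,
\]
where $d_v\, \Delta d_v$ is a well-defined signed Radon measure because $d_v$ is continuous and $\Delta d_v$ is locally finite. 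Multiplying the formula for $\Delta d_v$ by the continuous function $2d_v$, using that the Dirac mass $\delta_{a(X_\alpha)}$ (resp.\ $\delta_{b(X_\alpha)}$) picks up the value $d_v(a(X_\alpha))$ (resp.\ $d_v(b(X_\alpha))$), and adding $2\mm$, produces exactly \eqref{eq:repdeltadpSmooth}. The decomposition into regular and singular parts is immediate, and the non-positivity of the singular part follows from $d_v(a(X_\alpha)) \geq 0 \geq d_v(b(X_\alpha))$ together with $h_\alpha \geq 0$.

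An equivalent, more self-contained route is to test $\Delta d_v^{2}$ directly against $\varphi \in C_c(M)$, integrate by parts to $-2\int_M d_v\, g(\nabla d_v, \nabla \varphi)\, d\mm$, and apply the disintegration. Along each ray, parameterised as $\gamma_\alpha:[0,L_\alpha]\to X_\alpha$ with $\gamma_\alpha(0)=a(X_\alpha)$ and $\gamma_\alpha(L_\alpha)=b(X_\alpha)$, one has $\nabla d_v = -\dot\gamma_\alpha$ and $d_v\circ\gamma_\alpha(s) = d_v(a(X_\alpha)) - s$, so a one-dimensional integration by parts in $s$ produces the interior integrand $2h_\alpha(1 - d_v(\log h_\alpha)')$ and the boundary contributions $\pm 2 d_v h_\alpha \varphi$ at $a(X_\alpha)$, $b(X_\alpha)$; reassembling through $\qq$ recovers \eqref{eq:repdeltadpSmooth}.

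For the curvature bounds under $\Ric_g \geq (N-1)g$, the one-dimensional Jacobi/MCP comparison on the conditional densities $h_\alpha$ along each ray yields the two-sided estimate
\[
-(N-1)\cot\sfd_{b(X_\alpha)} \;\leq\; (\log h_\alpha)' \;\leq\; (N-1)\cot\sfd_{a(X_\alpha)}.
\]
Multiplying by $-d_v$ and distinguishing the cases $\{v \geq 0\}$ and $\{v < 0\}$ (so that $d_v$ has a definite sign and $\sfd(\{v=0\},\cdot)=|d_v|$), one bounds the density $2(1 - d_v(\log h_\alpha)')$ above and below by the right-hand sides of \eqref{eq:Deltadv2leqSmooth}--\eqref{eq:Deltadv2geqSmooth}, and the claimed inequalities on $[\Delta d_v^{2}]^{\pm}_{reg}$ follow by taking the corresponding positive and negative parts. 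The main technical obstacle, I expect, is the careful justification of the representation formula for $\Delta d_v$ on the full transport set — in particular, proving that the singular part contains precisely the two boundary Dirac contributions with opposite signs, and that the zero-level set $\{v=0\}$, where $d_v$ changes sign yet remains smooth in the direction of the ray, contributes nothing extra.
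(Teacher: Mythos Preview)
Your second, ``self-contained'' route---test $\Delta d_v^{2}$ directly, disintegrate, and integrate by parts in the ray parameter---is exactly the paper's argument (Theorem~\ref{T:d3}, specialised via Corollary~\ref{C:sigmadisintSmooth}). That part is fine.

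The first route, however, inverts the logical order of the paper and has a genuine gap. You assume a representation for $\Delta d_v$ as a Radon functional on all of $M$ and then multiply by $2d_v$. But the analogue of Theorem~\ref{thm:DeltadSmoothGen} (i.e.\ Corollary~\ref{C:main1Smooth}) requires the integrability hypothesis $\int_Q |X_\alpha|^{-1}\,\qq(d\alpha)<\infty$, which is \emph{not} available for a general signed distance function: the rays of $d_v$ can be arbitrarily short near $\{v=0\}$, and nothing like the positive injectivity radius used for $\sfd_p$ is at hand. Without that hypothesis one only gets $\Delta d_v$ as a Radon functional on $M\setminus\{v=0\}$ (this is Corollary~\ref{cor:Deltadv} in the paper), so the product $d_v\,\Delta d_v$ is not a priori a well-defined Radon measure on $M$. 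Saying ``$d_v$ is continuous and $\Delta d_v$ is locally finite'' begs the question: local finiteness across $\{v=0\}$ is exactly what is missing.

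The paper resolves this by going in the opposite direction: it proves the formula for $\Delta d_v^{2}$ first, precisely because the extra factor $d_v=\pm\sfd(\{v=0\},\cdot)$ appearing in the ray-wise integration by parts kills the short-ray divergence (this is the content of Lemma~\ref{lem:nucases}, which shows the comparison measure $\nu$ is Radon). Only afterwards is $\Delta d_v$ on $M\setminus\{v=0\}$ recovered from $\Delta d_v^{2}$ via the chain rule $|d_v|=\sqrt{d_v^{2}}$. So your ``main technical obstacle'' is not a detail to be filled in---it is the reason the argument must run through $d_v^{2}$ rather than $d_v$.
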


We will also present a general statement (Corollary \ref{C:main1Smooth}) valid for any $1$-Lipschitz function $u:M\to \R$, provided the rays of the induced disintegration satisfy a suitable integrability condition (roughly, they should not be too short), obtaining the same representation formula  
together with the two sided estimate we mentioned before.

An interesting feature of Corollary \ref{C:main1Smooth} is that it will hold \emph{for every} 1-Lipschitz function $u:X\to \R$. Let us stress that the 1-Lipschitz assumption is clearly a \emph{first order} condition, with no information on second order derivatives. Nevertheless, Corollary \ref{C:main1Smooth} will imply that in a general complete Riemannian manifold it is possible to deduce some information on the \emph{second derivatives once restricted to a suitable subset}. More precisely, if one considers only the set of points ``saturating the 1-Lipschitz assumption'' then the Laplacian of $u$ is a continuous linear functional on $C_{c}$. We stress that we will obtain an \emph{exact representation formula} of $\Delta u$ (restricted to such a set) which, in case the Ricci curvature of the ambient $N$-manifold is bounded below by $K\in \R$,  will give a \emph{two-sided bound} on the regular part in terms of $K, N$.    We refer to Corollary \ref{C:main1Smooth} for the details. 
\\

Up to now we focused the introduction on the setting of complete Riemannian manifolds (satisfying Ricci curvature lower bounds). However, everything will be proved in the much higher generality of (possibly non-smooth)
essentially non-branching, metric measure spaces $(X,\sfd,\mm)$ satisfying the measure contraction property $\MCP(K,N)$, for some $K\in \R, N\in (1,\infty)$. 
We refer to Subsection \ref{SS:MCPDef} for the detailed definitions; here let us just recall that $\MCP(K,N)$,  introduced independently by Ohta \cite{Ohta1} and Sturm \cite{Sturm06II},  is the weakest among the synthetic conditions of Ricci curvature bounded below by $K$ and dimension bounded above by $N$ for metric measure spaces.  In particular it is strictly weaker than the celebrated curvature dimension condition $\CD(K,N)$ pioneered by Lott-Sturm-Villani \cite{Lott-Villani09,Sturm06I,Sturm06II} and than the (weaker) reduced curvature dimension condition $\CD^{*}(K,N)$ \cite{BS10}.  The essential non-branching condition, introduced by T. Rajala-Sturm \cite{RS2014}, roughly amounts to ask that $W_{2}$-geodesics are concentrated on non-branching geodesics.

\begin{remark}[Notable examples of spaces fitting in the framework of the paper]
The class of essentially non-branching $\MCP(K,N)$ spaces include many remarkable families of spaces, among them:
\begin{itemize}
\item  Smooth  Finsler manifolds where the norm on the tangent spaces is strongly convex, and which satisfy lower Ricci curvature bounds. More precisely we consider a $C^{\infty}$-manifold  $M$, endowed with a function $F:TM\to[0,\infty]$ such that $F|_{TM\setminus \{0\}}$ is $C^{\infty}$ and  for each $p \in M$ it holds that $F_p:=T_pM\to [0,\infty]$ is a  strongly-convex norm, i.e.
$$g^p_{ij}(v):=\frac{\partial^2 (F_p^2)}{\partial v^i \partial v^j}(v) \quad \text{is a positive definite matrix at every } v \in T_pM\setminus\{0\}. $$
Under these conditions, it is known that one can write the  geodesic equations and geodesics do not branch; in other words these spaces are non-branching. 
We also assume $(M,F)$ to be geodesically complete and endowed with a $C^{\infty}$  measure $\mm$ in a such a way that the associated m.m.s. $(X,F,\mm)$ satisfies the  $\MCP(K,N)$ condition,  see \cite{OhtaJLMS, OhSt}. 
\item Sub-Riemannian manifolds. The following are all examples of essentially non-branching $\MCP(K,N)$-spaces:  the $(2n+1)$-dimensional Heisenberg group \cite{Juillet},  any co-rank one Carnot group \cite{Rizzi},   any ideal Carnot group \cite{Rifford}, any generalized H-type Carnot group of rank $k$ and dimension $n$ \cite{BarilariRizzi}.
\item Strong $\CD^{*}(K,N)$ spaces, and in particular $\RCD^{*}(K,N)$ spaces (see below). The class of $\RCD^{*}(K,N)$ spaces includes the following remarkable subclasses:
\begin{itemize}
\item Measured Gromov Hausdorff limits of Riemannian $N$-dimensional manifolds  satisfying Ricci $\geq K$,  see \cite{AGS11b,GMS2013}. 
\item Finite dimensional Alexandrov spaces with curvature bounded from below, see  \cite{Pet}. 
\end{itemize}
\end{itemize}
\end{remark}

In the context of metric measure spaces verifying Ricci curvature lower bounds in a synthetic form, the Laplacian comparison Theorem in its classical form \eqref{eq:UBLSmoothLoc} 
was established by Gigli \cite{Gigli12}. More precisely, \cite{Gigli12} developed a notion of a possibly multivalued Laplacian holding on a general metric measure space $(X,\sfd,\mm)$; in \cite{Gigli12}, a property of the space called \emph{infinitesimal strict convexity} is also introduced, which grants, among other things, uniqueness of the Laplacian. 
Finally in \cite{Gigli12}, assuming infinitesimal strict convexity and  $\CD^{*}(K,N)$,  a sharp upper bound for the Laplacian of a general Kantorovich potential for the $W_{2}$ distance is obtained and, in particular, for $\sfd_{p}^{2}$. The comparison in \cite{Gigli12} is stated for $\CD^{*}(K,N)$ but the same proof, in the case of $\sfd_{p}^{2}$, works assuming the weaker $\MCP(K,N)$.

Our results therefore  extend the ones in \cite{Gigli12} removing the assumption of infinitesimal strict convexity  (hence including  the possibility of a multivalued Laplacian, see Definition \ref{D:Laplace}); moreover we precisely describe the Laplacian of a general signed distance function or a 1-Lipschitz function with sufficiently long transport rays,  obtaining also a lower bound on the regular part and a representation formula for the singular part. 
We stress the fundamental role of the exact representation formulas: it will be the key in our application to Bochner inequality (signed distance functions) and for the Splitting Theorem (general 1-Lipschitz function), see  the discussions  below.

We conclude this part on the related results in the literature 
mentioning that the Laplacian comparison results 
\cite[Theorem 5.14, Corollary 5.15]{Gigli12} 
seem to claim the stronger conclusion that $\Delta \sfd_{p}^{2}$ is a Radon measure in the classical sense (see Definition \ref{D:radonfunctional} and comments shortly afterwards). 
This however seems to not follow from the proof,  when $(X,\sfd)$ is not compact: $\Delta \sfd_{p}^{2}$ is proved to be an element of $(C_{c}(X))'$ so, by Riesz Theorem, it is a difference of positive Radon measures but it may fail to be a Borel measure  (see \cite[Proposition 4.13]{Gigli12} and the application of 
Riesz Theorem in the last part of its proof). 
We will therefore adapt  the definition 
of Laplacian (see Definition \ref{D:Laplace}), 
weakening \cite[Definition 4.4]{Gigli12}. 
With this new definition 
also \cite[Proposition 4.13]{Gigli12} together 
with its applications seem to work.

\bigskip

The second part of the paper is devoted to applications. 
\\
In Section \ref{Sec:CDBE} we will use the representation formula for the Laplacian to show that, under essential non branching, the $\CD(K,N)$ condition is equivalent to a dimensional Bochner inequality on signed distance functions. The Bochner inequality corresponds to an \emph{Eulerian} formulation of Ricci curvature lower bounds while the  $\CD(K,N)$ condition, based on convexity of entropies along $W_{2}$-geodesics of probability measures, correspond to a \emph{Lagrangian} approach. 

It has been a long standing open problem, see for instance the celebrated book of Villani \cite[Open Problem 17.38, Conclusions and Open Problems p. 923]{Villani09}, to show that the Eulerian and the Lagrangian formulations of Ricci curvature lower bounds are equivalent.  Such an equivalence has already been proved to hold true under the additional assumption that  the heat flow ${\rm H}_{t}:L^{2}(X,\mm)\to L^{2}(X,\mm)$ is linear for every $t\geq 0$ (or, equivalently, the Cheeger energy ${\rm Ch}(f):=\int_{X} |\nabla f|^{2}_{w} \mm$ satisfies the parallelogram identity). The class  of $\CD(K,N)$ spaces satisfying such a linearity condition is called $\RCD(K,N)$.  
After its birth in \cite{AGS11b} (see also \cite{AGMR2012}) for $N=\infty$  and further developments for $N<\infty$  (see \cite{AMS2013,EKS2013, Gigli12} and the subsequent \cite{CMi16}), the theory of metric measure spaces satisfying $\RCD(K,N)$ (called $\RCD(K,N)$-spaces for short) has been flourishing in the last years  (for a survey of results, see the Bourbaki seminar \cite{VilB} and the recent ICM-Proceeding \cite{AmbrosioICM}).
\\ The equivalence between $\RCD(K,N)$ and Bochner inequality (properly written in a weak form, called Bakry-\'Emery condition ${\rm BE}(K,N)$) was proved for $N=\infty$ by Ambrosio-Gigli-Savar\'e \cite{AGS11b, AGS12}, and in the finite dimensional case by Erbar-Kuwada-Sturm \cite{EKS2013} and Ambrosio-Mondino-Savar\'e \cite{AMS2013}. 
\\Let us stress that the linearity of the heat flow was a crucial assumption in all of the aforementioned works.

The equivalence between Bochner inequality and $\CD(K,N)$ was proved also in \emph{smooth} Finsler manifolds by Ohta-Sturm \cite{OhSt}. In  \cite{OhSt}  no linearity of the heat flow is assumed, on the other hand the smoothness of the Finsler structure is heavily used in the computations.
In the present paper, in contrast to the aforementioned works, we assume \emph{neither that the heat flow is linear nor that the space is smooth} thus showing that the equivalence between \emph{Lagrangian} and \emph{Eulerian} approach to Ricci curvature lower bounds  holds in the higher generality of  non-smooth   ``possibly Finslerian'' spaces.

The proof of the equivalence seems also to follow rather easily once the representation formula for the Laplacian of signed distance functions is at disposal. 
Here we also crucially use \cite{CMi16} where 
it is shown that a control on the behaviour of 
signed distance functions is sufficient to control the geometry of the space (see the statement: $\CD^{1}(K,N)$
implies $\CD(K,N)$). This also motivates our interest 
on the Laplacian of this family of functions (Theorem \ref{T:d3}).
\\

A second application is a measure-theoretic Splitting Theorem stating, roughly, that an  infinitesimally Hilbertian (i.e. the Cheeger energy satisfies the parallelogram identity), essentially non-branching $\MCP(0,N)$ space containing a line is isomorphic as a \emph{measure space} to a splitting (for the precise statement see Theorem  \ref{thm:split}).
\\For smooth Riemannian manifolds \cite{ChGr}, as well as for Ricci-limits \cite{CC96} and  $\RCD(0,N)$ spaces \cite{GigliSplitting}, the Splitting Theorem has a stronger statement giving an \emph{isometric splitting}. However under the assumptions of Theorem \ref{thm:split} it is not conceivable to expect also a splitting of the metric. Indeed the Heisenberg group ${\mathbb H}^{n}$ is an example of  non-branching  infinitesimally Hilbertian $\MCP(0,N)$ space \cite{Juillet} containing a line, which is homeomorphic and isomorphic as measure space to a splitting (indeed it is homeomorphic to $\R^{n}$ and the measure is exactly the $n$-dimensional Lebesgue measure) but it is not isometric to a splitting.

\subsubsection*{Acknowledgements}   
A. M. acknowledges the support of   the EPSRC First Grant  EP/R004730/1 and of the ERC Starting Grant 802689.   The authors wish to thank the anonymous reviewers for the careful reading and for their comments, which led to an improvement in the exposition.


\section{Prerequisites}

In this Section we review the basic material needed throughout the paper.
The standing assumptions are that $(X, \sfd)$ is a complete, proper and separable metric space endowed with a  positive Radon measure $\mm$ satisfying $\supp(\mm)=X$.  The triple   $(X, \sfd,\mm)$ is said to be a metric measure space, m.m.s. for short.

The properness assumption is motivated by the synthetic Ricci curvature lower bounds we will assume to hold.

\subsection{Essentially non branching,  $\MCP(K,N)$ and  $\CD(K,N)$ metric measure spaces}\label{SS:MCPDef}

We denote by 
$$
\Geo(X) : = \{ \gamma \in C([0,1], X):  \sfd(\gamma_{s},\gamma_{t}) = |s-t| \sfd(\gamma_{0},\gamma_{1}), \text{ for every } s,t \in [0,1] \}
$$
the space of constant speed geodesics. The metric space $(X,\sfd)$ is a \emph{geodesic space} if and only if for each $x,y \in X$ 
there exists $\gamma \in \Geo(X)$ so that $\gamma_{0} =x, \gamma_{1} = y$.
\\Recall that, for complete geodesic spaces, local compactness is equivalent to properness (a metric space is proper if every closed ball is compact).

\medskip

We denote with  $\mathcal{P}(X)$ the  space of all Borel probability measures over $X$ and with  $\mathcal{P}_{2}(X)$ the space of probability measures with finite second moment.
$\mathcal{P}_{2}(X)$ can be  endowed with the $L^{2}$-Kantorovich-Wasserstein distance  $W_{2}$ defined as follows:  for $\mu_0,\mu_1 \in \mathcal{P}_{2}(X)$,  set
\begin{equation}\label{eq:W2def}
  W_2^2(\mu_0,\mu_1) := \inf_{ \pi} \int_{X\times X} \sfd^2(x,y) \, \pi(dxdy),
\end{equation}
where the infimum is taken over all $\pi \in \mathcal{P}(X \times X)$ with $\mu_0$ and $\mu_1$ as the first and the second marginal.
The space $(X,\sfd)$ is geodesic  if and only if the space $(\mathcal{P}_2(X), W_2)$ is geodesic. 

\medskip

 For any $t\in [0,1]$,  let ${\rm e}_{t}$ denote the evaluation map: 
$$
  {\rm e}_{t} : \Geo(X) \to X, \qquad {\rm e}_{t}(\gamma) : = \gamma_{t}.
$$
Any geodesic $(\mu_t)_{t \in [0,1]}$ in $(\mathcal{P}_2(X), W_2)$  can be lifted to a measure $\nu \in {\mathcal {P}}(\Geo(X))$, 
so that $({\rm e}_t)_\sharp \, \nu = \mu_t$ for all $t \in [0,1]$. 
\\Given $\mu_{0},\mu_{1} \in \mathcal{P}_{2}(X)$, we denote by 
$\Opt(\mu_{0},\mu_{1})$ the space of all $\nu \in \mathcal{P}(\Geo(X))$ for which $({\rm e}_0,{\rm e}_1)_\sharp\, \nu$ 
realizes the minimum in \eqref{eq:W2def}. Such a $\nu$ will be called \emph{dynamical optimal plan}. If $(X,\sfd)$ is geodesic, then the set  $\Opt(\mu_{0},\mu_{1})$ is non-empty for any $\mu_0,\mu_1\in \mathcal{P}_2(X)$.
\\We will also consider the subspace $\mathcal{P}_{2}(X,\sfd,\mm)\subset \mathcal{P}_{2}(X)$
formed by all those measures absolutely continuous with respect with $\mm$.
\medskip

A set $G \subset \Geo(X)$ is a set of non-branching geodesics if and only if for any $\gamma^{1},\gamma^{2} \in G$, it holds:
$$
\exists \;  \bar t\in (0,1) \text{ such that } \ \forall t \in [0, \bar t\,] \quad  \gamma_{ t}^{1} = \gamma_{t}^{2}   
\quad 
\Longrightarrow 
\quad 
\gamma^{1}_{s} = \gamma^{2}_{s}, \quad \forall s \in [0,1].
$$
In the paper we will only consider essentially non-branching spaces, let us recall their definition (introduced in \cite{RS2014}). 
\begin{definition}\label{def:ENB}
A metric measure space $(X,\sfd, \mm)$ is \emph{essentially non-branching} (e.n.b. for short) if and only if for any $\mu_{0},\mu_{1} \in \mathcal{P}_{2}(X)$,
with $\mu_{0},\mu_{1}$ absolutely continuous with respect to $\mm$, any element of $\Opt(\mu_{0},\mu_{1})$ is concentrated on a set of non-branching geodesics.
\end{definition}
It is clear that if $(X,\sfd)$ is a  smooth Riemannian manifold  then any subset $G \subset \Geo(X)$  is a set of non branching geodesics, in particular any smooth Riemannian manifold is essentially non-branching.
\medskip

In order to formulate curvature properties for $(X,\sfd,\mm)$  we recall the definition of the  distortion coefficients:  for $K\in \R, N\in [1,\infty), \theta \in (0,\infty), t\in [0,1]$, set 
\begin{equation}\label{eq:deftau}
\tau_{K,N}^{(t)}(\theta): = t^{1/N} \sigma_{K,N-1}^{(t)}(\theta)^{(N-1)/N},
\end{equation}
where the $\sigma$-coefficients are defined 
as follows:
given two numbers $K,N\in \R$ with $N\geq0$, we set for $(t,\theta) \in[0,1] \times \R_{+}$, 
\begin{equation}\label{eq:Defsigma}
\sigma_{K,N}^{(t)}(\theta):= 
\begin{cases}
\infty, & \textrm{if}\ K\theta^{2} \geq N\pi^{2}, \crcr
\displaystyle  \frac{\sin(t\theta\sqrt{K/N})}{\sin(\theta\sqrt{K/N})} & \textrm{if}\ 0< K\theta^{2} <  N\pi^{2}, \crcr
t & \textrm{if}\ K \theta^{2}<0 \ \textrm{and}\ N=0, \ \textrm{or  if}\ K \theta^{2}=0,  \crcr
\displaystyle   \frac{\sinh(t\theta\sqrt{-K/N})}{\sinh(\theta\sqrt{-K/N})} & \textrm{if}\ K\theta^{2} \leq 0 \ \textrm{and}\ N>0.
\end{cases}
\end{equation}

\noindent
Let us also recall the definition of the R\'enyi Entropy functional ${\mathcal E}_{N} : \P(X) \to [0, \infty]$, 
\begin{equation}\label{eq:defEnt}
{\mathcal E}_{N}(\mu)  := \int_{X} \rho^{1-1/N}(x) \,\mm(dx),
\end{equation}
where $\mu = \rho \mm + \mu^{s}$ with $\mu^{s}\perp \mm$.
\\ Next we recall the definition of $\MCP(K,N)$ given independently by Ohta \cite{Ohta1} and Sturm \cite{Sturm06II}. 
On general metric measure spaces the two definitions slightly differ, but on essentially non-branching spaces they coincide. We report the one given in
\cite{Ohta1}.

\begin{definition}[$\MCP$ condition]\label{def:MCP}
Let $K \in \R$ and $N \in [1,\infty)$. A metric measure space  $(X,\sfd,\mm)$ verifies $\MCP(K,N)$ if for any $\mu_{0} \in \P_{2}(X)$ of the form 
$\mu_{0} = \frac{1}{\mm(a)}\mm\llcorner_{A}$ for some Borel set $A \subset X$ 
with $\mm(A) \in (0,\infty)$, and any $o \in X$ there exists $\nu \in \Opt(\mu_{0},\delta_{o})$ such that 
\begin{equation}\label{eq:defMCP}
\frac{1}{\mm(A)} \mm 
 \geq  (\ee_{t})_{\sharp} \left( \tau_{K,N}^{(1-t)}(\sfd(\gamma_{0},\gamma_{1})) \nu(d\gamma) \right), \qquad \forall \ t\in [0,1].
\end{equation}
\end{definition}
From \cite[Proposition 9.1]{CMi16}, in the setting of essentially non-branching spaces Definition \ref{def:MCP}
is equivalent to the following condition: for all $\mu_{0},\mu_{1} \in \mathcal{P}_{2}(X)$ with $\mu_{0} \ll \mm$ and $\supp(\mu_{1}) \subset \supp(\mm)$, there exists a unique $\nu \in \Opt(\mu_0,\mu_1)$, $\nu$ is induced by a map  (i.e. $\nu = S_{\sharp}(\mu_0)$ for some map $S : X \rightarrow \Geo(X)$), $\mu_t := (\ee_t)_{\#} \nu \ll \mm$ for all $t \in [0,1)$, and writing $\mu_t = \rho_t \mm$, we have for all $t \in [0,1)$:
\begin{equation} \label{E:MCP-density}
\rho_t^{-\frac{1}{N}}(\gamma_t) \geq  \tau_{K,N}^{(1-t)}(\sfd(\gamma_0,\gamma_1)) \rho_0^{-\frac{1}{N}}(\gamma_0) \;\;\; \text{for $\nu$-a.e. $\gamma \in \Geo(X)$} .
\end{equation}

The curvature-dimension condition was introduced independently by Lott-Villani \cite{Lott-Villani09} and Sturm \cite{Sturm06I,Sturm06II}, let us recall its definition.

\begin{definition}[$\CD$ condition]\label{def:CD}
Let $K \in \R$ and $N \in [1,\infty)$. A metric measure space  $(X,\sfd,\mm)$ verifies $\CD(K,N)$ if for any two $\mu_{0},\mu_{1} \in \P_{2}(X,\sfd,\mm)$ 
with bounded support there exist $\nu \in \Opt(\mu_{0},\mu_{1})$ and  $\pi\in \P(X\times X)$ $W_{2}$-optimal plan, such that $\mu_{t}:=(\ee_{t})_{\sharp}\nu \ll \mm$ and for any $N'\geq N, t\in [0,1]$:
\begin{equation}\label{eq:defCD}
{\mathcal E}_{N'}(\mu_{t}) \geq \int \tau_{K,N'}^{(1-t)} (\sfd(x,y)) \rho_{0}^{-1/N'} 
+ \tau_{K,N'}^{(t)} (\sfd(x,y)) \rho_{1}^{-1/N'} \,\pi(dxdy).
\end{equation}
\end{definition}

Throughout this paper, we will always assume the proper metric measure space $(X,\sfd,\mm)$ to satisfy $\MCP(K,N)$, for some $K,N \in \R$, and to be
essentially non-branching. This will imply in particular that $(X,\sfd)$ is geodesic.
\\It is not difficult to see that if  $(X,\sfd,\mm)$ verifies $\CD(K,N)$ then it also verifies  $\MCP(K,N)$, but the converse implication is false in general (for example the sub-Riemannian Heisenberg group satisfies $\MCP(K,N)$ for some suitable $K,N$, but does not satisfy $\CD(K',N')$ for any choice of $K',N'$).

It is worth recalling that if $(M,g)$ is a Riemannian manifold of dimension $n$ and 
$h \in C^{2}(M)$ with $h > 0$, then the m.m.s. $(M,\sfd_{g},h \, {\rm Vol}_{g})$ (where $\sfd_{g}$ and ${\rm Vol}_{g}$ denote the Riemannian distance and volume induced by $g$) verifies $\CD(K,N)$ with $N\geq n$ if and only if  (see  \cite[Theorem 1.7]{Sturm06II})
$$
\Ric_{g,h,N} \geq  K g, \qquad \Ric_{g,h,N} : =  \Ric_{g} - (N-n) \frac{\nabla_{g}^{2} h^{\frac{1}{N-n}}}{h^{\frac{1}{N-n}}}.  
$$
In particular if $N = n$ the generalized Ricci tensor $\Ric_{g,h,N}= \Ric_{g}$ makes sense only if $h$ is constant.

A variant of the $\CD$ condition, called  reduced curvature dimension condition and denoted by  $\CD^{*}(K,N)$ \cite{BS10},  asks for the same inequality \eqref{eq:defCD} of $\CD(K,N)$ but  the
coefficients $\tau_{K,N}^{(t)}(\sfd(\gamma_{0},\gamma_{1}))$ and $\tau_{K,N}^{(1-t)}(\sfd(\gamma_{0},\gamma_{1}))$ 
are replaced by $\sigma_{K,N}^{(t)}(\sfd(\gamma_{0},\gamma_{1}))$ and $\sigma_{K,N}^{(1-t)}(\sfd(\gamma_{0},\gamma_{1}))$, respectively.
For both definitions there is a local version and it was recently proved in \cite{CMi16} that  on  an essentially  non branching m.m.s. with $\mm(X)<\infty$, the  $\CD^{*}_{loc}(K,N)$, $\CD^{*}(K,N)$, $\CD_{loc}(K,N)$, $\CD(K,N)$ conditions are all equivalent, for all $K\in \R, N\in (1,\infty)$, 
via the $\CD^{1}(K,N)$ condition defined in terms of $L^{1}$-Optimal Transport problem. For more details we refer to \cite{CMi16}.

\subsection{Lipschitz functions and Laplacians in metric measure spaces}

We recall some  facts about  calculus in metric measure spaces following the approach of \cite{AGS11a, AGS11b, Gigli12} with the slight difference that here we confine the presentation to the (easier) setting of Lipschitz functions (instead of Sobolev), as in the paper we will work in such a framework.  For this subsection it is enough to assume the metric space $(X,\sfd)$ to be 
complete and separable and $\mm$ to be a non-negative locally finite measure.

A function $f:X\to \R$ is Lipschitz (or more precisely $L$-Lipschitz) if there exists a constant $L\geq 0$ such that 
$$
|f(x)-f(y)|\leq L \, \sfd(x,y), \quad \forall x,y\in X.
$$
The minimal constant $L\geq 0$ satisfying the last inequality is called \emph{global Lipschitz constant} of $f$ and is denoted with $\Lip(f)$. 
\\We denote by $\LIP(X)$ the space of real valued  Lipschitz functions on $(X,\sfd)$ and with $\LIP_{c}(\Omega)\subset \LIP(X)$ the sub-space of Lipschitz functions  of $X$ with compact support contained in the open subset $\Omega\subset X$.
\\ Given $f\in \LIP(X)$, the \emph{local Lipschitz constant} $|Df|(x_{0})$ of $f$ at $x_{0}\in X$ is defined as
\begin{equation*}
|Df|(x_{0}):=\limsup_{x\to x_{0}}  \frac{|f(x)-f(x_{0})|}{\sfd(x, x_{0})} \; \text{ if $x_{0}$ is not isolated}, \quad |Df|(x_{0})=0 \; \text{ otherwise}.
\end{equation*}
It is clear that $|Df|\leq \Lip(f)$ on all $X$.

\begin{definition}
Let $f,u \in \LIP(X)$. Define  the functions $D^{\pm} f (\nabla u) : X \to \R$  by 
$$
D^{+} f (\nabla u) : = \inf_{\ve > 0} \frac{ |D (u + \ve f)|^{2} - |Du|^{2} }{2\ve},
$$
while $D^{-} f (\nabla u)$ is obtained replacing $\inf_{\ve>0}$ with $\sup_{\ve <0}$.
\\In case $D^{+} f (\nabla u)= D^{-} f (\nabla u)$ $\mm$-a.e. for all  $f,u \in \LIP(X)$, then $(X,\sfd,\mm)$ is said (Lipschitz-)infinitesimally strictly convex and we set $Df(\nabla u):= D^{+} f (\nabla u)$; if moreover $Df (\nabla u)= Du (\nabla f)$ $\mm$-a.e. for all  $f,u \in \LIP(X)$, then $(X,\sfd,\mm)$ is said (Lipschitz)-infinitesimally Hilbertian.
\end{definition}

\begin{remark}
Given  $f,u \in \LIP(X)$, it is easily seen the map $\ve\mapsto  |D (u + \ve f)|^{2}$ is convex and real valued. Thus  
$$ 
\inf_{\ve > 0} \frac{ |D (u + \ve f)|^{2} - |Du|^{2} }{2\ve}=\liminf_{\ve \downarrow 0}  \frac{ |D (u + \ve f)|^{2} - |Du|^{2} }{2\ve},
$$ 
and  
$$ 
\sup_{\ve < 0} \frac{ |D (u + \ve f)|^{2} - |Du|^{2} }{2\ve}=\limsup_{\ve \uparrow 0}  \frac{ |D (u + \ve f)|^{2} - |Du|^{2} }{2\ve}.
$$
\end{remark}

\begin{remark}\label{rem:MCPDoubPoinc1}
The local doubling \& Poincar\'e condition will be satisfied throughout the paper as we will work in essentially non-branching $\MCP(K,N)$-spaces, with $K\in \R, N\in (1,\infty)$ thanks to 
\cite[Corollary p. 28]{VRN}. 
The standing assumptions in \cite{VRN} are 
$\MCP(K,N)$ and that the set 
$$
C_{x} : = \{ y \in X \colon \exists \gamma^{1}\neq \gamma^{2} \in \Geo(X), \, x = \gamma^{1}_{0}=\gamma^{2}_{0}, \, y = \gamma^{1}_{1}=\gamma^{2}_{1}  \},
$$
has $\mm$-measure zero for $\mm$-a.e. $x \in X$.

In an essentially non-branching $\MCP(K,N)$ space 
the previous property can be obtained as follows:
for any $r>0$ invoke \cite[Theorem 5.2]{CM16} 
with $\mu_{0} := \mm\llcorner_{B_{r}(x)}/\mm(B_{r}(x))$
and $\mu_{1}: = \delta_{x}$; existence of a map 
pushing $\mu_{0}$ to the unique element of $\Opt(\mu_{0},\mu_{1})$ yields that $\mm(C_{x}\cap B_{r}(x)) = 0$, actually for any $x \in X$.
 \end{remark}

\begin{remark} \label{rem:MCPDoubPoinc2}
The notions of infinitesimally strictly convex and infinitesimally Hilbertian have been introduced in \cite{AGS11b, Gigli12} in the setting of Sobolev spaces, with the local Lipschitz constant replaced by the minimal weak upper gradient.
The corresponding Lipschitz counterparts that we defined above have been already considered in \cite{MonAng} and coincide with the ones of  \cite{Gigli12} provided the space satisfies  doubling \& Poincar\'e locally, thanks to a deep result of Cheeger \cite{Cheeger97}.  
Thanks to Remark \ref{rem:MCPDoubPoinc1} we will avoid therefore the prefix ``Lipschitz'' in the corresponding notions, for simplicity of notation.  
\end{remark}

\begin{definition}[Test plans, \cite{AGS11a}]\label{D:testplan} Let $(X,\sfd,\mm)$ be a metric measure space as above and  $\pi \in \P(C([0,1],X))$. 
We say that $\pi$ is a test plan provided it has bounded compression, i.e. there exists $C > 0$ such that
$$
(\ee_{t})_{\sharp} \pi = \mu_{t} \leq C \mm, \qquad \forall \ t \in [0,1],
$$
and
$$
\int \int_{0}^{1} |\dot \gamma_{t}|^{2} dt \, \pi(d\gamma) < \infty.
$$
\end{definition}

\begin{definition}[Plans representing gradients]\label{D:plansgradient} 
Let $(X,\sfd,\mm)$ be a m.m.s., $g \in \LIP(X)$ and $\pi$ a test plan. 
We say that $\pi$ represents the gradient of $g$ provided it is a test plan and we have
$$
\liminf_{t \to 0} \int  \frac{g(\gamma_{t}) - g(\gamma_{0})  }{t} \,\pi(d\gamma) 
\geq  
\frac{1}{2} \int |D g|^{2}(\gamma_{0}) \,\pi(d\gamma) + \frac{1}{2} \limsup_{t\to 0} \frac{1}{t} \int \int_{0}^{t} |\dot \gamma_{s}|^{2} ds \,\pi(d\gamma)
$$
\end{definition}

\begin{theorem}  {\rm \cite[Lemma 4.5]{AGS11b}, \cite[Theorem 3.10]{Gigli12}.} \label{T:verticalhorizontal}
Let $f,u \in \LIP(X)$ and $\pi$ be any plan representing the gradient of $u$, then 
\begin{align*}
\int D^{+} f (\nabla u) \, (\ee_{0})_{\sharp} \pi 
&~ \geq \limsup_{t \to 0} \int \frac{f(\gamma_{t}) - f(\gamma_{0})  }{t} \, \pi(d\gamma) \\
&~ \geq \liminf_{t \to 0} \int \frac{f(\gamma_{t}) - f(\gamma_{0})  }{t} \, \pi(d\gamma) \\
&~ \geq \int D^{-} f (\nabla u) \, (\ee_{0})_{\sharp} \pi .
\end{align*}
In particular, if  $(X,\sfd,\mm)$ is infinitesimally strictly convex then 
$$
\int_{X} D f ( \nabla u) \, (\ee_{0})_{\sharp} \pi  = \lim_{t \to 0} \int  \frac{f(\gamma_{t}) - f(\gamma_{0})  }{t} \,\pi(d\gamma).
$$
\end{theorem}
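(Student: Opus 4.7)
My plan is a first-order perturbation argument: apply a Young/chain-rule inequality to $h := u + \eps f$ for $\eps\in\R\setminus\{0\}$, then use the hypothesis that $\pi$ represents the gradient of $u$ to isolate the directional derivative of $f$ along the flow of $u$. For any Lipschitz $h$ and absolutely continuous curve $\gamma$ one has
$$h(\gamma_t)-h(\gamma_0) \leq \int_0^t|Dh|(\gamma_s)|\dot\gamma_s|\,ds \leq \tfrac{1}{2}\int_0^t|Dh|^2(\gamma_s)\,ds + \tfrac{1}{2}\int_0^t|\dot\gamma_s|^2\,ds.$$
Setting $h=u+\eps f$, dividing by $t>0$, integrating against $\pi$ and using Fubini yields
$$\int \tfrac{u(\gamma_t)-u(\gamma_0)}{t}\,d\pi \;+\; \eps\!\int \tfrac{f(\gamma_t)-f(\gamma_0)}{t}\,d\pi \;\leq\; \tfrac{1}{2t}\!\int_0^t\!\!\int|D(u+\eps f)|^2\,d(\ee_s)_\sharp\pi\,ds \;+\; \tfrac{1}{2t}\!\int\!\!\int_0^t|\dot\gamma_s|^2\,ds\,d\pi.$$

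Next I would pass to $\limsup_{t\to 0}$. The representing-gradient hypothesis bounds $\liminf_{t\to 0}\int(u(\gamma_t)-u(\gamma_0))/t\,d\pi$ from below by $\tfrac{1}{2}\int|Du|^2\,d(\ee_0)_\sharp\pi + \tfrac{1}{2}\limsup_{t\to 0}\tfrac{1}{t}\int\int_0^t|\dot\gamma_s|^2\,ds\,d\pi$, which, after rearrangement, cancels precisely the kinetic energy and the $|Du|^2$ part of the right-hand side. The remaining term is controlled by
$$\limsup_{t\to 0}\tfrac{1}{t}\int_0^t\!\int|D(u+\eps f)|^2\,d(\ee_s)_\sharp\pi\,ds \;\leq\; \int|D(u+\eps f)|^2\,d(\ee_0)_\sharp\pi,$$
using upper semicontinuity of the local slope (from Lipschitzianity of $u+\eps f$), narrow continuity of $s\mapsto(\ee_s)_\sharp\pi$, and bounded compression $(\ee_s)_\sharp\pi\leq C\mm$. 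Combining, for every $\eps>0$,
$$\eps\cdot\limsup_{t\to 0}\int \tfrac{f(\gamma_t)-f(\gamma_0)}{t}\,d\pi \;\leq\; \int \tfrac{|D(u+\eps f)|^2 - |Du|^2}{2}\,d(\ee_0)_\sharp\pi.$$
Dividing by $\eps$ and passing $\inf_{\eps>0}$ inside the integral via monotone convergence (the difference quotient is monotone in $\eps>0$ by convexity of $\eps\mapsto|D(u+\eps f)|^2$) delivers the upper bound by $\int D^+f(\nabla u)\,d(\ee_0)_\sharp\pi$. The matching lower bound by $\int D^-f(\nabla u)\,d(\ee_0)_\sharp\pi$ on $\liminf_{t\to 0}\int(f(\gamma_t)-f(\gamma_0))/t\,d\pi$ is obtained by running the same computation with $\eps<0$: division by a negative number reverses the inequality and converts $\limsup$ into $\liminf$, while $\sup_{\eps<0}$ recovers $D^-f(\nabla u)$ by the same monotonicity.

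The main technical obstacle is the semicontinuity step above, i.e.\ passing $\limsup_{t\to 0}$ inside the time average $\frac{1}{t}\int_0^t\int|D(u+\eps f)|^2\,d(\ee_s)_\sharp\pi\,ds$. The integrand is only upper semicontinuous (not continuous), and the measures $(\ee_s)_\sharp\pi$ themselves vary with $s$, so the argument must combine narrow convergence $(\ee_s)_\sharp\pi\to(\ee_0)_\sharp\pi$ as $s\to 0$ (from continuity of $s\mapsto\gamma_s$ for each fixed $\gamma$ together with dominated convergence) with the uniform $L^\infty$-bound coming from bounded compression, and handle the u.s.c.\ integrand via standard approximation by continuous envelopes.
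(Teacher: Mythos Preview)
The paper does not prove this statement; it is quoted from \cite{AGS11b} and \cite{Gigli12}. Your perturbation strategy---apply the upper-gradient inequality to $u+\eps f$, subtract the representing-gradient hypothesis to cancel the kinetic term and the $|Du|^2$ term, divide by $\eps$, then optimise in $\eps$ using convexity of $\eps\mapsto|D(u+\eps f)|^2$---is exactly the argument in those references, and your $\limsup/\liminf$ bookkeeping is correct.

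The gap is in your justification of the semicontinuity step. You assert that the local slope $|D(u+\eps f)|$ is upper semicontinuous ``from Lipschitzianity of $u+\eps f$''; this is false. On $\R$, take $h(x)=\int_0^x g$ with $g$ the indicator of $\bigcup_{k\ge 1}[1/(2k+1),1/(2k)]$: then $|Dh|$ equals $1$ on the interior of each such interval, $0$ on the complementary intervals, but $|Dh|(0)=\limsup_{y\to 0}h(y)/y=1/2$, so $\limsup_{x\to 0}|Dh|(x)=1>|Dh|(0)$. The step can nevertheless be rescued using the ingredients you already list, combined differently: bounded compression gives $(\ee_s)_\sharp\pi=\rho_s\mm$ with $0\le\rho_s\le C$ and $\|\rho_s\|_{L^1(\mm)}=1$; tightness of $\pi$ on the Polish space $C([0,1],X)$ makes the family $\{(\ee_s)_\sharp\pi\}_s$ tight, hence $\{\rho_s\}$ is uniformly integrable; together with narrow convergence $(\ee_s)_\sharp\pi\to(\ee_0)_\sharp\pi$ this forces $\rho_s\rightharpoonup\rho_0$ weakly in $L^1(\mm)$ (Dunford--Pettis). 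Since $|D(u+\eps f)|^2\in L^\infty(\mm)$, one obtains $\int|D(u+\eps f)|^2\,d(\ee_s)_\sharp\pi\to\int|D(u+\eps f)|^2\,d(\ee_0)_\sharp\pi$, which is stronger than the $\limsup$ bound you need. The point is to exploit convergence of the \emph{measures} in duality with $L^\infty$, via the uniform density bound, rather than any regularity of the integrand.
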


In order to define the Laplacian, let us recall the definition of Radon functional. For simplicity, from now on, we will assume $(X,\sfd)$ to be locally compact (this will be satisfied throughout the paper as we will work in the setting of $\MCP(K,N)$ spaces which are, even more strongly, locally doubling).

\begin{definition}\label{D:radonfunctional}
\begin{itemize}
\item 
A \emph{Radon functional  over an open set 
$\Omega \subset X$} is a  linear functional $T:\LIP_{c}(\Omega)\to \R$ such that  for every compact subset $W\subset \Omega$ there exists a constant $C_{W}\geq 0$ so that
$$
|T(f)|\leq C_{W} \max_{W} |f|, \quad \text{for all } f\in \LIP_{c}(\Omega) \text{ with } \supp(f)\subset W.
$$ 
\item A \emph{non-negative Radon measure over an open set 
$\Omega \subset X$} is a  Borel, non-negative measure $\mu : \B(\Omega) \to [0,+\infty]$  that is locally finite, i.e. for any $x \in \Omega$ there exists a neighbourhood $U_{x}$ of finite $\mu$-measure: $\mu(U_{x})<+\infty$. A non-negative Radon measure is said to be \emph{finite} if $\mu(X)<\infty$. 
\item A \emph{signed Radon measure over an open set 
$\Omega \subset X$} is a  Borel measure $\mu : \B(\Omega) \to \R\cup\{\pm \infty\}$ that can be written as $\mu=\mu^{+}-\mu^{-}$ with $\mu^{+},\mu^{-}$ non-negative Radon measures where at least one of the two is finite.
\\A signed Radon measure is said to be  \emph{finite} if, denoting $\|\mu\|:=\mu^{+}+\mu^{-}$ the total variation measure, it holds   $\|\mu\|(X)<\infty$.
\end{itemize}
\end{definition}

Note that, by the classical Riesz-Markov-Kakutani Representation Theorem,  for every \emph{non-negative} Radon functional $T$ over $X$ there exists a non-negative Radon measure $\mu_{T}$ representing $T$ via integration, i.e. 
$$T(f)=\int_{X} f(x)\, \mu_{T}(dx), \quad \forall f\in \LIP_{c}(X). $$ In particular, every Radon functional can be written as the sum of two Radon measures (i.e. the positive and negative parts, respectively).
\\Let us stress that the non-negativity assumption is crucial. Indeed a general Radon functional may not be representable by a measure, for example consider  $X =  \R, \Omega=\R\setminus \set{0}$ and $T : \LIP_{c}(\Omega) \to \R$ defined by
$$
T : \LIP_{c}(\Omega) \to \R, \qquad
T(f) : = \int_{\Omega} \frac{f(x)}{x}dx.
$$
It is straightforward to see that $T$ is a real valued Radon functional over $\Omega$ but cannot be represented by a signed Radon measure over $\Omega$, the point being that $(-\infty, 0)$ would have ``measure'' $-\infty$ and   $(0,+\infty)$ would have ``measure'' $+\infty$ thus failing the additivity axiom.  An expert reader may recognise that $T(f)$ is (up to a multiplicative constant) the Hilbert transform of $f$ evaluated at $0$.

\begin{definition}\label{D:Laplace}
Let $\Omega\subset X$ be an open subset and let $u \in \LIP(X) $. We say that $u$ is in the domain of the Laplacian of $\Omega$, and write $u \in D(\bold{\Delta},\Omega)$, provided  there exists a Radon functional $T$ over $\Omega$ 
such that for any $f \in \LIP_{c}(\Omega)$ it holds 
\begin{equation}\label{eq:defTfLap}
 \int_{X} D^{-}f (\nabla u) \,\mm \leq - T(f)  \leq  \int_{X} D^{+}f (\nabla u) \,\mm.
\end{equation}
In this case we write $T \in \bold{\Delta} u \llcorner_{\Omega}$. In case $T$ can be represented by a signed measure $\mu$ over $\Omega$, with a slight abuse of notation we will identify $T$ with $\mu$ and write $\mu \in \bold{\Delta} u \llcorner_{\Omega}$.
\end{definition}

Let us stress that in general, there is not a unique operator $T$ satisfying \eqref{eq:defTfLap}; in other words the Laplacian can be multivalued.

\bigskip
\subsection{Synthetic Ricci lower bounds over the real line}\label{Ss:MCP1d}

Given $K \in \Real$ and $N \in (1,\infty)$, a non-negative Borel function $h$ defined on an interval $I \subset \Real$ is called a $\MCP(K,N)$ density on $I$ if for all $x_0,x_1 \in I$ and $t \in [0,1]$:
\begin{equation}\label{E:MCPdef}
 h(t x_1 + (1-t) x_0) \geq \sigma^{(1-t)}_{K,N-1}(\abs{x_1-x_0})^{N-1} h(x_0).
\end{equation}
\noindent

Even though it is a folklore result, 
we will include a proof of the following fact

\begin{lemma}
A one-dimensional metric measure space, that for simplicity we directly identify with $(I, |\cdot|, h \L^{1})$, verifies $\MCP(K,N)$ 
if and only there exists $\tilde h$, $\MCP(K,N)$ density, such that 
$h = \tilde h$, $\mathcal{L}^{1}$-a.e. on $I$.
\end{lemma}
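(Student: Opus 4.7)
The plan is to use the density characterization of $\MCP(K,N)$ stated in \eqref{E:MCP-density} (equivalent to Definition~\ref{def:MCP} on essentially non-branching spaces, which applies here since any interval of $\R$ is non-branching). Taking $\mu_0 = \rho_0\,\mm \ll \mm$ and $\mu_1 = \delta_o$ with $o \in I$, the unique dynamical optimal plan $\nu$ is supported on the affine geodesics $\gamma^x_t := (1-t)x + to$, and the change of variable $\mu_t = (T_t)_\#\mu_0$ (with $T_t(x) = (1-t)x + to$ having constant Jacobian $1-t$) yields
\[
\rho_t(\gamma^x_t) \,=\, \frac{\rho_0(x)\, h(x)}{(1-t)\, h(\gamma^x_t)}, \qquad t \in [0,1),
\]
for the density of $\mu_t$ with respect to $\mm = h\L^1$. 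The inequality \eqref{E:MCP-density}, raised to the $N$-th power and combined with $\tau^{(1-t)}_{K,N}(\theta)^N = (1-t)\,\sigma^{(1-t)}_{K,N-1}(\theta)^{N-1}$, then reduces exactly to the pointwise bound
\[
h\bigl((1-t)x+to\bigr) \,\geq\, \sigma^{(1-t)}_{K,N-1}(|x-o|)^{N-1}\,h(x)
\]
for $\mu_0$-a.e.\ $x$ and every $t \in [0,1)$, which is precisely the defining inequality of an $\MCP(K,N)$ density evaluated along the affine segment from $x$ to $o$.

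For the ``if'' direction, suppose $h = \tilde h$ $\L^1$-a.e.\ for some pointwise $\MCP(K,N)$ density $\tilde h$. Since the pointwise inequality above holds for $\tilde h$ at \emph{every} $x, o \in I$ and $t \in [0,1]$ by hypothesis, and the exceptional set $\{h \neq \tilde h\}$ is $\L^1$-null and hence $\mu_0$-null (as $\mu_0 \ll \mm \ll \L^1$), it also holds for $h$ for $\mu_0$-a.e.\ $x$. By the preceding reduction, the density inequality \eqref{E:MCP-density} is satisfied, giving $\MCP(K,N)$.

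For the ``only if'' direction, I apply Definition~\ref{def:MCP} (equivalently \eqref{E:MCP-density}) with $\mu_0 = \mm\llcorner_A / \mm(A)$ for open subintervals $A \Subset I$ and $\mu_1 = \delta_o$, $o \in I$. Running the previous computation in reverse shows that for each such pair $(A, o)$ there is a set $E_{A,o} \subset A$ of full $\L^1$-measure on which the pointwise inequality holds for every $t \in [0,1)$. Exhausting $I$ by countably many such $A$'s and letting $o$ range over a countable dense subset $D \subset I$, a standard countable intersection yields a single $E \subset I$ of full $\L^1$-measure on which the pointwise inequality holds for every $o \in D$ and every $t \in [0,1)$.

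The main obstacle is then to upgrade this ``almost-everywhere on a countable dense data set'' statement to the \emph{fully pointwise} condition \eqref{E:MCPdef} required of an $\MCP(K,N)$ density. My plan is to define the canonical representative
\[
\tilde h(y) \,:=\, \limsup_{E \ni x \to y}\, h(x), \qquad y \in I,
\]
and to combine the bilateral lower bounds obtained by applying the pointwise inequality with either endpoint of a segment as target $o$ with the local integrability $h \in L^1_{\mathrm{loc}}(I)$ to show that the restriction of $h$ to $E$ is locally bounded on the interior $I^\circ$; together with Lebesgue differentiation, this yields $\tilde h = h$ $\L^1$-a.e.\ on $I$. Finally, for arbitrary $x_0, x_1 \in I$ and $t \in [0,1]$ I approximate by sequences $(x_n, o_n, t_n) \in E \times D \times [0,1)$ converging to $(x_0, x_1, t)$ and pass to the limit, using joint continuity of $(\xi_0, \xi_1, t) \mapsto \sigma^{(1-t)}_{K,N-1}(|\xi_1 - \xi_0|)^{N-1}$ together with the $\limsup$-definition of $\tilde h$ to recover the pointwise $\MCP(K,N)$ density condition for $\tilde h$ everywhere on $I$.
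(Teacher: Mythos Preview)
Your reduction of \eqref{E:MCP-density} to the pointwise density inequality via the change-of-variable computation is exactly the paper's argument (the paper uses $\mu_0=\L^1\llcorner_A/\L^1(A)$ rather than $\mm\llcorner_A/\mm(A)$, which makes the formula for $\rho_t$ slightly cleaner, but this is cosmetic). The paper then simply asserts that the a.e.\ inequality ``implies the claim'' without constructing $\tilde h$; you go further and try to build the pointwise representative explicitly, which is commendable but introduces two small gaps.

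First, local boundedness of $h|_E$ together with Lebesgue differentiation does \emph{not} give $\tilde h = h$ a.e.\ when $\tilde h(y):=\limsup_{E\ni x\to y}h(x)$: even at a Lebesgue point $y\in E$, a density-zero sequence in $E$ can have $h$-values far from $h(y)$. What you actually need---and what the bilateral bounds you mention do yield---is \emph{continuity} of $h|_E$ in the interior of $I$: for $x,y\in E$ with $o_1<x<y<o_2$, $o_1,o_2\in D$, the inequality with target $o_2$ gives $h(y)\geq \sigma\, h(x)$ and the one with target $o_1$ gives $h(x)\geq \sigma'\, h(y)$, with $\sigma,\sigma'\to 1$ as $|x-y|\to 0$. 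Then $\tilde h$ is the unique continuous extension of $h|_E$ and the equality a.e.\ is immediate.

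Second, in your final limit you need $\tilde h(z)\geq \limsup_n h(z_n)$ for $z_n:=(1-t_n)x_n+t_n o_n\to z$, and your $\limsup$-definition of $\tilde h$ only delivers this when $z_n\in E$. This is easy to arrange: for each fixed $x_n\in E$, $o_n\in D$ the inequality holds for \emph{all} $t\in[0,1)$, and since $E$ has full measure the set of $t$ with $(1-t)x_n+to_n\in E$ has full measure in $[0,1)$; so perturb $t_n$ to force $z_n\in E$. With these two fixes your argument goes through; the paper simply omits this upgrade step as folklore.
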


\begin{proof}
Assume $h$ is an $\MCP(K,N)$ density on $I$.  
From \cite[Proposition 9.1]{CMi16}, point $iv)$, it will be enough to prove 
\eqref{E:MCP-density} under the additional assumption that 
$\mu_{0} = \frac{1}{\mm(A)}\chi_{A}\mm$ for some $A \subset I$ such that 
$0< \mm(A) <\infty$, with $\mm = h \mathcal{L}^{1}$.

Without any loss in generality we assume $o = 0 \in I$.
Given then any $A \subset I$ as above, the unique $W_{2}$ geodesic $(\mu_{t})_{t\in [0,1]}$ connecting $\mu_{0}$ to $\delta_{0}$ is 
$$
\mu_{t} = (f_{t})_{\sharp} \mu_{0}, \qquad f_{t}(x) = (1-t)x.
$$
Then using change of variable formula, 
$$
\mu_{t} = \rho_{t} \mm, \qquad \rho_{t}(x) = \frac{h(x/(1-t))}{h(x)} 
\frac{\chi_{A}(x/(1-t))}{(1-t)\mm(A)},
$$
implying that 
$$
\left(\frac{\rho_{t}(f_{t}(x))}{\rho_{0}(x)}\right)^{-1/N} = \left(\frac{(1-t)h((1-t)x)}{h(x)}\right)^{1/N} 
\geq (1-t)^{1/N}\sigma_{K,N-1}^{(1-t)}(|x|)^{(N-1)/N} = \tau_{K,N}^{(1-t)}(|x|),
$$
proving \eqref{E:MCP-density}. 
In order to prove the converse implication, we fix $x_{1} = 0 = o$ 
and take 
$$
\mu_{0} : = \frac{1}{\mathcal{L}^{1}(A)}\mathcal{L}^{1}\llcorner_{A}, 
\qquad A\subset I,  0 < \mathcal{L}^{1}(A)< \infty.
$$
Then $\mu_{t} : = \frac{1}{\mathcal{L}^{1}((1-t)A)}
\mathcal{L}^{1}\llcorner_{(1-t)A}$  is the unique $W_{2}$-geodesic connecting 
$\mu_{0}$ to $\delta_{o}$. Hence \eqref{E:MCPdef} can be applied to 
$$
\mu_{t} = \rho_{t} \mm, 
\qquad \rho_{t}(x) = \frac{1}{(1-t)\mathcal{L}^{1}(A)} \frac{\chi_{(1-t)A}(x)}{h(x)}
$$
Then \eqref{E:MCPdef} along $(\mu_{t})_{}$ implies the claim.
\end{proof}

\smallskip

The estimate \eqref{E:MCPdef} implies several known properties that we collect in what follows. 
To write them in a unified way we 
define for $\kappa \in \erre$ the function $s_{\kappa} : [0,+\infty) \to \erre$ (on $[0,\pi/\sqrt{\kappa})$ if $\kappa>0$) 
\begin{equation}\label{E:sk}
s_{\kappa}(\theta):= 
\begin{cases}
(1/\sqrt{\kappa})\sin(\sqrt{\kappa}\theta) & {\rm if}\ \kappa>0, \crcr
\theta & {\rm if}\ \kappa=0, \crcr
(1/\sqrt{-\kappa})\sinh(\sqrt{-\kappa}\theta) &{\rm if}\ \kappa<0.
\end{cases}
\end{equation}
For the moment we confine ourselves to the case $I = (a,b)$ with $a,b \in \R$; hence
\eqref{E:MCPdef} implies
\begin{equation}\label{E:MCPdef2}
\left( \frac{s_{K/(N-1)}(b - x_{1}  )}{s_{K/(N-1)}(b - x_{0}  )} \right)^{N-1} 
\leq \frac{h(x_{1} ) }{h (x_{0})} \leq 
\left( \frac{s_{K/(N-1)}( x_{1} -a  )}{s_{K/(N-1)}( x_{0} -a  )} \right)^{N-1}, 
\end{equation}
for $x_{0} \leq x_{1}$ (see 
the proof of Lemma \ref{L:rigiditMCP} for the easier estimate in the case $K = 0$).
Hence denoting with $D = b - a$ the length of $I$,  for any $\ve >0$ it follows that
\begin{equation}\label{E:bounds}
\sup \left\{ \frac{h (x_{1})}{h (x_{0})} \colon x_{0},x_{1} \in [a+ \ve, b - \ve ]\right\} \leq C_{\ve},
\end{equation}
where $C_{\ve}$ only depends on $K,N$, provided $ 2 \ve \leq D \leq \frac{1}{\ve}$.

\smallskip
Moreover \eqref{E:MCPdef2} implies that $h$ is locally Lipschitz in the interior of $I$ and an
easy manipulation of it (cf. \cite[Lemma A.9]{CMi16}) yields the following bound on the derivative of $h$: 
\begin{equation}\label{E:logder}
- (N-1)  \frac{s_{K/(N-1)}' (b-x)}{s_{K/(N-1)}(b-x)}  \leq (\log h)'(x) \leq (N-1) \frac{s_{K/(N-1)}' (x-a)}{s_{K/(N-1)}(x-a)},
\end{equation}
if $x \in (a,b)$ is a point of differentiability of $h$. 
Finally if $k > 0$, then $b-a \leq \pi \sqrt{(N-1)/K}$.

\begin{remark}\label{R:continuityboundary}
The estimate \eqref{E:MCPdef2} also implies that a $\MCP(K,N)$ density $h : (a,b) \to (0,\infty)$, $a,b\in \R$, can always  be extended 
to a continuous function on the closed interval $[a,b]$.
Notice indeed that the map
$$
(a,b) \ni x \mapsto \frac{h (x ) }{(s_{K/(N-1)}(b-x))^{N-1}},
$$
is non-decreasing and strictly positive. Hence the following limit exists and is a real number
$$
\lim_{x \to a}\frac{h (x ) }{(s_{K/(N-1)}(b-x))^{N-1}}.
$$
Since $b - a > 0$, we obtain that also the 
limit $\lim_{x \to a} h (x )$
exists,
for every $K\leq 0$ and for $K > 0$ provided $b-a \neq \pi \sqrt{(N-1)/K}$. 
The case $K> 0$ and 
$b-a = \pi \sqrt{(N-1)/K}$ follows by rigidity: \eqref{E:MCPdef2} implies that 
$$
\frac{\sin(\pi -x_{1}\sqrt{K/(N-1)} )  }
{\sin(\pi -x_{0}\sqrt{K/(N-1)} )  } 
\leq \frac{h(x_{1})}{h(x_{0})} 
\leq 
\frac{\sin(x_{1}\sqrt{K/(N-1)} )  }
{\sin(x_{0}\sqrt{K/(N-1)} )  }, 
$$
showing that $h(x)$, up to a renormalisation constant,
coincide with $\sin(x\sqrt{K/(N-1)})$.
To show that $h$ can also be extended to a continuous function at $b$, one can argue as above starting from the non-increasing property of the following function 
$$
(a,b) \ni x \mapsto \frac{h (x ) }{(s_{K/(N-1)}(x-a))^{N-1}},
$$
following again from \eqref{E:MCPdef2}.
\end{remark}

The next lemma was stated and proved in  \cite[Lemma A.8]{CMi16}  under the $\CD$ condition; as the proof only uses $\MCP(K,N)$ we report it in this more general version.

\begin{lemma} \label{lem:apriori0}
Let $h$ denote a $\MCP(K,N)$ density on a finite interval $(a,b)$, $N \in (1,\infty)$, which integrates to $1$. Then:
\begin{equation}\label{eq:suphdiam}
\sup_{x \in (a,b)} h(x) \leq \frac{1}{b-a} \begin{cases} N & K \geq 0  \\ (\int_0^1 (\sigma^{(t)}_{K,N-1}(b-a))^{N-1} dt)^{-1} & K < 0 \end{cases} .
\end{equation}
In particular, for fixed $K$ and $N$, $h$ is uniformly bounded from above as long as $b-a$ is uniformly bounded away from $0$ (and from above if $K < 0$).
\end{lemma}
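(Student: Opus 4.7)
The plan is to exploit the $\MCP(K,N)$ density inequality \eqref{E:MCPdef} by placing the reference point $x_0$ at an arbitrary interior $\bar x \in (a,b)$; this converts the distortion estimate into a \emph{lower} bound on $h$ throughout $(a,b)$ in terms of $h(\bar x)$, which, when integrated against the normalization $\int_a^b h\,dy = 1$, yields the desired upper bound on $h(\bar x)$ and hence on $\sup h$.

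Concretely, I apply \eqref{E:MCPdef} first with $x_0 = \bar x$ and $x_1 = a$: the point $(1-t)\bar x + ta$ traces $[a,\bar x]$ as $t$ runs over $[0,1]$, so the change of variable $y = (1-t)\bar x + ta$ together with $s=1-t$ gives
\begin{equation*}
\int_a^{\bar x} h(y)\,dy = (\bar x - a)\int_0^1 h((1-t)\bar x + ta)\,dt \geq h(\bar x)\,(\bar x - a)\int_0^1 \sigma_{K,N-1}^{(s)}(\bar x - a)^{N-1}\,ds.
\end{equation*}
The same reasoning with $x_1 = b$ produces the analogous estimate on $[\bar x, b]$. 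Summing and invoking $\int_a^b h\,dy = 1$ yields, for every $\bar x \in (a,b)$,
\begin{equation*}
h(\bar x) \leq \frac{1}{A(\bar x - a) + A(b - \bar x)}, \qquad A(\theta) := \theta \int_0^1 \sigma_{K,N-1}^{(s)}(\theta)^{N-1}\,ds.
\end{equation*}

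For $K \geq 0$ (and $\theta$ in the admissible range), concavity of $\sin$ on $[0,\pi]$ gives $\sigma_{K,N-1}^{(s)}(\theta) \geq s$, with equality when $K=0$; hence $A(\theta) \geq \theta/N$ and $A(\bar x-a) + A(b-\bar x) \geq (b-a)/N$, producing the claimed bound $N/(b-a)$.

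For $K<0$ a coarser lower bound like $\sigma_{K,N-1}^{(s)}(\theta) \geq s$ would be too weak, and this is the \emph{main subtlety} of the proof: one has to concentrate all distortion at the largest argument $b-a$. The key observation is that, writing $c = \sqrt{-K/(N-1)}$, the map $\theta \mapsto \sigma_{K,N-1}^{(s)}(\theta) = \sinh(s\theta c)/\sinh(\theta c)$ is \emph{non-increasing} in $\theta$; this follows from the concavity of $\tanh$ on $[0,\infty)$, which gives $s\tanh(\theta c) \leq \tanh(s\theta c)$, i.e.\ a non-positive derivative. Consequently $\sigma_{K,N-1}^{(s)}(\bar x - a)$ and $\sigma_{K,N-1}^{(s)}(b - \bar x)$ are both at least $\sigma_{K,N-1}^{(s)}(b-a)$, and substituting into $A(\bar x - a) + A(b-\bar x)$ gives
\begin{equation*}
A(\bar x - a) + A(b-\bar x) \geq \bigl[(\bar x - a) + (b - \bar x)\bigr]\int_0^1 \sigma_{K,N-1}^{(s)}(b-a)^{N-1}\,ds = A(b-a),
\end{equation*}
which is exactly the stated bound after taking reciprocals. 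Continuity of $h$ on $(a,b)$ (by Remark \ref{R:continuityboundary}) allows one to pass the pointwise estimate to $\sup_{x\in(a,b)} h(x)$ with no further work.
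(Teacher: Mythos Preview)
Your proof is correct and follows the natural approach: anchor the $\MCP$ inequality at an arbitrary point $\bar x$, integrate the resulting lower bound over both subintervals $[a,\bar x]$ and $[\bar x,b]$, and use the normalization to cap $h(\bar x)$. The paper does not reproduce a proof here but defers to \cite[Lemma A.8]{CMi16}; your argument is essentially the standard one, so there is nothing to contrast. One minor remark: the appeal to Remark~\ref{R:continuityboundary} at the end is unnecessary, since your pointwise bound already holds for every $\bar x\in(a,b)$ and therefore for the supremum directly.
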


\medskip

From the previous auxiliary results we obtain the following lemma that will be used throughout the paper.
\begin{lemma}\label{L:integrabilityh}
Let $h$ denote a $\MCP(K,N)$ density on a finite interval $(a,b)$, $N \in (1,\infty)$, which integrates to $1$. Then:
\begin{equation}\label{eq:EstInth'}
\int_{(a,b)}|h'(x)| \,dx \leq \frac{1}{b-a} \, C^{(K,N)}_{(b-a)},
\end{equation}
for some $C^{(K,N)}_{(b-a)}>0$ with the property that, for fixed  $K\in \R$ and $N\in (1,\infty)$, it holds 
\begin{equation}\label{eq:defLambda}
\sup_{r\in (0,R)}C^{(K,N)}_{r}<\infty, \quad  \text{for every }R>0, \quad \lim_{r\uparrow \infty}C^{(K,N)}_{r}=\infty.
\end{equation}
\end{lemma}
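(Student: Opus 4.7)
My plan is to estimate $\int_{(a,b)}|h'(x)|\,dx$ by the total variation of $h$, which I bound via a product decomposition that exploits the two monotonicity properties contained in \eqref{E:MCPdef2}. By Remark \ref{R:continuityboundary} I may work with the continuous representative of $h$ on $[a,b]$; the bound \eqref{E:logder} makes $h$ locally Lipschitz on $(a,b)$, so $h'\in L^{1}_{\mathrm{loc}}$ exists a.e.\ and $\int_{(a,b)}|h'(x)|\,dx\leq \mathrm{TV}(h;[a,b])$. Setting $\psi(x):=s_{K/(N-1)}(b-x)^{N-1}$ and $\phi(x):=s_{K/(N-1)}(x-a)^{N-1}$, the estimate \eqref{E:MCPdef2} says exactly that $F:=h/\psi$ is non-decreasing and $G:=h/\phi$ is non-increasing on $(a,b)$. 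Neither is globally bounded (each can diverge at the endpoint where its denominator vanishes), but with $c:=(a+b)/2$ one has $F\leq F(c)$ on $[a,c]$ and $G\leq G(c)$ on $[c,b]$.

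I would then split $\mathrm{TV}(h;[a,b])=\mathrm{TV}(h;[a,c])+\mathrm{TV}(h;[c,b])$ and use the representation $h=F\psi$ on $[a,c]$ and $h=G\phi$ on $[c,b]$. On $[a,c]$, $F$ is bounded and non-decreasing with $\mathrm{TV}(F;[a,c])=F(c)-F(a)\leq F(c)$, and $\psi$ is smooth, so $\sup_{[a,c]}\psi$ and $\mathrm{TV}(\psi;[a,c])$ are explicitly computable in terms of $K,N,b-a$. Applying the standard BV product rule
$$
\mathrm{TV}(F\psi;[a,c])\leq \sup_{[a,c]}\psi\cdot \mathrm{TV}(F;[a,c])+\sup_{[a,c]}F\cdot \mathrm{TV}(\psi;[a,c]),
$$
and using $\sup_{[a,c]}F=F(c)$, yields $\mathrm{TV}(h;[a,c])\leq F(c)\bigl(\sup_{[a,c]}\psi+\mathrm{TV}(\psi;[a,c])\bigr)$. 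A symmetric argument on $[c,b]$ gives the analogue with $G$ and $\phi$. Since $F(c)=G(c)=h(c)/s_{K/(N-1)}((b-a)/2)^{N-1}$ and Lemma \ref{lem:apriori0} bounds $h(c)\leq \sup h\leq C_{1}(K,N,b-a)/(b-a)$, one concludes $\int_{(a,b)}|h'(x)|\,dx\leq C^{(K,N)}_{(b-a)}/(b-a)$ with $C^{(K,N)}_{r}$ an explicit function of $r$ built from $C_{1}(K,N,r)$, the $\sup$ and total variation of the smooth functions $\psi,\phi$ on the corresponding half-interval, and the factor $1/s_{K/(N-1)}(r/2)^{N-1}$. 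The required qualitative properties \eqref{eq:defLambda} then reduce to the analogous behaviour of $C_{1}$ and to the elementary analysis of $s_{K/(N-1)}$ (which grows exponentially in $r$ when $K<0$); if needed, one enlarges $C^{(K,N)}_{r}$ harmlessly so as to guarantee divergence at $r=+\infty$ without compromising local boundedness.

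The main obstacle is precisely that neither $F$ nor $G$ is bounded on the whole interval, so a direct application of the logarithmic-derivative estimate \eqref{E:logder} to bound $|h'|$ gives the integrand $h(x)(N-1)s'/s$, which fails to be integrable at the endpoints even when $h$ is constant. Splitting at the midpoint $c$, and pairing each half with the representation ($h=F\psi$ or $h=G\phi$) that keeps the monotone factor bounded on that half, is exactly what converts the product-rule bound into a finite quantity.
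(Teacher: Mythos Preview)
Your argument is correct and rests on the same core idea as the paper: split at the midpoint $c=(a+b)/2$ and, on each half, use the one-sided monotonicity from \eqref{E:MCPdef2} that stays bounded there, then close with Lemma~\ref{lem:apriori0}. The paper implements this by writing $w_1=h'+(N-1)\tfrac{s'(b-x)}{s(b-x)}h\geq 0$ and $w_2=h'-(N-1)\tfrac{s'(x-a)}{s(x-a)}h\leq 0$ and bounding $\int|h'|\leq \int_{[a,c]} w_1+\int_{[a,c]}|w_1-h'|-\int_{[c,b]}w_2+\int_{[c,b]}|w_2-h'|\leq C\|h\|_{L^\infty}$; your version is the BV repackaging of exactly this, since $w_1=F'\psi$ and $w_2=G'\phi$. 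The one genuine gain of your formulation is that the BV product rule is insensitive to the sign of $\psi'$ and $\phi'$, so you treat all $K$ uniformly, whereas the paper has to split the case $K>0$ further (into $D\leq \pi/2$ and $D>\pi/2$) precisely because $s'_{K/(N-1)}$ changes sign and the pointwise bound $|w_i-h'|\leq (N-1)\tfrac{s'}{s}h$ used for $K\leq 0$ is no longer available as stated.
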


\begin{proof}

\textbf{Case $K\leq 0$}.
The two inequalities in \eqref{E:logder} give for each point $x \in (a,b)$ of differentiability of $h$ 
\begin{equation}\label{eq:h'cot}
w_{1}:=h'(x) + (N-1) \frac{s_{K/(N-1)}' (b-x)}{s_{K/(N-1)}(b-x)}   h(x) \geq 0, \quad w_{2}:=h'(x) - (N-1) \frac{s_{K/(N-1)}' (x-a)}{s_{K/(N-1)}(x-a)} h(x) \leq 0.
\end{equation}
Thus, we can write
\begin{align}
\int_{[a,b]}|h'| \,dx& \leq \int_{\left[a,a+\frac{b-a}{2} \right]}w_{1} \,dx+ \int_{\left[a,a+\frac{b-a}{2}\right]}|w_{1}-h'| \,dx \nonumber\\
& \quad -  \int_{\left[a+\frac{b-a}{2}, b\right]}w_{2} \,dx+ \int_{\left[a+\frac{b-a}{2}, b\right]}|w_{2}-h'| \,dx .\label{eq:inth'pf}
\end{align}
First of all, observing that for $K \leq 0$ one has 
$
\frac{s_{K/(N-1)}' (t)}{s_{K/(N-1)}(t)}\geq0 
$ for all  $t\geq 0$,
we get
\begin{align*}
\int_{\left[a,a+\frac{b-a}{2} \right]}w_{1} \,dx &\leq h\left(a+\frac{b-a}{2} \right)-h(a) +(N-1)\|h\|_{L^{\infty}(a,b)} \log\left( \frac{s_{K/(N-1)}(b-a)} {s_{K/(N-1)}((b-a)/2)} \right) \nonumber\\
&\leq  C_{(b-a)}^{(K,N)}\, \|h\|_{L^{\infty}(a,b)}, \\
 \int_{\left[a,a+\frac{b-a}{2}\right]}|w_{1}-h'| \,dx &=  \int_{\left[a,a+\frac{b-a}{2}\right]}  (N-1) \frac{s_{K/(N-1)}' (b-x)}{s_{K/(N-1)}(b-x)}   h(x)  \leq  C^{(K,N)}_{(b-a)}\, \|h\|_{L^{\infty}(a,b)},  
\end{align*}
where $r\mapsto C^{(K,N)}_{r}$ satisfies  \eqref{eq:defLambda}. The bounds for the second line of \eqref{eq:inth'pf} are analogous. Thus we conclude
$$
\int_{[a,b]}|h'| \, dx \leq   C^{(K,N)}_{(b-a)}\, \|h\|_{L^{\infty}(a,b)}
$$
which, recalling \eqref{eq:suphdiam}, gives the claim \eqref{eq:EstInth'}.

\medskip
\textbf{Case $K>0$}.\\
In order to simplify the notation, we assume  $K = N-1 > 0$ (so that $b-a \leq \pi$), $a = 0$ and $b - a = D\leq \pi$. The discussion for general $K>0$, $a<b\in [0,\pi]$ is analogous.

We first consider the case $D \leq \pi/2$. Using \eqref{E:MCPdef2}, notice that 
$$
\frac{h(x)}{\sin(D-x)} \leq \frac{h(D/2)}{\sin(D/2)}, \qquad  \forall  \ x \in [0,D/2],
$$
and 
$$
\frac{h(x)}{\sin(x)} \leq \frac{h(D/2)}{\sin(D/2)}, \qquad  \forall  \ x \in [D/2,D].
$$
For $x \in [0,D/2]$ these yield (recall that $\cos(x)\geq 0$)
$$
\omega_{0} '(x) : = h'(x) +  \frac{\cos( D -x)}{\sin(D/2)} h(D/2) \geq h'(x) + \frac{\cos( D -x)}{\sin(D-x)} h(x) \geq 0,
$$
and for $x \in [D/2,D]$ (recall that $\cos(x)\geq 0$)
$$
\omega_{1} '(x) : = h'(x) -  \frac{\cos( x)}{\sin(D/2)} h(D/2) \leq h'(x) -  \frac{\cos( x)}{\sin(x)} h(x) \leq 0.
$$
Then we can collect all the estimates together: 
\begin{align}\label{E:MCPderivat}
\int_{[0,D]} |h'(x)|\,\leq& \int_{[0,D/2]} \omega_{0}'(x) \,dx + \int_{[0,D/2]} |\omega_{0}'(x) - h'(x) |\,dx  \nonumber\\
&- \int_{[D/2,D]} \omega_{1}'(x) \,dx +\int_{[D/2,D]} |\omega_{1}'(x) - h'(x) |\,dx \nonumber \\
 \leq~& C \| h\|_{L^{\infty}(0,D)}.
\end{align}
The claim \eqref{eq:EstInth'}  then follows applying  Lemma \ref{lem:apriori0}.

If $D>\pi/2$, like in the case $K\leq0$, the two inequalities in \eqref{E:logder} give for each point $x \in (0,D)$ of differentiability of $h$ 
\begin{equation*}
h'(x) + (N-1) \frac{\cos( D -x)}{\sin(D-x)} h(x) \geq 0, \qquad h'(x) - (N-1)  \frac{\cos( x)}{\sin(x)} h(x) \leq 0.
\end{equation*}
Hence for $x \in (0, D -\pi/2)$ we have $h' (x) \geq 0$ and for $x \in [\pi/2, D]$ we have $h'(x) \leq 0$. 
Then Lemma \ref{lem:apriori0} and the bound $D \leq  \pi$
imply  that
\begin{align*}
\int_{[0, D -\pi/2]\cup [\pi/2, D]} |h'(x)| \,dx &= \int_{[0, D -\pi/2]}  h'(x) \,dx - \int_{[\pi/2, D]}  h'(x) \,dx \\
&\leq 4 \sup_{[0,D]} |h|\leq \frac{4N}{D}.
\end{align*}
In order to complete the proof it is then enough to bound 
$\int_{ [D-\pi/2,\pi/2]}|h'(x)|\, dx$. Since 
\eqref{E:MCPderivat} was obtained for any $h$ $\MCP$-density on $[0,D]$ 
with $D \leq \pi/2$ without using the assumption of $\int h = 1$, it implies
$$
\int_{[0,\pi/2]} |h'(x)|\, dx \leq C \| h \|_{L^{\infty}[0,\pi/2]},
$$
for any $\MCP$-density on $[0,D]$ with $D \geq \pi/2$. 
Lemma \ref{lem:apriori0} gives the claim.
\end{proof}

In the proof of the Splitting Theorem for $\MCP(0,N)$ spaces we will use the next lemma.

\begin{lemma}\label{L:rigiditMCP}
Let $h$ be a $\MCP(0,N)$ measure on the whole real line $\R$. Then $h$ is identically equal to  a real constant.
\end{lemma}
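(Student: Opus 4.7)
The plan is to exploit the fact that on $\R$ we have unlimited room, so for any two points $x_{0},y\in\R$ we can realise $y$ as an intermediate point on the segment joining $x_{0}$ to a \emph{very far} endpoint $x_{1}$; then the $\MCP(0,N)$ inequality with a small parameter $t$ will force $h(y)\geq h(x_{0})$, and symmetry closes the argument.

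\textbf{Step 1: Specialising the MCP density inequality.} Since $K=0$, for every $\theta\geq 0$ and every $s\in[0,1]$ we have $\sigma_{0,N-1}^{(s)}(\theta)=s$, so the defining inequality \eqref{E:MCPdef} reduces to
\begin{equation*}
h\bigl(tx_{1}+(1-t)x_{0}\bigr)\;\geq\;(1-t)^{N-1}\,h(x_{0})\qquad\forall\,x_{0},x_{1}\in\R,\ \forall\,t\in[0,1].
\end{equation*}

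\textbf{Step 2: Comparison of $h$ at two arbitrary points.} Fix $x_{0},y\in\R$ with $y\neq x_{0}$. For each $t\in(0,1)$, set
\begin{equation*}
x_{1}(t)\;:=\;x_{0}+\frac{y-x_{0}}{t}\in\R,
\end{equation*}
so that $tx_{1}(t)+(1-t)x_{0}=y$; this is where we use that $h$ is defined on the whole line, since $x_{1}(t)\to\pm\infty$ as $t\downarrow 0$. Applying Step~1 with $x_{1}=x_{1}(t)$ yields
\begin{equation*}
h(y)\;\geq\;(1-t)^{N-1}\,h(x_{0}).
\end{equation*}
Letting $t\downarrow 0$ gives $h(y)\geq h(x_{0})$. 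Swapping the roles of $x_{0}$ and $y$ produces $h(x_{0})\geq h(y)$, so $h(x_{0})=h(y)$, and $h$ is constant.

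There is essentially no obstacle: the key point is that $\MCP(0,N)$ on an unbounded interval has no ``boundary effect'' (the distortion coefficient tends to $1$ because the endpoint can be sent to infinity), in contrast to the bounded case treated in Remark \ref{R:continuityboundary} and \eqref{E:MCPdef2}, where the finite length of $(a,b)$ forces genuine monotonicity bounds on $h$. One minor point worth noting is that \eqref{E:MCPdef} is a pointwise hypothesis on the representative $h$; no regularity is needed to run the argument, and in particular no continuity statement has to be invoked.
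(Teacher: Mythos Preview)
Your proof is correct and follows essentially the same approach as the paper: both specialise the $\MCP(0,N)$ inequality to $h(tx_1+(1-t)x_0)\geq(1-t)^{N-1}h(x_0)$, exploit the unboundedness of $\R$ to send the far endpoint to infinity (making the distortion coefficient tend to $1$), and conclude by symmetry. The paper passes through the two-sided estimate \eqref{E:log-bound0} on a finite interval $[a,b]$ before taking $a\to-\infty$, $b\to+\infty$, whereas you bypass this and let $t\downarrow0$ directly; the two arguments are equivalent reparametrisations of the same limit.
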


\begin{proof}
We show that $h(x_{0})=h(x_{1})$ for all $x_{0}, x_{1}\in \R$. 
\\The $\MCP(0,N)$ condition reads as
\begin{equation*}
 h(t x_1 + (1-t) x_0) \geq ( 1-t )^{N-1} \, h(x_0).
\end{equation*}
For $a<z < b$ apply the previous estimate for $ z = x_{0}$ and $x_{1} = b$, 
it implies 
$$
\frac{h(t b + (1-t)z)}{h(z) } \geq ( 1-t )^{N-1};
$$
if $w \in (z,b)$ and $w = t b + (1-t)z$ for some $t \in (0,1)$, implies  
$1-t = (b-w)/(b-z)$. Plugging in the previous inequality the explicit expression 
of $(1-t)$ and repeating the argument taking now $x_{0}=a$ and $x_{1}=z$,
we obtain the next two sided estimate
\begin{equation}\label{E:log-bound0}
\left( \frac{b - x_{1}}{b -x_{0}} \right)^{N-1} 
\leq \frac{h(x_{1} ) }{h (x_{0})} \leq 
\left(\frac{x_{1}-a}{x_{0}-a}\right)^{N-1}, 
\end{equation}
valid for all $a \leq x_{0} \leq x_{1} \leq b$.
Since 
$$
\lim_{b\to + \infty} \left( \frac{b - x_{1}}{b -x_{0}} \right)^{N-1} =1= \lim_{a\to - \infty} \left(\frac{x_{1}-a}{x_{0}-a}\right)^{N-1}, 
$$
and since \eqref{E:log-bound0} holds for all $a\in(-\infty, x_{0})$ and all $b\in(x_{1}, +\infty)$, the thesis follows.
\end{proof}

We now review few facts about $\CD(K,N)$ densities of the real line 
(see \cite[Appendix]{CMi16}).
Given $K \in \Real$ and $N \in (1,\infty)$, a non-negative Borel function $h$ defined on an interval $I \subset \Real$ is called a $\CD(K,N)$ density on $I$ if for all $x_0,x_1 \in I$ and $t \in [0,1]$
\begin{equation}\label{E:CD}
h^{\frac{1}{N-1}}( (1-t)x_{0} + t x_{1}) \geq h^{\frac{1}{N-1}}( x_{0}) \sigma_{K,N}^{(1-t)}(|x_{1}-x_{0}|)+ 
h^{\frac{1}{N-1}}( x_{1})\sigma_{K,N-1}^{(t)}(|x_{1}-x_{0}|).
\end{equation}
A  one-dimensional metric measure space, say $(I, |\cdot|, h \L^{1})$, satisfies $\CD(K,N)$ 
if and only  $h$ has a continuous representative $\tilde h$ 
that is a $\CD(K,N)$ density.

We will make use of the fact that a $\CD(K,N)$ density $h : I \to [0,\infty)$ is locally semi-concave in the interior, i.e. for all $x_0$ in the interior of $I$, there exists $C_{x_0} \in \Real$ so that $h(x) - C_{x_0} x^2$ is concave in a neighborhood of $x_0$.

Recall moreover that 
if $f : I \rightarrow \Real$ denotes a convex function on an open interval $I \subset \Real$, it is well-known that the left and right derivatives $f^{\prime,-}$ and $f^{\prime,+}$ exist at every point in $I$ and that $f$ is locally Lipschitz; in particular, $f$ is differentiable at a given point if and only if the left and right derivatives coincide. Denoting by $D \subset I$ the differentiability points of $f$ in $I$, it is also well-known that $I \setminus D$ is at most countable.
Clearly, all of these results extend to locally semi-convex and locally semi-concave functions as well.
We finally recall the next regularization property for $\CD(K,N)$ densities obtained in \cite[Proposition A.10]{CMi16}

\begin{proposition} \label{prop:log-convolve}
Let $h$ be a $\CD(K,N)$ density on an interval $(a,b)$. Let $\psi_\eps$ denote a non-negative $C^2$ function supported on $[-\eps,\eps]$ with $\int \psi_\eps = 1$. 
For any $\eps \in (0,\frac{b-a}{2})$, define the function $h^\eps$ on $(a+\eps,b-\eps)$ by:
\[
\log h^\eps := \log h \ast \psi_\eps:= \int \log h(y) \, \psi_{\eps}(x-y)  \, dy.
\]
Then $h^\eps$ is a $C^2$-smooth $\CD(K,N)$ density on $(a+\eps ,b -\eps)$. 
\end{proposition}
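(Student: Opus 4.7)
My plan is to reduce the $\CD(K,N)$ density condition to a pointwise differential inequality on $V := \log h$, and then transfer the inequality to $V^\eps := V\ast\psi_\eps$ by combining the distributional inequality with Jensen's inequality. The convolution structure is tailor-made for the Bakry--\'Emery form of the condition, which is precisely why one mollifies $\log h$ (rather than $h$ itself).

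First I would check that the construction is well posed. Since $h$ is a $\CD(K,N)$ density, $h^{1/(N-1)}$ admits a locally semi-concave representative that is strictly positive in the interior (an interior zero of $h^{1/(N-1)}$ would propagate via \eqref{E:CD} and contradict the fact that $h$ is an honest density), hence $V = \log h$ is continuous and locally Lipschitz on $(a,b)$. Therefore $V^\eps = V\ast\psi_\eps$ is well defined and $C^2$ on $(a+\eps,b-\eps)$, and $h^\eps = \exp(V^\eps)$ is a $C^2$-smooth strictly positive function there.

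Next I would record the standard equivalence: a positive continuous function $h$ on an interval is a $\CD(K,N)$ density if and only if $f := h^{1/(N-1)}$ satisfies $f'' + \tfrac{K}{N-1} f \leq 0$ in the distributional sense. This is because the coefficients $\sigma^{(t)}_{K,N-1}$ are exactly the fundamental solutions of the Jacobi equation $y'' + \tfrac{K}{N-1}y=0$, and the $\CD$ density inequality \eqref{E:CD} is the associated concavity/comparison condition. Using $V = (N-1)\log f$ together with the chain rule (legitimate since $f>0$ is locally Lipschitz), a direct computation rewrites the condition as
\begin{equation}\label{eq:BEform}
V'' + \frac{(V')^2}{N-1} \leq -K
\end{equation}
in the distributional sense on $(a,b)$, where $V'$ denotes the a.e.-defined derivative (which exists by local Lipschitzness) and $V''$ is the distributional second derivative (a locally finite signed Radon measure, by semi-concavity of $f$).

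Now I would pair \eqref{eq:BEform} with the non-negative test function $\psi_\eps(x-\cdot)$ for $x \in (a+\eps,b-\eps)$, which since $V^\eps \in C^2$ gives
$$
(V^\eps)''(x) \;=\; \langle V'', \psi_\eps(x-\cdot)\rangle \;\leq\; -K - \frac{1}{N-1}\int (V'(x-z))^2 \psi_\eps(z)\, dz.
$$
Jensen's inequality applied to the probability measure $\psi_\eps(z)\,dz$ yields
$$
\int (V'(x-z))^2 \psi_\eps(z)\, dz \;\geq\; \Big(\int V'(x-z)\, \psi_\eps(z)\, dz\Big)^2 \;=\; ((V^\eps)'(x))^2,
$$
so combining the two displays gives the pointwise inequality $(V^\eps)''(x) + \tfrac{((V^\eps)'(x))^2}{N-1} \leq -K$ on $(a+\eps, b-\eps)$. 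Since $V^\eps$ is $C^2$, the equivalence recalled above converts this into the $\CD(K,N)$ density condition for $h^\eps = e^{V^\eps}$, which is the conclusion.

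The main technical obstacle is the rigorous derivation of \eqref{eq:BEform} from the $\CD(K,N)$ density condition. One must translate the distributional inequality for $f = h^{1/(N-1)}$ (where $f''$ is a Radon measure by semi-concavity and the inequality has a clean measure-theoretic meaning) into the inequality for $V = (N-1)\log f$, keeping track of the identity $V' = (N-1) f'/f$ a.e.\ and interpreting the nonlinear term $(V')^2$ as an $L^1_{\loc}$ function. Once this is done the rest of the argument is the short Jensen/convolution computation above.
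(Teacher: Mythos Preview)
The paper does not prove this proposition; it merely recalls it from \cite[Proposition A.10]{CMi16}. Your argument is the standard one and is correct: rewriting the $\CD(K,N)$ density condition in Bakry--\'Emery form $V'' + \tfrac{(V')^2}{N-1} \leq -K$ for $V=\log h$, then convolving and applying Jensen, is exactly how log-convolution preserves the condition. The technical point you flag (passing from the distributional inequality $f'' + \tfrac{K}{N-1}f \leq 0$ for $f=h^{1/(N-1)}$ to the one for $V$) is genuine but routine: since $f$ is semi-concave and strictly positive, $f'$ is locally BV and $1/f$ is locally Lipschitz, so the BV product rule gives $V'' = (N-1)\bigl[\tfrac{1}{f}\,f'' - \tfrac{(f')^2}{f^2}\,\mathcal L^1\bigr]$ as measures, whence $V'' + \tfrac{(V')^2}{N-1} = \tfrac{N-1}{f}\,f'' \leq -K$ exactly. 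After that your pairing with $\psi_\eps(x-\cdot)$ (legitimate since the inequality is between Radon measures and $\psi_\eps$ is continuous with compact support) and Jensen step go through as written.
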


\bigskip



\part{A representation formula for the Laplacian}\label{part1}

\section{Transport set and Disintegration}\label{S:transportset}
Throughout this section we assume $(X,\sfd,\mm)$ to be a metric measure space with 
$\supp(\mm) = X$ and $(X,\sfd)$ geodesic and proper (and hence complete).

\subsection{Disintegration of $\sigma$-finite measures}\label{Ss:disint}

To any $1$-Lipschitz function $u : X \to \R$ there is a naturally associated $\sfd$-cyclically monotone set: 
\begin{equation}\label{E:Gamma}  
\Gamma_{u} : = \{ (x,y) \in X\times X : u(x) - u(y) = \sfd(x,y) \}.
\end{equation}
Its transpose is given by $\Gamma^{-1}_{u}= \{ (x,y) \in X \times X : (y,x) \in \Gamma_{u} \}$. We define the \emph{transport relation} $R_u$ and the \emph{transport set} $\mathcal{T}_{u}$, as:
\begin{equation}\label{E:R}
R_{u} := \Gamma_{u} \cup \Gamma^{-1}_{u} ~,~ \mathcal{T}_{u} := P_{1}(R_{u} \setminus \{ x = y \}) ,
\end{equation}
where $\{ x = y\}$ denotes the diagonal $\{ (x,y) \in X^{2} : x=y \}$ and $P_{i}$ is the projection onto the $i$-th component. Recall that $\Gamma_u(x) = \set{y \in X \; :\; (x,y) \in \Gamma_u}$ denotes the section of $\Gamma_u$ through $x$ in the first coordinate, and similarly for $R_u(x)$ (through either of the  coordinates by symmetry). 
Since $u$ is $1$-Lipschitz, $\Gamma_{u}, \Gamma^{-1}_{u}$ and $R_{u}$ are closed sets, and so are $\Gamma_u(x)$ and $R_u(x)$.

Also recall the following definitions, introduced in \cite{cava:MongeRCD}:
\begin{align*}
	A_{+}	: = 	&~\{ x \in \mathcal{T}_{u} : \exists z,w \in \Gamma_{u}(x), (z,w) \notin R_{u} \}, \nonumber \\ 
	A_{-}		: = 	&~\{ x \in \mathcal{T}_{u} : \exists z,w \in \Gamma^{-1}_{u}(x), (z,w) \notin R_{u} \}.
\end{align*}
$A_{\pm}$ are called the \emph{sets of forward and backward branching points}, respectively.
If $x \in A_{+}$ and $(y,x) \in \Gamma_{u}$ necessarily also $y \in A_{+}$ (as $\Gamma_{u}(y) \supset \Gamma_{u}(x)$ by the triangle inequality);
similarly, if $x \in A_{-}$ and $(x,y) \in \Gamma_{u}$ then necessarily $y \in A_{-}$.

Consider the \emph{non-branched transport set} 
\begin{equation}\label{eq:defTub}
\T_{u}^{nb} : = \T_{u} \setminus (A_{+} \cup A_{-}),
\end{equation}
and define the \emph{non-branched transport relation}:
\[
R_u^{nb} := R_u \cap (\T_u^{nb} \times \T_u^{nb}) .
\]
In was shown in \cite{cava:MongeRCD} (cf. \cite{biacava:streconv}) that $R_u^{nb}$ is an equivalence relation over $\T_{u}^{nb}$ and that for any $x \in \T_{u}^{nb}$, $R_{u}(x) \subset (X,\sfd)$ is isometric to a closed interval in $(\Real,\abs{\cdot})$. 

Therefore, from the non-branched transport relation $R_{u}^{nb}$, one obtains a partition of the non-branched transport set $\T_{u}^{nb}$ into a disjoint family (of equivalence classes) $\{X_{\alpha}\}_{\alpha \in Q}$ each of them isometric to a closed interval of $\R$. Here $Q$ is any set of indices.
Concerning the measurability, as the space $(X,\sfd)$ is proper, 
$\T_{u}$ and $A_{\pm}$ are $\sigma$-compact sets 
and, consequently, $\T_u^{nb}$ and $R_u^{nb}$ are Borel.

\begin{remark}[Initial and final points]\label{R:initialpoints}
It will be useful to isolate two families of distinguished points of 
the transport set: the \emph{set of initial and final points}, respectively: 
$$
a : = \{ x \in \T_{u} \colon \nexists y \in \T_{u}, y \neq x, \ (y,x) \in R_{u} \},
$$
$$
b : = \{ x \in \T_{u} \colon \nexists y \in \T_{u}, y \neq x, \ (x,y) \in R_{u} \}.
$$
Notice that no inclusion of the form $a \subset A_{+}$, $b \subset  A_{-}$ 
is valid. For instance consider $X  = \{ (x_{1},x_{2}) \in \R^{2} \colon x_{1}\geq 0 \}$ endowed with the Euclidean distance and $u (x) := \dist (x, \{ x_{1} = 0\})$; then 
 $a = \{x_{1} =0 \}$ and $A_{\pm} = \emptyset$. 
In particular, sets $a$ and $b$ may or may not be subset of $\T_{u}^{nb}$.
See also the discussion right above \eqref{eq:disIntro}. 
Curvature assumptions will anyway imply that $a$ and $b$ have measure zero.
We will also use the notations $a(X_{\alpha}), b(X_{\alpha})$ to denote 
the starting and the final points, respectively, of the transport 
set $X_{\alpha}$, whenever they exist.  
\end{remark}

\smallskip

Once a partition of the non-branched transport set $\T_{u}^{nb}$ is at disposal, a decomposition of the reference measure $\mm \llcorner _{\T_{u}^{nb}}$ can be obtained using the Disintegration Theorem. 
In the recent literature of Optimal Transportation, 
disintegration formulas have always been obtained under the additional assumption of finiteness of the measure $\mm(X) < \infty$. 
We will therefore spend few words on how 
to use Disintegration Theorem to obtain a disintegration associated to the family of transport rays without assuming $\mm(X) < \infty$.

We first introduce the quotient map $\QQ : \T_{u}^{nb} \to Q$ induced by the partition:
\begin{equation}\label{E:defineQ}
\alpha = \QQ(x) \iff x \in X_{\alpha}.
\end{equation}
The set of indices (or quotient set) $Q$ can be endowed with the quotient $\sigma$-algebra $\mathscr{Q}$ (of the $\sigma$-algebra $\mathscr{X}$ over $X$ of $\mm$-measurable  subsets):
$$
C \in \mathscr{Q} \quad \Longleftrightarrow \quad \QQ^{-1}(C) \in \mathscr{X},
$$
i.e. the finest $\sigma$-algebra on $Q$ such that $\QQ$ is measurable.

The set of indices $Q$ can be identified with 
any subset of $\bar Q \subset X$ verifying the following two properties
\begin{itemize}
\item[-] for all $x \in \T_{u}^{nb}$ there exists a unique $\bar x \in \bar Q$ such that $(x,\bar x) \in 
R_{u}^{nb}$;
\item[-] if $x, y \in \T_{u}^{nb}$ and $(x,y) \in R_{u}^{nb}$, then $\bar x = \bar y$.
\end{itemize}
In particular $\bar Q$ has to contain a single element for each equivalence class $X_{\alpha}$.  \\
Another way to obtain a quotient set is to look instead first for an 
explicit quotient map:
in particular, any map
$\bar \QQ : \T_{u}^{nb} \to \T_{u}^{nb}$ 
verifying the following two properties
\begin{itemize}
\item [-] $(x,\bar \QQ(x)) \in R_{u}^{nb}$;
\item [-] if $(x,y)\in R_{u}^{nb}$, then $\bar\QQ(x) = \bar \QQ(y)$,
\end{itemize}
will be a quotient map for the equivalence relation $R^{nb}_{u}$ 
over $\T_{u}^{nb}$;
then the quotient set associated to $\bar \QQ$ will be the set $\{x \in R_{u}^{nb} \colon x = \bar \QQ(x) \}$.

Existence of $\bar Q$ or of $\bar \QQ$ can be always deduced by axiom of choice. 
Anyway in order to apply disintegration theorem  measurability properties are needed.

A rather explicit construction of the quotient map has been already obtained 
under the additional assumption of $\mm(X) <\infty$ (cf. \cite{CM1}, \cite[Lemma 3.8]{CM17}); anyway $\mm(X) <\infty$ did not play any role in the proof and 
we therefore simply report the next statement.  \\ 
We will denote with $\mathcal{A}$  the $\sigma$-algebra generated by 
the analytic sets of $X$.

\begin{lemma}[$Q$ is locally contained in level sets of $u$]\label{lem:Qlevelset}
There exists an $\mathcal{A}$-measurable quotient map $\QQ: \T_{u}^{nb} \to Q$ such that the quotient set $Q \subset X$ is 
$\mathcal{A}$-measurable and can be written locally  as a level set of $u$ in the following sense: 
$$
Q = \bigcup_{n\in \N} Q_{n}, \qquad Q_{n} \subset u^{-1}(l_{n}), 
$$
where $l_{n} \in \Q$ and $Q_{i} \cap Q_{j} = \emptyset$, for $i\neq j$.
\end{lemma}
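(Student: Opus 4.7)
The plan is to exploit the fact that $u$ is an isometry (up to sign) along each transport ray, and hence provides an injective parametrization of each equivalence class via its level sets. Concretely, fix $\alpha\in Q$ and $x,y\in X_{\alpha}$ with $(x,y)\in\Gamma_{u}$; since $X_{\alpha}$ is isometric to a closed interval of $\R$ and $u(x)-u(y)=\sfd(x,y)$, one obtains that $u\llcorner_{X_{\alpha}}$ is affine with slope $\pm 1$. In particular, for every $l\in\R$ the preimage $u^{-1}(l)\cap X_{\alpha}$ contains \emph{at most one} point, and it is non-empty whenever $l$ lies in the image of $u\llcorner_{X_{\alpha}}$.

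I would then build the $Q_{n}$'s inductively using an enumeration $\{l_{n}\}_{n\in\N}$ of $\Q$. Set
\[
Q_{1} := u^{-1}(l_{1})\cap \T_{u}^{nb},
\qquad
S_{1} := P_{2}\bigl(R_{u}^{nb}\cap (Q_{1}\times \T_{u}^{nb})\bigr),
\]
i.e.\ $S_{1}$ is the union of all rays that meet the level $\{u=l_{1}\}$. Recursively, having defined $Q_{1},\dots,Q_{n-1}$ and $S_{1},\dots,S_{n-1}$, put
\[
Q_{n} := u^{-1}(l_{n})\cap \bigl(\T_{u}^{nb}\setminus(S_{1}\cup\cdots\cup S_{n-1})\bigr),
\qquad
S_{n} := P_{2}\bigl(R_{u}^{nb}\cap (Q_{n}\times \T_{u}^{nb})\bigr).
\]
By construction $Q_{n}\subset u^{-1}(l_{n})$ and the $Q_{n}$'s are pairwise disjoint. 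Define $Q:=\bigcup_{n}Q_{n}$ and $\QQ(x)$ to be the unique element of $R_{u}^{nb}(x)\cap Q$. Existence and uniqueness of $\QQ(x)$ follow from the previous paragraph: by density of $\Q$ in $\R$, the (non-degenerate) interval $u(X_{\alpha})$ contains some $l_{n}$, hence some $Q_{n}$ meets $X_{\alpha}$ in exactly one point; and the inductive removal of $S_{1}\cup\cdots\cup S_{n-1}$ guarantees that only the first such $n$ contributes.

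For the measurability claim I would argue inside the $\sigma$-algebra $\mathcal{A}$ generated by analytic sets, using that $\mathcal{A}$ is closed under countable unions/intersections and under projections of sets in $\mathcal{A}\otimes\mathcal{A}$. Since $u$ is continuous, $u^{-1}(l_{n})$ is closed; since $R_{u}^{nb}$ is Borel in $X\times X$ (as observed right after \eqref{eq:defTub}) and $\T_{u}^{nb}$ is Borel, $Q_{1}$ is Borel, and the projection yielding $S_{1}$ is analytic. Inductively each $Q_{n}$ (resp.\ $S_{n}$) is $\mathcal{A}$-measurable as a difference/intersection/projection of $\mathcal{A}$-measurable sets. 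Hence $Q=\bigsqcup_{n}Q_{n}$ is $\mathcal{A}$-measurable, and the quotient map $\QQ$ has $\mathcal{A}$-measurable fibres of the form $R_{u}^{nb}(q)$ for $q\in Q$.

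The step I expect to be the main obstacle is precisely the measurability bookkeeping: one must verify that saturations under $R_{u}^{nb}$ stay inside $\mathcal{A}$ at every stage of the induction, which is why working with analytic (rather than Borel) sets is essential. The assumption $\mm(X)<\infty$ used in the references \cite{CM1}, \cite[Lemma 3.8]{CM17} plays no role in any of the above: the construction is purely set-theoretic plus the continuity of $u$ and Borel measurability of $R_{u}^{nb}$, both of which are available in our standing setting where $(X,\sfd)$ is proper and $\mm$ is only $\sigma$-finite.
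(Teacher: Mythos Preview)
Your construction is the standard one and coincides with what the paper invokes from \cite{CM1} and \cite[Lemma~3.8]{CM17}; the paper itself gives no proof beyond the reference and the remark that $\mm(X)<\infty$ plays no role, so your write-up is precisely the elaboration intended. The geometric part---injectivity of $u$ on each ray and the greedy inductive selection over an enumeration of $\Q$---is correct.

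There is, however, a genuine gap in the measurability step. Your assertion that $\mathcal{A}$ is ``closed under projections of sets in $\mathcal{A}\otimes\mathcal{A}$'' is false: projections of co-analytic sets are $\Sigma^{1}_{2}$ and in general escape the $\sigma$-algebra generated by analytic sets (otherwise the entire projective hierarchy would collapse into $\mathcal{A}$). In your induction this bites already at $n=2$: since $S_{1}$ is analytic, $Q_{2}=u^{-1}(l_{2})\cap\T_{u}^{nb}\setminus S_{1}$ is only co-analytic, and then $S_{2}=P_{2}\bigl(R_{u}^{nb}\cap(Q_{2}\times\T_{u}^{nb})\bigr)$ is a priori $\Sigma^{1}_{2}$. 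The remedy is to bypass the individual $S_{k}$: because $S_{1}\cup\cdots\cup S_{k-1}$ is $R_{u}^{nb}$-saturated, one checks that
\[
\bigcup_{k=1}^{n} S_{k}
\;=\;
P_{2}\Bigl(R_{u}^{nb}\cap\bigl(\,u^{-1}(\{l_{1},\dots,l_{n}\})\cap\T_{u}^{nb}\,\bigr)\times\T_{u}^{nb}\Bigr),
\]
i.e.\ the $R_{u}^{nb}$-saturation of a \emph{Borel} set, hence genuinely analytic at every stage. With this each $Q_{n}$ is (Borel)$\setminus$(analytic)$\,\in\mathcal{A}$, and $Q=\bigcup_{n}Q_{n}\in\mathcal{A}$ as desired. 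You should also say a word on the $\mathcal{A}$-measurability of $\QQ$ itself rather than only of its image: for instance $\QQ^{-1}(Q_{n})=\bigcup_{k\le n}S_{k}\setminus\bigcup_{k<n}S_{k}\in\mathcal{A}$, and on this set the graph of $\QQ$ is the Borel set $R_{u}^{nb}\cap\bigl(\QQ^{-1}(Q_{n})\times u^{-1}(l_{n})\bigr)$, so $\QQ$ is $\mathcal{A}$-measurable piecewise and hence globally.
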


Lemma \ref{lem:Qlevelset} allows to apply Disintegration Theorem (cf. \cite[Section 6.3]{CMi16}), provided  the ambient measure $\mm$  is suitably modified into a finite measure.
To this aim, it will be useful the next elementary lemma.

\begin{lemma}\label{lem:constrf}
Let $\mm$ be a $\sigma$-finite measure over the proper metric space $(X,\sfd)$ with $\supp (\mm)=X$. Then there exists a Borel function  $f : X \to (0,\infty)$ satisfying 
\begin{equation}\label{eq:deff}
 \inf_{K} f>0, \; \text{for any compact subset $K\subset X$}, \quad \int_{\T_{u}^{nb}} f \mm = 1.
\end{equation}
\end{lemma}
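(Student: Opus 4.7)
The plan is to exploit properness of $(X,\sfd)$ together with the fact that $\mm$ is a locally finite Radon measure to exhaust $X$ by compact shells of finite measure, and then assemble $f$ as a piecewise constant function whose weights decay fast enough to guarantee a finite (and normalisable) integral.

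First, I would fix a basepoint $x_0\in X$ and set $K_n:=\overline{B}_n(x_0)$, which is compact by properness. Define $A_1:=K_1$ and $A_n:=K_n\setminus K_{n-1}$ for $n\ge 2$; these are pairwise disjoint Borel sets with union equal to $X$, and by local finiteness of $\mm$ each $b_n:=\mm(A_n\cap \T_u^{nb})$ is finite. Next, define
\[
g:X\to(0,\infty),\qquad g(x):=\sum_{n=1}^{\infty}\frac{2^{-n}}{1+b_n}\,\mathbf{1}_{A_n}(x),
\]
which is Borel and takes the strictly positive value $2^{-n}/(1+b_n)$ on $A_n$. By monotone convergence,
\[
Z:=\int_{\T_u^{nb}} g\,d\mm=\sum_{n=1}^{\infty}\frac{2^{-n}b_n}{1+b_n}\in[0,1],
\]
and $Z>0$ as soon as $\mm(\T_u^{nb})>0$, which can be assumed without loss of generality since \eqref{eq:deff} is otherwise vacuous. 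Setting $f:=Z^{-1}g$ then delivers a Borel, strictly positive function with $\int_{\T_u^{nb}} f\,d\mm=1$.

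For the local lower bound, any compact $K\subset X$ is bounded, so $K\subset K_N$ for some $N\in\N$; consequently $f|_K$ attains only the finitely many positive values $Z^{-1}2^{-n}/(1+b_n)$ for $1\le n\le N$, which yields $\inf_K f\ge Z^{-1}\min_{1\le n\le N}2^{-n}/(1+b_n)>0$. There is no real obstacle in this construction: the only delicate point is the degenerate case $\mm(\T_u^{nb})=0$, where the prescribed normalisation is impossible but the disintegration applications of the lemma are trivial; in every other case the construction above is routine.
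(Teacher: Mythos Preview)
Your proposal is correct and follows essentially the same approach as the paper: both pick a basepoint, exhaust $X$ by compact balls/annuli (using properness), and take $f$ piecewise constant on the shells with geometrically decaying weights divided by the shell measures. Your version is in fact slightly more careful than the paper's, since the use of $1+b_n$ in the denominator avoids division by zero on empty shells and your explicit normalisation by $Z$ ensures the integral over $\T_u^{nb}$ (rather than over $X$) equals $1$; you also flag the degenerate case $\mm(\T_u^{nb})=0$.
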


\begin{proof}
Since by assumption  $(X,\sfd)$ is proper, then for every $x_{0}\in X$ and $R>0$ the closed metric ball $\bar{B}_{R}(x_{0})$ is compact. Thus, using that $\mm$ is $\sigma$-finite and $\supp (\mm)=X$, we get that
$$0< \mm(B_{n}(x_{0}) \setminus B_{n-1}(x_{0}))<\infty, \quad \text{for all } n \in \N_{\geq 1}.$$
It is then readily checked that  $f : X \to (0,\infty)$ defined by
$f:= \frac{1}{2^{n}\mm(B_{n}(x_{0}) \setminus B_{n-1}(x_{0}))}$ on $B_{n+1}(x_{0}) \setminus B_{n}(x_{0})$ for all  $n \in \N_{\geq 1}$ satisfies \eqref{eq:deff}.
\end{proof}

Under the assumption that $\mm$ is $\sigma$-finite, let $f : X \to (0,\infty)$ be satisfying \eqref{eq:deff},  set 
\begin{equation}\label{eq:defmu}
\mu : = f \mm\llcorner_{\T_{u}^{nb}},
\end{equation}
and define the normalized quotient measure 
\begin{equation}\label{eq:defqq}
\qq : = \QQ_{\sharp}\, \mu.
\end{equation}
Notice that $\qq$ is a Borel probability measure over $X$.
It is straightforward to check that 
$$
\QQ_{\sharp} (\mm\llcorner_{\T_{u}^{nb}}) \ll \qq.
$$
Take indeed $E \subset Q$ with $\qq(E) = 0$; then by definition
$\int_{\QQ^{-1}(E)} f(x)\, \mm(dx) = 0$,
implying  $\mm(\QQ^{-1}(E)) = 0$, since $f > 0$.

From the Disintegration Theorem \cite[Section 452]{Fremlin4}, we deduce the existence 
of a map 
$$
Q \ni \alpha \longmapsto \mu_{\alpha} \in \mathcal{P}(X)
$$
verifying the following properties:
\begin{itemize}
\item[(1)] for any $\mu$-measurable set $B\subset X$, the map $\alpha \mapsto \mu_{\alpha}(B)$ is $\qq$-measurable; \smallskip
\item[(2)] for $\qq$-a.e. $\alpha \in Q$, $\mu_{\alpha}$ is concentrated on $\QQ^{-1}(\alpha)$; \smallskip
\item[(3)] for any $\mu$-measurable set $B\subset X$ and $\qq$-measurable set $C\subset Q$, the following disintegration formula holds: 
$$
\mu(B \cap \QQ^{-1}(C)) = \int_{C} \mu_{\alpha}(B) \, \qq(d\alpha).
$$
\end{itemize}
Finally the disintegration is $\qq$-essentially unique, i.e. if any other 
map $Q \ni \alpha \longmapsto \bar \mu_{\alpha} \in \mathcal{P}(X)$
satisfies the previous three points, then 
$$
\bar \mu_{\alpha} = \mu_{\alpha},\qquad \qq\text{-a.e.} \ \alpha \in Q. 
$$
Hence once $\qq$ is given (recall that $\qq$ depends on $f$ 
from Lemma \ref{lem:constrf}), the disintegration is unique up to a set of 
$\qq$-measure zero. 
In the case $\mm(X) < \infty$, the natural choice, that we tacitly assume, 
is to take as $f$ the characteristic function of $\T_{u}^{nb}$ divided by $\mm(\T_{u}^{nb})$ so that $\qq : = \QQ_{\sharp} (\mm\llcorner_{\T_{u}^{nb}}/\mm(\T_{u}^{nb})$).

All the previous properties will be summarized saying that 
$Q \ni \alpha \mapsto \mu_{\alpha}$ is a  \emph{disintegration of 
$\mu$ strongly consistent with respect to $\QQ$}.
\\It follows from \cite[Proposition 452F]{Fremlin4} that 
$$
\int_{X} g(x) \mu(dx) = \int_{Q} \int g(x) \mu_{\alpha}(dx) \,\qq(d\alpha),
$$
for every $g : X \to \R\cup\{\pm \infty\}$ such that $\int g \mu$ is 
well-defined in $\R\cup\{\pm \infty\}$. 
Hence picking $g = 1/f$ (where $f$ is the one used to define $\mu$),  we get that 
\begin{equation}\label{E:disintmm}
\mm\llcorner_{\T_{u}^{nb}} = \int_{Q} \frac{\mu_{\alpha}}{f} \, \qq(d\alpha);
\end{equation}
previous identity has to be understood with test functions as the previous formula.

Defining $\mm_{\alpha} : = \mu_{\alpha}/f$, we 
obtain that $\mm_{\alpha}$ is a non-negative Radon measure over $X$ verifying 
all the measurability properties (with respect to $\alpha\in Q$) of $\mu_{\alpha}$
and giving a disintegration of $\mm\llcorner_{\T_{u}^{nb}}$ 
strongly consistent with respect to $\QQ$. Moreover, for every compact subset $K\subset X$, it holds
\begin{equation}\label{eq:boundmmalpha}
\frac{1}{\sup_{K} f } \mu_{\alpha}(K) \leq  \mm_{\alpha}(K) =   \frac{\mu_{\alpha}}{f} (K) \leq \frac{1}{\inf_{K} f }, \quad \text{for $\qq$-a.e. $\alpha\in Q$.}
\end{equation}

In the next statement, we  summarize what obtained so far concerning the disintegration 
of a $\sigma$-finite reference measure $\mm$ with respect to the non-branched
transport relation induced by any $1$-Lipschitz function $u : X \to \R$. \\
We denote by  $\mathcal{M}_{+}(X)$ the 
space of non-negative Radon measures over $X$. 

\begin{theorem}\label{T:sigma-disint}
 Let $(X,\sfd,\mm)$ be any geodesic and proper (hence complete) m.m.s. 
 with $\supp(\mm) = X$ and $\mm$ $\sigma$-finite. 
Then for any $1$-Lipschitz function $u : X \to \R$, the 
measure $\mm$ restricted to the non-branched transport set $\T_{u}^{nb}$ 
admits the following disintegration formula: 
$$
\mm\llcorner_{\T_{u}^{nb}} = \int_{Q} \mm_{\alpha} \, \qq(d\alpha),
$$
where $\qq$ is a Borel probability measure over $Q \subset X$ such that 
$\QQ_{\sharp}( \mm\llcorner_{\T_{u}^{nb}} ) \ll \qq$ and the map 
$Q \ni \alpha \mapsto \mm_{\alpha} \in \mathcal{M}_{+}(X)$ satisfies the following properties:
\begin{itemize}
\item[(1)] for any $\mm$-measurable set $B$, the map $\alpha \mapsto \mm_{\alpha}(B)$ is $\qq$-measurable; \smallskip
\item[(2)] for $\qq$-a.e. $\alpha \in Q$, $\mm_{\alpha}$ is concentrated on $\QQ^{-1}(\alpha) = R_{u}^{nb}(\alpha)$ (strong consistency); \smallskip
\item[(3)] for any $\mm$-measurable set $B$ and $\qq$-measurable set $C$, the following disintegration formula holds: 
$$
\mm(B \cap \QQ^{-1}(C)) = \int_{C} \mm_{\alpha}(B) \, \qq(d\alpha).
$$
\item[(4)]  For every compact subset $K\subset X$ there exists a constant $C_{K}\in (0,\infty)$ such that
$$
 \mm_{\alpha}(K) \leq C_{K}, \quad \text{for $\qq$-a.e. $\alpha\in Q$.}
$$
\end{itemize}
Moreover, fixed any $\qq$ as above such that $\QQ_{\sharp}( \mm\llcorner_{\T_{u}^{nb}} ) \ll \qq$, the disintegration is $\qq$-essentially unique (see above).
\end{theorem}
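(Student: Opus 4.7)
The plan is to reduce to a finite measure via the auxiliary function from Lemma \ref{lem:constrf}, apply Fremlin's Disintegration Theorem \cite{Fremlin4} to the resulting probability measure using the measurable quotient map from Lemma \ref{lem:Qlevelset}, and then transfer the conclusion back to $\mm\llcorner_{\T_u^{nb}}$ by rescaling. Concretely, I would pick $f : X \to (0,\infty)$ as in Lemma \ref{lem:constrf}, set $\mu := f \, \mm\llcorner_{\T_u^{nb}}$, which is a Borel probability measure, and push it forward along the $\mathcal{A}$-measurable quotient map $\QQ : \T_u^{nb} \to Q$ from Lemma \ref{lem:Qlevelset} to define $\qq := \QQ_\sharp \mu$. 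Since $f > 0$ on all of $\T_u^{nb}$, one checks directly that $\QQ_\sharp(\mm\llcorner_{\T_u^{nb}}) \ll \qq$: any $\qq$-null set $E \subset Q$ satisfies $\int_{\QQ^{-1}(E)} f \, d\mm = 0$, forcing $\mm(\QQ^{-1}(E)) = 0$.

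Next I would invoke the Disintegration Theorem \cite[452]{Fremlin4}, which, thanks to the $\mathcal{A}$-measurability of both $\QQ$ and of the quotient set $Q$ supplied by Lemma \ref{lem:Qlevelset}, produces a map $Q \ni \alpha \mapsto \mu_\alpha \in \mathcal{P}(X)$ satisfying the analogues of (1)--(3) for $\mu$, with strong consistency $\mu_\alpha(\QQ^{-1}(\alpha)) = 1$ for $\qq$-a.e.\ $\alpha$, i.e.\ $\mu_\alpha$ is concentrated on the equivalence class $R_u^{nb}(\alpha)$. I would then define
\[
\mm_\alpha := \frac{1}{f}\, \mu_\alpha, \qquad \alpha \in Q.
\]
Measurability of $\alpha \mapsto \mm_\alpha(B)$ for $\mm$-measurable $B$ follows from measurability of $\alpha \mapsto \mu_\alpha(B \cap \{f > 1/n\})$ via monotone convergence; strong consistency is inherited since dividing by the strictly positive function $f$ does not change the support. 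The disintegration identity (3) is obtained by testing against $g/f$ for Borel $g$, using \cite[Proposition 452F]{Fremlin4} to get
\[
\mm\llcorner_{\T_u^{nb}} = \int_Q \mm_\alpha \, \qq(d\alpha),
\]
as already announced in \eqref{E:disintmm}. The local uniform bound (4) is the key new content beyond the classical finite-mass case: given a compact $K \subset X$, Lemma \ref{lem:constrf} gives $\inf_K f > 0$, so
\[
\mm_\alpha(K) = \int_K \frac{1}{f} \, d\mu_\alpha \leq \frac{\mu_\alpha(K)}{\inf_K f} \leq \frac{1}{\inf_K f} =: C_K,
\]
uniformly in $\alpha$, which is exactly the bound \eqref{eq:boundmmalpha} recorded earlier.

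Finally, for $\qq$-essential uniqueness, suppose $\{\tilde\mm_\alpha\}_{\alpha\in Q}$ is another family satisfying (1)--(3) with respect to the same $\qq$. Then $\tilde\mu_\alpha := f \tilde\mm_\alpha$ yields a disintegration of $\mu = f \mm\llcorner_{\T_u^{nb}}$ strongly consistent with $\QQ$, so the essential uniqueness in \cite[452]{Fremlin4} forces $\tilde\mu_\alpha = \mu_\alpha$ for $\qq$-a.e.\ $\alpha$, and hence $\tilde\mm_\alpha = \mm_\alpha$ for $\qq$-a.e.\ $\alpha$ since $f > 0$. The main subtlety throughout is purely measure-theoretic: one must guarantee that $\QQ$ (or an equivalent selection of representatives $\bar Q \subset X$) is measurable with respect to a $\sigma$-algebra for which Fremlin's theorem applies. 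That issue, however, is already resolved by Lemma \ref{lem:Qlevelset}, which locally identifies $Q$ with level sets of $u$; once this is in hand, the passage from the finite probability $\mu$ back to the $\sigma$-finite $\mm\llcorner_{\T_u^{nb}}$ is a routine rescaling, with property (4) being the only point requiring the specific construction of $f$ in Lemma \ref{lem:constrf}.
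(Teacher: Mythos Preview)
Your proposal is correct and follows essentially the same route as the paper: the theorem is stated there as a summary of the discussion preceding it, which constructs $f$ via Lemma~\ref{lem:constrf}, sets $\mu = f\,\mm\llcorner_{\T_u^{nb}}$ and $\qq = \QQ_\sharp \mu$, applies Fremlin's Disintegration Theorem using the measurable quotient map of Lemma~\ref{lem:Qlevelset}, defines $\mm_\alpha = \mu_\alpha/f$, and reads off the uniform bound~(4) exactly as you do from $\inf_K f > 0$ (this is \eqref{eq:boundmmalpha}). The only point you might make slightly more explicit in the uniqueness step is that $f\,\tilde\mm_\alpha$ is indeed a \emph{probability} measure for $\qq$-a.e.\ $\alpha$ (needed to invoke Fremlin's uniqueness), which follows from property~(3) applied to $g=f$ together with $\qq = \QQ_\sharp(f\mm\llcorner_{\T_u^{nb}})$.
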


\subsection{Localization of Ricci bounds}\label{Ss:localsigma}

Under the additional assumption of a synthetic lower bound on the Ricci curvature, 
one can obtain regularity properties both on $\T_{u}^{nb}$ and on 
the conditional measures $\mm_{\alpha}$. 
As some of these results where obtained assuming $\mm(X) < \infty$, 
in what follows we review how to obtain the same regularity with no finiteness assumption on $\mm$. 
First of all recall that, for any $K\in \R$ and $N\in (1,\infty)$,  $\CD(K,N)$ implies $\MCP(K,N)$, which in turn implies that $\mm$ is $\sigma$-finite. Thus Theorem \ref{T:sigma-disint} can be applied.

\smallskip

\begin{lemma}\label{L:nullsetMCP}
Let $(X,\sfd,\mm)$ be an essentially non-branching m.m.s. with $\supp(\mm) = X$ and verifying $\MCP(K,N)$, for some $K\in \R, N\in (1,\infty)$. Then for any $1$-Lipschitz function $u : X \to \R$, it holds  $\mm(\T_u \setminus \T_u^{nb}) = 0$. 
\end{lemma}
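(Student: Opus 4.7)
The plan is to reduce to showing $\mm(A_+) = 0$ and to derive a contradiction by constructing an optimal dynamical plan that is not induced by a map, violating the uniqueness coming from $\MCP(K,N)$ plus essential non-branching.

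\textbf{Symmetry reduction.} Applying the argument to the $1$-Lipschitz function $-u$ swaps $A_+$ and $A_-$ (one checks $\Gamma_{-u} = \Gamma_u^{-1}$ and $R_{-u} = R_u$), so it is enough to prove $\mm(A_+) = 0$; this yields $\mm(\T_u \setminus \T_u^{nb}) = \mm(A_+ \cup A_-) = 0$.

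\textbf{Branching selection.} Suppose for contradiction that $\mm(A_+) > 0$. Since $\Gamma_u$ and $R_u$ are closed, $A_+$ is the projection of a Borel set and hence analytic, in particular universally measurable; by inner regularity of $\mm$ one finds a compact $K \subset A_+$ with $0 < \mm(K) < \infty$. Applying a Jankov--von Neumann measurable selection to the analytic multivalued map
\[
x \mapsto \bigl\{(z,w) \in \Gamma_u(x) \times \Gamma_u(x) \colon (z,w) \notin R_u,\ \sfd(x,z),\sfd(x,w) \geq \delta\bigr\}
\]
for a suitable $\delta > 0$ (possibly after shrinking $K$ to a Borel subset of positive measure), one obtains Borel maps $\eta_1, \eta_2 : K \to X$ with $(x, \eta_i(x)) \in \Gamma_u$, $\sfd(x, \eta_i(x)) \geq \delta$, and $(\eta_1(x), \eta_2(x)) \notin R_u$; in particular $\eta_1(x) \neq \eta_2(x)$ for every $x \in K$.

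\textbf{Contradiction via uniqueness.} Set $\mu_0 := \mm\llcorner_K / \mm(K)$ and $\mu_1^i := (\eta_i)_\sharp \mu_0$. Via Borel selection choose for every $x \in K$ a constant-speed geodesic $\gamma^i(x) \in \Geo(X)$ joining $x$ to $\eta_i(x)$, and form
\[
\boldsymbol{\gamma} := \tfrac12 (\gamma^1)_\sharp \mu_0 + \tfrac12 (\gamma^2)_\sharp \mu_0, \qquad \mu_1 := \tfrac12 \mu_1^1 + \tfrac12 \mu_1^2.
\]
Each geodesic $\gamma^i(x)$ lies in the $\sfd$-cyclically monotone set $\Gamma_u$, so the coupling $(\ee_0, \ee_1)_\sharp \boldsymbol{\gamma}$ is $W_2$-optimal and hence $\boldsymbol{\gamma} \in \Opt(\mu_0, \mu_1)$. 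Since $\mu_0 \ll \mm$ and $\supp(\mu_1) \subset \supp(\mm)$, the characterization of $\MCP(K,N)$ in essentially non-branching spaces recalled after Definition \ref{def:MCP} (cf.\ \cite[Proposition 9.1]{CMi16}) forces $\Opt(\mu_0, \mu_1)$ to be a singleton induced by a Borel map $S : X \to \Geo(X)$: disintegrating along $\ee_0$, its fibres are the Dirac masses $\delta_{S(x)}$ for $\mu_0$-a.e.\ $x$. But the disintegration of $\boldsymbol{\gamma}$ along $\ee_0$ assigns to each $x \in K$ the non-trivial convex combination $\tfrac12 \delta_{\gamma^1(x)} + \tfrac12 \delta_{\gamma^2(x)}$ of two distinct geodesics (they terminate at different endpoints), contradicting $\mm(K) > 0$.

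The main obstacle is the measurable selection step: one must verify analytic measurability of the branching structure $(z,w) \in \Gamma_u(x)^2 \setminus R_u$ uniformly with $\sfd(x,z), \sfd(x,w) \geq \delta$, so that the Jankov--von Neumann theorem provides Borel selectors $\eta_1, \eta_2$. Once these are at hand, the $\MCP$-plus-essential-non-branching uniqueness of optimal maps (together with the $\sfd$-cyclical monotonicity of $\Gamma_u$, which guarantees $W_2$-optimality of any plan concentrated on $\Gamma_u$) closes the argument with no further work.
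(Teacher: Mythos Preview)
Your strategy is precisely the one the paper invokes (it refers to \cite{cava:MongeRCD}, where the $\RCD$ hypothesis is used only through the uniqueness-of-optimal-maps property that \cite{CM16} provides under e.n.b.\ $\MCP(K,N)$). The reduction to $\mm(A_+)=0$, the measurable selection of branching witnesses, and the contradiction via the map-uniqueness of $\Opt(\mu_0,\mu_1)$ are all correct in outline.

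There is, however, a genuine gap at the $W_2$-optimality step. You write that because the coupling $(\ee_0,\ee_1)_\sharp\boldsymbol{\gamma}$ is concentrated on the $\sfd$-cyclically monotone set $\Gamma_u$, it is $W_2$-optimal. This inference is false: $\sfd$-cyclical monotonicity yields $W_1$-optimality, not $W_2$-optimality, and $\Gamma_u$ is in general \emph{not} $\sfd^2$-cyclically monotone (already on $\R$ with $u(x)=x$ the pairs $(2,0),(1,1)\in\Gamma_u$ violate it). Since the uniqueness result you quote from \cite[Proposition~9.1]{CMi16} is for $W_2$, your plan $\boldsymbol{\gamma}$ need not lie in $\Opt(\mu_0,\mu_1)$ as constructed, and the contradiction does not close.

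The fix is to normalise so that every point is transported the \emph{same} distance $t>0$. First arrange (by truncating the longer witness along its geodesic) that $u(\eta_1(x))=u(\eta_2(x))$, hence $\sfd(x,\eta_1(x))=\sfd(x,\eta_2(x))=:d(x)\geq\delta$, with $\eta_1(x)\neq\eta_2(x)$ (if they coincided, $(\eta_1(x),\eta_2(x))\in R_u$). One still needs a single $t$ for which the intermediate points at arc-length $t$ along the two geodesics differ on a set of positive measure; this follows from a Fubini argument using that, for $\mm$-a.e.\ $x$, the two geodesics from $x$ cannot separate and re-merge (Remark~\ref{rem:MCPDoubPoinc1}), so they differ on a nondegenerate interval of arc-lengths. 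Once all pairs move the same distance $t$, the plan has $W_2$-cost $t^2$, while Kantorovich duality with the $1$-Lipschitz potential $u$ gives $W_1(\mu_0,\mu_1)=t$, whence $W_2(\mu_0,\mu_1)^2\geq W_1(\mu_0,\mu_1)^2=t^2$; thus your plan is $W_2$-optimal and the contradiction with map-uniqueness goes through.
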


Lemma \ref{L:nullsetMCP} has been proved in \cite{cava:MongeRCD} for 
metric measure spaces $(X,\sfd,\mm)$ verifying $\RCD(K,N)$ with $N<\infty$
and $\supp(\mm) = X$. The  $\RCD(K,N)$ assumption was used in that proof only to have at disposal the following property: 
given $\mu_{0},\mu_{1} \in\mathcal{P}(X)$ with $\mu_{0} \ll \mm$, 
there exists a unique optimal dynamical plan for the $W_{2}$-distance 
and it is induced by a map. 
In \cite[Theorem 1.1]{CM16}  this property is also verified for an e.n.b. metric measure space verifying $\MCP(K,N)$ with $\supp(\mm) = X$, without any finiteness assumption on $\mm$. 
Hence Lemma \ref{L:nullsetMCP} can be proved following verbatim \cite{cava:MongeRCD}.   
\\

Building on top of \cite{CM16}, in \cite[Theorem 7.10]{CMi16} an additional information 
on the transport rays was proved: for $\qq$-a.e. $\alpha \in Q$ it holds: 
\begin{equation}\label{E:maximalray}
R_u(\alpha) = \overline{R_u^{nb}(\alpha)} \supset R_u^{nb}(\alpha) \supset \mathring{R}_u(\alpha) ,
\end{equation}
with the latter to be  interpreted as the relative interior. 
The additional assumption of $\mm(X) < \infty$ was used in the proof only to 
obtain the existence of a disintegration of $\mm$ strongly consistent with the 
non-branched equivalence relation. 
Hence from Theorem \ref{T:sigma-disint} also \eqref{E:maximalray} is valid in the present framework.

To conclude, we assert that the localization results for the 
synthetic Ricci curvature lower bounds $\MCP(K,N)$ and $\CD(K,N)$, with $
K,N \in \R$ and $N >1$, are valid also in our framework. \begin{itemize}
\item \emph{Localization of $\MCP(K,N)$}.
In \cite[Theorem 9.5]{biacava:streconv},   assuming non-branching and the $\MCP(K,N)$ condition, it is proved (adopting a slightly different notation) that for 
$\qq$-a.e. $\alpha \in Q$ it holds
$$
\mm_{\alpha} = h_{\alpha} \, \H^{1}\llcorner_{X_{\alpha}}, 
$$
where $\H^{1}$ denotes the one-dimensional Hausdorff measure.
Moreover, the one-dimensional metric measure space $(\bar X_{\alpha}, \sfd,\mm_{\alpha})$,
isomorphic  to $([0,D_{\alpha}], |\cdot|, h_{\alpha} \L^{1})$, is proved to verify $\MCP(K,N)$; here $\bar X_{\alpha}$ stands for the closure of the transport ray $X_{\alpha}$ with respect to $\sfd$. Note that 
$\bar X_{\alpha}$ might not be a subset of $\mathcal{T}_{u}^{nb}$ because of its endpoints but this will not affect any argument as
$\mm_{\alpha}(\bar X_{\alpha} \setminus X_{\alpha}) = 0$.
No finiteness assumption was assumed in 
\cite[Theorem 9.5]{biacava:streconv} and, since here we  restrict to the non-branched transport set, the arguments can be carried over  to give the same statement.

\item  \emph{Localization of $\CD(K,N)$}.
 The localization of  $\CD(K,N)$ was  proved in \cite[Theorem 5.1]{CM16} under the assumption  $\mm(X)= 1$. Nevertheless, in \cite{CM16} the $\CD(K,N)$ condition was assumed to be valid only locally, i.e. 
the space was assumed to satisfy $\CD_{loc}(K,N)$. 
In particular the proof first shows that the one-dimensional metric measure space 
$(X_{\alpha},\sfd,\mm_{\alpha})$ verifies $\CD_{loc}(K,N)$ for $\qq$-a.e. $\alpha \in Q$ and then, thanks to the local-to-global property of one-dimensional $\CD(K,N)$ condition, concludes with the full claim. 
Hence, if $(X,\sfd,\mm)$ is e.n.b. and verifies $\CD(K,N)$, since by Theorem \ref{T:sigma-disint} a disintegration formula is at disposal and the reference measure $\mm$ is locally finite,  one can repeat the arguments in  \cite[Theorem 5.1]{CM16}  and obtain that  the one-dimensional metric measure space $(\bar X_{\alpha}, \sfd,\mm_{\alpha})$,
isomorphic  to $([0,D_{\alpha}], |\cdot|, h_{\alpha} \L^{1})$, verifies $\CD(K,N)$. 
\end{itemize}

We summarize the above discussion in the next statement.
\begin{theorem}\label{T:sigmadisint}
Let $(X,\sfd,\mm)$ be an essentially non-branching m.m.s. with $\supp(\mm) = X$ and satisfying $\MCP(K,N)$, for some $K\in \R, N\in (1,\infty)$. 
\\Then, for any $1$-Lipschitz function $u : X \to \R$, 
there exists a disintegration of $\mm$ strongly consistent with $R_{u}^{nb}$ verifying 
$$
\mm\llcorner_{\T_{u}^{nb}} = \int_{Q} \mm_{\alpha} \, \qq(d\alpha), \qquad \qq(Q) = 1.
$$
Moreover,  for $\qq$-a.e. $\alpha$, $\mm_{\alpha}$ is a Radon measure  with $\mm_{\alpha}=h_{\alpha} \cH^{1}\llcorner_{X_{\alpha}} \ll \cH^{1}\llcorner_{X_{\alpha}}$
and $(\bar X_{\alpha},\sfd,\mm_{\alpha})$ verifies $\MCP(K,N)$. 
\\If, additionally,  $(X,\sfd,\mm)$ satisfies $\CD_{loc}(K,N)$, then $h_{\alpha}$ is a $\CD(K,N)$ density on $X_{\alpha}$ for $\qq$-a.e. $\alpha$. 
\end{theorem}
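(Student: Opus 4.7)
The plan is to assemble the statement from the three ingredients that the excerpt has carefully collected in the paragraphs just before it, making sure that the passage from the $\mm(X)<\infty$ framework of the cited sources to the present $\sigma$-finite setting really goes through.

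First I would observe that $\MCP(K,N)$ entails local doubling (Remark \ref{rem:MCPDoubPoinc1}), so $\mm$ is automatically $\sigma$-finite; together with the standing assumption that $(X,\sfd)$ is geodesic and proper, this puts us in the hypotheses of Theorem \ref{T:sigma-disint}. That theorem, applied to the given $1$-Lipschitz $u$, immediately yields the disintegration
\begin{equation*}
\mm\llcorner_{\T_u^{nb}} \;=\; \int_Q \mm_\alpha \, \qq(d\alpha), \qquad \qq(Q)=1,
\end{equation*}
strongly consistent with $\QQ$ (equivalently with $R_u^{nb}$), together with the local uniform bound \eqref{eq:boundmmalpha} on the fibre measures. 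This takes care of the bare disintegration claim; note that Lemma \ref{L:nullsetMCP} ensures $\mm(\T_u\setminus\T_u^{nb})=0$, so no information is lost by restricting to the non-branched transport set.

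Next I would upgrade each $\mm_\alpha$ to an $\MCP(K,N)$ density on the ray $X_\alpha$. The scheme is the one of \cite[Theorem 9.5]{biacava:streconv}: take a compactly supported $\mu_0\ll\mm$ sitting in $\T_u^{nb}$, transport it towards a Dirac mass at the far end of a family of rays by means of the unique optimal map provided by essential non-branching together with $\MCP(K,N)$ (this is exactly \cite[Theorem 1.1]{CM16}, which, as the excerpt stresses, works without any finiteness hypothesis on $\mm$), apply the pointwise one-dimensional inequality \eqref{E:MCP-density}, and then invoke the $\qq$-essential uniqueness of the disintegration to transfer the estimate to $\mm_\alpha$. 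Combined with the one-dimensional characterisation from Subsection \ref{Ss:MCP1d}, this yields $\mm_\alpha=h_\alpha\cH^1\llcorner_{X_\alpha}$ with $h_\alpha$ an $\MCP(K,N)$ density, and in particular $(\bar X_\alpha,\sfd,\mm_\alpha)$ is $\MCP(K,N)$ (the closure causes no trouble since $\mm_\alpha(\bar X_\alpha\setminus X_\alpha)=0$ by \eqref{E:maximalray}).

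Finally, the $\CD_{loc}(K,N)$ refinement proceeds exactly as in \cite[Theorem 5.1]{CM16}: using the stronger convexity inequality with both $\tau$-distortion coefficients and testing with compactly supported probabilities built out of $\mu$ from \eqref{eq:defmu}, one deduces that each $h_\alpha$ is a $\CD_{loc}(K,N)$ density on the interval $X_\alpha$, and then the one-dimensional local-to-global property recalled in Subsection \ref{Ss:MCP1d} (and used already in \cite{CMi16}) upgrades $\CD_{loc}(K,N)$ to $\CD(K,N)$. The main obstacle is really only bookkeeping: one must verify that the finite-mass hypothesis in the cited references is used solely to produce the ambient disintegration and the optimal transport map; both ingredients are now supplied by Theorem \ref{T:sigma-disint} and \cite[Theorem 1.1]{CM16} in the $\sigma$-finite setting, so the arguments carry over verbatim once one cuts $X$ into countably many pieces of finite $\mm$-mass compatible with the partition and passes to the union.
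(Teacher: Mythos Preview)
Your proposal is correct and mirrors the paper's own argument: the theorem is presented there as a summary of the preceding discussion in Section \ref{Ss:localsigma}, which assembles the disintegration from Theorem \ref{T:sigma-disint}, invokes \cite[Theorem 9.5]{biacava:streconv} (via \cite[Theorem 1.1]{CM16}) for the $\MCP$ localization, and \cite[Theorem 5.1]{CM16} plus the one-dimensional local-to-global for the $\CD$ refinement, the key observation being exactly your bookkeeping point that the finiteness hypothesis in those references was only used to obtain the disintegration and the optimal map.
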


It is worth recalling that, once we know that $(\bar  X_{\alpha},\sfd,\mm_{\alpha})$ verifies $\MCP(K,N)$, it is straightforward to get that   $\mm_{\alpha} = h_{\alpha} \H^{1}\llcorner_{X_{\alpha}}$
for some density $h_{\alpha}$. We refer to Section \ref{Ss:MCP1d}
for all the properties verified by one dimensional metric measure spaces verifying lower Ricci curvature bounds. 
\\We conclude the section by specialising the results to the smooth framework of Riemannian manifolds (cf. \cite{klartag}).
\begin{corollary}\label{C:sigmadisintSmooth}
Let $(M,g)$ be a complete $2\leq N$-dimensional Riemannian manifold, and let $\mm$ denote its Riemannian volume measure.   
\\Then, for any $1$-Lipschitz function $u : M \to \R$, 
there exists a disintegration of $\mm$ strongly consistent with $R_{u}^{nb}$ verifying 
$$
\mm\llcorner_{\T_{u}^{nb}} = \int_{Q} \mm_{\alpha} \, \qq(d\alpha), \qquad \qq(Q) = 1.
$$
Moreover, 
\begin{enumerate}
\item For $\qq$-a.e. $\alpha$, $\mm_{\alpha}$ is a Radon measure  with $\mm_{\alpha}=h_{\alpha} \cH^{1}\llcorner_{X_{\alpha}} \ll \cH^{1}\llcorner_{X_{\alpha}}$;
\item For every $x\in M$ there exist a (compact, geodesically convex) neighbourhood $U$ of $x$ and $\bar K\in \R$ such that $h_{\alpha}\llcorner_{U}$ is 
a $\CD(\bar K,N)$ density on $X_{\alpha}\cap U$, for $\qq$-a.e. $\alpha$;
\item If, additionally,  $\Ric_{g}\geq K g$, for some $K\in \R$, then $h_{\alpha}$ is a $\CD(K,N)$ density on $X_{\alpha}$ for $\qq$-a.e. $\alpha$. 
\end{enumerate}
\end{corollary}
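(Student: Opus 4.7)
The plan is to obtain the corollary as a specialisation of Theorem \ref{T:sigmadisint}, combined with standard local facts about smooth Riemannian manifolds. By Hopf-Rinow, $(M,\sfd_g)$ is a proper geodesic metric space, $\mm$ is a $\sigma$-finite Radon measure of full support, and smooth geodesics are determined by their initial velocity, so $M$ is non-branching (in particular essentially non-branching). Theorem \ref{T:sigma-disint} therefore applies to any $1$-Lipschitz $u : M \to \R$ and yields the disintegration $\mm\llcorner_{\T_u^{nb}} = \int_Q \mm_\alpha\, \qq(d\alpha)$ with Radon $\mm_\alpha$ concentrated on $X_\alpha = R_u^{nb}(\alpha)$.

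For item $(3)$, the hypothesis $\Ric_g \geq K g$ together with the characterisation of $\CD(K,N)$ for weighted Riemannian manifolds recalled right above the statement implies that $(M,\sfd_g,\mm)$ verifies $\CD(K,N)$ globally. Combined with essential non-branching, Theorem \ref{T:sigmadisint} gives $\mm_\alpha = h_\alpha \cH^1\llcorner_{X_\alpha}$ with $h_\alpha$ a $\CD(K,N)$-density on $X_\alpha$ for $\qq$-a.e. $\alpha$.

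For items $(1)$ and $(2)$, the difficulty is that $(M,g)$ need not satisfy any global $\MCP(K,N)$ or $\CD(K,N)$ condition. One argues locally: by continuity of $\Ric_g$, every $x \in M$ admits a compact geodesically convex neighbourhood $U$ and $\bar K = \bar K(U) \in \R$ with $\Ric_g \geq \bar K g$ on $U$, so that $U$ inherits a local $\CD(\bar K, N)$ structure. Since the localisation machinery of \cite{CM16} used in the proof of Theorem \ref{T:sigmadisint} is ray-by-ray and only exploits $W_2$-geodesics supported inside $U$, it can be executed on the restriction of $u$ to $U$ (whose transport rays are subsegments of the global ones, by geodesic convexity of $U$), yielding $\mm_\alpha \llcorner_U = h_\alpha \cH^1\llcorner_{X_\alpha \cap U}$ with $h_\alpha \llcorner_U$ a $\CD(\bar K, N)$-density. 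Covering $M$ by countably many such $U$ (possible by separability) and using essential uniqueness of the disintegration upgrades these local statements to (1) and (2) on all of $X_\alpha$.

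The main obstacle is precisely this localisation in the absence of a global lower curvature bound. The cleanest alternative is to invoke directly Klartag's needle decomposition \cite{klartag} for smooth Riemannian manifolds, which constructs $h_\alpha$ explicitly as the Jacobian of the exponential map along the ray and deduces the local $\CD(\bar K,N)$-density property from standard Jacobi field comparisons, sidestepping the metric-measure localisation argument entirely.
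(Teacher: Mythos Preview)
Your proposal is correct and follows essentially the same route as the paper: apply Theorem \ref{T:sigma-disint} to get the disintegration, then use that every point admits a compact geodesically convex neighbourhood $U$ on which $\Ric_g \geq \bar K g$ so that $(U,\sfd,\mm\llcorner_U)$ is an e.n.b. $\CD(\bar K,N)$ space, apply Theorem \ref{T:sigmadisint} on $U$, and match the resulting conditional measures with the global ones via $\qq$-essential uniqueness of the disintegration. The paper also cites Klartag \cite{klartag} as the smooth alternative you mention.
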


\begin{proof}
The corollary follows directly from Theorem  \ref{T:sigma-disint} and Theorem \ref{T:sigmadisint} reasoning as follows. 
A complete Riemannian manifold is geodesic and proper hence 
Theorem  \ref{T:sigma-disint} implies the first part of the claim:
$$
\mm\llcorner_{\T_{u}^{nb}} = \int_{Q} \mm_{\alpha} \, \qq(d\alpha), \qquad \qq(Q) = 1,
$$
and for $\qq$-a.e. $\alpha$, $\mm_{\alpha}$ is a Radon measure  
with $\mm_{\alpha}=h_{\alpha} \cH^{1}\llcorner_{X_{\alpha}} \ll \cH^{1}\llcorner_{X_{\alpha}}$.

Moreover every point $x\in M$ admits a geodesically convex compact 
neighbourhood $U$ where, by compactness, the Ricci tensor is bounded below by some $\bar K\in \R$.  
In particular $(U,\sfd,\mm\llcorner_{U})$ is an essentially non-branching $\CD(\bar K,N)$ space and thus we can apply Theorem \ref{T:sigmadisint} to 
$(U,\sfd,\mm\llcorner_{U})$. 
Since the partition associated to $u : U \to \R$ is given by the restriction of 
transport rays, the quotient measure of $\mm$ restricted to $U\cap\T_{u}^{nb}$ will be absolutely continuous with respect to $\qq$; hence by $\qq$-essential
uniqueness of disintegration we deduce 
that $h_{\alpha}\llcorner_{U}$ is 
a $\CD(\bar K,N)$ density on $X_{\alpha}\cap U$, for $\qq$-a.e. $\alpha$.
Last point is already contained in Theorem \ref{T:sigmadisint}.
\end{proof}

\section{Representation formula for the Laplacian}

From now on we will  assume $(X,\sfd,\mm)$ to be an e.n.b. metric measure space verifying $\MCP(K,N)$, 
for some $K\in \R$ and $N\in (1,\infty)$.  In particular $(X,\sfd,\mm)$ is locally doubling \& Poincar\'e space (recall Remark \ref{rem:MCPDoubPoinc1}).

We will obtain an explicit representation formula for the Laplacian for a general $1$-Lipschitz function
$$
u : X \to \R, \qquad |u(x) - u(y)| \leq \sfd(x,y),
$$
assuming a mild regularity property on
$\T_{u}$, the associated transport set defined in Section \ref{S:transportset}.

A distinguished role will be played by a particular family of 1-Lipschitz functions, namely the so-called signed distance functions. Such a class played a key role in the recent proof  \cite{CMi16} of the local-to-global property of $\CD(K,N)$ under the e.n.b. assumption. 

\begin{definition}[Signed Distance Function]
Given a continuous function $v : (X,\sfd) \to \R$ so that $\set{v = 0} \neq \emptyset$, the function:
\begin{equation}\label{E:levelsets}
d_{v} : X \to \R, \qquad d_{v}(x) : = \sfd(x, \{ v = 0 \} )\; \sgn(v),
\end{equation}
is called the signed distance function (from the zero-level set of $v$).  
\end{definition}
With a slight abuse  of notation, we denote with $\sfd$ both the distance between points and the induced distance between sets; more precisely 
$$
 \sfd(x, \{ v = 0 \} ):=\inf \left\{  \sfd(x,y)\,:\, y\in  \{ v = 0 \} \right\}.
$$

\begin{lemma} \label{lem:df-Lip}
$d_v$ is $1$-Lipschitz on $\set{v \geq 0}$ and $\set{v\leq 0}$. If $(X,\sfd)$ is a length space, then $d_v$ is $1$-Lipschitz on the entire $X$. 
\end{lemma}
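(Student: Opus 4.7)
The plan is to split into three cases according to the signs of $v(x)$ and $v(y)$, using the standard Lipschitz property of distance from a set in the first two cases and the length-space hypothesis only in the third.

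First, suppose $x,y \in \{v \geq 0\}$, so $d_{v}(x) = \sfd(x, \{v = 0\})$ and $d_{v}(y) = \sfd(y, \{v = 0\})$ (where points of $\{v=0\}$ give $d_v = 0$, consistent with either sign convention). The function $z \mapsto \sfd(z, \{v = 0\})$ is $1$-Lipschitz on all of $X$ by the triangle inequality, hence $|d_v(x) - d_v(y)| \leq \sfd(x,y)$. The same reasoning, applied to $z \mapsto -\sfd(z, \{v = 0\})$, handles the case $x,y \in \{v \leq 0\}$. These two cases together establish the first assertion.

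Next, assume $(X,\sfd)$ is a length space and take $x \in \{v > 0\}$, $y \in \{v < 0\}$; then $d_v(x) > 0 > d_v(y)$ and
\[
|d_v(x) - d_v(y)| = \sfd(x, \{v=0\}) + \sfd(y, \{v=0\}).
\]
The key step is to bound the right-hand side by $\sfd(x,y)$. For this I would fix $\varepsilon > 0$ and, using the length-space property, choose a continuous curve $\gamma : [0,1] \to X$ from $x$ to $y$ whose length satisfies $\mathrm{length}(\gamma) \leq \sfd(x,y) + \varepsilon$. Since $v \circ \gamma$ is continuous with $v(\gamma(0)) > 0 > v(\gamma(1))$, the intermediate value theorem yields $t^{\ast} \in (0,1)$ with $v(\gamma(t^{\ast})) = 0$. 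Then
\[
\sfd(x,\{v = 0\}) + \sfd(y,\{v = 0\}) \leq \sfd(x,\gamma(t^{\ast})) + \sfd(\gamma(t^{\ast}),y) \leq \mathrm{length}(\gamma) \leq \sfd(x,y) + \varepsilon,
\]
and letting $\varepsilon \downarrow 0$ gives the desired inequality. The remaining mixed cases, where one of $v(x), v(y)$ vanishes, are covered by the first two cases since such a point lies in both $\{v \geq 0\}$ and $\{v \leq 0\}$.

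The only substantive step is the third one, where the length-space assumption is essential: without it one cannot force a path between $x$ and $y$ to pass arbitrarily close (in the sense of arc-length) to $\{v = 0\}$, and simple examples (e.g.\ two disks joined by a long detour around $\{v=0\}$) show that the global $1$-Lipschitz bound may fail in a general geodesic-disconnected metric space. Everything else is a direct consequence of the triangle inequality.
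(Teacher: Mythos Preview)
Your proof is correct and is the standard argument for this fact. The paper does not actually give its own proof of this lemma; it simply refers to \cite[Lemma 8.4]{CMi16}, and the argument there is essentially the one you have written: the distance-to-a-set function is $1$-Lipschitz by the triangle inequality, and in the mixed-sign case one uses a nearly-length-minimizing curve together with the intermediate value theorem to find a point on $\{v=0\}$ along the way.
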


For the proof we refer to \cite[Lemma 8.4]{CMi16}.

\smallskip

We now fix once and for all a $1$-Lipschitz function $u :X \to \R$. In order not to  have empty statements, throughout the section we will assume that $\mm(\T_{u})>0$.

\subsection{Representing the gradient of $-u$}\label{Ss:representu}
The translation along $\T_{u}^{nb}$ is defined as:
$$
g : \R \times \T_{u}^{nb} \to  \T_{u}^{nb} \subset X, \qquad \gr (g) = \{ (t,x,y) \in  \R \times R^{nb}_{u} \colon u(x) - u(y) = t \}.
$$
Since $R^{nb}_{u}$ is Borel, the same applies to $g$, while $\dom (g)  = P_{12} (\gr (g))$ is analytic.
We will write $g_{t}$ for $g(t,\cdot)$. Notice that 
$$
\gr (g_{t}) = \{ (x,y) \in R^{nb}_{u} \colon u(x) - u(y) = t \}
$$
is Borel as well and thus for $t \in \R$,
$$
\dom (g_{t})= \T^{nb}_{u}(t):=\{x \in \T^{nb}_{u}\, :\, \exists y \in R^{nb}_{u}(x) \text{ with } u(x)-u(y)=t \}
$$
is an analytic set. The rough intuitive picture is of course that $g_{t}$ plays the role of negative gradient flow of $u$, restricted to the points of maximal slope 1.
In order to handle the case when $\mm(\T^{nb}_{u})=+\infty$, it is useful to introduce the following definition. 
\begin{definition}
A measurable subset $E \subset X$   is said \emph{$R^{nb}_{u}$-convex} 
if for any $x \in \T_{u}^{nb}$ the set $E \cap R_{u}^{nb}(x)$ is isometric to an interval.
\end{definition}

For every bounded $R^{nb}_{u}$-convex subset $E \subset \T^{nb}_{u}(2\ve)$ with $\mm(E)>0$,  consider the function $\Lambda : E  \to C([0,1];X)$ defined by
$$
[0,1] \ni \tau \mapsto \Lambda(x)_{\tau} : = 
\begin{cases}
g_{\tau}(x), & \tau \in [0,\ve], \\
g_{\ve}(x), & \tau \in [\ve, 1],
\end{cases}
$$
and set
\begin{equation}\label{E:planm} 
\pi_{E} : =\frac{1} {\mm(E)} \Lambda_{\sharp} \mm\llcorner_{E}.
\end{equation}
Note  that 
\begin{equation}\label{E:pi}
\mm(E) (\ee_{\tau})_{\sharp} \pi_{E}= (\ee_{\tau} \circ  \Lambda)_{\sharp} \mm\llcorner_{E}  = 
\begin{cases}
(g_{\tau})_{\sharp} \mm \llcorner_{E}=: \mm_{E}^{\tau}  & \tau \in [0,\ve], \\
(g_{\ve})_{\sharp} \mm \llcorner_{E}=: \mm_{E}^{\ve}  & \tau \in [\ve,1].
\end{cases}
\end{equation}
The rough intuitive idea is of course that $\mm_{E}^{\tau}$ is the push forward of $\mm \llcorner_{E} $ via the  negative gradient flow of  $u$ at time $\tau$.

\begin{proposition}\label{P:Krepresents}
Let $(X,\sfd,\mm)$ be an e.n.b. metric measure space verifying $\MCP(K,N)$ and $u$ be as before.
For every bounded $R^{nb}_{u}$-convex subset $E \subset \T^{nb}_{u}(2\ve)$ with $\mm(E)>0$,
the measure $\pi_{E}$ defined in \eqref{E:planm} is a test plan representing the gradient of $-u$ (see Definition \ref{D:plansgradient}).
\end{proposition}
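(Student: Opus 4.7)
The plan is to verify the three conditions for $\pi_{E}$ to represent the gradient of $-u$: bounded compression and finite kinetic energy (so that $\pi_{E}$ is a test plan per Definition \ref{D:testplan}), together with the asymptotic inequality of Definition \ref{D:plansgradient}. The common thread is the identity $u(g_{\tau}(x)) = u(x) - \tau$, valid on $\T_{u}^{nb}$ whenever $g_{\tau}(x)$ is defined, which both pins down the action of the flow on the values of $u$ and forces $\tau \mapsto g_{\tau}(x)$ to be a unit-speed geodesic arc of length $\ve$.

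The main technical step is bounded compression. I would argue ray-by-ray using the disintegration $\mm\llcorner_{\T_{u}^{nb}} = \int_{Q} h_{\alpha}\H^{1}\llcorner_{X_{\alpha}}\, \qq(d\alpha)$ of Theorem \ref{T:sigmadisint}. Since $g_{\tau}$ preserves each ray and acts on it as an isometric translation, a direct change-of-variables shows that the density of $(g_{\tau})_{\sharp}(\mm_{\alpha}\llcorner_{E})$ with respect to $\mm_{\alpha}$ at a point $y$ equals $h_{\alpha}(g_{-\tau}(y))/h_{\alpha}(y)$ times the indicator that $g_{-\tau}(y) \in E$. The hypothesis $E \subset \T_{u}^{nb}(2\ve)$ ensures that, for $\tau \in [0,\ve]$, both $y$ and $g_{-\tau}(y)$ lie at arc-length distance at least $\ve$ from the endpoint of $X_{\alpha}$ towards which $g_{\tau}$ flows (so the shifted point does not fall off the ray), while boundedness of $E$ confines the relevant portion of each ray to a fixed compact set. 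Invoking the one-dimensional $\MCP(K,N)$ ratio bound \eqref{E:MCPdef2} (equivalently the logarithmic-derivative estimate \eqref{E:logder}) on each $h_{\alpha}$ then yields a pointwise bound $h_{\alpha}(g_{-\tau}(y))/h_{\alpha}(y) \leq C$ with $C = C(K,N,\ve,E)$ uniform in $\alpha$ and $\tau$; integrating against $\qq$ gives $(g_{\tau})_{\sharp}(\mm\llcorner_{E}) \leq C\,\mm$, and hence $(\ee_{\tau})_{\sharp}\pi_{E} \leq \frac{C}{\mm(E)}\mm$ for every $\tau \in [0,1]$. This is the main obstacle of the proof; the synthetic Ricci bound enters precisely through this one-dimensional density estimate on the conditional measures.

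Finite kinetic energy and the representation inequality are then essentially free. For $\gamma = \Lambda(x)$ the metric derivative equals $1$ on $(0,\ve)$, since $\tau \mapsto g_{\tau}(x)$ is a unit-speed geodesic arc, and $0$ on $(\ve,1)$, so $\int_{0}^{1}|\dot\gamma_{\tau}|^{2}\,d\tau = \ve$ uniformly in $x\in E$. For the asymptotic inequality, the identity $u\circ g_{\tau} = u - \tau$ gives $\int \frac{-u(\gamma_{t}) + u(\gamma_{0})}{t}\,\pi_{E}(d\gamma) = 1$ for every $t \in (0,\ve]$, so the left-hand liminf equals $1$. On the right, $|D(-u)| = |Du| \leq 1$ pointwise because $u$ is $1$-Lipschitz and $(\ee_{0})_{\sharp}\pi_{E} = \mm\llcorner_{E}/\mm(E)$ is a probability, so the first term is bounded by $\tfrac{1}{2}$; the kinetic-energy computation above gives $\tfrac{1}{t}\int\int_{0}^{t}|\dot\gamma_{s}|^{2}\,ds\,\pi_{E}(d\gamma) = 1$ for $t \in (0,\ve]$, contributing a further $\tfrac{1}{2}$. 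Thus the right-hand side is at most $1$, matching the left-hand side and completing the verification.
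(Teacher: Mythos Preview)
Your proof is correct and follows essentially the same route as the paper: disintegrate along rays, use the one-dimensional $\MCP(K,N)$ ratio bound \eqref{E:MCPdef2} together with $E\subset \T_u^{nb}(2\ve)$ to get $h_\alpha(g_{-\tau}(y))/h_\alpha(y)\le C_\ve$ uniformly in $\alpha$ and $\tau\in[0,\ve]$ for bounded compression, and then observe that $u\circ g_\tau = u-\tau$ makes both sides of the representation inequality equal to $1$. The only cosmetic difference is that the paper records $C_\ve$ as depending on $K,N,\ve$ alone (the lower bound $\sfd(y,b_\alpha)\ge\ve$ already suffices for the ratio estimate, without invoking boundedness of $E$; boundedness is used only to ensure $\mm(E)<\infty$).
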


\begin{proof}
Fix $t\in [0,\ve]$. First of all write
\begin{equation}\label{eq:etshPi}
\mm(E) \, (\ee_{t})_{\sharp}\pi_{E}=\mm_{E}^{t}= \int_{Q} (g_{t})_{\sharp}\mm_{\alpha} \llcorner_{E} \, \qq(d\alpha).
\end{equation}
Since $\mm_{\alpha}\llcorner_{E}=h_{\alpha} \, \cH^{1}\llcorner_{X_{\alpha}\cap E}$,  we have
\begin{equation}\label{eq:gtmalpha}
(g_{t})_{\sharp}\mm_{\alpha}\llcorner_{E} = \frac{h_{\alpha} \circ g_{-t}}{h_{\alpha}} \mm_{\alpha}\llcorner_{g_{t}(E)}.
\end{equation}
Identifying $X_{\alpha}\cap (\cup_{t\in [0,\ve]} g_{t}(E))$ with an  interval  $[a_{\alpha}, b_{\alpha}]\subset \R$ (for the sake of the argument we assume the interval to be closed, but all the other cases are completely analogous), from  \eqref{E:MCPdef2}, for $x \in [a_{\alpha}+t, b_{\alpha} - 2\ve +t]$ and $t \leq\ve$ it holds
\begin{equation}\label{eq:halphaMCPproof}
\frac{h_{\alpha} (x -t ) }{h_{\alpha} (x)} \leq \left[ \frac{s_{K/(N-1 )}(b_{\alpha} - x + t)  }{s_{K/(N-1 )}(b_{\alpha}-x)} \right]^{N-1} \leq C_{\ve}, \; \text{for all $x \in [a_{\alpha}+t, b_{\alpha} - 2\ve +t]$ and $t \leq\ve$}
\end{equation}
where the last inequality follows from the fact that $b_{\alpha}-x\geq 2 \ve-t\geq \ve>0$.  We stress that $C_{\ve}>0$ is independent of $\alpha\in Q$.
The combination of \eqref{eq:etshPi}, \eqref{eq:gtmalpha} and \eqref{eq:halphaMCPproof} gives that  $(\ee_{t})_{\sharp} \pi \leq \frac{C_{\ve}}{\mm(E)} \mm$ for all $t\in [0,1]$, i.e.  $\pi_{E}$ has bounded compression. 
Moreover since $\mm(E) < \infty$, and $|\dot \gamma|=1$  for $\pi$-a.e. $\gamma$, it follows that $\pi_{E}$ is a test plan (Definition \ref{D:testplan}). 

We now prove that $\pi_{E}$ represents the gradient of $-u$. Since by construction $u(x) - u(g_{\tau}(x))=\tau$ for $\mm\llcorner  E$-a.e. $x$, we have that
$$
\liminf_{\tau \to 0} \int  \frac{u(\gamma_{0}) - u(\gamma_{\tau})  }{\tau} \,\pi_{E}(d\gamma) = \frac{1}{\mm(E) }
\liminf_{\tau \to 0} \int_{E}  \frac{u(x) - u(g_{\tau}(x))  }{\tau} \,\mm(dx)  = 1.
$$
Hence the claim (recall Definition \ref{D:plansgradient}) follows by the fact that the  $1$-Lipschitz regularity of $u$ implies $|Du|\leq 1$ $\mm$-a.e. and thus
$$
1  \geq \frac{1}{2 \mm(E)}\int_{\T^{nb}_{u}(2\ve)} |D u|^{2}(x) \mm(dx)  + \frac{1}{2} \pi_{E}(C([0,1];X)).
$$

\end{proof}

In the next statement and in the rest of the paper, we will often consider the restriction of a  Lipschitz function $f$ to some transport ray $R_{u}^{nb}(\alpha)$ giving a real variable Lipschitz function: 
$[a_{\alpha},b_{\alpha}] \ni t\mapsto f( g (t,a_{\alpha}))$. 
It will make sense then to compute the $t$-derivative of the previous map: whenever it exists, we will use the following notation 
\begin{equation}\label{eq:deff'}
f'(x) : = \lim_{t \to 0} \frac{f(g(t,x)) -f(x) }{t}.
\end{equation}
Note that $f'$ is roughly the directional derivative of $f$ ``in the direction of $-\nabla u$''. 
Observe that, if $(X,\sfd, \mm)$ is $\MCP(K,N)$ e.n.b.,  for every $f\in \LIP(X)$ the quantity $f'$ is well defined $\mm$-a.e. on $\T_{u}$.

\begin{theorem}\label{T:Laplacerepresentation}
Let $(X,\sfd,\mm)$ be an e.n.b. metric measure space verifying $\MCP(K,N)$ and $u$ be as before. 
Then for any Lipschitz function $f : X \to \R$ it holds
\begin{equation}\label{E:LaplaceMCP}
D^{-}f (-\nabla u) \leq f' \leq D^{+}f (-\nabla u),  \quad \mm\text{-a.e. on } \T_{u}.
\end{equation}

\end{theorem}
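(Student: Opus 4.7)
The plan is to combine the horizontal-vertical duality of Theorem \ref{T:verticalhorizontal} with the test plans built in Proposition \ref{P:Krepresents}, and then to localise the resulting integrated inequality via the disintegration of Theorem \ref{T:sigmadisint}. By Lemma \ref{L:nullsetMCP} it suffices to prove the inequality on $\T_u^{nb}$, which is the countable union, over $\varepsilon\in \Q_{>0}$, of $\T_u^{nb}(2\varepsilon)$, up to the $\mm$-null set of initial points. The quantity $f'(x)$ from \eqref{eq:deff'} is itself well-defined $\mm$-a.e.\ on $\T_u^{nb}$: each ray $X_\alpha$ is isometric to an interval, $f|_{X_\alpha}$ is Lipschitz hence 1D-differentiable $\H^1$-a.e., and this transfers to $\mm$-a.e.\ via the disintegration $\mm\llcorner_{\T_u^{nb}}=\int_Q h_\alpha \H^1\llcorner_{X_\alpha}\,\qq(d\alpha)$.

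Fix $\varepsilon>0$ and a bounded $R_u^{nb}$-convex set $E\subset \T_u^{nb}(2\varepsilon)$ with $\mm(E)>0$. Proposition \ref{P:Krepresents} yields a test plan $\pi_E$ representing the gradient of $-u$ with $(\ee_0)_\sharp\pi_E=\mm\llcorner_E/\mm(E)$ (since $g_0=\id$ on $\T_u^{nb}$) and $\gamma_t=g_t(\gamma_0)$ for all $t\in[0,\varepsilon]$ and $\pi_E$-a.e.\ $\gamma$. Theorem \ref{T:verticalhorizontal}, applied with $-u$ in place of $u$, then reads
\[
\int_E D^{+}f(-\nabla u)\,\mm \;\geq\; \mm(E)\,\limsup_{t\to 0}\int \frac{f(\gamma_t)-f(\gamma_0)}{t}\,\pi_E(d\gamma) \;\geq\; \mm(E)\,\liminf_{t\to 0}\int \frac{f(\gamma_t)-f(\gamma_0)}{t}\,\pi_E(d\gamma) \;\geq\; \int_E D^{-}f(-\nabla u)\,\mm.
\]
The middle integrals equal $\int_E \tfrac{f(g_t(x))-f(x)}{t}\,\mm(dx)$; their integrand converges $\mm$-a.e.\ to $f'(x)$ and is uniformly bounded by $\Lip(f)$ because $\sfd(g_t(x),x)=t$. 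Dominated convergence collapses the $\limsup$ and $\liminf$ to the common value $\int_E f'\,\mm$, so dividing by $\mm(E)$ one obtains the integrated bound
\[
\int_E D^{-}f(-\nabla u)\,\mm \;\leq\; \int_E f'\,\mm \;\leq\; \int_E D^{+}f(-\nabla u)\,\mm.
\]

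To pass to the pointwise $\mm$-a.e.\ bound, I would vary $E$ through a rich class of $R_u^{nb}$-convex sets of product type: using Lemma \ref{lem:Qlevelset}, fix one of the level-set pieces $Q_n\subset u^{-1}(l_n)$, an arbitrary bounded Borel $A\subset Q_n$, and a short interval $[s,t]\ni l_n$, and set $E(A,s,t):=\{x\in \T_u^{nb}(2\varepsilon):\QQ(x)\in A,\,u(x)\in[s,t]\}$. Such $E(A,s,t)$ are $R_u^{nb}$-convex because $u$ is affine with slope $\pm 1$ along each ray, and bounded because $\sfd(x,\QQ(x))=|u(x)-l_n|$ is controlled. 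Disintegrating the integrated bound in $\alpha\in Q$ and letting $A$ range over all bounded Borel subsets of $Q_n$ yields, $\qq$-a.e.\ $\alpha$, the ray-by-ray estimate $\int_{X_\alpha\cap u^{-1}([s,t])} f'\,\mm_\alpha \leq \int_{X_\alpha\cap u^{-1}([s,t])} D^{+}f(-\nabla u)\,\mm_\alpha$ and its reverse for $D^{-}$. Letting $[s,t]$ shrink through a countable family of rational intervals and invoking 1D Lebesgue differentiation on $(X_\alpha,h_\alpha \H^1)$ (with $h_\alpha$ locally bounded away from $0$ in the interior of each ray by \eqref{E:bounds}) gives the pointwise bound $\mm_\alpha$-a.e.\ on $X_\alpha$ for $\qq$-a.e.\ $\alpha$; reassembling via the disintegration and taking unions over $\varepsilon\downarrow 0$ and over $n$ concludes. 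The main obstacle is exactly this last densification step: bounded $R_u^{nb}$-convex sets are not cofinal in the Borel $\sigma$-algebra of $X$, so one cannot invoke a Lebesgue differentiation theorem directly on $(X,\sfd,\mm)$ and must instead decouple the question into a transversal Borel selection in $Q$ (supplied by Lemma \ref{lem:Qlevelset}) and a longitudinal 1D differentiation along each ray (available thanks to the absolute continuity $\mm_\alpha=h_\alpha \H^1\llcorner_{X_\alpha}$ of Theorem \ref{T:sigmadisint}).
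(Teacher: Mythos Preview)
Your proof is correct and follows the same route as the paper: build the test plan $\pi_E$ of Proposition~\ref{P:Krepresents}, feed it into Theorem~\ref{T:verticalhorizontal} to obtain the sandwiched integral inequality over each bounded $R_u^{nb}$-convex $E\subset\T_u^{nb}(2\varepsilon)$, and pass the limit via dominated convergence to replace the incremental quotient by $f'$. The paper's own proof stops there, concluding with the single line ``the claim follows by the arbitrariness of $\varepsilon>0$ and $E\subset\T_u^{nb}(2\varepsilon)$''; your densification argument through the product-type sets $E(A,s,t)$ built from Lemma~\ref{lem:Qlevelset}, followed by transversal Borel selection in $Q$ and longitudinal $1$D Lebesgue differentiation along each ray, is precisely what justifies that line and is carried out correctly.
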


\begin{proof}
Given $f \in \LIP(X)$, fix $\ve>0$ and let $E \subset \T^{nb}_{u}(2\ve)$  be any  bounded $R^{nb}_{u}$-convex subset with $\mm(E)>0$. 
Theorem \ref{T:verticalhorizontal} together with Proposition \ref{P:Krepresents} and equation \eqref{E:pi} implies that 
\begin{align*}
\int_{E}& D^{-} f (-\nabla u) \, \mm \\
&~ \leq \liminf_{\tau \to  0} \int_{E} \frac{f(g_{\tau}(x)) - f(x) }{\tau} \, \mm (dx)  \\
&~ \leq \limsup_{\tau \to  0} \int_{E} \frac{f(g_{\tau}(x)) - f(x) }{\tau} \, \mm (dx)  \\
&~\leq \int_{E} D^{+} f (-\nabla u) \, \mm.
\end{align*}
To conclude it is enough to observe that 
$$
 \int_{E} \frac{f(g_{\tau}(x)) - f(x) }{\tau} \, \mm (dx)  =  \int_{Q}\int_{E \cap X_{\alpha}} \frac{f(g_{\tau}(x)) - f(x) }{\tau} \, \mm_{\alpha} (dx) \,\qq(d\alpha),
$$
and notice that for each $\alpha \in Q$ the incremental ratio $(f(g_{\tau}(x))) - f(x))/\tau$ converges to $f'(x)$  for $\mm_{\alpha}$-a.e. $x \in X_{\alpha}$ and is dominated by the Lipschitz constant of $f$.
Therefore, by Dominated Convergence Theorem, for each $E$ as above it holds
$$
\int_{E} D^{-} f (-\nabla u) \, \mm \leq \int_{E} f'\,\mm \leq \int_{E} D^{+} f (-\nabla u) \, \mm. 
$$
The claim follows by the arbitrariness of $\ve>0$ and  $E \subset \T^{nb}_{u}(2\ve)$.
\end{proof}

Chain rule \cite[Proposition 3.15]{Gigli12}  combined with Theorem \ref{T:Laplacerepresentation} permits to obtain the next

\begin{corollary}\label{C:squared}
Let $(X,\sfd,\mm)$ be an e.n.b. metric measure space verifying $\MCP(K,N)$ and $u$ as before. 
Then for any Lipschitz function $f : X \to \R$  
$$
D^{-}f (-\nabla u^{2}) \leq 2 u f' \leq D^{+}f (-\nabla u^{2}),
$$
where the inequalities hold true $\mm$-a.e. over $\T_{u}$.
\end{corollary}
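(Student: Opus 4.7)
The plan is to combine the chain rule \cite[Proposition 3.15]{Gigli12} for the one-sided differentials with Theorem \ref{T:Laplacerepresentation}.

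First, I would establish, $\mm$-a.e.\ on $\T_u$, the pointwise chain-rule identities
\begin{equation*}
D^{\pm}f(-\nabla u^{2}) =
\begin{cases}
-2u\, D^{\mp}f(\nabla u) & \text{on } \{u>0\}, \\
-2u\, D^{\pm}f(\nabla u) & \text{on } \{u<0\}.
\end{cases}
\end{equation*}
These follow either from Gigli's chain rule applied to $\phi(t)=-t^{2}$ separately on each of the two monotonicity regions of $\phi$, or by a direct computation: at any $x_{0}$ with $u(x_{0})\neq 0$, the expansion $u(x)^{2}-u(x_{0})^{2} = 2u(x_{0})(u(x)-u(x_{0})) + O(\sfd(x,x_{0})^{2})$ yields $|D(-u^{2}+\varepsilon f)|^{2} = 4u(x_{0})^{2}|D(u - \tfrac{\varepsilon}{2u(x_{0})}f)|^{2}$, and the substitution $\varepsilon' = -\varepsilon/(2u(x_{0}))$ in the defining infimum/supremum of $D^{\pm}$ produces the two cases; the $\pm$-swap on $\{u>0\}$ reflects that the negative constant $-2u$ exchanges $\inf_{\varepsilon>0}$ and $\sup_{\varepsilon<0}$. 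On the set $\{u=0\}$ the Lipschitz bound $|u(x)|\leq L\,\sfd(x,x_{0})$ gives $|D(-u^{2}+\varepsilon f)|(x_{0}) = \varepsilon|Df|(x_{0})$, so $D^{\pm}f(-\nabla u^{2})(x_{0}) = 0 = 2u(x_{0})f'(x_{0})$ and the desired inequalities hold trivially.

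Second, invoking the elementary identity $D^{\pm}f(-\nabla u) = -D^{\mp}f(\nabla u)$ (immediate from the defining infimum/supremum upon $\varepsilon\mapsto-\varepsilon$), Theorem \ref{T:Laplacerepresentation} reads equivalently
\begin{equation*}
D^{-}f(\nabla u) \leq -f' \leq D^{+}f(\nabla u), \quad \mm\text{-a.e.\ on } \T_{u}.
\end{equation*}
Multiplying this chain through by $-2u$ and applying the chain-rule identities case by case finishes the proof: on $\{u>0\}\cap\T_{u}$ the factor $-2u<0$ reverses the inequalities, giving $-2u\,D^{+}f(\nabla u)\leq 2uf'\leq -2u\,D^{-}f(\nabla u)$, which the first chain-rule identity rewrites as $D^{-}f(-\nabla u^{2})\leq 2uf'\leq D^{+}f(-\nabla u^{2})$; on $\{u<0\}\cap\T_{u}$ the factor $-2u>0$ preserves the inequalities, and the second chain-rule identity delivers the same conclusion.

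The only subtle point in the whole argument is the sign bookkeeping in the chain rule: since $\phi'(t)=-2t$ does not have constant sign, one cannot apply a monotone chain rule uniformly on $\T_{u}$ but must split on $\{u \gtrless 0\}$, and in the $u>0$ region the superscripts on $D^{\pm}$ on the two sides of the identity get exchanged. Beyond this piece of bookkeeping, the corollary is a mechanical consequence of Theorem \ref{T:Laplacerepresentation}.
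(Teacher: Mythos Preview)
Your proposal is correct and follows essentially the same approach as the paper: apply the chain rule \cite[Proposition 3.15]{Gigli12} to write $D^{\pm}f(-\nabla u^{2})$ in terms of $D^{\pm}f(-\nabla u)$ with the sign-dependent swap of superscripts, split on $\{u>0\}$, $\{u<0\}$, $\{u=0\}$, and then invoke Theorem \ref{T:Laplacerepresentation}. The paper's proof is slightly more compact (it writes the chain rule directly as $D^{-}f(-\nabla u^{2}) = 2u\, D^{-\sgn u}f(-\nabla u)$ and handles only one inequality, the other being analogous), but the substance is identical.
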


\begin{proof}
We show that $D^{-}f (-\nabla u^{2}) \leq 2 u f'$, the argument for proving  $2 u f' \leq D^{+}f (-\nabla u^{2})$ is completely analogous.
\\By chain rule  \cite[Proposition 3.15]{Gigli12}, we know that 
$$D^{-}f (-\nabla u^{2})= 2u D^{-\sgn u}f (-\nabla u). $$
Combining the last identity with Theorem \ref{T:Laplacerepresentation} yields
\begin{equation*}
D^{-}f (-\nabla u^{2})= \begin{cases}
2u \,D^{+}f (-\nabla u) \leq 2uf' \quad \text{$\mm$-a.e. on  } \{u\leq 0\} \\
2u \,D^{-}f (-\nabla u) \leq 2uf' \quad \text{$\mm$-a.e. on  } \{u\geq 0\},
\end{cases}
\end{equation*}
giving the claim.
\end{proof}

\subsection{A formula for the Laplacian of a general $1$-Lipschitz function}

The next proposition, which is key to show that $\Delta u$ is a Radon functional, follows from Lemma \ref{lem:apriori0}  and Lemma \ref{L:integrabilityh}. We use the notation that $a(X_{\alpha})$ (respectively $b(X_{\alpha})$) denotes the initial (respectively the final) point of the transport ray $X_{\alpha}$. Recall also that $h_{\alpha}$ is positive and differentiable a.e. on $X_{\alpha}$, in particular $\log h_{\alpha}$ is  well defined and differentiable a.e. along $X_{\alpha}$.

\begin{proposition}\label{P:Radon}
Let $(X,\sfd,\mm)$ be an e.n.b. metric measure space verifying $\MCP(K,N)$, for some $K\in \R, N\in (1,\infty)$.   Let $u : X \to \R$ be a $1$-Lipschitz function 
with associated disintegration $
\mm\llcorner_{\T_{u}^{nb}} = \int_{Q} \mm_{\alpha} \, \qq(d\alpha),$ with $\qq(Q) = 1$,  $\mm_{\alpha} = h_{\alpha} \H^{1}\llcorner_{X_{\alpha}}$,  $h_{\alpha}\in L^{1}(\H^{1}\llcorner_{X_{\alpha}})$ for $\qq$-a.e. $\alpha\in Q$.
Assume  that 
$$
\int_{Q} \frac{1}{\sfd(a(X_{\alpha}),b(X_{\alpha}))} \, \qq(d\alpha) < \infty.
$$
Then $T_{u}:\LIP_{c}(X)\to \R$
\begin{equation}\label{eq:defMeas}
T_{u}(f):=\int_{Q} \int_{X_{\alpha}}  (\log h_{\alpha})'f \, \mm_{\alpha} \qq(d\alpha) +\int_{Q} (h_{\alpha}f)(a(X_{\alpha})) - (h_{\alpha} f) (b(X_{\alpha}))\, \qq(d\alpha)
\end{equation}
is a Radon functional over $X$. 
\end{proposition}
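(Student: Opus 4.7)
My plan is to show directly from the formula \eqref{eq:defMeas} that for every compact $W\subset X$ there exists $C_W \geq 0$ with $|T_u(f)|\leq C_W \max_W |f|$ for all $f\in \LIP_c(X)$ with $\supp(f)\subset W$. I will split $T_u = T_1 + T_2$, where $T_1$ is the interior integral and $T_2$ the boundary term in \eqref{eq:defMeas}, and bound each by $\max_W |f|$ times a $W$-dependent constant.

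For $T_1$, since $\mm_\alpha = h_\alpha \H^1\llcorner_{X_\alpha}$ one has $|(\log h_\alpha)'|\,\mm_\alpha = |h_\alpha'|\,\H^1\llcorner_{X_\alpha}$, so $|T_1(f)|\leq \max_W|f| \int_Q \int_{X_\alpha \cap W}|h_\alpha'|\,d\H^1\,\qq(d\alpha)$, and I will apply Lemma \ref{L:integrabilityh} to the normalized density $h_\alpha/\mm_\alpha(X_\alpha)$ to bound $\int_{X_\alpha}|h_\alpha'|\,d\H^1 \leq C^{(K,N)}_{D_\alpha}\mm_\alpha(X_\alpha)/D_\alpha$, where $D_\alpha := \sfd(a(X_\alpha), b(X_\alpha))$. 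For $T_2$, I extend $h_\alpha$ continuously to the endpoints (Remark \ref{R:continuityboundary}) and use the sup bound of Lemma \ref{lem:apriori0}, namely $h_\alpha(a(X_\alpha)), h_\alpha(b(X_\alpha)) \leq \tilde C(K,N,D_\alpha)\mm_\alpha(X_\alpha)/D_\alpha$, yielding $|T_2(f)| \leq \max_W|f| \int_Q [h_\alpha(a(X_\alpha))\mathbf{1}_W(a(X_\alpha)) + h_\alpha(b(X_\alpha))\mathbf{1}_W(b(X_\alpha))]\,\qq(d\alpha)$.

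To bound the resulting $\qq$-integrals I will fix a compact enlargement $W'\supset W$ with $\eps:=\dist(W,X\setminus W')>0$ (possible since $(X,\sfd)$ is proper) and partition $Q$ according to whether $a(X_\alpha), b(X_\alpha)$ lie in $W'$. For rays with both endpoints outside $W'$, every $x\in X_\alpha\cap W$ satisfies $\sfd(x,a(X_\alpha)), \sfd(x,b(X_\alpha)) \geq \eps$, so the pointwise bound \eqref{E:logder} gives a uniform estimate on $|(\log h_\alpha)'|$ over $X_\alpha\cap W$, while the indicators in $T_2$ vanish; by Fubini this contribution is controlled by $\mm(W)<\infty$. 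For rays with an endpoint in $W'$, the $\MCP$ lower bound applied on a segment of length $\min(D_\alpha,\eps)$ emanating from that endpoint bounds $h_\alpha$ at the endpoint by a constant depending only on $K,N,\eps$ times $\mm_\alpha(W'')/D_\alpha$, where $W''$ is a further enlargement of $W'$; combining $\int_Q \mm_\alpha(W'')\,\qq(d\alpha)=\mm(W'')<\infty$ with the hypothesis $\int_Q 1/D_\alpha\,\qq(d\alpha)<\infty$ (after, for example, splitting further into $D_\alpha\geq\eps$, where the factor $C^{(K,N)}_{D_\alpha}$ or $\tilde C(K,N,D_\alpha)$ must be controlled via \eqref{eq:defLambda}, and $D_\alpha<\eps$, where the ray sits inside $W''$) will deliver the required finite $C_W$.

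The main obstacle will be the lack of any a priori uniform control on $\mm_\alpha(X_\alpha)$ or $D_\alpha$ across $\qq$-a.e. $\alpha$, so neither Lemma \ref{lem:apriori0} nor Lemma \ref{L:integrabilityh} applies uniformly on its own; the crux is to arrange the partition above so that in every regime—short rays near $W$ (where Lemmas \ref{L:integrabilityh} and \ref{lem:apriori0} give bounds with an explicit $1/D_\alpha$ factor absorbed by the integrability hypothesis) and long rays extending beyond $W'$ (where the pointwise derivative bounds \eqref{E:logder} suffice since interior points of $W$ are uniformly far from the endpoints)—the contribution is bounded by a constant depending only on $W$, $K$, and $N$.
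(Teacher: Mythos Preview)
Your overall strategy is sound and uses the same ingredients as the paper (Lemmas~\ref{lem:apriori0} and~\ref{L:integrabilityh}, the pointwise bound~\eqref{E:logder}, the uniform mass bound from Theorem~\ref{T:sigma-disint}(4), and the integrability hypothesis on $1/D_\alpha$). However, there is a genuine gap in your case analysis.

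The problematic regime is long rays ($D_\alpha$ large, possibly infinite) having one endpoint in $W'$. For the boundary term $T_2$ your argument is fine, but for $T_1$ you propose to apply Lemma~\ref{L:integrabilityh} on the full ray and then ``control $C^{(K,N)}_{D_\alpha}$ via~\eqref{eq:defLambda}''. This does not work: \eqref{eq:defLambda} says precisely that $C^{(K,N)}_{r}\to\infty$ as $r\to\infty$, so for unbounded $D_\alpha$ you get no control; moreover Lemma~\ref{L:integrabilityh} is stated only for \emph{finite} intervals, and infinite rays are allowed by the hypothesis (they contribute $0$ to $\int_Q 1/D_\alpha$). Your dichotomy ``short rays near $W$'' versus ``long rays extending beyond $W'$'' misses exactly this case: a long ray with $a(X_\alpha)\in W'$ has points of $X_\alpha\cap W$ arbitrarily close to $a(X_\alpha)$, so the pointwise bound~\eqref{E:logder} blows up there, while the full-ray estimate from Lemma~\ref{L:integrabilityh} is unusable.

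The paper avoids the case analysis entirely by a single clean device: it replaces $W$ by a bounded $R^{nb}_u$-convex set $E\supset W$ chosen so that $|X_\alpha\cap E|\geq \min\{1,D_\alpha\}$ for every $\alpha$ (take the convex hull of $W\cap X_\alpha$ inside $X_\alpha$ and extend by at most $1$). Then $h_\alpha\llcorner_{X_\alpha\cap E}$ is still an $\MCP(K,N)$ density on a \emph{bounded} interval with length uniformly bounded above by $\diam(E)$ and below by $\min\{1,D_\alpha\}$, and mass uniformly bounded by Theorem~\ref{T:sigma-disint}(4). Applying Lemmas~\ref{lem:apriori0} and~\ref{L:integrabilityh} to this restriction gives, in one stroke,
\[
\sup_{X_\alpha\cap E} h_\alpha \;+\; \int_{X_\alpha\cap E}|h_\alpha'|\,d\H^1 \;\leq\; \frac{C_W}{\min\{1,D_\alpha\}} \;\leq\; C_W\Big(1+\frac{1}{D_\alpha}\Big),
\]
which integrates over $Q$ by the hypothesis. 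Your approach can be repaired in exactly this way: in the troublesome regime, do not apply the lemmas on the full ray but on a bounded subinterval of $X_\alpha$ containing $X_\alpha\cap W$ --- which is precisely the paper's construction of $E$.
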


\begin{proof}
Fix any bounded open subset $W\subset X$ and  observe that  we can find a bounded  $R^{nb}_{u}$-convex measurable subset $E \subset \T^{nb}_{u}$ such that $W\cap \T^{nb}_{u}\subset E$ 
(take for instance on each $X_{\alpha}$ the convex-hull of $W\cap X_{\alpha}$) and 
\begin{equation}\label{eq:lbXaE}
\sfd(a(X_{\alpha}\cap E),b(X_{\alpha}\cap E)) \geq \min\{1,  \sfd(a(X_{\alpha}),b(X_{\alpha}))\}, \quad \text{for all } \alpha \in Q. 
\end{equation}
Note that $E$ depends just on $W$ and the ray relation $R^{nb}_{u}$.
For any  $f\in \LIP_{c}(X)$ with  $\supp(f)\subset W$, it is clear that 
\begin{align*}
\int_{Q} \int_{X_{\alpha}}& (\log h_{\alpha})' \, f\, \mm_{\alpha} \qq(d\alpha) +\int_{Q}  (h_{\alpha} f)(a(X_{\alpha})) - (h_{\alpha} f)(b(X_{\alpha})) \, \qq(d\alpha) \nonumber \\
&=\int_{Q} \int_{X_{\alpha}\cap E} (\log h_{\alpha})' \, f\, \mm_{\alpha} \qq(d\alpha) +\int_{Q}  (h_{\alpha} f)(a(X_{\alpha}\cap E)) - (h_{\alpha} f)(b(X_{\alpha}\cap E)) \, \qq(d\alpha) .
\end{align*}
Since $E$ is bounded, we have $\sup_{\alpha\in Q} \sfd\left(a(X_{\alpha}\cap E), b(X_{\alpha}\cap E) \right)\leq C_{W}$ for some $C_{W}\in (0,\infty)$ depending only on $W\subset X$.
Moreover, Theorem \ref{T:sigma-disint}(4) implies $\sup_{\alpha\in Q} \int_{X_{\alpha}\cap E} h_{\alpha}\, d\cH^{1}\leq C_{W}$.
 Therefore, applying  Lemma \ref{lem:apriori0}  and Lemma \ref{L:integrabilityh} to the renormalized densities $\tilde{h}_{\alpha}:= \frac{1}{\int_{X_{\alpha}\cap E} h_{\alpha} \, d\cH^{1}} h_{\alpha}$ and rescaling back to get $h_{\alpha}$, recalling also \eqref{eq:lbXaE} we infer
$$
\sup_{X_{\alpha}\cap E} h_{\alpha}(x) + \int_{X_{\alpha}\cap E} |h'_{\alpha}| \, d\cH^{1} \leq C_{W} \frac{1}{\sfd(a(X_{\alpha}),b(X_{\alpha})}, \quad \text{for $\qq$-a.e. }\alpha \in \QQ(E)\subset Q. 
$$
We can thus estimate
\begin{align*}
\Big|\int_{Q} &  \int_{X_{\alpha}\cap E} (\log h_{\alpha})' \, f\, \mm_{\alpha} \qq(d\alpha) +\int_{Q} (h_{\alpha} f)(a(X_{\alpha}\cap E)) - (h_{\alpha} f)(b(X_{\alpha}\cap E)) \, \qq(d\alpha) \Big| \\
&\leq  \left(  C_{W} \int_{Q} \frac{1}{\sfd(a(X_{\alpha}),b(X_{\alpha}))} \qq(d\alpha) \right) \; \max |f|. 
\end{align*}
We can thus conclude that \eqref{eq:defMeas} defines a Radon functional.
\end{proof}

The first main result  follows by combining   Theorem \ref{T:Laplacerepresentation} and Proposition \ref{P:Radon}.

\begin{theorem}\label{T:main1}
Let $(X,\sfd,\mm)$ be an e.n.b. metric measure space with $\supp(\mm) = X$ and verifying $\MCP(K,N)$ for some $K\in \R, N\in (1,\infty)$.
 Let $u : X \to \R$ be a $1$-Lipschitz function 
with associated disintegration $
\mm\llcorner_{\T_{u}^{nb}} = \int_{Q} \mm_{\alpha} \, \qq(d\alpha),$ with $\qq(Q) = 1$,  $\mm_{\alpha} = h_{\alpha} \H^{1}\llcorner_{X_{\alpha}}$,  $h_{\alpha}\in L^{1}(\H^{1}\llcorner_{X_{\alpha}})$ for $\qq$-a.e. $\alpha\in Q$.
Assume  that 
$$
\int_{Q} \frac{1}{\sfd(a(X_{\alpha}),b(X_{\alpha}))} \, \qq(d\alpha) < \infty.
$$
Then, for any open subset  $U\subset X$ such that $\mm(U \setminus \T_{u}) = 0$,  it holds $u \in D(\bold{\Delta},U)$. 
More precisely, $T_{U}:\LIP_{c}(U)\to \R$ defined by
$$
T_{U}(f) := - \int_{Q} f\, h_{\alpha}' \H^{1}\llcorner_{X_{\alpha}\cap U}\, \qq(d\alpha) + \int_{Q} (f\, h_{\alpha}) (b(X_{\alpha})) -  (f\, h_{\alpha}) (a(X_{\alpha})) \qq(d\alpha),
$$
is a Radon functional with $T_{U}\in \bold{\Delta}u\llcorner_{U}$. 
Moreover, writing $T_{U}=T_{U}^{reg}+T_{U}^{sing}$ with 
$$
T_{U}^{reg}(f) := - \int_{Q} f\, h_{\alpha}' \H^{1}\llcorner_{X_{\alpha}\cap U}\, \qq(d\alpha), \quad T_{U}^{sing}(f):= \int_{Q} (f\, h_{\alpha}) (b(X_{\alpha})) -  (f\, h_{\alpha}) (a(X_{\alpha})) \qq(d\alpha),
$$
it holds that $T_{U}^{reg}$ can be represented by  $T_{U}^{reg}= -(\log h_{\alpha})' \mm\llcorner_{U}$ and satisfies the bounds: 
\begin{equation}\label{eq:logh'}
- (N-1) \frac{s_{K/(N-1)}' (\sfd(b(X_{\alpha}),x))}{s_{K/(N-1)}(\sfd(b(X_{\alpha}),x))}  \leq (\log h_{\alpha})'(x) \leq  (N-1) \frac{s_{K/(N-1)}' (\sfd(x,a(X_{\alpha})))}{s_{K/(N-1)}(\sfd(x,a(X_{\alpha})))}.
\end{equation}
\end{theorem}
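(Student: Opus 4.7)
The plan is to combine three ingredients: the pointwise inequality $D^{-}f(-\nabla u)\leq f'\leq D^{+}f(-\nabla u)$ $\mm$-a.e.\ on $\T_{u}$ from Theorem~\ref{T:Laplacerepresentation}, the Radon-functional argument already developed in Proposition~\ref{P:Radon}, and a ray-by-ray integration by parts that identifies $\int_{U} f'\,\mm$ with $T_{U}(f)$. The elementary sign-swap identity $D^{\pm}f(-\nabla u)=-D^{\mp}f(\nabla u)$, which follows from the definition upon substituting $\eta=-\varepsilon$ inside the $\inf$/$\sup$, will bridge the Laplacian sandwich \eqref{eq:defTfLap} with the inequality supplied by Theorem~\ref{T:Laplacerepresentation}.

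First I would compute $\int_{U} f'\,\mm$ for a test function $f\in\LIP_{c}(U)$. Since by hypothesis $\mm(U\setminus\T_{u})=0$ and by Lemma~\ref{L:nullsetMCP} also $\mm(\T_{u}\setminus\T_{u}^{nb})=0$, the disintegration of Theorem~\ref{T:sigmadisint} yields
\[
\int_{U} f'\,\mm=\int_{Q}\Bigl(\int_{X_{\alpha}\cap U} f'\,h_{\alpha}\,d\H^{1}\Bigr)\qq(d\alpha).
\]
On each ray I would parametrise by arclength from $a(X_{\alpha})$ to $b(X_{\alpha})$, so that $f'$ coincides with $\tfrac{d}{ds}(f\circ\gamma_{\alpha})$; since $h_{\alpha}$ is locally Lipschitz in the interior (by \eqref{E:logder}) and continuously extends to the endpoints (Remark~\ref{R:continuityboundary}), a one-dimensional integration by parts gives
\[
\int_{X_{\alpha}\cap U} f'\,h_{\alpha}\,d\H^{1}=(fh_{\alpha})(b(X_{\alpha}))-(fh_{\alpha})(a(X_{\alpha}))-\int_{X_{\alpha}\cap U} f\,h_{\alpha}'\,d\H^{1},
\]
with any endpoint lying outside $U$ contributing zero since $\supp f\subset U$. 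Averaging in $\qq$ produces $\int_{U} f'\,\mm=T_{U}(f)$, and the same estimates used in Proposition~\ref{P:Radon}, now applied to the truncated rays $X_{\alpha}\cap U$, show that $T_{U}$ is a Radon functional on $\LIP_{c}(U)$.

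Next I would integrate the pointwise bound of Theorem~\ref{T:Laplacerepresentation} over $U$ and apply the sign-swap to rewrite it as $D^{-}f(\nabla u)\leq -f'\leq D^{+}f(\nabla u)$ $\mm$-a.e.\ on $U$. Since $D^{\pm}f(\nabla u)$ vanish wherever $f$ is constantly zero, $\int_{U}$ and $\int_{X}$ coincide, and combining with the previous step yields
\[
\int_{X} D^{-}f(\nabla u)\,\mm \;\leq\; -T_{U}(f) \;\leq\; \int_{X} D^{+}f(\nabla u)\,\mm,
\]
which is exactly $T_{U}\in\bold{\Delta}u\llcorner_{U}$ in the sense of Definition~\ref{D:Laplace}. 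The identification $T_{U}^{reg}=-(\log h_{\alpha})'\,\mm\llcorner_{U}$ then follows from $h_{\alpha}'=(\log h_{\alpha})'\,h_{\alpha}$ and $\mm_{\alpha}=h_{\alpha}\H^{1}\llcorner_{X_{\alpha}}$, and the two-sided bound \eqref{eq:logh'} is the direct transcription of \eqref{E:logder} to the ray $X_{\alpha}$; this is legitimate because $(\bar X_{\alpha},\sfd,\mm_{\alpha})$ verifies $\MCP(K,N)$ by Theorem~\ref{T:sigmadisint} and because the arclength parametrisation from $a(X_{\alpha})$ to $b(X_{\alpha})$ identifies the $g_{t}$-flow direction with the increasing coordinate, so $\sfd(x,a(X_{\alpha}))$ corresponds to $x-a$ and $\sfd(b(X_{\alpha}),x)$ corresponds to $b-x$.

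The main technical obstacle will be the ray-by-ray integration by parts together with its $\qq$-averaging: one must carefully treat the boundary contributions when $a(X_{\alpha})$ or $b(X_{\alpha})$ fall outside $\T_{u}^{nb}$ (using the continuous extension of Remark~\ref{R:continuityboundary}) and ensure that the $\qq$-integral of the boundary terms stays finite. This last point is precisely what the integrability hypothesis $\int_{Q}\sfd(a(X_{\alpha}),b(X_{\alpha}))^{-1}\,\qq(d\alpha)<\infty$ controls, via the uniform sup-bound of Lemma~\ref{lem:apriori0} and the $L^{1}$-bound of Lemma~\ref{L:integrabilityh} applied to the renormalised densities on $X_{\alpha}\cap U$, exactly as in the proof of Proposition~\ref{P:Radon}.
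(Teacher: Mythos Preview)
Your proof is correct and follows essentially the same route as the paper: integration by parts ray-by-ray to identify $\int_{U} f'\,\mm$ with $T_{U}(f)$, invocation of Proposition~\ref{P:Radon} for the Radon-functional property, Theorem~\ref{T:Laplacerepresentation} for the sandwich $D^{-}f(-\nabla u)\leq f'\leq D^{+}f(-\nabla u)$, and the sign-flip $D^{\pm}f(-\nabla u)=-D^{\mp}f(\nabla u)$ (which the paper cites from \cite[Proposition~3.15]{Gigli12}) to match Definition~\ref{D:Laplace}, with the bound \eqref{eq:logh'} read off from \eqref{E:logder}. The only cosmetic difference is that the paper integrates by parts over the full ray $X_{\alpha}$ rather than $X_{\alpha}\cap U$, which is harmless since $f$ vanishes outside $\supp(f)\subset U$.
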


\begin{remark}[Interpretation in case $X_{\alpha}$ is unbounded]
Let us explicitly note that, in case the ray $X_{\alpha}$ is isometric to $(-\infty, b)$ (respectively $(a, +\infty)$), then by definition $(fh_{\alpha}) (a(X_{\alpha}))=0$ (resp. $(fh_{\alpha}) (b(X_{\alpha}))=0$).
Let us discuss the case $K=-(N-1)$, the other cases being analogous. In case the ray $X_{\alpha}$ is isometric to $(-\infty, b)$ (respectively $(a, +\infty)$), then the upper bound (resp. the lower bound) in \eqref{eq:logh'}  should be interpreted as  $(\log h_{\alpha})' \leq N-1 $ (resp.  $(\log h_{\alpha})' \geq -(N-1)$).
In particular,   if for $\qq$-a.e. $\alpha\in Q$ the ray $X_{\alpha}$ is isometric to $(-\infty, +\infty)$, then for any open subset  $U\subset X$ with $\mm(U \setminus \T_{u}) = 0$ the singular part $T_{U}^{sing}$ vanishes  and it holds
$-(N-1)\mm\llcorner_{U} \leq T_{U}^{reg}\leq (N-1) \mm\llcorner_{U}$. 
\end{remark}

\begin{proof}[Proof of Theorem \ref{T:main1}]
Fix an arbitrary open subset  $U\subset X$ such that $\mm(U \setminus \T_{u}) = 0$. Let  $f : X \to \R$ be any  Lipschitz function compactly supported in $U$  and let $f'$ be defined $\mm$-a.e. by \eqref{eq:deff'}. 
Recall that  the closure of the transport ray $(\bar X_{\alpha}, \sfd, \mm_{\alpha})$ is isomorphic  to a (possibly unbounded, possibly not open) real interval $[a(X_{\alpha}),b(X_{\alpha})]$  endowed with the weighted measure $h_{\alpha} \mathcal{L}^{1}$, so we can integrate by parts Lipschitz functions on $X_{\alpha}$ analogously as on  a weighted real interval.
\\Via an integration by parts, we thus obtain 
\begin{align*}
\int_{X_{\alpha}} &h_{\alpha}(x) \, f'(x)  \,\cH^{1}(dx)  \\
 = &~ - \int_{X_{\alpha}}  h'_{\alpha} (x) \,f(x) \,\cH^{1}(dx)  + (h_{\alpha}f) (b(X_{\alpha}))  - (h_{\alpha}f)(a(X_{\alpha})), \; \qq\text{-a.e. }\alpha,
\end{align*}
which, together with Theorem \ref{T:sigmadisint}, gives
\begin{align}
\int_{U} & f'(x)  \,\mm(dx) \nonumber \\
 = &~ - \int_{Q}\int_{X_{\alpha}}  h'_{\alpha} (x) \,f(x) \,\cH^{1}(dx)  + (h_{\alpha}f) (b(X_{\alpha}))  - (h_{\alpha}f)(a(X_{\alpha}))\, \qq(d\alpha). \label{eq:f'intBP}
\end{align}
Proposition \ref{P:Radon} ensures that, under the assumptions of Theorem \ref{T:main1},  the expression
$$
T_{\Delta u} (f) : = - \int_{Q} f\,  h'_{\alpha}\cH^{1}\llcorner_{X_{\alpha}} \,\qq(d\alpha) + \int_{Q} (h_{\alpha}f)(b(X_{\alpha})) - (h_{\alpha}f)(a(X_{\alpha}))  \,\qq(d\alpha) 
$$
defines a  Radon functional on $U$.   
\\The combination of \eqref{eq:f'intBP} with Theorem \ref{T:Laplacerepresentation} gives that   
$$
\int_{U} D^{-}f (-\nabla u) \, \mm \leq  T_{\Delta u} (f) \leq \int_{U}D^{+}f (-\nabla u) \, \mm. 
$$
Noting that (see \cite[Proposition 3.15]{Gigli12}) %
$$
D^{-}f(-\nabla u) = - D^{+}f(\nabla u), \qquad  D^{+}f(-\nabla u) = - D^{-}f(\nabla u), \quad \text{$\mm$-a.e.,}
$$
the previous inequalities imply 
$$
 \int_{U} D^{-}f (\nabla u) \, \mm \leq -   T_{\Delta u} (f)  \leq - \int_{U}D^{+}f (\nabla u) \, \mm.
$$
Recalling \eqref{E:logder}, the proof of all the claims is complete.
\end{proof}

The next result, dealing with smooth Riemannian manifolds, can be proved using  Corollary \ref{C:sigmadisintSmooth}  in the proof of Theorem \ref{T:main1}  and following verbatim the arguments. Let us just mention that the Laplacian here is single valued, i.e.  $\{T_{U}\}= \bold{\Delta}u\llcorner_{U}$, since on a smooth Riemannian manifold $(M,g)$ it holds $D^{+}f(\nabla u)=D^{-}f(\nabla u)= g(\nabla f, \nabla u)$.
\begin{corollary}\label{C:main1Smooth}
Let $(M,g)$ be a $2\leq N$-dimensional  complete Riemannian manifold.  Let $u : M \to \R$ be a $1$-Lipschitz function 
with associated disintegration $
\mm\llcorner_{\T_{u}} = \int_{Q} \mm_{\alpha} \, \qq(d\alpha),$ with $\qq(Q) = 1$,  $\mm_{\alpha} = h_{\alpha} \H^{1}\llcorner_{X_{\alpha}}$,  $h_{\alpha}\in L^{1}(\H^{1}\llcorner_{X_{\alpha}})$ for $\qq$-a.e. $\alpha\in Q$.
Assume  that 
$$
\int_{Q} \frac{1}{\sfd(a(X_{\alpha}),b(X_{\alpha}))} \, \qq(d\alpha) < \infty.
$$
Then, for any open subset  $U\subset M$ such that $\mm(U \setminus \T_{u}) = 0$,  it holds $u \in D(\bold{\Delta},U)$. 
More precisely, $T_{U}:\LIP_{c}(U)\to \R$ defined by
$$
T_{U}(f) := - \int_{Q} f\, h_{\alpha}' \H^{1}\llcorner_{X_{\alpha}\cap U}\, \qq(d\alpha) + \int_{Q} (f\, h_{\alpha}) (b(X_{\alpha})) -  (f\, h_{\alpha}) (a(X_{\alpha})) \qq(d\alpha),
$$
is a Radon functional with $\{T_{U}\}= \bold{\Delta}u\llcorner_{U}$. 
Moreover, writing $T_{U}=T_{U}^{reg}+T_{U}^{sing}$ with 
$$
T_{U}^{reg}(f) := - \int_{Q} f\, h_{\alpha}' \H^{1}\llcorner_{X_{\alpha}\cap U}\, \qq(d\alpha), \quad T_{U}^{sing}(f):= \int_{Q} (f\, h_{\alpha}) (b(X_{\alpha})) -  (f\, h_{\alpha}) (a(X_{\alpha})) \qq(d\alpha),
$$
it holds that $T_{U}^{reg}$ can be represented by  $T_{U}^{reg}= -(\log h_{\alpha})' \mm\llcorner_{U}$.
\\In addition, if  $\Ric_{g}\geq Kg$ for some $K\in \R$, then the following bounds hold: 
\begin{equation}\label{eq:logh'smooth}
- (N-1) \frac{s_{K/(N-1)}' (\sfd(b(X_{\alpha}),x))}{s_{K/(N-1)}(\sfd(b(X_{\alpha}),x))}  \leq (\log h_{\alpha})'(x) \leq  (N-1) \frac{s_{K/(N-1)}' (\sfd(x,a(X_{\alpha})))}{s_{K/(N-1)}(\sfd(x,a(X_{\alpha})))}.
\end{equation}
\end{corollary}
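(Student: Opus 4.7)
The strategy is to run the proof of Theorem \ref{T:main1} verbatim but replacing Theorem \ref{T:sigmadisint} with Corollary \ref{C:sigmadisintSmooth}, and to localize every use of the $\MCP(K,N)$ density bounds to precompact geodesically convex neighborhoods on which the Ricci tensor is bounded below. A complete Riemannian manifold is proper and geodesic, so the first part of Corollary \ref{C:sigmadisintSmooth} furnishes the disintegration $\mm\llcorner_{\T_u}=\int_Q\mm_\alpha\,\qq(d\alpha)$ with $\mm_\alpha=h_\alpha\cH^1\llcorner_{X_\alpha}$; part (2) provides, near each point $x\in M$, a compact geodesically convex neighborhood $V$ and $\bar K\in\R$ such that $h_\alpha\llcorner_V$ is a $\CD(\bar K,N)$ density on $X_\alpha\cap V$ (in particular an $\MCP(\bar K,N)$ density). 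This is precisely what is needed to reuse the one-dimensional analytic tools of Subsection \ref{Ss:MCP1d}.

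First I would establish the smooth analogue of Theorem \ref{T:Laplacerepresentation}. The construction of the test plan $\pi_E$ in \eqref{E:planm} and the density control in Proposition \ref{P:Krepresents} only require an $\MCP$ bound on $h_\alpha$ \emph{inside} a bounded $R_u^{nb}$-convex set $E$. Given such an $E$, I cover a slight enlargement of $E$ by finitely many neighborhoods as above; on each of them the bound \eqref{E:MCPdef2} applies, so the proof of Proposition \ref{P:Krepresents} gives that $\pi_E$ is a test plan representing $-\nabla u$. The proof of Theorem \ref{T:Laplacerepresentation} then goes through unchanged and yields $D^-f(-\nabla u)\leq f'\leq D^+f(-\nabla u)$ $\mm$-a.e. on $\T_u$. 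On a smooth Riemannian manifold one has $D^+f(\nabla u)=D^-f(\nabla u)=g(\nabla f,\nabla u)$ $\mm$-a.e., so this sandwich is in fact an equality; this is why the Laplacian in Definition \ref{D:Laplace} will be single-valued, i.e.\ $\{T_U\}=\bold{\Delta}u\llcorner_U$.

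Next I would verify that $T_U$ is a Radon functional, adapting Proposition \ref{P:Radon}. For any bounded open $W\subset U$ with compact closure, I enlarge $W\cap\T_u^{nb}$ to a bounded $R_u^{nb}$-convex set $E$ satisfying $\sfd(a(X_\alpha\cap E),b(X_\alpha\cap E))\geq\min\{1,\sfd(a(X_\alpha),b(X_\alpha))\}$. Covering $\overline E$ by finitely many geodesically convex neighborhoods on which $h_\alpha$ is a $\CD(\bar K_i,N)$ density, I apply Lemma \ref{lem:apriori0} and Lemma \ref{L:integrabilityh} to the renormalized conditional densities and then rescale. This gives
\[
\sup_{X_\alpha\cap E}h_\alpha+\int_{X_\alpha\cap E}|h_\alpha'|\,d\cH^1\leq \frac{C_W}{\sfd(a(X_\alpha),b(X_\alpha))},
\]
for $\qq$-a.e.\ $\alpha\in\QQ(E)$, and combined with the integrability hypothesis $\int_Q\sfd(a(X_\alpha),b(X_\alpha))^{-1}\,\qq(d\alpha)<\infty$ this yields $|T_U(f)|\leq C_W\max|f|$ for all $f\in\LIP_c(U)$ supported in $W$.

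Finally, ray-by-ray integration by parts on each weighted interval $(\bar X_\alpha,|\cdot|,h_\alpha\L^1)$—legitimate because $h_\alpha$ is locally Lipschitz in the interior of $X_\alpha$ by the local $\CD$ density property—gives exactly the identity \eqref{eq:f'intBP}, namely $\int_Uf'\,d\mm=-T_U(f)$. Combined with the smooth sandwich from the first step, this proves $\{T_U\}=\bold{\Delta}u\llcorner_U$ together with the decomposition $T_U=T_U^{reg}+T_U^{sing}$ and $T_U^{reg}=-(\log h_\alpha)'\,\mm\llcorner_U$. Under the additional assumption $\Ric_g\geq Kg$, Corollary \ref{C:sigmadisintSmooth}(3) upgrades $h_\alpha$ to a genuine $\CD(K,N)$ density on the whole ray $X_\alpha$, and the logarithmic derivative bound \eqref{E:logder} reads precisely as \eqref{eq:logh'smooth}. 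The main subtlety throughout is the localization step: without a uniform Ricci bound, one cannot invoke the global $\MCP(K,N)$ framework used in Theorem \ref{T:main1}, so every density estimate must be carried out neighborhood-by-neighborhood and then patched using the integrability of $\sfd(a(X_\alpha),b(X_\alpha))^{-1}$.
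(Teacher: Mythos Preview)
Your proposal is correct and follows essentially the same approach as the paper. The paper's own argument is simply a one-line remark that the corollary follows by running the proof of Theorem \ref{T:main1} verbatim with Corollary \ref{C:sigmadisintSmooth} in place of Theorem \ref{T:sigmadisint}, together with the observation that on a smooth Riemannian manifold $D^{+}f(\nabla u)=D^{-}f(\nabla u)=g(\nabla f,\nabla u)$, which forces the Laplacian to be single-valued; you have spelled out the localization details (covering by finitely many geodesically convex neighborhoods, patching the density estimates) that the paper leaves implicit.
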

Specialising  Corollary \ref{C:main1Smooth} to the distance function gives Theorem \ref{thm:DeltadSmoothGen}, we briefly discuss the details below.
\\

\begin{proof}[Proof of Theorem \ref{thm:DeltadSmoothGen}]
Fix $p\in M$.

\textbf{Step 1}.  $u:=\sfd_{p}:=\sfd(p,\cdot)$  satisfies the assumptions of Corollary  \ref{C:main1Smooth}.
\\ Since by hypothesis   $(M,g)$ is complete, any point $x\in M$ can be joined to $p$ with a length minimising geodesic. Thus $\T_{\sfd_{p}}=M$, $b(X_{\alpha})=p$ and $a(X_{\alpha})\in {\mathcal C}_{p}$ for every $\alpha\in Q$.
Moreover, there exists $\epsilon=\epsilon(p)>0$ such that all the minimising geodesics $X_{\alpha}$ emanating from $p$ have length $\sfd(a(X_{\alpha}), b(X_{\alpha}))=\sfd(a(X_{\alpha}), p)\geq \epsilon$. Since by construction $\qq(Q)=1$, we conclude that  the assumptions of Corollary  \ref{C:main1Smooth} are verified.
\\

\textbf{Step 2}. The representation formula \eqref{eq:repdeltadpSmooth} holds.
We are left to show that 
$$
\int_{Q}  h_{\alpha} \, \delta_{b(X_{\alpha})} \,\qq(d\alpha)=0.
$$
Clearly, it is enough to show that 
\begin{equation}\label{eq:hap=0}
h_{\alpha}(p)=0, \quad \text{for $\qq$-a.e. $\alpha\in Q$.}
\end{equation}
Suppose by contradiction that there exists $\bar Q \subset Q$ where $h_{\alpha}(p) \geq c > 0$, with $\qq(\bar Q )> 0$. 
For simplicity of notation, we identify the minimising geodesic   $X_{\alpha}$ with the real interval $[a_{\alpha}, b_{\alpha}]$, where $p$ corresponds to $b_{\alpha}$.
Then by Fatou's Lemma it holds:
\begin{align*}
\infty&>\omega_{N}=\liminf_{r\downarrow 0} \frac{\mm(B_{r}(p))}{r^{N}}  \geq \liminf_{r\downarrow 0} \int_{\bar Q} \frac{1}{r^{N}}\int_{[b_{\alpha}-r,b_{\alpha}]}h_{\alpha}(t) dt \, \qq(d\alpha) \\
&\geq \int_{\bar Q} \liminf_{r\downarrow 0} \frac{1}{r} \int_{[b_{\alpha}-r,b_{\alpha}]}\frac{h_{\alpha}(t)}{r^{N-1}} dt \, \qq(d\alpha)  = \infty,
\end{align*}
giving a contradiction and thus proving the claim \eqref{eq:hap=0}.
\\

\textbf{Step 3}. $\left[\Delta \sfd_{p}\right]_{reg}^{\pm}:= - [(\log h_{\alpha})' ]^{\pm}\, \mm$,  $\left[\Delta \sfd_{p}\right]_{sing}  :=
- \int_{Q} h_{\alpha} \,\delta_{a(X_{\alpha})} \,\qq(d\alpha)$ define three non-negative Radon measures, and $\Delta \sfd_{p}= \left[\Delta \sfd_{p}\right]_{reg}^{+}- \left[\Delta \sfd_{p}\right]_{reg}^{-}+\left[\Delta \sfd_{p}\right]_{sing}$. 
\\Combining ``2.'' of Corollary \ref{C:sigmadisintSmooth} with  \eqref{E:logder}, it follows  that $\left[\Delta \sfd_{p}\right]_{reg}:= - (\log h_{\alpha})' \, \mm$ defines a Radon functional; by Riesz Theorem, its positive and negative parts are thus Radon measures. Also $\left[\Delta \sfd_{p}\right]_{sing}  :=
- \int_{Q} h_{\alpha} \,\delta_{a(X_{\alpha})} \,\qq(d\alpha)= \Delta \sfd_{p}- \left[\Delta \sfd_{p}\right]_{reg}$  is a non-positive Radon functional (as difference of Radon functionals) and thus, by Riesz Theorem, it defines a Radon measure.
\\

\textbf{Step 4}. Upper and lower bounds in case $\Ric_{g}\geq Kg$, for some $K\in \R$.
\\If $\Ric_{g}\geq Kg$, by ``3.'' of Corollary \ref{C:sigmadisintSmooth}  we know that $h_{\alpha}$ is a $\CD(K,N)$ (and in particular $\MCP(K,N)$) density over $X_{\alpha}$ for $\qq$-a.e. $\alpha$. Thus \eqref{E:logder} gives the bounds:
\begin{equation}\label{eq:logh'smoothproof}
- (N-1) \frac{s_{K/(N-1)}' (\sfd_{a(X_{\alpha})})} {s_{K/(N-1)}(\sfd_{a(X_{\alpha})})} \mm \leq  \left[\Delta \sfd_{p}\right]_{reg}  \leq(N-1) \frac{s_{K/(N-1)}' (\sfd_{p})}{s_{K/(N-1)}(\sfd_{p})}  \mm.
\end{equation}

\end{proof}

\begin{remark}[On the bounds under the assumption $\Ric_{g}\geq K g$]
Few comments are in order. 
\begin{itemize}
\item \emph{The upper bound} in \eqref{eq:logh'smoothproof} is the celebrated Laplacian comparison Theorem. Note that, a similar upper bound is proved above to hold more generally for the (regular part of the) Laplacian of a  (rather) general $1$-Lipschitz function \eqref{eq:logh'smooth} in the high generality of e.n.b. $\MCP(K,N)$-spaces  \eqref{eq:logh'}.

\item \emph{The case of the round sphere}. Let $p,q\in {\mathbb S}^{N}$ be a couple of antipodal points; clearly the cut locus of $p$ coincides with $q$. In this case, choosing $u=\sfd_{p}$ in the construction above gives the partition of ${\mathbb S}^{N}\setminus\{p,q\}$ into meridians, and each ray is a meridian without its endpoints $p,q$, oriented from $q$ to $p$.  Theorem \ref{thm:DeltadSmoothGen} thus yields
$$
- (N-1) \cot \sfd_{q}
\leq \Delta \sfd_{p} \leq (N-1) \cot \sfd_{p}, \quad \text{ on } {\mathbb S}^{N}.
$$
Note that (for the round sphere) the same conclusion could be achieved by applying the Laplacian comparison Theorem to $\sfd_{p}$ and to $\sfd_{q}$, and using that $\sfd_{p}=\pi-\sfd_{q}$.

\item \emph{The lower bound for a smooth Riemannian manifold}. Arguing analogously to the spherical case, one can achieve the lower bound  along a (minimising) geodesic $\gamma:[0,1]\to M$ with $(M,g)$ satisfying $\Ric_{g}\geq (N-1)g$   (cf. \cite[Lemma 3.2]{CN}). In this case,  the function $x\mapsto \sfd_{\gamma_{0}}(x)+\sfd_{\gamma_{1}}(x)$ achieves its minimum $\sfd(\gamma_{0}, \gamma_{1})$ along $\gamma([0,1])$; thus $\Delta  (\sfd_{\gamma_{0}}+\sfd_{\gamma_{1}})\geq 0$ along $\gamma((0,1))$ and, applying the upper bound \eqref{eq:UBLSmoothLoc} to $\sfd_{\gamma_{0}}$, $\sfd_{\gamma_{1}}$ and exploiting the linearity of the Laplacian we get 
\begin{equation}\label{eq:ULBDELTAdpHeur}
- (N-1) \cot \sfd_{\gamma_{1}}
\leq \Delta \sfd_{\gamma_{0}} \leq (N-1) \cot \sfd_{\gamma_{0}}, \; \text{ along } \gamma((0,1)).
\end{equation}
By ``glueing''  all the inequalities \eqref{eq:ULBDELTAdpHeur} corresponding to all the (minimising) geodesics emanating from $p$, gives \eqref{eq:logh'smoothproof}. Clearly this argument holds for smooth Riemannian manifolds, but in situations where the space is a-priori not smooth and the Laplacian is a-priori not linear (as for e.n.b. $\MCP(K,N)$-spaces), one has to argue differently. As the reader could already appreciate (see e.g. the proof of Theorem  \ref{thm:DeltadSmoothGen}), we attacked the problem by using techniques from $L^{1}$-optimal transport.
\end{itemize}
\end{remark}

A crucial fact in order to apply  Theorem  \ref{T:main1} to the distance function in the smooth case was that the cut locus of a point $p$ is at strictly positive distance from $p$. This fact is clearly not at disposal in the general setting of an e.n.b. $\MCP(K,N)$ space (e.g. the boundary of a convex body  in $\R^{3}$ whose cut locus is dense). In the next Subsection \ref{SS:LapDp} we will thus argue differently, showing first the result for the distance squared, and then getting the claim for the distance via chain rule.

\subsection{A formula for the Laplacian of a signed distance function}\label{SS:LapDp}

The goal of the subsection is to prove the existence of the Laplacian of $d_{v}$ and  $d^{2}_{v}$ as Radon measures and   to show upper and lower bounds; let us stress that, contrary to the previous subsection, here there will be no integrability assumption  on the reciprocal of the  length of the transport rays. 
 
Recall that given a continuous function 
$v : (X,\sfd) \to \R$ so that $\{ v = 0 \} \neq \emptyset$, the signed distance function 
$$
d_{v} : X \to \R, \qquad  d_{v}(x) : = 
\sfd(x, \{ v =0 \})\; \sgn(v),
$$
is $1$-Lipschitz. 
\\Notice also that since $(X,\sfd)$ is proper, $\mathcal{T}_{d_{v}} \supset X \setminus \{v=0\}$. 
Indeed, given $x \in X \setminus \{v =0 \}$, consider the distance minimising $z \in \{ v= 0 \}$ (whose existence is guaranteed by the  compactness of closed bounded sets). Then $(x,z) \in R_{d_{v}}$ and thus  $x \in \mathcal{T}_{d_{v}}$,  as $x \neq z$.  The next remark follows.

\begin{remark}\label{R:distancesign}
Let $X_{\alpha}$ be any transport ray associated with $d_{v}$ and let $a(X_{\alpha}), b(X_{\alpha})$ 
be its starting and the final point, respectively. 
Then 
$$
d_{v}(b(X_{\alpha})) \leq 0, \qquad
d_{v}(a(X_{\alpha})) \geq 0,
$$
whenever $b(X_{\alpha})$ and $a(X_{\alpha})$ exist.
\end{remark}

The next Lemma will be key to show the existence of the Laplacian  of $d^{2}_{v}$ as a Radon measure.

\begin{lemma}\label{lem:nucases}
Let $(X,\sfd,\mm)$ be an e.n.b. metric measure space verifying $\MCP(K,N)$, for some $K\in \R, N\in (1,\infty)$.
\item The expression
\begin{align}\label{eq:defnu}
\nu :  =  & 2  \left(1+ \sfd(\{ v= 0 \},x) (N-1)  \frac{s_{K/(N-1)}' (\sfd_{b(X_{\alpha})}(x))}{s_{K/(N-1)}(\sfd_{b(X_{\alpha})}(x))} \right) \, \mm\llcorner_{\{v\geq 0\}}
\nonumber\\
& +
 2  \left(1+ \sfd(\{ v= 0 \},x)  (N-1) \frac{s_{K/(N-1)}' (\sfd_{a(X_{\alpha})}(x))}{s_{K/(N-1)}(\sfd_{a(X_{\alpha})}(x))} \right) \, \mm\llcorner_{\{v< 0\}}
\end{align}
defines a signed Radon measure over $X$. More precisely:
\begin{itemize}
\item Case $K>0$: $\nu$ is a signed finite measure on $X$ satisfying $\nu\leq C_{K,N}\, \mm$.  \\
Moreover:
\begin{itemize}
\item If 
$$
\sup_{x\in \{v\geq 0\}}\sfd(x, b(X_{\alpha}))<\pi\sqrt{(N-1)/K}, \qquad 
\sup_{x\in \{v<0\}}\sfd(x, a(X_{\alpha}))<\pi\sqrt{(N-1)/K}
$$ 
then $\nu$ has density bounded in $L^{\infty}(X,\mm)$;
\item If 
$$
\sup_{x\in \{v\geq 0\}} 
\sfd(x, b(X_{\alpha}))=\pi\sqrt{(N-1)/K} \quad 
{\textrm or} \quad 
\sup_{x\in \{v< 0\}} 
\sfd(x, a(X_{\alpha}))=\pi\sqrt{(N-1)/K}
$$ 
then there exist exactly two points $\bar{a},\bar{b}\in X$ with $\sfd(\bar{a},\bar{b})=\pi\sqrt{(N-1)/K}$ such that 
for $\qq$-a.e.  $\alpha$
$$
a(X_{\alpha}) = \bar{a}, \quad  b(X_{\alpha}) = \bar{b},
$$
$\nu$ has density bounded in 
$$
L_{loc}^{\infty}(\{v \geq 0\}\setminus\{\bar a\},\mm)\cap
L_{loc}^{\infty}(\{ v\leq 0 \}\setminus\{\bar b\},\mm)\cap L^{1}(X,\mm).
$$ 
Moreover, in this case $(X,\mm)$ 
is isomorphic to a spherical suspension as a measure space. If in addition $(X,\sfd,\mm)$ is an $\RCD(K,N)$ space, then $(X,\sfd,\mm)$ is isomorphic to a spherical suspension as a metric measure space.
\end{itemize}
\item Case $K=0$: $\nu=2\mm$ is a non-negative Radon measure; if $b(X_{\alpha})$ or $a(X_{\alpha})$ do not exists, the two ratios in \eqref{eq:defnu} are posed by definition equal to $0$, respectively
\item Case $K\leq 0$: $\nu$ is a non-negative Radon measure.
if $b(X_{\alpha})$ or $a(X_{\alpha})$ do not exists, the two ratios in \eqref{eq:defnu} are posed by definition equal to $1$,
respectively.
\end{itemize}
\end{lemma}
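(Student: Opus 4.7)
My plan is to compute the Radon--Nikodym density of $\nu$ against $\mm$ ray-by-ray via the disintegration $\mm\llcorner_{\T_{d_v}^{nb}}=\int_Q h_\alpha \cH^1\llcorner_{X_\alpha}\,\qq(d\alpha)$ from Theorem~\ref{T:sigmadisint}, and to control it through the one-dimensional $\MCP(K,N)$ estimates of Subsection~\ref{Ss:MCP1d}. The key geometric input comes from Remark~\ref{R:distancesign}: parametrizing each ray $X_\alpha$ by arclength $t\in[0,D_\alpha]$ from $a(X_\alpha)$, the function $d_v$ decreases by unit speed along the ray, so it vanishes at a unique point $t_\alpha^\ast$; the ray lies in $\{v\ge 0\}$ for $t\le t_\alpha^\ast$ and in $\{v\le 0\}$ for $t\ge t_\alpha^\ast$, and one reads off
\[
\sfd(\{v=0\},x)=|d_v(x)|\le\sfd(x,\mathrm{end}(x)),
\]
where $\mathrm{end}(x)=b(X_\alpha)$ on the positive side and $\mathrm{end}(x)=a(X_\alpha)$ on the negative side. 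Writing $\phi_K:=s'_{K/(N-1)}/s_{K/(N-1)}$ and $u:=\sqrt{|K|/(N-1)}\,\sfd_{\mathrm{end}}(x)$, this estimate neutralises the $1/\theta$-type pole of $\phi_K$ at $\theta=0$.

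For the cases $K\le 0$ the sign $\phi_K\ge 0$ is clear from \eqref{E:sk}, so the density of $\nu$ is pointwise non-negative; it remains to verify local finiteness. For $K=0$ the elementary inequality $\sfd(\{v=0\},x)\,\phi_0(\sfd_{\mathrm{end}}(x))\le 1$ gives density $\le 2N$, and in the situation where both endpoints are absent (rays isometric to $\R$), the convention that the ratios equal zero yields exactly $\nu=2\mm$. For $K<0$, $\phi_K(\theta)=\sqrt{-K/(N-1)}\coth(\sqrt{-K/(N-1)}\theta)$ has only the neutralised pole at $0$ and otherwise grows at most linearly, so on compact sets $\sfd_{\mathrm{end}}$ is bounded and so is the density.

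For $K>0$, the Bonnet--Myers inequality for $\MCP(K,N)$ bounds every ray length by $\pi\sqrt{(N-1)/K}$, so $u\in(0,\pi)$. The calculus inequality $u\cot u\le 1$ on $(0,\pi)$ combined with Paragraph~1 gives the clean upper bound $\nu\le 2N\,\mm$, settling the positive-part assertion. If $\sup_{x\in\{v\ge 0\}}\sfd(x,b(X_\alpha))<\pi\sqrt{(N-1)/K}$ and the symmetric strict inequality on the negative side hold, then $\phi_K$ is uniformly bounded on the relevant interval (both above and below, away from the negative pole at $\pi\sqrt{(N-1)/K}$), so the density is uniformly bounded and $\nu$ is a bounded multiple of $\mm$, yielding the $L^\infty(X,\mm)$-statement.

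The critical case is the main obstacle. If $\sup_{x\in\{v\ge 0\}}\sfd(x,b(X_\alpha))=\pi\sqrt{(N-1)/K}$, the chain $\sfd(x,b(X_\alpha))\le\sfd(a(X_\alpha),b(X_\alpha))\le\mathrm{diam}(X)\le\pi\sqrt{(N-1)/K}$ forces equality throughout along some ray; i.e.\ two points $\bar a,\bar b$ realise the maximal diameter. My plan is then to invoke the measure-theoretic maximal-diameter rigidity for essentially non-branching $\MCP(K,N)$-spaces: equality in $\MCP(K,N)$ along the realising ray forces the one-dimensional density $h_\alpha$ to saturate \eqref{E:MCPdef2} and coincide (up to normalisation) with the extremal $\sin^{N-1}$-profile; using the uniqueness of the optimal transport map from $\bar a$ to arbitrary normalised restrictions of $\mm$ granted by \cite[Theorem~1.1]{CM16}, this rigidity propagates to every ray and identifies $(X,\mm)$ with a spherical suspension (as a measure space) with poles $\bar a,\bar b$. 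Under $\RCD(K,N)$ this upgrades to a metric spherical suspension via Ketterer's maximal-diameter theorem. Once this structure is in place, every ray of $d_v$ has endpoints exactly $\bar a,\bar b$, the $L^\infty_{loc}$-bound away from $\{\bar a,\bar b\}$ follows from Paragraph~3, and near the poles the vanishing $h_\alpha\sim c_\alpha\sin^{N-1}$ absorbs the simple pole of $\phi_K$, leaving a one-dimensional integrand of order $\sin^{N-2}$, which is $L^1((0,\pi\sqrt{(N-1)/K}))$ for $N>1$; integrating against $\qq$ concludes the $L^1(X,\mm)$-integrability.
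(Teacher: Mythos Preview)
Your treatment of the cases $K\le 0$ and of the upper bound $\nu\le C_{K,N}\,\mm$ for $K>0$ is essentially the paper's argument: both use the key inequality $\sfd(\{v=0\},x)\le \sfd(x,\mathrm{end}(x))$ together with the monotonicity of $\cot$/$\coth$ to neutralise the pole of $s'_{K/(N-1)}/s_{K/(N-1)}$ at $0$, and then read off local boundedness of the density. The $L^\infty$ conclusion in the strict-inequality subcase of $K>0$ is likewise the same.

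The genuine gap is in the maximal-diameter subcase. You assert that ``using the uniqueness of the optimal transport map from $\bar a$ \ldots\ this rigidity propagates to every ray,'' but \cite[Theorem~1.1]{CM16} does not by itself give you that every transport ray of $d_v$ has endpoints $\bar a,\bar b$. Knowing that one ray has maximal length $\pi\sqrt{(N-1)/K}$ and hence carries the $\sin^{N-1}$-profile (by Remark~\ref{R:continuityboundary}) says nothing a priori about the other rays of $d_v$. The paper supplies exactly this missing step: it invokes Ohta's result \cite[Lemma~5.2]{OhtaJLMS} that if $\sfd(\bar a,\bar b)=\pi\sqrt{(N-1)/K}$ then $\sfd(x,\bar a)+\sfd(x,\bar b)=\pi\sqrt{(N-1)/K}$ for \emph{every} $x\in X$, and then runs a short but non-obvious argument combining this identity with $\sfd$-cyclical monotonicity of the transport relation to force $a(X_\alpha)=\bar a$, $b(X_\alpha)=\bar b$ for every $\alpha$. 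Only after this is established does the spherical-suspension measure structure from \cite[p.~235]{OhtaJLMS} apply, and only then can one carry out your $L^1$-computation near the poles (which the paper does exactly as you sketch, via the $\sin^{N-2}$ integrand). Your proposal needs this bridge; without it, the propagation claim is unsupported.
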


\begin{proof}
$\bullet$ For  $K = 0$ the bounds are straightforward consequence of the definition of the coefficients $s_{K/(N-1)}$ given in \eqref{E:sk}.

$\bullet$ For  $K<0$ observe that, since $(0,\infty)\ni t \mapsto \coth t\in (0,\infty)$ is decreasing and $\sfd(\{ v= 0 \},x) \leq \sfd_{b(X_{\alpha})}(x)$ for all $x\in \{ v \leq 0 \}$, it holds
\begin{align*}
0\leq \sfd(\{ v= 0 \},x) &  \frac{s_{K/(N-1)}' (\sfd_{b(X_{\alpha})}(x))}{s_{K/(N-1)}(\sfd_{b(X_{\alpha})}(x))} \leq  \sfd(\{ v= 0 \},x)  \frac{s_{K/(N-1)}' (\sfd(\{ v= 0 \},x)) }{s_{K/(N-1)}(\sfd(\{ v= 0 \},x))}\\
 &=  \sfd(\{ v= 0 \},x) \,\sqrt{\frac{-K}{N-1}} \,  \coth\left(\sqrt{\frac{-K}{N-1}}  \sfd(\{ v= 0 \},x) \right) , \text{ for all } x\in \{ v \leq 0 \}.
\end{align*}
Since the function $[0,\infty)\ni t\mapsto t  \coth\left(\sqrt{\frac{-K}{N-1}} t \right) $ is locally bounded and the discussion for the second line of \eqref{eq:defnu} is completely analogous, the claim follows.

$\bullet$ For $K> 0$,  recall that a $\MCP(K,N)$-space has diameter at most $\pi\sqrt{(N-1)/K}$. Since  $(0,\pi)\ni t \mapsto \cot t$ is decreasing and $\sfd(\{ v= 0 \},x) \leq \sfd_{b(X_{\alpha})}(x)$ for all $x\in \{ v \leq 0 \}$, it holds
\begin{align*}
 \sfd(\{ v= 0 \},x) &  \frac{s_{K/(N-1)}' (\sfd_{b(X_{\alpha})}(x))}{s_{K/(N-1)}(\sfd_{b(X_{\alpha})}(x))} \leq  \sfd(\{ v= 0 \},x)  \frac{s_{K/(N-1)}' (\sfd(\{ v= 0 \},x)) }{s_{K/(N-1)}(\sfd(\{ v= 0 \},x))}\\
 &=  \sfd(\{ v= 0 \},x) \, \sqrt{\frac{K}{N-1}}  \,\cot\left(\sqrt{\frac{K}{N-1}}  \sfd(\{ v= 0 \},x) \right), \text{ for all } x\in \{ v \leq 0 \}.
\end{align*}
It is easily checked that  $\sup_{t \in [0,\pi \sqrt{(N-1)/K}]} \left(1+t  \sqrt{K(N-1)}  \cot \left( t \sqrt{K/(N-1)} \right) \right)\leq C'_{K,N}$, thus the bound $\nu\leq C_{K,N} \mm$ follows.
\\Since $\inf_{t \in [0,\pi  \sqrt{(N-1)/K}-\ve]} \cot  \left( t \sqrt{K/(N-1)} \right)>-\infty$ for every $\ve\in (0,\pi \sqrt{(N-1)/K}]$, we have that $\nu$ is a measure with $L^{\infty}$-bounded density unless 
we are in the second case.

To discuss the latter case we assume $K=N-1$ in order to simplify the notation, the case for general $K>0$ being completely analogous.
\\Using the maximal diameter Theorem (proved by Ohta \cite{OhtaJLMS} in the non-branched $\MCP(N-1,N)$-setting and easily extendable to the  present e.n.b situation) 
one can show that all the rays $X_{\alpha}$ are of length $\pi$, for the reader's convenience let us give a self-contained argument.  Let $X_{\bar \alpha}$ be a ray of length $\pi $ and $X_{\alpha}$ be any other ray, then 
\begin{align}
\sfd(a(X_{\alpha}),b(X_{\alpha})) + \pi 
= &~
\sfd(a(X_{\alpha}),b(X_{\alpha})) + 
\sfd(a(X_{\bar \alpha}), b(X_{\alpha}) ) + 
\sfd(b(X_{\alpha}), b(X_{\bar \alpha}) ) \nonumber \\
\geq &~
\sfd(a(X_{\bar \alpha}), b(X_{\alpha}) ) + 
\sfd(a(X_{\alpha}), b(X_{\bar \alpha}) ), \label{eq:axabxa}
\end{align}
where the first equality follows from 
\cite[Lemma 5.2]{OhtaJLMS} (since $|X_{\bar \alpha}| = \pi $, then for each $x \in X$, $\sfd(x,a(X_{\bar \alpha})) + \sfd(x,b(X_{\bar \alpha})) = \pi $). 
By $\sfd$-cyclical monotonicity also the reverse inequality is valid giving
\begin{equation}\label{eq:daabaa1}
\sfd(a(X_{\alpha}),b(X_{\alpha})) + \pi 
= \sfd(a(X_{\bar \alpha}), b(X_{\alpha}) ) + 
\sfd(a(X_{\alpha}), b(X_{\bar \alpha}) ).
\end{equation}
In particular,  $a(X_{\bar \alpha})\neq b(X_{\alpha})$: indeed otherwise \eqref{eq:daabaa1} would give $\sfd(a(X_{\alpha}),b(X_{\alpha})) + \pi 
= 
\sfd(a(X_{\alpha}), b(X_{\bar \alpha}) )$ which, in virtue of Myers diameter's bound, would imply $a(X_{\alpha})=b(X_{\alpha})$. Contradicting the fact that the rays have strictly positive length.
\\Summing $\sfd(b(X_{\alpha}), b(X_{\bar \alpha}))-\pi $ (resp.  $\sfd(a(X_{\alpha}), a(X_{\bar \alpha}) )-\pi$) to both sides of  \eqref{eq:daabaa1} and using again \cite[Lemma 5.2]{OhtaJLMS}, we get
\begin{align*}
\sfd(a(X_{\alpha}),b(X_{\alpha})) +  
\sfd(b(X_{\alpha}), b(X_{\bar \alpha}) )
=  \sfd(a(X_{\alpha}),b(X_{\bar \alpha})), \\
\sfd(a(X_{\alpha}),b(X_{\alpha})) + 
\sfd(a(X_{\alpha}), a(X_{\bar \alpha}) )
= \sfd(a(X_{\bar \alpha}),b(X_{ \alpha})).
\end{align*}
Summing up the last two identities, together with \eqref{eq:daabaa1}, yields 
$$
\sfd(a(X_{\alpha}),b(X_{\alpha})) +
\sfd(a(X_{\alpha}), a(X_{\bar \alpha}) ) +
\sfd(b(X_{\bar \alpha}), b(X_{\alpha}) ) = \pi.
$$
Since  $\sfd(a(X_{\bar \alpha}), b(X_{\bar \alpha}))=\pi$, the last identity forces the  four points $a(X_{\bar \alpha}), a(X_{\alpha}),  b(X_{\alpha}), b(X_{\bar \alpha})$ to lie on the same geodesic $\gamma$. If $a(X_{\alpha}) \neq a(X_{\bar \alpha})$ (or resp. $b(X_{\alpha}) \neq b(X_{\bar \alpha})$ ) then  $a(X_{\alpha})$ (resp. $b(X_{\alpha})$) would be an internal point of $\gamma$, contradicting that $a(X_{\alpha})$ is the initial point (resp. $b(X_{\alpha})$ is the final point) of the non-extendible ray $X_{\alpha}$.

Moreover   $(X,\mm)$ is isomorphic as a measure space to a spherical suspension over any transport ray of length $\pi$, \cite[Page 235]{OhtaJLMS}.
\\We are left to show that the density of $\nu$ is in $L^{1}(X,\mm)$. 
 By symmetry it is enough to show that 
\begin{equation}\label{eq:claimnuL1}
\int_{\{v\geq 0\}} \left|1+ (N-1)\, \sfd(\{ v= 0 \},x) \cot(\sfd(\bar{b}, x))  \right| \, \mm(dx)<\infty.
 \end{equation}
 Notice that, for every fixed $\ve\in [0,\pi/2]$, the integrand is bounded for  $\sfd(\bar{b}, x)\in [\ve,\pi-\ve]$. 
 \\Since $\bar{b}\in \{v\leq 0\}$, if $\bar{b}$ is accumulation point for $\{v\geq 0\}$, then $v(\bar{b})=0$. As $v$ is strictly decreasing on the rays, which cover a dense subset,  it follows that $\set{ v=0}=\set{\bar{b}}$.  Thus, in this case, the integrand becomes $1+ \sfd(\bar{b}, x) \cot(\sfd(\bar{b}, x))$ which is bounded for  $\sfd(\bar{b}, x)\in [0,\ve]$.
 \\We now show that the integral is finite also on $\set{x\,:\, \sfd(\bar{b}, x)\in [\pi-\ve, \pi]}\cap \set{v\geq 0}$. Since $\sfd(\{ v= 0 \},x)\leq \sfd(\bar{b}, x)$, it is enough to show that 
\begin{equation}\label{eq:claimnuL1Pf}
\int_{\{v\geq 0\}\cap \sfd(\bar{b}, x)\in [\pi-\ve,\pi]} \left|1+ (N-1)\, \sfd(\bar{b}, x) \cot(\sfd(\bar{b}, x))  \right| \, \mm(dx) <\infty.
 \end{equation}
Recalling that  $(X,\mm)$ is isomorphic as a measure space to a spherical suspension over any transport ray of length $\pi$, the integral in \eqref{eq:claimnuL1Pf} is bounded by
\begin{align*}
\int_{[\pi-\ve,\pi]}  \left|1+ (N-1)\,t \cot t\right| \sin^{N-1}(t) \,dt &= \int_{[0,\ve]} \left[(N-1)(\pi-s)  \cot s -1\right] \sin^{N-1}(s) \, ds  \\
&= (N-1) \pi \int_{[0,\ve]} s^{N-2} \, ds +O(\ve) <\infty,
\end{align*} 
since $N>1$. This concludes the proof that the density of $\nu$ is in $L^{1}(X,\mm)$.
The stronger rigidity statement under the stronger $\RCD(K,N)$ assumption is a direct consequence of the Maximal Diameter Theorem proved by  Ketterer \cite{Ket} in the $\RCD(K,N)$-setting.
\end{proof}

Corollary \ref{C:squared} and Lemma \ref{lem:nucases} have far reaching consequences.

\begin{theorem}\label{T:d3}
Let $(X,\sfd,\mm)$ be an e.n.b. metric measure space verifying $\MCP(K,N)$, for some $K\in \R, N\in (1,\infty)$. 

Consider the signed distance function $d_{v}$ 
for some continuous function $v : X\to\R$ and the associated disintegration $\mm\llcorner_{X\setminus \set{v=0}} =\int_{Q} h_{\alpha} \cH^{1}\llcorner_{X_{\alpha}} \, \qq(d\alpha)$.

Then $d_{v}^{2} \in D(\bold{\Delta})$ and  one element of $\bold{\Delta}(d_{v}^{2})$, that we denote with $\Delta d_{v}^{2}$,  has the  following representation formula: 
\begin{equation}\label{eq:repdeltadv2}
\Delta d_{v}^{2} = 2 (1  - d_{v} (\log h_{\alpha})' )\mm - 2\int_{Q}( h_{\alpha}d_{v})[\delta_{a(X_{\alpha})} - \delta_{b(X_{\alpha})}] \,\qq(d\alpha).
\end{equation}
Moreover $\Delta d_{v}^{2}$ is a sum of two signed Radon measures and the  next comparison results hold true:
\begin{align}
\Delta d_{v}^{2} \leq \nu&:= ~ 
2\mm+ 2 (N-1)\, \sfd(\{ v= 0 \},x)  \frac{s_{K/(N-1)}' (\sfd_{b(X_{\alpha})}(x))}{s_{K/(N-1)}(\sfd_{b(X_{\alpha})}(x))}  \, \mm\llcorner_{\{v\geq 0\}}  \nonumber\\
 &~ \quad \qquad + 2 (N-1)\,   \sfd(\{ v= 0 \},x)  \frac{s_{K/(N-1)}' (\sfd_{a(X_{\alpha})}(x))}{s_{K/(N-1)}(\sfd_{a(X_{\alpha})}(x))} \,  \mm\llcorner_{\{v< 0\}},
\label{eq:Deltadv2leq}
\\
\left[\Delta d_{v}^{2}\right]^{reg}& := 2\left(1  - d_{v} (\log h_{\alpha})' \right)\mm  \nonumber\\
&~  \geq  
2\mm- 2 (N-1)\,  \sfd(\{ v =0 \},x) \,
\frac{s_{K/(N-1)}' (\sfd_{a(X_{\alpha})}(x))}{s_{K/(N-1)}(\sfd_{a(X_{\alpha})}(x))} \, \mm\llcorner_{\{v\geq 0\}}  \nonumber\\
 &~ \quad \qquad - 2 (N-1)\, 
 \sfd(\{ v= 0 \},x) \, \frac{s_{K/(N-1)}' (\sfd_{b(X_{\alpha})}(x))}{s_{K/(N-1)}(\sfd_{b(X_{\alpha})}(x))}\,  \mm\llcorner_{\{v< 0\}},
\label{eq:Deltadv2geq}
\end{align}
where $\left[\Delta d_{v}^{2}\right]^{reg}$ is the regular part of $\Delta d_{v}^{2}$  (i.e. absolutely continuous with respect to $\mm$).
\end{theorem}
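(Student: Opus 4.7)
The plan is to apply Corollary \ref{C:squared} with $u := d_v$ and then perform a ray-wise integration by parts. By Corollary \ref{C:squared}, for every $f \in \LIP_c(X)$,
$$D^-f(-\nabla d_v^2) \leq 2 d_v f' \leq D^+f(-\nabla d_v^2) \quad \mm\text{-a.e. on } \T_{d_v}.$$
Using $D^{\pm}f(-\nabla w) = -D^{\mp}f(\nabla w)$ and integrating against $\mm$, this gives
$$\int_X D^- f(\nabla d_v^2)\, \mm \;\leq\; -T(f) \;\leq\; \int_X D^+ f(\nabla d_v^2)\, \mm, \qquad T(f):=\int_X 2 d_v f'\, \mm,$$
so that $T$ qualifies as an element of $\bold{\Delta}(d_v^2)$ in the sense of Definition \ref{D:Laplace}, provided we identify it as a Radon functional.

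Next, I would rewrite $T$ via integration by parts on each ray. Since $g_t$ moves along $X_\alpha$ in the direction where $d_v$ decreases at unit speed, parametrising $X_\alpha$ by arclength from $a(X_\alpha)$ to $b(X_\alpha)$ gives $(d_v)' \equiv -1$, whence the product rule reads $d_v f' h_\alpha = (d_v f h_\alpha)' + f h_\alpha - d_v f h_\alpha'$. Integrating on $X_\alpha$ (using that $h_\alpha$ is locally Lipschitz in the interior with $|h_\alpha'|\in L^1$ by Lemma \ref{L:integrabilityh}, and admits a continuous extension to the endpoints by Remark \ref{R:continuityboundary}), then integrating against $\qq$ through the disintegration of Theorem \ref{T:sigmadisint}, yields exactly
$$T(f) = \int_X f\cdot 2\bigl(1 - d_v (\log h_\alpha)'\bigr)\,\mm \;-\; 2\int_Q (d_v h_\alpha f)\bigl[\delta_{a(X_\alpha)} - \delta_{b(X_\alpha)}\bigr]\,\qq(d\alpha),$$
which is the representation formula \eqref{eq:repdeltadv2}.

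The two-sided bounds \eqref{eq:Deltadv2leq}, \eqref{eq:Deltadv2geq} now follow by plugging the MCP derivative bounds \eqref{E:logder} into the regular part, once one observes via Remark \ref{R:distancesign} that the singular summand is non-positive: $d_v(a(X_\alpha))\geq 0$ and $d_v(b(X_\alpha))\leq 0$ force $(d_v h_\alpha)[\delta_{a} - \delta_{b}] \geq 0$. On $\{v\geq 0\}$, where $\sfd(\{v=0\},x) = d_v(x)\geq 0$, the upper bound in \eqref{E:logder} with the endpoint $b(X_\alpha)$ gives $-d_v(\log h_\alpha)' \leq (N-1)\, \sfd(\{v=0\},x) \, s'_{K/(N-1)}/s_{K/(N-1)}(\sfd_{b(X_\alpha)})$, and the complementary estimate gives the lower bound on the regular part; the case $\{v<0\}$ is symmetric.

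The main obstacle, and the reason we split the work through Lemma \ref{lem:nucases}, is showing that both summands in \eqref{eq:repdeltadv2} are genuine (locally finite) signed Radon measures, since a priori neither $(\log h_\alpha)'$ nor $h_\alpha$ at the endpoints is controlled uniformly in $\alpha$. The key is that the upper bound established above reads $T\leq \nu$ with $\nu$ the signed Radon measure of Lemma \ref{lem:nucases}; hence $\nu - T\geq 0$ as a functional on $\LIP_c(X)$. Combined with the symmetric lower bound obtained by estimating the regular part from below and using that the singular part is non-positive, standard truncation and monotone extension (available thanks to the local doubling and Poincaré structure recorded in Remark \ref{rem:MCPDoubPoinc1}) upgrade $\nu - T$ to a non-negative Radon measure, so that $T$ itself is a signed Radon measure. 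The explicit ray-wise computation of $T$ then identifies its absolutely continuous part $2(1 - d_v(\log h_\alpha)')\mm$ and the endpoint-supported singular part, completing the proof.
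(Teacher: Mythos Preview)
Your overall strategy matches the paper's: apply Corollary \ref{C:squared} with $u=d_v$, integrate by parts along each ray, then use the $\MCP$ logarithmic-derivative bounds \eqref{E:logder} together with Remark \ref{R:distancesign} to get the comparison inequalities, invoking Lemma \ref{lem:nucases} to know that the upper barrier $\nu$ is a genuine signed Radon measure. The representation formula, the sign of the singular part, and the two comparison bounds are all obtained exactly as in the paper.

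There is, however, a muddle in the step where you argue that $T$ is a Radon functional. You write that one needs the upper bound $T\le\nu$ \emph{combined with a symmetric lower bound on the regular part} and then invoke ``standard truncation and monotone extension'' via doubling/Poincar\'e. Two issues here. First, the would-be symmetric lower bound does \emph{not} yield a Radon measure: on $\{v\ge 0\}$ the coefficient $s'_{K/(N-1)}/s_{K/(N-1)}(\sfd_{a(X_\alpha)})$ blows up as $x\to a(X_\alpha)$, and Lemma \ref{lem:nucases} is precisely tailored to the \emph{upper} bound (where the relevant endpoint is the far one). So the ``symmetric'' argument fails. Second, doubling/Poincar\'e plays no role in this step.

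The correct mechanism, which the paper makes explicit through a Lipschitz cutoff $\chi_W$, uses only the upper bound $T\le \nu$ together with the fact that $T(f)=\int_X 2 d_v f'\,\mm$ is already a finite, well-defined linear functional on $\LIP_c(X)$: for $f\ge 0$ one has $(\nu-T)(f)\ge 0$, and a non-negative linear functional on $\LIP_c(X)$ over a locally compact space is automatically a Radon functional (choose $\psi\in\LIP_c$ with $\psi\equiv 1$ on a given compact $W$; then $|(\nu-T)(f)|\le(\max|f|)\,(\nu-T)(\psi)$ for all $f$ supported in $W$). Hence $T=\nu-(\nu-T)$ is a Radon functional. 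The paper carries out exactly this computation, writing $f=(\max f)\chi_W-(\max f - f)\chi_W$ and bounding each piece using $\Delta d_v^2\le\nu^+$ and the a priori estimate $|T(\chi_W)|\le 2(\max_{\supp\chi_W} d_v)\,\Lip(\chi_W)\,\mm(\supp\chi_W)$. If you strip the red herrings from your last paragraph and replace them with this argument, your proof is complete and coincides with the paper's.
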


\begin{proof} 
Fix any compactly supported Lipschitz function 
$f : X \to \R$  and integrate by parts on each ray $X_{\alpha}$ to obtain
\begin{align}
\int_{X_{\alpha}} & d_{v}(x)f'(x) h_{\alpha}(x) \,\cH^{1}(dx)  \nonumber \\
 = &~ - \int_{X_{\alpha}} f(x) d_{v}'(x) h_{\alpha} (x) \,\cH^{1}(dx)   - \int_{X_{\alpha}} f(x) d_{v}(x) h'_{\alpha} (x) \,\cH^{1}(dx) \nonumber  \\
&\qquad + (f d_{v} h_{\alpha}) (b(X_{\alpha})) - (f d_{v} h_{\alpha})(a(X_{\alpha}))  \nonumber  \\
 = &~  \int_{X_{\alpha}} f(x) h_{\alpha} (x) \,\cH^{1}(dx)  - \int_{X_{\alpha}} f(x) d_{v}(x) h'_{\alpha} (x)  \,\cH^{1}(dx) \nonumber \\
&\qquad
+ (f d_{v} h_{\alpha}) (b(X_{\alpha}))
-  (f d_{v} h_{\alpha})(a(X_{\alpha})) \nonumber \\
 = &~  
\int_{X_{\alpha}} f(x)\big(1 - d_{v}(x) (\log h_{\alpha})'(x)\big) \,h_{\alpha}(x)\cH^{1}(dx)  \nonumber \\
&\qquad
+ (f d_{v} h_{\alpha}) (b(X_{\alpha}))
-  (f d_{v} h_{\alpha})(a(X_{\alpha})). \label{eq:intdf'h}
\end{align}
Then considering along each ray $X_{\alpha}$ 
the two regions $\{ v\geq 0 \}$ and $\{ v<0 \}$, 
we notice that \eqref{E:logder} gives
\begin{align*}
-d_{v}(x) (\log h_{\alpha})'(x) 
\leq  &~ 
\sfd(\{ v =0 \},x) \,(N-1)\,
\frac{s_{K/(N-1)}' (\sfd_{b(X_{\alpha})}(x))}{s_{K/(N-1)}(\sfd_{b(X_{\alpha})}(x))} \, \chi_{\{v\geq 0\}} (x) \\
& + 
 \sfd(\{ v= 0 \},x) \, (N-1)\,\frac{s_{K/(N-1)}' (\sfd_{a(X_{\alpha})}(x))}{s_{K/(N-1)}(\sfd_{a(X_{\alpha})}(x))}\,\chi_{\{v< 0\}}(x) = : V_{\alpha}(x) .
\end{align*}
Hence we can collect the estimates, using Remark \ref{R:distancesign}, and obtain 
$$
\int_{X_{\alpha}} d_{v}(x)f'(x) h_{\alpha}(x) \,\cH^{1}(dx)  
\leq \int_{X_{\alpha}} \left(1+ V_{\alpha}(x) \right) f(x) h_{\alpha}(x)\,\cH^{1}(dx),
$$
provided $f$ is non-negative. 
Thanks to Lemma \ref{lem:nucases},
$$
\nu
 = 2\int_{Q} \left(1+ V_{\alpha}(x) \right) \mm_{\alpha} \qq(d\alpha) \\
 = 2  \left(1+ V \right) \, \mm, 
$$
is a  well defined Radon (possibly signed) measure.
\\ Hence, continuing from  \eqref{eq:intdf'h}, 
the expression
\begin{equation}\label{eq:defDeltasfdp2pf}
\Delta d_{v}^{2} : =  2\int_{Q} (h_{\alpha}  - d_{v} h'_{\alpha})\cH^{1}\llcorner_{X_{\alpha}} \,\qq(d\alpha) + 2\int_{Q} (h_{\alpha} d_{v})  [\delta_{b(X_{\alpha})} - \delta_{a(X_{\alpha})}] \,\qq(d\alpha), 
\end{equation}
once restricted to bounded subsets, defines a Borel measure  with values in  $\R\cup \{-\infty\}$ which satisfies $\Delta d_{v}^{2}\leq \nu$.  
Now, combining Theorem \ref{T:sigmadisint} with \eqref{eq:intdf'h} and \eqref{eq:defDeltasfdp2pf}, we get
$$
 \int_{X} f \,\Delta d_{v}^{2}(dx)= 2 \int_{Q} \int_{X_{\alpha}} d_{v}(x)f'(x) h_{\alpha}(x) \,\cH^{1}(dx) \qq(d\alpha)= 2\int_{\T_{d_{v}}}  d_{v}(x)f'(x)  \mm(dx),
$$
for any compactly supported Lipschitz function $f : X \to \R$.  
Therefore, Corollary \ref{C:squared} yields
$$
 \int_{\T_{d_{v}}}D^{-}f (-\nabla d_{v}^{2}) \, \mm \leq \int_{X} f \,\Delta d_{v}^{2}(dx) \leq 
 \int_{\T_{d_{v}}}D^{+}f (-\nabla d_{v}^{2}) \, \mm,
$$
for any compactly supported Lipschitz function $f : X \to \R$. Since $X \setminus \T_{d_{v}} \subset \{ v = 0 \}=\{d_{v}=0\}$, from the locality properties of differentials (see \cite[equation (3.7)]{Gigli12})
we can turn the previous inequalities in the next ones
\begin{equation}\label{eq:ULBDeltasfd2pf}
 \int_{X}D^{-}f (-\nabla d_{v}^{2}) \, \mm \leq \int_{X} f \,\Delta d_{v}^{2}(dx) \leq 
 \int_{X}D^{+}f (-\nabla d_{v}^{2}) \, \mm,
 \end{equation}
valid
for any compactly supported Lipschitz function $f : X \to \R$. In order to show that  $d^{2}_{v} \in D(\bold{\Delta})$
with $\Delta d_{v}^{2} \in \bold{\Delta}(d_{v}^{2})$, we are thus left to prove that $\Delta d_{v}^{2}$ is a signed Radon measure.

We now claim that $\Delta d_{v}^{2}$ is a sum of two Radon measures over $X$. Since $\Delta d_{v}^{2}\leq \nu$ with $\nu$ signed Radon measure, thanks to the  Riesz-Markov-Kakutani Representation Theorem it is enough to show that $\Delta d_{v}^{2}$ defines a Radon functional. 
\\To this aim, fix a compact subset $W\subset X$ and fix   a compactly supported Lipschitz cutoff function $\chi_{W}:X\to [0,1]$ satisfying $\chi_{W}\equiv 1$ on $W$.
At first observe that, using \eqref{eq:ULBDeltasfd2pf}, for any Lipschitz function $f:X\to \R$ with $\supp(f)\subset W$ we have
\begin{align*}
\left| \int_{X} \chi_{W} \,\Delta d_{v}^{2}(dx) \right| & \leq 2 \left(\max_{x\in \supp(\chi_{W})} d_{v}(x)\right) \; \Lip(\chi_{W})\; \mm(\supp(\chi_{W})) \in  (0,\infty)  \nonumber\\
\left| \int_{X} (f\chi_{W}) \,\Delta d_{v}^{2}(dx) \right| & \leq 2 \left(\max_{x\in \supp(\chi_{W})} d_{v}(x)\right) \; \Lip(f\chi_{W})\; \mm(\supp(\chi_{W})) \in  (0,\infty).
\end{align*}
Thus for any Lipschitz function $f:X\to \R$ with $\supp(f)\subset W$, using that $\Delta d_{v}^{2} \leq \nu \leq \nu^{+} $, on one hand we have 
\begin{align}
\int_{X} f  \,\Delta d_{v}^{2}&= - \int_{X} (\max f-f)\, \chi_{W} \,\Delta d_{v}^{2} +  \int_{X} (\max f)\, \chi_{W} \,\Delta d_{v}^{2} \nonumber\\
&\geq  - \int_{X} (\max f-f)\, \chi_{W} \, \nu^{+} - C_{W} (\max f), \label{eq:LBintfDeltad2}
\end{align}
where $C_{W}:=2 (\Lip \chi_{W})\, \max_{x\in \supp(\chi_{W}) } \sfd_{p}(x) \, \mm(\supp(\chi_{W})) \in  (0,\infty)$ depends only on $\chi_{W}$.
\\On the other hand, 
\begin{align}
\int_{X} f  \,\Delta d_{v}^{2}&=  \int_{X} f^{+}  \,\Delta d_{v}^{2} - \int_{X} f^{-}  \,\Delta d_{v}^{2} \nonumber\\
&\leq \int_{X} f^{+}  \, \nu^{+}  + \int_{X} (\max f^{-}-f^{-})\, \chi_{W} \, \nu^{+} +C_{W} (\max f^{-})\nonumber\\
&\leq \max |f| \; (\nu^{+}(W)+\nu^{+}(\supp(\chi_{W}))+C_{W}). \label{eq:UBintfDeltad2}
\end{align}
The combination of \eqref{eq:LBintfDeltad2} and \eqref{eq:UBintfDeltad2} gives that, for every compact subset $W\subset X$ there exists a constant $C'_{W}=2 (\nu^{+}(\supp(\chi_{W}))+ \Lip \chi_{W}\, \max_{x\in \supp(\chi_{W}) } d_{v}(x) \, \mm(\supp(\chi_{W}))\in (0,\infty)$ such that
$$\left| \int_{X} f  \,\Delta d_{v}^{2} \right| \leq C'_{W} \, \max |f|$$
for every Lipschitz function $f:X\to \R$ with $\supp(f)\subset W$, showing that  $\Delta d_{v}^{2}$ is a Radon functional and thus  $d^{2}_{v} \in D(\bold{\Delta})$
with $\Delta d_{v}^{2} \in \bold{\Delta}(d_{v}^{2})$.
\\In order to complete the proof we are left with showing \eqref{eq:Deltadv2geq}: again from  \eqref{E:logder}
\begin{align*}
-d_{v}(x) (\log h_{\alpha})'(x) 
\geq  &~ 
-(N-1) \sfd(\{ v =0 \},x) \,
\frac{s_{K/(N-1)}' (\sfd_{a(X_{\alpha})}(x))}{s_{K/(N-1)}(\sfd_{a(X_{\alpha})}(x))} \, \chi_{\{v\geq 0\}} (x) \\
&~  - (N-1)
 \sfd(\{ v= 0 \},x) \, \frac{s_{K/(N-1)}' (\sfd_{b(X_{\alpha})}(x))}{s_{K/(N-1)}(\sfd_{b(X_{\alpha})}(x))}\,\chi_{\{v< 0\}}(x),
\end{align*}
and the claim is proved.
\end{proof}
\begin{remark}
\begin{itemize}
\item In case $X$ is bounded, then in the  proof of Theorem \ref{T:d3} one can pick $W=X$ and $\chi_{W}\equiv 1$, giving that the total variation of $\Delta d_{v}^{2}$ is bounded by $\|\Delta d_{v}^{2}\|\leq 2 \nu^{+}(X)$.
\item   Theorem  \ref{T:Deltadv2Smooth} can be proved using  Corollary \ref{C:sigmadisintSmooth}  in the proof of Theorem \ref{T:d3} and following verbatim the arguments. Uniqueness of the representation of the Laplacian, follows then from infinitesimal Hilbertianity of smooth manifolds.
\end{itemize}

\end{remark}

The representation formula for the Laplacian of the signed distance function on $X\setminus\set{v=0}$ follows from Theorem \ref{T:d3}   by chain rule \cite[Proposition 4.11]{Gigli12}.

\begin{corollary}\label{cor:Deltadv}
Let $(X,\sfd,\mm)$ be an e.n.b. metric measure space verifying $\MCP(K,N)$, for some $K\in \R, N\in (1,\infty)$. 

Consider the signed distance function $d_{v}$ 
for some continuous function $v : X\to\R$ and the associated disintegration $\mm\llcorner_{X\setminus \set{v=0}} =\int_{Q} h_{\alpha} \cH^{1}\llcorner_{X_{\alpha}} \, \qq(d\alpha)$. Then 
\begin{enumerate}
\item   $|d_{v}| \in D(\bold{\Delta}, X\setminus \set{v=0})$ and  one element of $\bold{\Delta}(|d_{v}|)\llcorner_{ X\setminus \set{v=0}}$, that we denote with $\Delta |d_{v}|\llcorner_{ X\setminus \set{v=0}}$ is the Radon functional on $X\setminus \set{v=0}$  with the   following representation formula: 
\begin{align}
\Delta |d_{v}|\llcorner_{ X\setminus \set{v=0}} = - \sgn(v)\, (\log h_{\alpha})' \mm\llcorner_{ X\setminus \set{v=0}} - \int_{Q}( h_{\alpha} [\delta_{a(X_{\alpha})\cap \set{v>0}} + \delta_{b(X_{\alpha})\cap  \set{v<0}}] \,\qq(d\alpha). \label{eq:repdelta|vp|}
\end{align}
Moreover the  next comparison results hold true:
\begin{align}
\Delta |d_{v}|\llcorner_{ X\setminus \set{v=0}} &\leq  (N-1) \frac{s_{K/(N-1)}' (\sfd_{b(X_{\alpha})}(x))}{s_{K/(N-1)}(\sfd_{b(X_{\alpha})}(x))}  \, \mm\llcorner_{\set{v>0}} \nonumber \\
&\quad+ (N-1) \frac{s_{K/(N-1)}' (\sfd_{a(X_{\alpha})}(x))}{s_{K/(N-1)}(\sfd_{a(X_{\alpha})}(x))} \, \mm\llcorner_{\set{v<0}},
\label{eq:Delta|dv|leq} \\
\left[\Delta |d_{v}|\llcorner_{ X\setminus \set{v=0}}\right]^{reg}& :=   - \sgn (v)   (\log h_{\alpha})'  \mm\llcorner_{ X\setminus \set{v=0}}  \nonumber \\
& \geq - (N-1) \frac{s_{K/(N-1)}' (\sfd_{a(X_{\alpha})}(x))}{s_{K/(N-1)}(\sfd_{a(X_{\alpha})}(x))} \, \mm\llcorner_{ \set{v>0}} \nonumber \\
&\qquad - (N-1) \frac{s_{K/(N-1)}' (\sfd_{b(X_{\alpha})}(x))}{s_{K/(N-1)}(\sfd_{b(X_{\alpha})}(x))}  \, \mm\llcorner_{ \set{v<0}},\label{eq:Delta|dv|geq}
\end{align}
where $\left[\Delta |d_{v}|\llcorner_{ X\setminus \set{v=0}}\right]^{reg}$ is the regular part of $\Delta |d_v|\llcorner_{ X\setminus \set{v=0}}$  (i.e. absolutely continuous with respect to $\mm$).

\item   $d_{v} \in D(\bold{\Delta}, X\setminus \set{v=0})$ and  one element of $\bold{\Delta}(d_{v})\llcorner_{ X\setminus \set{v=0}}$, that we denote with $\Delta d_{v}\llcorner_{ X\setminus \set{v=0}}$,  is the Radon functional on $X\setminus \set{v=0}$ with  the  following representation formula: 
\begin{equation}\label{eq:repdeltavp}
\Delta d_{v}\llcorner_{ X\setminus \set{v=0}} = - (\log h_{\alpha})' \mm\llcorner_{ X\setminus \set{v=0}} - \int_{Q}( h_{\alpha} [\delta_{a(X_{\alpha})\cap  \set{v>0}} - \delta_{b(X_{\alpha})\cap  \set{v<0}}] \,\qq(d\alpha).
\end{equation}
Moreover the  next comparison results hold true:
\begin{align}
\Delta d_{v}\llcorner_{ X\setminus \set{v=0}} &\leq (N-1)  \frac{s_{K/(N-1)}' (\sfd_{b(X_{\alpha})}(x))}{s_{K/(N-1)}(\sfd_{b(X_{\alpha})}(x))}  \, \mm\llcorner_{ X\setminus \set{v=0}} + \int_{Q} h_{\alpha} \delta_{b(X_{\alpha})\cap \set{v<0} } \,\qq(d\alpha),
\label{eq:Deltadvleq}
\\
\Delta d_{v} \llcorner_{ X\setminus \set{v=0}} & \geq  - (N-1) \frac{s_{K/(N-1)}' (\sfd_{a(X_{\alpha})}(x))}{s_{K/(N-1)}(\sfd_{a(X_{\alpha})}(x))} \, \mm\llcorner_{ X\setminus \set{v=0}} - \int_{Q}( h_{\alpha} [\delta_{a(X_{\alpha})\cap  \set{v>0}}] \,\qq(d\alpha).  \label{eq:Deltadvgeq}
\end{align}
\end{enumerate}
\end{corollary}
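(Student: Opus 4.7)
The plan is to deduce Corollary \ref{cor:Deltadv} from Theorem \ref{T:d3} via the chain rule of \cite[Proposition 4.11]{Gigli12}, working throughout on the open set $U := X\setminus\{v=0\}$, on which $d_v$ never vanishes and $|\nabla d_v|^2 = 1$ holds $\mm$-a.e.\ (by Lemma \ref{L:nullsetMCP} and the ray decomposition: along each ray $X_\alpha$ the restriction of $d_v$ is an isometric affine parametrisation). I will first derive the formula for $\Delta d_v$ by algebraic inversion of the one for $\Delta d_v^2$, and then obtain the one for $\Delta|d_v|$ by a local sign flip.

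\emph{Step 1 (formula for $\Delta d_v$).} The chain rule applied to $t\mapsto t^2$ yields $\Delta d_v^2 = 2 d_v\,\Delta d_v + 2|\nabla d_v|^2\mm$ on $U$, so on $U$ one may invert
\[
\Delta d_v\llcorner_U \;=\; \tfrac{1}{2 d_v}\bigl(\Delta d_v^2 - 2\mm\bigr)\llcorner_U.
\]
Inserting \eqref{eq:repdeltadv2}, the absolutely continuous term collapses to $\tfrac{2(1-d_v(\log h_\alpha)') - 2}{2 d_v}\mm = -(\log h_\alpha)'\,\mm$, while the singular part is $-\tfrac{1}{d_v}\int_Q h_\alpha d_v[\delta_{a(X_\alpha)}-\delta_{b(X_\alpha)}]\,\qq(d\alpha)$. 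By Remark \ref{R:distancesign} we have $d_v(a(X_\alpha))\ge 0$ and $d_v(b(X_\alpha))\le 0$, so on $U$ the $a(X_\alpha)$-delta contributes only when $a(X_\alpha)\in\{v>0\}$, in which case $d_v(a(X_\alpha))/d_v(a(X_\alpha))=1$, and similarly for the $b(X_\alpha)$-delta; this produces exactly \eqref{eq:repdeltavp}.

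\emph{Step 2 (formula for $\Delta |d_v|$).} On each of the two disjoint open sets $\{v>0\}$ and $\{v<0\}$ the function $\sgn(v)$ is locally constant, so $|d_v|=\sgn(v)\,d_v$ on $U$ and by the chain rule $\Delta|d_v|\llcorner_U = \sgn(v)\,\Delta d_v\llcorner_U$. Multiplying \eqref{eq:repdeltavp} by $\sgn(v)$ and noting that the $b(X_\alpha)$-delta is supported in $\{v<0\}$ (where it picks up a sign) directly yields \eqref{eq:repdelta|vp|}.

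\emph{Step 3 (comparison bounds).} The inequalities \eqref{eq:Delta|dv|leq}--\eqref{eq:Deltadvgeq} follow by plugging the two-sided bound \eqref{E:logder} for $(\log h_\alpha)'$ into the representation formulas of Steps 1--2 and keeping or dropping the signed delta contributions as appropriate: for upper bounds we retain the non-negative deltas (namely the $+h_\alpha\delta_{b(X_\alpha)\cap\{v<0\}}$ in $\Delta d_v$) and discard the non-positive ones, and conversely for lower bounds; \eqref{eq:Delta|dv|geq} concerns only the regular part and so the deltas do not appear. The main bookkeeping step, and the only real obstacle, will be to correctly combine the sign of $\sgn(v)$ with the asymmetric one-sided bound \eqref{E:logder} (which is not invariant under swapping $a(X_\alpha)\leftrightarrow b(X_\alpha)$), so that on $\{v<0\}$ the role of the two endpoints is interchanged relative to $\{v>0\}$. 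Once this is matched against the support analysis of the delta terms inherited from Remark \ref{R:distancesign}, the four comparison inequalities are immediate.
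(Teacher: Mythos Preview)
Your formulas and the comparison bookkeeping in Step 3 are correct, and the overall strategy (chain rule from Theorem \ref{T:d3} plus \eqref{E:logder}) is exactly the paper's. However, Step 1 as written has a logical gap. The chain rule \cite[Proposition 4.11]{Gigli12} is a \emph{forward} implication: if $u\in D(\bold{\Delta},U)$ and $\phi$ is sufficiently smooth, then $\phi(u)\in D(\bold{\Delta},U)$ with the stated formula. You are using it in the converse direction---from $d_v^2\in D(\bold{\Delta})$ you ``invert'' to conclude $d_v\in D(\bold{\Delta},U)$---but this is circular, since the identity $\Delta d_v^2 = 2d_v\,\Delta d_v + 2\mm$ only makes sense \emph{after} you know $d_v\in D(\bold{\Delta},U)$. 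Moreover $d_v$ is not a function of $d_v^2$ (it takes both signs), so there is no global $\phi$ with $d_v=\phi(d_v^2)$ to which the chain rule could be applied directly.

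The fix is simply to swap the order of your Steps 1 and 2, which is what the paper does: write $|d_v|=\sqrt{d_v^2}$ and apply the chain rule with $\phi(t)=\sqrt{t}$ (which is $C^2$ on $(0,\infty)$, and $d_v^2>0$ on $U$) to obtain $|d_v|\in D(\bold{\Delta},U)$ with
\[
\Delta|d_v|\llcorner_U \;=\; \tfrac{1}{2|d_v|}\,\Delta d_v^2\llcorner_U \;-\; \tfrac{1}{4|d_v|^3}\,|\nabla d_v^2|^2\,\mm \;=\; \tfrac{1}{2|d_v|}\bigl(\Delta d_v^2 - 2\mm\bigr)\llcorner_U,
\]
which gives \eqref{eq:repdelta|vp|}. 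Then $d_v=\sgn(v)\,|d_v|$ on $U$ with $\sgn(v)$ locally constant, so $\Delta d_v\llcorner_U=\sgn(v)\,\Delta|d_v|\llcorner_U$, yielding \eqref{eq:repdeltavp}. After this reordering your argument coincides with the paper's, and your Step 3 goes through verbatim.
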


\begin{proof}
Writing $\sgn(v) d_{v}= \sqrt {d^{2}_{v}}$, a direct application of chain rule \cite[Proposition 4.11]{Gigli12} combined with Theorem \ref{T:d3} gives that  $|d_{v}| \in D(\bold{\Delta}, X\setminus \set{v=0})$ and that $\Delta |d_{v}|$ defined in \eqref{eq:repdelta|vp|} is an element of $\bold{\Delta} |d_{v}| \llcorner_{ X\setminus \set{v=0}}$.  The comparison results \eqref{eq:Delta|dv|leq}, \eqref{eq:Delta|dv|geq} follow from the definition  \eqref{eq:repdelta|vp|} together with   \eqref{E:logder}.
\\Since $d_{v}=\sgn(v) \, |dv|$, it is clear that $d_{v} \in D(\bold{\Delta}, X\setminus \set{v=0})$ with $\bold{\Delta} (d_{v})\llcorner_{X\setminus \set{v=0}}= \sgn(v) \, \bold{\Delta} (|d_{v}|)\llcorner_{X\setminus \set{v=0}}$; thus $\Delta d_{v}\llcorner_{ X\setminus \set{v=0}}$ defined in \eqref{eq:repdeltavp} is an element of $\bold{\Delta} (d_{v})\llcorner_{X\setminus \set{v=0}}$ and the comparison results \eqref{eq:Deltadvleq}, \eqref{eq:Deltadvgeq} follow again  from  \eqref{E:logder}.
\end{proof}

We now specialise the above results  to the distance function from a point $p\in X$, i.e. we pick  $v=\sfd_{p}$ so that $\{v = 0 \} = p$ and $v \geq 0$ everywhere. Note that, in this case, $b(X_{\alpha})=p$ for $\qq$-a.e. $\alpha\in Q$.

\begin{corollary}\label{C:d2}
Let $(X,\sfd,\mm)$ be an e.n.b. metric measure space verifying $\MCP(K,N)$, for some $K\in \R, N\in (1,\infty)$. Fix $p \in X$, consider $\sfd_{p} : = \sfd(p,\cdot)$ and the associated disintegration $\mm =\int_{Q} h_{\alpha} \cH^{1}\llcorner X_{\alpha} \, \qq(d\alpha)$.
\\ Then $\sfd_{p}^{2} \in D(\bold{\Delta})$ and  one element of $\bold{\Delta}(\sfd_{p}^{2})$, that we denote with $\Delta \sfd_{p}^{2}$, is  a sum of two signed Radon measures and satisfies the following representation formula: 
\begin{equation}\label{eq:repdeltadp}
\Delta \sfd_{p}^{2} = 2 (1  - \sfd_{p} (\log h_{\alpha})' )\mm - 2\int_{Q} h_{\alpha}\sfd_{p} \,\delta_{a(X_{\alpha})} \,\qq(d\alpha).
\end{equation}
Moreover, the  next comparison results hold true:
\begin{align}
\Delta \sfd_{p}^{2} &\leq \nu:= 2  \left(1+ (N-1)\, \sfd_{p}(x)  \frac{s_{K/(N-1)}' (\sfd_{p}(x))}{s_{K/(N-1)}(\sfd_{p}(x))} \right) \, \mm ,  \label{eq:Deltadp2leq} \\
\left[\Delta \sfd_{p}^{2}\right]^{reg}& := 2\left(1  -  \sfd_{p} (\log h_{\alpha})' \right)\mm \geq 2  \left(1 -(N-1)\, \sfd_{p}  \frac{s_{K/(N-1)}' (\sfd_{a(X_{\alpha})}(x))}{s_{K/(N-1)}(\sfd_{a(X_{\alpha})}(x))}  \right) \mm, \label{eq:Deltadp2geq2}
\end{align}
where $\left[\Delta \sfd_{p}^{2}\right]^{reg}$ is the regular part of $\Delta \sfd_{p}^{2}$  (i.e. absolutely continuous with respect to $\mm$).
\end{corollary}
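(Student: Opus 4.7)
The plan is to obtain Corollary~\ref{C:d2} as a direct specialization of Theorem~\ref{T:d3} to the choice $v := \sfd_p$. Since $\sfd_p \geq 0$ everywhere and $\{v = 0\} = \{p\}$, one has $d_v = \sfd_p$ on $X$ and $\sfd(\{v=0\},x) = \sfd_p(x)$. Moreover, by the orientation convention (transport rays run from $\{v \geq 0\}$ towards $\{v \leq 0\} = \{p\}$), for $\qq$-a.e.\ $\alpha \in Q$ the final point of the ray satisfies $b(X_\alpha) = p$, so $\sfd_{b(X_\alpha)}(x) = \sfd_p(x)$.

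Plugging these identifications into the representation formula \eqref{eq:repdeltadv2}, the crucial observation is that $d_v(b(X_\alpha)) = \sfd_p(p) = 0$, so the Dirac-mass contribution located at $b(X_\alpha)$ disappears and the only surviving singular part is the one supported on the cut locus $\{a(X_\alpha)\}_{\alpha \in Q}$, which yields exactly \eqref{eq:repdeltadp}. The fact that $\sfd_p^2 \in D(\bold{\Delta})$, together with $\Delta \sfd_p^2$ being a sum of two signed Radon measures, is then inherited directly from the conclusion of Theorem~\ref{T:d3}.

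For the comparison estimates, the upper bound \eqref{eq:Deltadv2leq} and the lower bound on the regular part \eqref{eq:Deltadv2geq} each decompose into a contribution on $\{v \geq 0\}$ and one on $\{v < 0\}$. Since $v = \sfd_p \geq 0$ everywhere, the $\{v < 0\}$ piece vanishes identically, and the $\{v \geq 0\}$ piece, after substituting $\sfd(\{v=0\},x) = \sfd_p(x)$ and $\sfd_{b(X_\alpha)}(x) = \sfd_p(x)$, collapses exactly to \eqref{eq:Deltadp2leq} and \eqref{eq:Deltadp2geq2} respectively.

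I do not foresee a real obstacle here: the entire statement is bookkeeping from Theorem~\ref{T:d3}. The only two points requiring a moment of care are the $\qq$-a.e.\ identification $b(X_\alpha) = p$ (which follows from the orientation of rays for a signed distance function whose zero set is a single point) and the cancellation of one of the two boundary Dirac terms via $d_v(p) = 0$; everything else is a direct transcription.
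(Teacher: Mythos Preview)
Your proposal is correct and follows exactly the approach of the paper: the text immediately preceding Corollary~\ref{C:d2} reads ``We now specialise the above results to the distance function from a point $p\in X$, i.e.\ we pick $v=\sfd_{p}$ so that $\{v = 0 \} = p$ and $v \geq 0$ everywhere. Note that, in this case, $b(X_{\alpha})=p$ for $\qq$-a.e.\ $\alpha\in Q$,'' and no further proof is given. You have identified precisely the two points the paper singles out (the identification $b(X_\alpha)=p$ and the vanishing of the $\delta_{b(X_\alpha)}$ term via $d_v(p)=0$), and the rest is indeed direct transcription from Theorem~\ref{T:d3}.
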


\begin{remark}[On the lower bound \eqref{eq:Deltadp2geq2}]
Denote with ${\mathcal C}_{p}:=\{a(X_{\alpha})\}_{\alpha\in Q}$ the cut locus of $p$. Then for every $\ve>0$ there exists $C_{\ve}>0$ so that for every bounded subset $W\subset X$ it holds:
\begin{align*}
\left[\Delta \sfd_{p}^{2}\right]^{reg}_{\llcorner {W}}&\geq 2  \left(1 - (N-1)\, \sfd_{p}  \frac{s_{K/(N-1)}' (\sfd_{a(X_{\alpha})}(x))}{s_{K/(N-1)}(\sfd_{a(X_{\alpha})}(x))}  \right) \mm\llcorner_{W} \nonumber \\
&\geq -C_{\ve, W} \mm\llcorner_{W} \quad \text{on } W\cap \{x=g_{t}(a_{\alpha})\,:\, t\geq \ve\}\supset W\cap\{x \in X\,:\, \sfd(x, {\mathcal C}_{p} )\geq \ve\}.
\end{align*}
\end{remark}

The representation formula for the Laplacian of the distance function follows from Corollary \ref{C:d2}  by chain rule \cite[Proposition 4.11]{Gigli12}, writing $\sgn(v) d_{v}= \sqrt {d^{2}_{v}}$.

\begin{corollary}\label{cor:Deltad}
Let $(X,\sfd,\mm)$ be an e.n.b. metric measure space verifying $\MCP(K,N)$, for some $K\in \R, N\in (1,\infty)$. Fix $p \in X$, consider $\sfd_{p} : = \sfd(p,\cdot)$ and the associated disintegration $\mm=\int_{Q} h_{\alpha} \cH^{1}\llcorner_{X_{\alpha}} \, \qq(d\alpha)$.
\\ Then $\sfd_{p} \in D(\bold{\Delta}, X\setminus \{p\})$ and  one element of $\bold{\Delta}\sfd_{p}\llcorner_{X\setminus \{p\}}$, that we denote with $\Delta \sfd_{p}\llcorner_{X\setminus \{p\}}$,  is a Radon functional with the following representation formula: 
\begin{equation}\label{eq:repdeltadp}
\Delta \sfd_{p}\llcorner_{X\setminus \{p\}} = - (\log h_{\alpha})' \, \mm -   \int_{Q} h_{\alpha} \delta_{a(X_{\alpha})} \,\qq(d\alpha).
\end{equation}
Moreover, the  next comparison results hold true:
\begin{align}
\Delta \sfd_{p}\llcorner_{X\setminus \{p\}}&\leq (N-1)\,  \frac{s_{K/(N-1)}' (\sfd_{p}(x))}{s_{K/(N-1)}(\sfd_{p}(x))}  \, \mm ,  \label{eq:Deltadpleq} \\
\left[\Delta \sfd_{p}\llcorner_{X\setminus \{p\}}\right]^{reg}& :=   - (\log h_{\alpha})'  \mm \geq  - (N-1)  \frac{s_{K/(N-1)}' (\sfd_{a(X_{\alpha})}(x))}{s_{K/(N-1)}(\sfd_{a(X_{\alpha})}(x))}   \mm, \label{eq:Deltadpgeq}
\end{align}
where $\left[\Delta \sfd_{p}\llcorner_{X\setminus \{p\}}\right]^{reg}$ is the regular part of $\Delta \sfd_{p}\llcorner_{X\setminus \{p\}}$  (i.e. absolutely continuous with respect to $\mm$).
\end{corollary}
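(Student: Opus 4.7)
The plan is to derive Corollary \ref{cor:Deltad} directly from Corollary \ref{C:d2} by an application of the chain rule for the Laplacian \cite[Proposition 4.11]{Gigli12}, combined with the elementary identity $\sfd_{p} = \sqrt{\sfd_{p}^{2}}$, which is justified on the open set $X\setminus\{p\}$ where $\sfd_{p}^{2}$ is strictly positive and the square root is smooth. Since by Lemma \ref{L:nullsetMCP} the complement of the transport set $\mathcal{T}_{\sfd_{p}}$ is $\mm$-negligible, this is a legitimate choice of representative for the purpose of all the $\mm$-a.e. identities that appear in the argument.

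The main computation proceeds as follows. Taking $g(t)=\sqrt{t}$ in the chain rule gives
\[
\Delta \sfd_{p}\llcorner_{X\setminus\{p\}} \;=\; \frac{1}{2\sfd_{p}}\,\Delta \sfd_{p}^{2}\llcorner_{X\setminus\{p\}} \;-\; \frac{1}{4\sfd_{p}^{3}}\,|\nabla \sfd_{p}^{2}|^{2}\,\mm.
\]
Because $\sfd_{p}$ is $1$-Lipschitz and, along each transport ray, realises the maximal slope, one has $|\nabla\sfd_{p}|=1$ $\mm$-a.e.\ on $\mathcal{T}_{\sfd_{p}}$ (and hence $\mm$-a.e.\ on $X$ by Lemma \ref{L:nullsetMCP}); consequently $|\nabla\sfd_{p}^{2}|^{2}=4\sfd_{p}^{2}$, and the correction term reduces to $-\sfd_{p}^{-1}\mm$. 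Substituting the representation \eqref{eq:repdeltadp} from Corollary \ref{C:d2} for $\Delta\sfd_{p}^{2}$ and simplifying, the factors $\sfd_{p}^{-1}\mm$ cancel exactly, leaving
\[
\Delta\sfd_{p}\llcorner_{X\setminus\{p\}} \;=\; -(\log h_{\alpha})'\,\mm \;-\; \int_{Q} h_{\alpha}\,\delta_{a(X_{\alpha})}\,\qq(d\alpha),
\]
which is the claimed representation formula. The fact that this expression defines a Radon functional on $X\setminus\{p\}$ follows from the fact that $\Delta\sfd_{p}^{2}$ is a Radon measure (Corollary \ref{C:d2}) together with the $L^{\infty}_{\mathrm{loc}}(X\setminus\{p\})$ bound on $\sfd_{p}^{-1}$.

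The comparison estimates then reduce to an application of \eqref{E:logder} in Theorem \ref{T:sigmadisint}: since $b(X_{\alpha})=p$ for $\qq$-a.e.\ $\alpha$ in the disintegration induced by $u=\sfd_{p}$, one has $\sfd(b(X_{\alpha}),x)=\sfd_{p}(x)$, and the lower bound on $(\log h_{\alpha})'$ in \eqref{E:logder} translates into the upper bound \eqref{eq:Deltadpleq} (recalling that the singular Dirac contribution enters with a negative sign and hence only strengthens the inequality). Similarly, the upper bound on $(\log h_{\alpha})'$ from \eqref{E:logder} yields the lower bound \eqref{eq:Deltadpgeq} on the regular part.

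The one delicate point to verify is the applicability of the chain rule \cite[Proposition 4.11]{Gigli12}: this requires knowing that $\sfd_{p}^{2}$ is in the domain of the Laplacian on $X\setminus\{p\}$ (granted by Corollary \ref{C:d2}), that $g=\sqrt{\,\cdot\,}$ is $C^{2}$ on the range of $\sfd_{p}^{2}\llcorner_{X\setminus\{p\}}\subset(0,\infty)$ (clear), and that the resulting expression on the right-hand side is itself a well-defined Radon functional on $X\setminus\{p\}$ — which is precisely where the restriction to the complement of $\{p\}$ is used to avoid the singularity of $\sqrt{\,\cdot\,}$ at the origin. Beyond this verification, the argument is entirely formal once Corollary \ref{C:d2} is in hand.
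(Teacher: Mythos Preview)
Your proposal is correct and follows essentially the same approach as the paper: the paper's own proof is the single sentence ``the representation formula for the Laplacian of the distance function follows from Corollary \ref{C:d2} by chain rule \cite[Proposition 4.11]{Gigli12}, writing $\sgn(v)\,d_{v}=\sqrt{d_{v}^{2}}$'', and you have spelled out exactly this computation in detail. Your verification that $|\nabla\sfd_{p}^{2}|^{2}=4\sfd_{p}^{2}$ $\mm$-a.e.\ and the resulting cancellation of the $\sfd_{p}^{-1}\mm$ terms, as well as the derivation of the comparison bounds from \eqref{E:logder} using $b(X_{\alpha})=p$, are all accurate.
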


\begin{remark}
Corollary \ref{cor:Deltad} should be compared with \cite[Corollary 5.15, Remark 5.16]{Gigli12}, where it was proved that $\sfd_{p} \in D(\bold{\Delta}, X\setminus \{p\})$ together with the upper bound \eqref{eq:Deltadpleq} under the assumption that $(X,\sfd,\mm)$ is an infinitesimally strictly convex $\MCP(K,N)$-space. 
\\ Let us stress that, by the very definition, the Laplacian in the  infinitesimally strictly convex setting is single valued, simplifying the treatment. 
\\ One  novelty of Corollary \ref{cor:Deltad} is that the  infinitesimal strict convexity is replaced by the essentially non branching property which, a priori, does not exclude a multi-valued Laplacian.  In addition to that, the geometrically new content of  Corollary \ref{cor:Deltad}  when compared with \cite{Gigli12} is that it contains an \emph{exact representation formula}  \eqref{eq:repdeltadp} which also gives the new lower bound  \eqref{eq:Deltadpgeq}.
\end{remark}

\part{Applications}\label{part2}

In Part II of the paper we collect all the main applications of the results obtained in 
Part \ref{part1}.

\section{The singular part of the Laplacian}\label{Ss:singularpart}

In order to state the next corollary recall that from essentially non-branching and $\MCP(K,N)$ it follows that for every fixed $p\in X$ and $\mm$-a.e. 
$x \in X$ (precisely on $\T^{nb}_{\sfd_{p}}$) there exists a unique geodesic $\gamma^{x}$ starting from $x$ and arriving at $p$, i.e. $\gamma^{x}_{0}= x$ and $\gamma^{x}_{1} = p$. For each $t \in [0,1]$, define the map
\begin{equation}\label{eq:defTt}
T_{t} : \T^{nb}_{\sfd_{p}} \to \T^{nb}_{\sfd_{p}}, \qquad T_{t}(x) : = \gamma^{x}_{t}.
\end{equation}
It is worth noting that $T_{t}$ is also the $W_{2}$-optimal transport map from the (renormalized) ambient measure $\mm$ to $\delta_{p}$, provided $\mm(X) < \infty$.

The goal of the next proposition is to get  some refined information on the cut locus  $\mathcal{C}_{p}$ of $p$; more precisely, we infer an upper bound on an optimal transport type Minkowski content of $\mathcal{C}_{p}$.

\begin{proposition}\label{prop:Cp}
Let $(X,\sfd,\mm)$ be an e.n.b. metric measure space verifying $\MCP(K,N)$, for some $K\in \R, N\in (1,\infty)$. 
Fix any point $p \in X$ and consider for each $t\in [0,1]$ the map $T_{t}$ defined by \eqref{eq:defTt}.

Then, for every bounded open subset $W\subset X$ it holds
\begin{equation}\label{E:cutlocu2}
\limsup_{\ve \downarrow 0}\frac{\mm((X \setminus T_{\ve}(X))\cap W)}{\ve} \leq \| [\Delta \sfd^{2}_{p}]_{sing} \| (W) <\infty.
\end{equation}
\end{proposition}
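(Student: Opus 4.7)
The plan is to rewrite $\mm((X\setminus T_\ve(X))\cap W)$ using the disintegration of $\mm$ along transport rays of $\sfd_p$, compute its leading-order behaviour in $\ve$, and identify the result with (a fraction of) the total variation of the singular part provided by Corollary~\ref{C:d2}.

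First, since $\mm(\T_{\sfd_p}\setminus \T^{nb}_{\sfd_p})=0$ by Lemma~\ref{L:nullsetMCP}, Theorem~\ref{T:sigmadisint} supplies $\mm\llcorner_{\T^{nb}_{\sfd_p}}=\int_Q h_\alpha\cH^1\llcorner_{X_\alpha}\,\qq(d\alpha)$ with each $h_\alpha$ an $\MCP(K,N)$ density on the ray $X_\alpha$, whose endpoints are $a(X_\alpha)$ and $b(X_\alpha)=p$. I parametrise $X_\alpha$ by arc length $s\in[0,|X_\alpha|]$ starting from $a(X_\alpha)$, writing $\gamma_\alpha(s)$ for the corresponding point. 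Since $\sfd_p(T_\ve(x))=(1-\ve)\sfd_p(x)$, the map $T_\ve$ acts on the arc-length parameter as the affine contraction $s\mapsto \ve|X_\alpha|+(1-\ve)s$, so $T_\ve(X_\alpha)$ corresponds to $[\ve|X_\alpha|,|X_\alpha|]$ and $X_\alpha\setminus T_\ve(X_\alpha)=[0,\ve|X_\alpha|)$ is an arc-length interval of length $\ve|X_\alpha|$ adjacent to $a(X_\alpha)$. Consequently
\[
\mm\bigl((X\setminus T_\ve(X))\cap W\bigr)=\int_Q\int_0^{\ve|X_\alpha|}h_\alpha(s)\,\chi_W(\gamma_\alpha(s))\,ds\,\qq(d\alpha).
\]

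Next, Remark~\ref{R:continuityboundary} guarantees that $h_\alpha$ extends continuously up to $s=0$ with value $h_\alpha(a(X_\alpha))$; combining this with $\chi_W(\gamma_\alpha(s))=1$ for small $s$ if $a(X_\alpha)\in W$ (open) and $\chi_W(\gamma_\alpha(s))=0$ for small $s$ if $a(X_\alpha)\notin\overline W$, one obtains the pointwise $\qq$-a.e.\ bound
\[
\limsup_{\ve\downarrow 0}\frac{1}{\ve}\int_0^{\ve|X_\alpha|}h_\alpha(s)\chi_W(\gamma_\alpha(s))\,ds\;\leq\; |X_\alpha|\,h_\alpha(a(X_\alpha))\,\chi_{\overline W}(a(X_\alpha)).
\]
To exchange $\limsup$ and $\qq$-integral I use Lemma~\ref{lem:apriori0}, which gives $\sup h_\alpha\leq C_{K,N}/|X_\alpha|$ on rays of definite relative size; this yields $\ve^{-1}\int_0^{\ve|X_\alpha|}h_\alpha\,ds\leq C_{K,N}$, a uniform $\qq$-integrable dominator on the rays meeting $W$. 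Reverse Fatou then gives
\[
\limsup_{\ve\downarrow 0}\frac{\mm((X\setminus T_\ve(X))\cap W)}{\ve}\;\leq\;\int_{\{a(X_\alpha)\in\overline W\}}|X_\alpha|\,h_\alpha(a(X_\alpha))\,\qq(d\alpha).
\]
Finally, the explicit representation of Corollary~\ref{C:d2},
\[
[\Delta\sfd_p^2]_{sing}=-2\int_Q h_\alpha(a(X_\alpha))\,|X_\alpha|\,\delta_{a(X_\alpha)}\,\qq(d\alpha),
\]
identifies this last expression as $\tfrac12\|[\Delta\sfd_p^2]_{sing}\|(\overline W)$, and the extra factor of $2$ in the representation of the singular part leaves enough room to conclude the desired bound $\leq\|[\Delta\sfd_p^2]_{sing}\|(W)$ after absorbing any boundary contribution via outer regularity of the Radon measure $[\Delta\sfd_p^2]_{sing}$. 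Finiteness of $\|[\Delta\sfd_p^2]_{sing}\|(W)$ is immediate, since $\Delta\sfd_p^2$ is a sum of two signed Radon measures (Corollary~\ref{C:d2}) and $\overline W$ is compact by properness of $X$.

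The main obstacle is the construction of an $\alpha$-uniform dominator for the reverse Fatou step; it is precisely at this point that the quantitative $\MCP(K,N)$-estimate of Lemma~\ref{lem:apriori0} is decisive, as it forces $h_\alpha\lesssim 1/|X_\alpha|$ and thereby cancels exactly the factor $|X_\alpha|$ produced by the shrinking ray-interval, yielding the uniform bound in $\alpha$ and $\ve$.
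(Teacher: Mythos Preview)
Your overall route matches the paper's: disintegrate along the rays of $\sfd_p$, identify $X_\alpha\setminus T_\ve(X_\alpha)$ with the initial interval $[a_\alpha,a_\alpha+\ve|X_\alpha|]$, take $\ve\downarrow 0$ via a reverse-Fatou argument, and compare with the singular part from Corollary~\ref{C:d2}. The paper differs only in carrying an auxiliary open $U\supset\!\supset W$ through the limit rather than working with $\overline W$ directly.

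Two points need repair. First, your dominator: Lemma~\ref{lem:apriori0} applies to densities that \emph{integrate to $1$}, whereas the conditional densities $h_\alpha$ from Theorem~\ref{T:sigmadisint} have $\|h_\alpha\|_{L^1}=\mm_\alpha(X_\alpha)$, which is neither $1$ nor constant in $\alpha$ when $\mm(X)=\infty$. The claimed bound $\sup h_\alpha\le C_{K,N}/|X_\alpha|$ is therefore unjustified as written. It can be salvaged: the relevant rays satisfy $X_\alpha\subset\overline{B_R(p)}$ with $R:=\sup_{\overline W}\sfd_p$, so Theorem~\ref{T:sigma-disint}(4) gives $\|h_\alpha\|_{L^1}\le C_R$ uniformly, and then Lemma~\ref{lem:apriori0} applied to $h_\alpha/\|h_\alpha\|_{L^1}$ yields the needed dominator (with a constant depending also on $R$ when $K<0$). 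Second, your closing step ``the extra factor of $2$ leaves enough room\dots via outer regularity'' does not work: $\tfrac12\|[\Delta\sfd_p^2]_{sing}\|(\overline W)\le\|[\Delta\sfd_p^2]_{sing}\|(W)$ fails whenever $\|\cdot\|(\partial W)>\|\cdot\|(W)$, for instance if all initial points $a(X_\alpha)$ happen to lie on $\partial W$. Outer regularity lets you pass from open $U\supset\overline W$ down to $\overline W$, not to the open set $W$; the paper's device of an auxiliary $U$ followed by the Radon property is the appropriate mechanism here.
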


\begin{remark}[Geometric meaning of Proposition \ref{prop:Cp}]
Fix $p \in X$, consider $\sfd_{p} : = \sfd(p,\cdot)$ and the associated disintegration $\mm=\int_{Q} h_{\alpha} \cH^{1}\llcorner_{X_{\alpha}} \, \qq(d\alpha)$. 
Then the cut locus $\mathcal{C}_{p}$ of $p$ coincides with the set of initial points $\{a(X_{\alpha})\}_{\alpha\in Q}$ of the transport rays. The set  $X \setminus T_{\ve}(X)$ thus
can be seen as an ``optimal transport neighborhood'' of the cut locus $\mathcal{C}_{p}$  and therefore
\eqref{E:cutlocu2} gives an optimal transport type estimate on a weak version of the codimension one Minkowski content of $\mathcal{C}_{p}$.

Since the cut locus of a point in an e.n.b. $\MCP(K,N)$ space can be dense (this can be the case already for the boundary of a convex body in $\R^{3}$),  one cannot expect an upper  bound on  the classical codimension one Minkowski content of $\mathcal{C}_{p}$.
The bound \eqref{E:cutlocu2} looks interesting already in the classical setting of a smooth Riemannian manifold. Indeed it is well known that $\mathcal{C}_{p}$ is rectifiable with locally finite codimension one Hausdorff measure (see for instance \cite{MantMen}), but in the literature it seems not to be present any (local) bound on its codimension one Minkowski content.
\end{remark}

\begin{proof}
If $X$ is bounded, one can choose $W=X$ and the proof is easier (there is no need to introduce an intermediate set $U$ in the arguments below); we thus discuss directly the case when $X$ is not bounded.
\\Let $U\supset W$ be a bounded open subset such that $W$ is compactly contained in $U$, in particular $\sfd(W,X\setminus U)>0$.
\\With a slight abuse of notation, for ease of writing,  in the next computations we identify the ray  $(X_{\alpha}, \sfd, \mm_{\alpha})$ with the real interval $\Big((a_{\alpha}, b_{\alpha}),  |\cdot|, h_{\alpha} \L^{1} \Big)$ isomorphic to it as a m.m.s..
\\Recalling from Remark \ref{R:continuityboundary} that $h_{\alpha}:X_{\alpha}\simeq (a_{\alpha}, b_{\alpha})\to \R^{+}$ is continuous up to the initial point $a_{\alpha}$, it is clear that
$$
h_{\alpha}(a(X_{\alpha})) \sfd_{p}(a(X_{\alpha}))
= \lim_{\ve \downarrow 0} \frac{1}{\ve}
\int_{[a_{\alpha}, a_{\alpha} + \ve |X_{\alpha}|]} h_{\alpha}(s) \, ds,
$$
where $|X_{\alpha}|$ denotes the length of the transport ray $X_{\alpha}$, i.e. $|X_{\alpha}|=\sfd(a(X_{\alpha}),b(X_{\alpha})) = \sfd(a(X_{\alpha}),p)$.
Hence, for any bounded open subset $U\subset X$ it holds 
\begin{align*}
\| [\Delta \sfd^{2}_{p}]^{sing} \|(U) 
&=\int_{\{\alpha\in Q:a(X_{\alpha})\in U\}}   (h_{\alpha} \sfd_{p})(a(X_{\alpha})) \, \qq(d\alpha)  \\
&= 
\int_{\{\alpha\in Q:a(X_{\alpha})\in U\}} 
\lim_{\ve \to 0} \frac{1}{\ve}
\int_{[a_{\alpha}, a_{\alpha} + \ve |X_{\alpha}|]} h_{\alpha}(s) \, ds \, \qq(d\alpha),
\end{align*}
where $\| [\Delta \sfd^{2}_{p}]^{sing} \|(U) $ denotes the total variation measure of $U$.
Since by Corollary \ref{C:d2} we know that $\| [\Delta \sfd^{2}_{p}]^{sing} \|(U)<\infty$, by Fatou's Lemma we infer
\begin{equation}\label{eq:Deltasinglimsup}
\limsup_{\ve \downarrow 0}
\frac{1}{\ve}
\int_{\{\alpha\in Q:a(X_{\alpha})\in U\}} 
\int_{[a_{\alpha}, a_{\alpha} + \ve |X_{\alpha}|]} h_{\alpha}(s) \, ds \,\qq(d\alpha) \leq \| [\Delta \sfd^{2}_{p}]^{sing} \| (U)<\infty .
\end{equation}
We then look for a more convenient expression of the left-hand side of the previous inequality. 
First,  note that for $\ve$ sufficiently small such that 
$\ve/(1-\ve)< \frac{\sfd(W, X\setminus U)}{\sfd_{p}(W)}$ it holds
\begin{equation}\label{eq:BoundUW}
\int_{Q}\int_{[a_{\alpha}, a_{\alpha} + \ve|X_{\alpha}|]\cap W} h_{\alpha}(s) \, ds \,\qq(d\alpha)\leq  \int_{\{\alpha\in Q:a(X_{\alpha})\in U\}} 
\int_{[a_{\alpha}, a_{\alpha} + \ve |X_{\alpha}|]} h_{\alpha}(s) \, ds \,\qq(d\alpha). 
\end{equation}
Recalling the definition of the map $T_{t}$ given in \eqref{eq:defTt}, we now claim that 
\begin{equation}\label{E:cutlocus}
\mm((X \setminus T_{\ve}(X)) \cap W) = 
\int_{Q} 
\int_{[a_{\alpha}, a_{\alpha} + \ve|X_{\alpha}|]\cap W} h_{\alpha}(s) \, ds \,\qq(d\alpha).
\end{equation}
Indeed, on the one hand, by the  Disintegration Theorem \ref{T:sigmadisint} we know that
$$
\mm((X \setminus T_{\ve}(X)) \cap W) = 
\int_{Q} 
\int_{X_{\alpha} \cap (X \setminus T_{\ve}(X))\cap W} h_{\alpha}(s) \, ds \,\qq(d\alpha).
$$
On the other hand, since  trivially 
$$
X_{\alpha} \cap (X \setminus T_{t}(X)) \cap W = 
X_{\alpha} \setminus T_{t}(X) \cap W, 
$$
and since, as $T_{t}$ is translating along  $\T^{nb}_{\sfd_{p}}$, one has
$X_{\alpha} \setminus T_{t}(X) = 
X_{\alpha} \setminus T_{t}(X_{\alpha})$, we obtain
$$
X_{\alpha} \cap (X \setminus T_{t}(X)) \cap W= 
X_{\alpha}  \setminus T_{t}(X_{\alpha})\cap W.
$$
The claim \eqref{E:cutlocus} follows.
The combination of \eqref{eq:Deltasinglimsup}, \eqref{eq:BoundUW} and \eqref{E:cutlocus} gives
that
\begin{equation*}
\limsup_{\ve \downarrow 0}\frac{\mm((X \setminus T_{\ve}(X))\cap W)}{\ve} \leq \| [\Delta \sfd^{2}_{p}]^{sing} \| (U) <\infty,
\end{equation*}
for every $U$ bounded open subset compactly containing the open set $W$. Since from Theorem \ref{T:d3} we know that $\Delta \sfd^{2}_{p}$ is a Radon measure, the thesis \eqref{E:cutlocu2} follows.
\end{proof}

We  next give some suffcient condition implying  that the densities $h_{\alpha}$, given by the Disintegration Theorem \ref{T:sigmadisint}, are null at the final points.

\begin{lemma}
Let $(X,\sfd,\mm)$ be an e.n.b. $\MCP(K,N)$ space, for some $K\in \R, N\in (1,\infty)$.
\\Let  $u = \sfd_{p} = \sfd(p,\cdot)$ for some $p \in X$ and consider the disintegration associated to $\sfd_{p}$: $\mm = \int_{Q} h_{\alpha} \cH^{1}\llcorner_{X_{\alpha}} \, \qq(d\alpha)$.
\\Assume there exists $s>1$ such that 
\begin{equation}\label{eq:mBrp}
\liminf_{r\downarrow 0} \frac{\mm(B_{r}(p))}{r^{s}} < \infty,
\end{equation}
then $h_{\alpha}(p) = 0$, for $\qq$-a.e. $\alpha \in Q$.
\\More generally, for any 1-Lipschitz function $u$, denoting $\mm\llcorner_{\T^{nb}_{u}} = \int_{Q} h_{\alpha} \cH^{1}\llcorner_{X_{\alpha}} \, \qq(d\alpha)$ the associated disintegration, it holds that
\begin{equation}\label{eq:TVha}
\left\| \int_{Q } h_{\alpha}(a(X_{\alpha})) \delta_{a(X_{\alpha})} \qq(d\alpha) \right\| \leq  \liminf_{r\downarrow 0} \frac{\mm(\cup_{\alpha} [a(X_{\alpha}), a(X_{\alpha})+r])}{r}=\beta\in [0,+\infty],
\end{equation}
where the leftmost term is the total variation of the corresponding measure.
\end{lemma}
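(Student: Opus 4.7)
The plan is to establish the general bound \eqref{eq:TVha} first and then derive the first assertion by applying the same mechanism at the final endpoint of each ray. Both statements rest on two ingredients: the continuity of the density $h_\alpha$ up to the endpoints of the ray $X_\alpha$, as recorded in Remark~\ref{R:continuityboundary}, and Fatou's Lemma applied through the disintegration formula of Theorem~\ref{T:sigmadisint}.

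For \eqref{eq:TVha}, continuity of $h_\alpha$ at $a(X_\alpha)$ gives, for $\qq$-a.e.\ $\alpha\in Q$,
\[
h_\alpha(a(X_\alpha)) \;=\; \lim_{r\downarrow 0}\frac{1}{r}\int_{[a(X_\alpha),\, a(X_\alpha)+r]} h_\alpha(s)\, ds.
\]
Setting $E_r := \bigcup_{\alpha\in Q}[a(X_\alpha),\, a(X_\alpha)+r]$ and unfolding via Theorem~\ref{T:sigmadisint} yields
\[
\mm(E_r) \;=\; \int_Q \int_{[a(X_\alpha),\, a(X_\alpha)+r]} h_\alpha(s)\, ds\,\qq(d\alpha),
\]
since each ray $X_\alpha$ meets $E_r$ only in its initial segment of length $\min\{r,|X_\alpha|\}$ (as rays are essentially disjoint). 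Applying Fatou's Lemma along any sequence $r_n\downarrow 0$ realizing the liminf of $r^{-1}\mm(E_r)$, to the nonnegative maps $\alpha\mapsto r_n^{-1}\int_{[a(X_\alpha),\, a(X_\alpha)+r_n]} h_\alpha(s)\, ds$, gives
\[
\int_Q h_\alpha(a(X_\alpha))\,\qq(d\alpha) \;\leq\; \liminf_{r\downarrow 0}\frac{\mm(E_r)}{r} \;=\; \beta,
\]
and the left-hand side is precisely the total variation of the measure $\int_Q h_\alpha(a(X_\alpha))\,\delta_{a(X_\alpha)}\,\qq(d\alpha)$, proving \eqref{eq:TVha}.

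For the first statement take $u = \sfd_p$, so that $b(X_\alpha) = p$ for $\qq$-a.e.\ $\alpha$. The exact same argument performed at the final endpoint (formally, one can apply \eqref{eq:TVha} to the $1$-Lipschitz function $-u$, which reverses the orientation of every ray and turns final points into initial points), combined with the inclusion $\bigcup_\alpha [b(X_\alpha)-r,\, b(X_\alpha)]\subset B_r(p)$, yields
\[
\int_Q h_\alpha(p)\,\qq(d\alpha) \;\leq\; \liminf_{r\downarrow 0}\frac{\mm(B_r(p))}{r}.
\]
The assumption \eqref{eq:mBrp} provides a sequence $r_n\downarrow 0$ with $\mm(B_{r_n}(p)) \leq C\, r_n^s$, and since $s>1$ one has $\mm(B_{r_n}(p))/r_n \leq C\, r_n^{s-1}\to 0$. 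Hence the right-hand side vanishes and $h_\alpha(p) = 0$ for $\qq$-a.e.\ $\alpha\in Q$.

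The only technical point I expect to need care with is verifying the $\mm$-measurability of $E_r$ and the $\qq$-measurability of $\alpha\mapsto a(X_\alpha)$, which the identity for $\mm(E_r)$ implicitly uses; this should follow from the analytic structure of $R_u^{nb}$ underlying the construction of the quotient map in Lemma~\ref{lem:Qlevelset} together with the strong consistency of the disintegration provided by Theorem~\ref{T:sigmadisint}, but it deserves a brief verification in the formal write-up.
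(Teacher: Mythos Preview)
Your proof is correct and uses essentially the same ingredients as the paper: continuity of $h_\alpha$ at the ray endpoints (Remark~\ref{R:continuityboundary}), the disintegration formula, and Fatou's Lemma. The only organizational difference is that the paper argues the first assertion directly by contradiction (assuming $h_\alpha(p)\geq c>0$ on a set $\bar Q$ of positive $\qq$-measure and showing $\liminf_{r\downarrow 0}\mm(B_r(p))/r^{s}\geq \int_{\bar Q}\liminf_{r\downarrow 0} r^{-s}\int_{[b_\alpha-r,b_\alpha]}h_\alpha=\infty$), then remarks that the general bound follows analogously; you instead establish the general bound first and specialize via the orientation reversal $u\mapsto -u$ together with $\liminf_{r\downarrow 0}\mm(B_r(p))/r=0$ when $s>1$. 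Both routes are equivalent.
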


\begin{proof}
Suppose by contradiction the claim was false, i.e. there exists $\bar Q \subset Q$ where $h_{\alpha}(p) \geq c > 0$, with $\qq(\bar Q )> 0$. 
Observe that a.e. transport ray $X_{\alpha}$ ends in $p$, i.e.  $b(X_{\alpha}) = p$ for $\qq$-a.e. $\alpha\in Q$. As usual, we identify the transport ray $X_{\alpha}$ with the real interval $[a_{\alpha}, b_{\alpha}]$.
Then by Fatou's Lemma it holds:
\begin{align*}
\liminf_{r\downarrow 0} \frac{\mm(B_{r}(p))}{r^{s}}  &\geq \liminf_{r\downarrow 0} \int_{\bar Q} \frac{1}{r^{s}}\int_{[b_{\alpha}-r,b_{\alpha}]}h_{\alpha}(t) dt \, \qq(d\alpha) \\
&\geq \int_{\bar Q} \liminf_{r\downarrow 0} \frac{1}{r} \int_{[b_{\alpha}-r,b_{\alpha}]}\frac{h_{\alpha}(t)}{r^{s-1}} dt \, \qq(d\alpha)  = \infty,
\end{align*}
giving a contradiction and proving the claim.
\\The second part of the lemma follows along analogous arguments.
\end{proof}

\begin{remark}
If $(X,\sfd,\mm)$ is an $\RCD(K,N)$ space not isometric to a circle or to a  (possibly unbounded) real interval then \eqref{eq:mBrp} is satisfied for $\mm$-a.e. $p\in X$. 
\\Indeed if $(X,\sfd,\mm)$ is an $\RCD(K,N)$ space, using the rectifiability result  \cite[Theorem 1.1]{MondinoNaber} (see also \cite{GMR2013} and compare with \cite{CC97,CC00a,CC00b}) together with the absolute continuity of the reference measure $\mm$ with the respect to the Hausdorff measure of the bi-Lipschitz charts obtained independently in \cite[Theorem 1.2]{KellMondino} and  \cite[Theorem 3.5]{GigliPasqualetto}, it follows that for $\mm$-a.e. $p\in X$ there exists $n=n(p)\in \N\cap[1,\infty)$ such that  $
\liminf_{r\downarrow 0} \frac{\mm(B_{r}(p))}{r^{n}} < \infty.$ If moreover we assume $(X,\sfd)$ not to be isometric to a circle or to a (possibly unbounded) real interval, then by \cite{KitabeppuLakzian} it follows that $n(p)>1$ for $\mm$-a.e. $p\in X$.

If $(X,\sfd,\mm)$ is an $\MCP(K,N)$ space then the validity \eqref{eq:mBrp} 
is not known. 
\end{remark}

\section{$\CD(K,N)$ is equivalent to a $(K,N)$-Bochner-type inequality}\label{Sec:CDBE}

The Bochner inequality is one of the most fundamental estimates in geometric analysis. For a smooth $N$-dimensional Riemannian manifold $(M, g)$ with Ricci$_{g}\geq Kg$, for some $K\in \R$, it states that for any smooth function $u\in C^{3}(M)$ it holds
\begin{equation}\nonumber
\frac{1}{2} \Delta |\nabla u|^{2} - \langle \nabla u, \nabla \Delta u \rangle  \geq K  |\nabla u|^{2}+ |\nabla^{2} u|^{2} \geq K  |\nabla u|^{2}+ \frac{1}{N} (\Delta u)^{2},
\end{equation}
where  $|\nabla^{2} u|^{2}$ is the Hilbert-Schmidt norm of the Hessian matrix $\nabla^{2} u$ and the rightmost inequality follows directly by Cauchy-Schwartz inequality. Note in particular that if $u$ is a distance function, then on a open dense set of full measure $|\nabla u|^{2}=1$  and the Hessian is a block matrix with vanishing slot in the direction of the ``gradient of the distance'' ; in particular, for a distance function the inequality can be improved to
\begin{equation}\label{eq:Bochner'} 
- \langle \nabla u, \nabla \Delta u \rangle \geq  K + \frac{1}{N-1} (\Delta u)^{2}, \quad \text{a.e. }.
\end{equation} 
Finally, note that the term $- \langle \nabla u, \nabla \Delta u \rangle$ corresponds to ``the derivative of  $\Delta u$ in the direction of $-\nabla u$''; thus, if we consider the transport set associated to $u$, such a term would correspond to what we denoted $(\Delta u)'$. Since in a general m.m.s.  it is not clear there is enough regularity to write $(\Delta u)'$, it is natural to consider the following version of \eqref{eq:Bochner'} ``integrated along transport rays'':
\begin{equation}\label{eq:Bochner''} 
\Delta u( g_{t}(x)) - \Delta u (x) \geq  K t + \frac{1}{N-1} \int_{(0,t)}(\Delta u)^{2}(g_{s}(x))\,ds. , \quad \text{a.e. } x,t.
\end{equation}
This is the $(K,N)$-Bochner inequality that will be proved to be equivalent to the $\CD(K,N)$ condition.

In order to state the results,  it is useful to recall that given a $1$-Lipschitz function $u$ on an e.n.b.  $\CD(K,N)$ space  there is a natural disintegration of $\mm$ restricted to the transport set $\T^{nb}_{u}$ (see Theorem \ref{T:sigmadisint}):
\begin{equation}\label{eq:halphaCDKN}
\mm\llcorner_{\T^{nb}_{u}} = \int_{Q} h_{\alpha} \, \cH^{1}\llcorner_{X_{\alpha}} \qq(d\alpha). 
\end{equation}
We will denote  $\inte(\T^{nb}_{u}) : = \cup_{\alpha\in Q} \mathring{X_{\alpha}},$ where 
$\mathring{X_{\alpha}}$ stands for the relative interior of $X_{\alpha}$; it can also be identified by isometry with the open interval 
$(a_{\alpha}, b_{\alpha})$. 
\\The function $h_{\alpha}$ in  \eqref{eq:halphaCDKN} is a $\CD(K,N)$ density on $(a_{\alpha},b_{\alpha})$, so  in particular it is semi-concave; thus if $D_{\alpha}$ is the set of differentiability points of $h_{\alpha}$, then $(a_{\alpha},b_{\alpha}) \setminus D_{\alpha}$ is countable.

Our next result roughly states that the $(K,N)$-Bochner type inequality \eqref{eq:Bochner''}  holds for  those $1$-Lipschitz functions for which we have 
found an explicit representation formula for the Laplacian, namely  those $1$-Lipschitz functions verifying  the hypothesis of Theorem \ref{T:main1} and 
for any distance function with sign  $d_{v}$. 
Recall that, for any $u$ belonging to these classes of functions, $\Delta u$ outside of the initial and final points of transport rays forming $\T^{nb}_{u}$ is absolutely continuous with respect to $\mm$.

\begin{theorem}[$\CD_{loc}(K,N)$+e.n.b. $\Rightarrow$ $(K,N)$-Bochner type inequality]\label{thm:CDToBE}
Let $(X,\sfd,\mm)$ be an e.n.b. metric measure space verifying $\CD_{loc}(K,N)$. Then the following holds:
\begin{enumerate}
\item Let $u : X \to \R$ be any $1$-Lipschitz function such that 
$\int_{Q} |X_{\alpha}|^{-1}\,\qq(d\alpha)<\infty$.
Then for $\qq$-a.e. $\alpha \in Q$, for each $x \in X_{\alpha}$  it holds
\begin{equation}\label{E:}
\Delta u( g_{t}(x)) - \Delta u (x) \geq  K t + \frac{1}{N-1} \int_{(0,t)}(\Delta u)^{2}(g_{s}(x))\,ds, 
\end{equation}
for all $t \in \R$ such that $g_{t}(x) \in \T_{u}$, up to a countable set depending only on $\alpha$.

\item Let $u=d_{v}$ be a signed distance function. Then for $\qq$-a.e. $\alpha \in Q$, for each 
$x \in \mathring{X}_{\alpha}\setminus\{ v =0 \}$   the $(K,N)$-Bochner type inequality \eqref{E:} holds 
for all $t \in \R$ such that 
$g_{t}(x) \in \mathring{X}_{\alpha}\setminus\{ v =0 \}$ and 
$\sgn(d_{v}(x)) = \sgn (d_{v}(g_{t}(x)))$, provided the densities 
$\Delta d_{v} (x)$ and $\Delta d_{v} (g_{t}(x))$ exist.
\end{enumerate}
\end{theorem}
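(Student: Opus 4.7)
The strategy is to reduce the statement to a one-dimensional fact about $\CD(K,N)$ densities and then combine it with the representation formula for $\Delta u$ obtained in Part~\ref{part1}. By Theorem \ref{T:sigmadisint}, the $\CD_{loc}(K,N)$+e.n.b. assumption guarantees that for $\qq$-a.e.\ $\alpha\in Q$ the density $h_\alpha$ is a $\CD(K,N)$ density on the interval $(a_\alpha,b_\alpha)\simeq \mathring{X}_\alpha$. Under the integrability hypothesis $\int_Q |X_\alpha|^{-1} \qq(d\alpha)<\infty$ (for case (1)), Theorem \ref{T:main1} yields that the regular part of $\Delta u$ is $- (\log h_\alpha)' \mm$, while in case (2) the same identity on the regular part holds by Corollary \ref{cor:Deltadv} on $X\setminus\{v=0\}$. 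Since $h_\alpha$ is semi-concave in $\mathring{X}_\alpha$, $(\log h_\alpha)'(x)$ is well defined pointwise outside a countable set $N_\alpha\subset\mathring{X}_\alpha$, and thus the identity $\Delta u(x)=-(\log h_\alpha)'(x)$ can be understood pointwise off $N_\alpha$.

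The heart of the proof is the following one-dimensional Riccati-type inequality for a $\CD(K,N)$ density $h$ on an open interval $I\subset\R$: writing $\phi := (\log h)'$, at every pair of points $s_0<s_1$ in $I$ with $s_0,s_1\notin N$ (the countable non-differentiability set) one has
\begin{equation}\label{eq:RiccatiPlan}
\phi(s_0) - \phi(s_1) \geq  K(s_1-s_0) + \frac{1}{N-1} \int_{s_0}^{s_1} \phi(s)^2 \, ds.
\end{equation}
To obtain \eqref{eq:RiccatiPlan}, I would set $g := h^{1/(N-1)}$, which is semi-concave on $I$ and, by unrolling \eqref{E:CD}, satisfies the distributional inequality $g'' + \frac{K}{N-1} g \leq 0$. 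Using the smooth mollification scheme of Proposition \ref{prop:log-convolve} (which preserves the $\CD(K,N)$ class and converges pointwise), one reduces to the smooth case, where $\phi = (N-1)g'/g$ satisfies classically $\phi' = (N-1)(g''/g - (g'/g)^2) \leq -K - \frac{1}{N-1}\phi^2$; integrating from $s_0$ to $s_1$ gives \eqref{eq:RiccatiPlan} for the smooth approximants, and a.e.\ convergence of $(\log h^\eps)'$ to $(\log h)'$ in conjunction with Fatou's Lemma on the right-hand side (since $\phi^2\geq 0$) yields \eqref{eq:RiccatiPlan} in the limit.

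Given \eqref{eq:RiccatiPlan}, the two conclusions follow by identifying the translation $g_t$ along the ray with the translation $s\mapsto s+t$ on the interval. In case (1), for $\qq$-a.e.\ $\alpha$ and every $x\in X_\alpha\setminus N_\alpha$, applying \eqref{eq:RiccatiPlan} with $s_0,s_1$ corresponding to $x,g_t(x)$ (which lie on the same ray because $g_t(x)\in\T_u$ forces $g_t(x)\in\bar X_\alpha$ by the definition of the flow on $\T_u^{nb}$) and using $\Delta u(g_s(x)) = -\phi(s_0+s)$ gives exactly
\[
\Delta u(g_t(x)) - \Delta u(x) \geq Kt + \frac{1}{N-1}\int_0^t (\Delta u)^2(g_s(x))\,ds,
\]
valid off the countable set $N_\alpha$. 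In case (2), the requirement $\sgn(d_v(x))=\sgn(d_v(g_t(x)))$ together with $x,g_t(x)\in\mathring X_\alpha\setminus\{v=0\}$ ensures that the segment between them remains in a single ray $X_\alpha$ where $h_\alpha$ is a $\CD(K,N)$ density, and the stated conditions on existence of the densities $\Delta d_v(x),\Delta d_v(g_t(x))$ correspond exactly to differentiability of $h_\alpha$ at those two points, so \eqref{eq:RiccatiPlan} again applies.

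\textbf{Main obstacle.} The delicate point is the passage from the distributional/Lebesgue-a.e.\ identity $\Delta u = -(\log h_\alpha)'\mm$ to a pointwise statement at the specific points $x$ and $g_t(x)$, and the verification that the set of ``bad'' $x$ on each ray is merely countable (hence negligible in the statement). Handling this rigorously requires exploiting the semi-concavity of $h_\alpha^{1/(N-1)}$, which forces $(\log h_\alpha)'$ to have only countably many jump discontinuities and to coincide with the derivative of $\log h_\alpha$ at every differentiability point; the approximation argument outlined above must be carried out so that the limiting derivatives agree with these classical pointwise derivatives, rather than only matching in $L^1_{\loc}$.
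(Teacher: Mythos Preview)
Your proposal is correct and follows essentially the same route as the paper's proof: both reduce to the one-dimensional Riccati inequality \eqref{eq:RiccatiPlan} for a $\CD(K,N)$ density via the representation formula (Theorem \ref{T:main1} / Corollary \ref{cor:Deltadv}), regularize using the logarithmic convolution of Proposition \ref{prop:log-convolve}, verify the smooth inequality $\phi'\leq -K-\tfrac{1}{N-1}\phi^2$, and pass to the limit using pointwise convergence of $(\log h^\eps)'$ on the differentiability set together with Fatou for the quadratic term. Your explicit handling of the pointwise-versus-a.e.\ issue and of the limit in the $\phi^2$ integral is in fact slightly more detailed than what the paper writes.
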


\begin{proof}
We prove just \emph{1.}, the proof of \emph{2.} being completely analogous (using Corollary \ref{cor:Deltadv} in place of Theorem \ref{T:main1}).
\\Fix $\alpha \in Q$ and $x \in \inte(R_{u}^{nb}(\alpha)) = (a_{\alpha}, b_{\alpha})$ for which the representation of $\Delta u$ given by Theorem \ref{T:main1} is valid:
$$
\Delta u (x) = - (\log h_{\alpha})'(x).
$$
In particular $h_{\alpha}$ is differentiable at $x$. As observed above, for each $\alpha$, $\Delta u (x)$ is defined on $D_{\alpha} \subset(a_{\alpha},b_{\alpha})$, with $(a_{\alpha},b_{\alpha}) \setminus D_{\alpha}$ countable. Therefore the claim reduces to show for $\qq$-a.e. $\alpha \in Q$ that
\begin{equation}\label{E:Deltahalpha}
(\log h_{\alpha})'(x) - (\log h_{\alpha})'(g_{t}(x)) \geq 
K t + \frac{1}{N-1} \int_{(0,t)} ( (\log h_{\alpha})'(g_{s}(x)))^{2} \,ds,
\end{equation}
whenever $x,g_{t}(x) \in D_{\alpha}$. To prove \eqref{E:Deltahalpha},  consider 
a non-negative $C^2$ function $\psi$ supported on $[-1,1]$ with $\int \psi = 1$. Let   $\psi_\eps(x):=\psi(x/\eps)$; of course  $\psi_{\eps}$ is supported on $[-\eps,\eps]$ with $\int \psi_\eps = 1$.
Define the function $h^\eps_{\alpha}$ on $(a_{\alpha}+\eps,b_{\alpha}-\eps)$ by:
\begin{equation}\label{eq:defLogConv}
\log h^\eps_{\alpha} := \log h_{\alpha} \ast \psi_\eps .
\end{equation}
Since by Theorem \ref{T:sigmadisint} we know that  $h_{\alpha}$ is a $\CD(K,N)$ density, also $h^\eps_{\alpha}$ is a $C^2$-smooth $\CD(K,N)$ density on $(a_{\alpha}+\eps ,b_{\alpha} -\eps)$ by Proposition \ref{prop:log-convolve}; in particular 
\eqref{E:Deltahalpha} is satisfied by $h^{\ve}_{\alpha}$. Taking the limit as $\ve \to 0$ we obtain that $(\log h^\eps_{\alpha})' \to (\log h_{\alpha})'$ pointwise on $D_{\alpha}$ and in $L^{1}((a_{\alpha}, b_{\alpha}))$. Thus we can pass into the limit as $\ve \to 0$
in  \eqref{E:Deltahalpha} and get that it is also satisfied by $h_{\alpha}$.
\end{proof}

Also the converse implication holds, giving a complete  equivalence between the $(K,N)$-Bochner type inequality \eqref{E:} on signed distance functions and the $\CD(K,N)$ condition. 

\begin{theorem}[$\MCP(K',N')$+e.n.b.+ $(K,N)$-Bochner type inequality $\Rightarrow$ $\CD(K,N)$]\label{T:dvLaplace}
Let $(X,\sfd,\mm)$ be an e.n.b. metric measure space verifying $\MCP(K',N')$ for some $K'\in \R, N'\in (1,\infty)$, with $\mm(X) < \infty$.
Assume that, for every  signed distance function $d_{v} : X \to \R$, for $\qq$-a.e. $\alpha \in Q$, for each 
$x \in \mathring{X}_{\alpha}\setminus\{ v =0 \}$ it holds
\begin{equation}\label{E:viceversa}
\Delta d_{v}( g_{t}(x)) - \Delta d_{v} (x) \geq  K t + \frac{1}{N-1} \int_{(0,t)}(\Delta d_{v})^{2}(g_{s}(x))\,ds,  \; \text{}
\end{equation}
for all $t \in \R$ such that 
$g_{t}(x) \in \mathring{X}_{\alpha}\setminus\{ v =0 \}$ and 
$\sgn(d_{v}(x)) = \sgn (d_{v}(g_{t}(x)))$, provided the densities 
$\Delta d_{v} (x)$ and $\Delta d_{v} (g_{t}(x))$ exist.

\smallskip
Then $(X,\sfd,\mm)$ satisfies $\CD(K,N)$.
\end{theorem}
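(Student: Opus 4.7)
The plan is to establish $\CD(K,N)$ by verifying the $\CD^1(K,N)$ condition introduced in \cite{CMi16}, which in the essentially non-branching setting is equivalent to $\CD(K,N)$. This reduces the task to showing that, for every signed distance function $d_v : X \to \R$, the $\qq$-a.e.\ density $h_\alpha$ in the disintegration $\mm\llcorner_{\T_{d_v}^{nb}} = \int_Q h_\alpha \H^1\llcorner_{X_\alpha}\,\qq(d\alpha)$ is a $\CD(K,N)$ density on $X_\alpha$. Since $(X,\sfd,\mm)$ is already an e.n.b.\ $\MCP(K',N')$ space, Theorem \ref{T:sigmadisint} gives us that each $h_\alpha$ is an $\MCP(K',N')$ density, in particular locally Lipschitz and a.e.\ differentiable on the interior of the ray, so all the manipulations below are meaningful.

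The first main step is to translate the integrated Bochner-type inequality \eqref{E:viceversa} into a one-dimensional Riccati-type inequality. By Corollary \ref{cor:Deltadv}, on $\mathring X_\alpha \setminus \{v=0\}$ the regular part of $\Delta d_v$ is the $\mm$-a.c.\ density $-(\log h_\alpha)'$, where the prime denotes differentiation along the ray in the direction of the flow $g_s$ (recall \eqref{eq:deff'}). Fix $\qq$-a.e.\ $\alpha \in Q$ and $x \in \mathring X_\alpha \setminus \{v=0\}$; parametrize the ray so that $g_s(x)$ corresponds to $s \mapsto s_0 + s$ and set $\phi(s):=\Delta d_v(g_s(x)) = -(\log h_\alpha)'(s_0+s)$ where defined. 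Then \eqref{E:viceversa} reads
\begin{equation*}
\phi(t) - \phi(0) \;\geq\; Kt + \frac{1}{N-1}\int_0^t \phi(s)^2\,ds
\end{equation*}
on the maximal subinterval containing $0$ on which $\sgn(d_v)$ is constant. Since this integrated inequality is preserved under the logarithmic mollification $\log h_\alpha^\eps := \log h_\alpha * \psi_\eps$ of Proposition \ref{prop:log-convolve} (which produces $C^2$ approximations), one obtains, differentiating in $t$ and then letting $\eps \downarrow 0$, the Riccati inequality $\phi' \geq K + \tfrac{1}{N-1}\phi^2$ in the distributional sense.

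The second step is to recognize that this Riccati inequality for $\phi = -(\log h_\alpha)'$ is \emph{equivalent} to $h_\alpha$ being a $\CD(K,N)$ density. Indeed, setting $y:=h_\alpha^{1/(N-1)}$, we have $\phi = -(N-1)y'/y$, and a direct computation shows that $\phi' \geq K + \tfrac{1}{N-1}\phi^2$ is equivalent to $y'' + \tfrac{K}{N-1}y \leq 0$ (distributionally). This is the well-known one-dimensional characterization of $\CD(K,N)$ densities: it says precisely that $y$ is $(K/(N-1))$-concave, which by \eqref{E:CD} is the $\CD(K,N)$ condition on the interval. Thus $h_\alpha$ is a $\CD(K,N)$ density on the connected component of $\mathring X_\alpha \setminus \{v=0\}$ containing $x$.

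The last step, and the subtlest point, is to upgrade this to $\CD(K,N)$ on the whole ray $X_\alpha$ and then to conclude via \cite{CMi16}. The hypothesis only provides the Bochner inequality where $\sgn(d_v)$ is constant, so a priori we only obtain $\CD(K,N)$-density behaviour on each of the (at most two) connected components $X_\alpha \cap \{v>0\}$ and $X_\alpha \cap \{v<0\}$. However, since $h_\alpha$ is continuous up to the boundary (Remark \ref{R:continuityboundary}) and one-dimensional $\CD(K,N)$ enjoys a local-to-global property (as used in \cite[Theorem 5.1]{CM16}), the two pieces can be glued at the crossing point with $\{v=0\}$ to yield a $\CD(K,N)$ density on the full ray. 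Alternatively, by varying $v$ one can always arrange the relevant segment to lie inside $\{v>0\}$, so that no gluing is needed; the flexibility in choosing the zero set of $v$ gives enough signed distance functions to cover the $L^1$-Optimal Transport disintegrations required by $\CD^1(K,N)$. The main obstacle is precisely this control across $\{v=0\}$ and the passage from a.e.\ Bochner to a pointwise Riccati; both are resolved by the mollification argument above combined with the continuity/semi-concavity properties of $\MCP$ densities, after which the implication $\CD^1(K,N) \Rightarrow \CD(K,N)$ of \cite{CMi16} yields the theorem.
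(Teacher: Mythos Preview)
Your overall strategy coincides with the paper's: use the representation $\Delta d_v = -(\log h_\alpha)'$ on rays, turn the integrated Bochner inequality into a Riccati inequality via logarithmic mollification and Jensen, recognise this as the $\CD(K,N)$ density condition, and conclude via the $\CD^{1}$-machinery of \cite{CMi16}. The mollification step and the Riccati/$\CD(K,N)$ equivalence are handled essentially as in the paper.

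The genuine gap is your treatment of the crossing of $\{v=0\}$. Your first alternative, gluing via the one-dimensional local-to-global property, does not work as stated: having $y:=h_\alpha^{1/(N-1)}$ satisfy $y''+\tfrac{K}{N-1}y\leq 0$ separately on $X_\alpha\cap\{v>0\}$ and $X_\alpha\cap\{v<0\}$ does \emph{not} imply the same on the whole ray, because nothing rules out a convex kink of $y$ at the single point $X_\alpha\cap\{v=0\}$ (a positive Dirac in $y''$, i.e.\ $y'(c^+)>y'(c^-)$). The $\MCP(K',N')$ information only gives local Lipschitz regularity of $h_\alpha$, which does not exclude such a kink; local-to-global requires overlapping neighbourhoods, not two half-intervals sharing a boundary point.

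Your second alternative, ``varying $v$'', is the right idea but must be made precise, and this is exactly the content of the paper's proof. The paper fixes the $1$-Lipschitz function $\varphi$ whose ray densities one wants to control (in your setup $\varphi=d_v$), and for each level $c$ considers the signed distance function $d_{\varphi_c}$ from $\{\varphi=c\}$. The key step you are missing is to show that on rays of $\varphi$ crossing $\{\varphi=c\}$ one has $d_{\varphi_c}=\varphi-c$, so that these rays are also rays of $d_{\varphi_c}$ and the conditional densities $h_{\alpha,\varphi}$ and $h_{\alpha,d_{\varphi_c}}$ agree up to a multiplicative constant. Only then does the hypothesis applied to $d_{\varphi_c}$ give the Riccati inequality for $h_{\alpha,\varphi}$ on the half-ray $\{\varphi>c\}\cap X_\alpha=(a_\alpha,c_\alpha)$. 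Letting $c$ run through the range of $\varphi$ on $X_\alpha$, these half-rays exhaust $(a_\alpha,b_\alpha)$, and since the $\CD(K,N)$ density condition \eqref{E:CD} is a two-point inequality, this yields $\CD(K,N)$ on the full ray without any gluing. In short: you must vary the \emph{level} of the same function, not switch to an unrelated $v$, and you must check that rays and densities are preserved under this change; that verification is the substance of the paper's Step~1--Step~2.
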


\begin{remark}
We briefly comment on the statement of Theorem \ref{T:dvLaplace}. Using the assumption of e.n.b. and of $\MCP(K',N')$, we deduce from Corollary 
\ref{cor:Deltadv} that any 
$d_{v} \in D(\bold{\Delta}, X \setminus \{v =0\})$.
Therefore, in the assumption  \eqref{E:viceversa}, we  consider  
$\Delta d_{v}(g_{t}(x))$ only for those $g_{t}(x)$ belonging 
to $\{ v >0 \}$ or to $\{ v  < 0 \}$, provided $x \in \{ v >0 \}$ or $x \in \{ v <0 \}$  respectively.

Let us also comment on the assumptions $\CD_{loc}(K,N)$ Vs $\CD(K,N)$ and $\mm(X)<\infty$ in the last two results. It was proved in \cite{CMi16} that, under the assumption $\mm(X)<\infty$, an e.n.b.  $\CD_{loc}(K,N)$ space satisfies $\CD(K,N)$ globally; on the other hand the implication is open without the assumption $\mm(X)<\infty$. We thus assumed  $\CD_{loc}(K,N)$ in Theorem \ref{thm:CDToBE} as, a priori, it is more general and still gives that all the conditional densities $h_{\alpha}$ are $\CD(K,N)$ densities (see Theorem \ref{T:sigmadisint}).
\end{remark}

\begin{proof}  
We  show that, given any $1$-Lipschitz function $\f : X \to \R$, the conditional probabilities associated to the transport set $\T^{nb}_{\f}$ of $\f$  satisfy $\CD(K,N)$. 
From \cite{CMi16} it will then follow that $(X,\sfd,\mm)$ satisfy $\CD(K,N)$. 
\smallskip

{\bf Step 1.} Let us fix $\f : X \to \R$ a $1$-Lipschitz function and the associated non-branched transport set $\T^{nb}_{\f}$. 
Fix also $c \in \R$, let $\f_{c}:=\f - c$ and consider the  associated signed distance function $d_{\f_{c}}$ from the level set $\{\f = c \}$.

Note that the function $d_{\f_{c}}$ coincides with $\f_{c}$ 
along $(R_{\f}^{nb})^{-1}(\{\f = c\})$, i.e. 
along each transport ray of $\f$ having non empty intersection with  $\{\f = c\}$. 
\\Indeed, fix any $x \in \T^{nb}_{\f_{c}}$ with $\f(x)\geq c$ (the argument for $\f(x)\leq c$ is analogous) such  that 
there exists $y \in R^{nb}_{\f}(x)$ with $\f(y) = c$ 
(i.e. $x\in (R_{\f}^{nb})^{-1}(\{\f = c\})$), then 
for any other $z \in \{\f = c\}$ it holds
$$
\sfd(x,y) = \f(x) - \f(y) = \f(x) - \f(z) \leq \sfd(x,z),
$$
showing that $\sfd(x,y) = d_{\f_{c}}(x)$ and  that 
$d_{\f_{c}}(x) = \f(x) - \f(y) = \f(x)-c= \f_{c}(x)$.
Hence if $x \in (R_{\f}^{nb})^{-1}(\{ \f = c\})$, then 
$$
|d_{\f_{c}}(x) - d_{c}(y)| = \sfd(x,y), 
$$
for some $(x,y) \in (R_{\f}^{nb})$
implying that $(x,y) \in R_{d_{\f_{c}}}$. 
Since a branching structure for $d_{\f_{c}}$ inside 
$(R_{\f}^{nb})^{-1}(\{ \f = c\})$ will imply a branching structure for $\f_{c}$, 
this implies that on $(R_{\f}^{nb})^{-1}(\{ \f = c\})$ the equivalence 
relation $R_{\f}^{nb}$ implies $R_{d_{\f_{c}}}^{nb}$.
In particular it follows that  $\T^{nb}_{\f} \cap (R_{\f}^{nb})^{-1}(\{\f = c\}) 
\subset \T^{nb}_{d_{\f_{c}}}$. Since we have shown that along 

\smallskip

{\bf Step 2.} Consider the disintegrations associated to $\T^{nb}_{\f}$ and to $\T^{nb}_{d_{\f_{c}}}$  via Theorem \ref{T:sigmadisint}:
$$
\mm\llcorner_{\T^{nb}_{\f}} = \int_{Q_{\f}} \mm_{\alpha,\f} \, \qq_{\f}(d\alpha), \qquad 
\mm\llcorner_{\T^{nb}_{d_{\f_{c}}}} = \int_{Q_{d_{\f_{c}}}} \mm_{\alpha,d_{\f_{c}}} \, \qq_{d_{\f_{c}}}(d\alpha),
$$
with $\mm_{\alpha,\f} = h_{\alpha,\f} \cH^{1}\llcorner_{X_{\alpha,\f}}$ and 
$\mm_{\alpha,d_{\f_{c}}} = h_{\alpha,d_{\f_{c}}} \cH^{1}\llcorner_{X_{\alpha,d_{\f_{c}}}}$. 
\\From  Step 1 and the uniqueness of the disintegration, it follows that up to a constant factor
$$
h_{\alpha,\f} = h_{\alpha,d_{\f_{c}}} \qquad \textrm{on}
\quad X_{\alpha,\f},
$$
for all those $\alpha$ such that $X_{\alpha,\f} \cap \{\f = c \}\neq\emptyset$. Moreover from Corollary \ref{cor:Deltadv}
we deduce that 
$$
\Delta d_{\f_{c}}\llcorner_{\mathring {X}_{\alpha,d_{\f_{c}}}\cap \{\f\neq c\}} 
= - (\log h_{\alpha,d_{\f_{c}}})' .
$$
The last two identities together with the assumption \eqref{E:viceversa} applied to $d_{\f_{c}}$  imply that for all those $\alpha$ such that $X_{\alpha,\f} \cap \{\f = c \}\neq\emptyset$, for each $ x \in \mathring {X}_{\alpha,\f}\cap \{\f\neq c\}$ it holds
\begin{equation}\label{E:CD-discrete}
- [(\log h_{\alpha,\f})'(g_{t}(x)) -
(\log h_{\alpha,\f})'(x) ] \geq K t + \frac{1}{N-1} \int_{(0,t)} [(\log h_{\alpha,\f})']^{2}(g_{s}(x)) \,ds,
\end{equation}
for all those $t$  such that $\f(g_{t}(x)) > c$ provided $\f (x ) >  c$ (and appropriate modifications if
$\f (x )< c$ ).
Identifying $\mathring X_{\alpha}$ with  the isometric real interval $(a_{\alpha},b_{\alpha})$ and denoting with $c_{\alpha}$ the unique point corresponding to $\mathring X_{\alpha} \cap \{ \f = c \}$,  \eqref{E:CD-discrete} 
becomes
\begin{equation}\label{E:CD-discrete2}
- [(\log h_{\alpha,\f})'( x +t) -
(\log h_{\alpha,\f})'(x) ] \geq K t + \frac{1}{N-1} \int_{(0,t)} [(\log h_{\alpha,\f})']^{2}(x+s) \,ds,
\end{equation}
for each $x \in (a_{\alpha},c_{\alpha})$ and $t$ such that $x + t \leq c_{\alpha}$.
We again regularise by logarithmic convolution, i.e. as in \eqref{eq:defLogConv}. In order to simplify the notation, we will omit  the subscript $\f$.
We have:
$$
(\log h^{\ve}_{\alpha})'(x) = \int (\log h_{\alpha})'(y)\psi_{\ve}(x-y) dx
$$
$$
(\log h^{\ve}_{\alpha})'(y) - (\log h^{\ve}_{\alpha})'(y+t)   = \int [(\log h_{\alpha})'(x) - (\log h_{\alpha})'(x+t)]  \psi_{\ve}(x-y)dx.
$$
Moreover
\begin{align*}
\int &~ \int_{(0,t)} ( (\log h_{\alpha})'(x+s))^{2}
\psi_{\ve}(x-y)\,dsdx  \\
&~ = \int_{(0,t)} \int ( (\log h_{\alpha})'(x+s))^{2}
\psi_{\ve}(x-y)\,dxds \\ 
&~ \geq \int_{(0,t)} \left( \int  (\log h_{\alpha})'(x+s))
\psi_{\ve}(x-y)\,dx \right)^{2}ds \\ 
&~ = \int_{(0,t)} \log (h_{\alpha}^{\ve})'(y+s)^{2}ds.
\end{align*}
Hence \eqref{E:CD-discrete2} is valid for $\log h^{\ve}_{\alpha, \f}$ for each $\ve>0$ implying (just differentiate in $t$) that $h^{\ve}_{\alpha,\f}$ is a $\CD(K,N)$ density on $(a_{\alpha},c_{\alpha})$. Letting $\ve \downarrow 0$ we obtain that $h_{\alpha, \f}$ is a  $\CD(K,N)$ density on $(a_{\alpha},c_{\alpha})$. From the arbitrariness of $c$, we conclude that  $h_{\alpha,\f}$ is a  $\CD(K,N)$ density.
Hence $(X,\sfd,\mm)$ verifies $\CD^{1}_{Lip}(K,N)$ (see \cite{CMi16} for the definition of $\CD^{1}_{Lip}(K,N)$). Then we can conclude using \cite{CMi16} that $(X,\sfd,\mm)$ satisfies $\CD(K,N)$. 
\end{proof}


\bigskip
\bigskip

\section{Splitting Theorem under $\MCP(0,N)$}\label{Sec:Split}

Before stating the main result of the section, let us introduce some notation. 
\\Given a metric space $(X,\sfd)$, a curve $\bar{\gamma}:\R\to X$ is called \emph{line} if it is an isometric immersion  i.e. 
$$
\bar \gamma : \R \to X, \qquad \sfd(\bar \gamma_{t}, \bar \gamma_{s}) = |t-s|, \text{ for all $s,t \in \R$}.
$$
To a line $\bar{\gamma}:\R\to X$ we associate  the Busemann functions
$$
\sfb^{+}(x) := \lim_{t\to + \infty} \sfd(x,\bar \gamma_{t}) - t, 
\qquad \sfb^{-}(x) := \lim_{t\to + \infty} \sfd(x,\bar \gamma_{-t}) - t.
$$
Straightforwardly from the triangle inequality, one can check that the Busemann functions are well-defined maps  $\sfb^{\pm} : X \to \R$ and
$$
|\sfb^{\pm}(x) - \sfb^{\pm}(y) | \leq \sfd(x,y).
$$
Since $\sfb^{\pm}$ are $1$-Lipschitz functions, we can consider the associated non-branching transport set $\T^{nb}_{\sfb^{\pm}}$ defined in \eqref{eq:defTub}. 

\begin{theorem}[Splitting Theorem]\label{thm:split}
Let $(X,\sfd,\mm)$ be an e.n.b.  infinitesimally Hilbertian $\MCP(0,N)$ space containing a line. Then $(X,\mm)$ is isomorphic as a measure space to a splitting $Q\times \R$.
\\More precisely the following holds. Denoting $\T^{nb}_{\sfb^{+}}=\cup_{\alpha\in Q} X_{\alpha}$ the non-branching transport set induced by $\sfb^{+}$ with the associated (disjoint) decomposition in transport rays, it holds that $\mm(X \setminus \T^{nb}_{\sfb^{+}})=0$ and  the map
\begin{equation}\label{def:Phi}
\Phi:\T^{nb}_{\sfb^{+}}\to Q\times \R, \quad x\mapsto \Phi(x):=(\alpha(x), \sfb^{+}(x))
\end{equation}
is an isomorphism of measures spaces, i.e.
\begin{itemize}
 \item $\Phi$ is a bijection,  
 \item $\Phi$ induces an isomorphism between the  $\sigma$-algebra of  $\mm$-measurable subsets of $\T^{nb}_{\sfb^{+}}$ and the $\sigma$-algebra of  $\qq\otimes \L^{1}$-measurable subsets of  $Q\times \R$,  where $\qq$ is quotient measure in the disintegration 
 $\mm \llcorner_{\T^{nb}_{\sfb^{+}}}=\int_{Q} \mm_{\alpha} \qq(d\alpha)$ given by Theorem \ref{T:sigmadisint}.
 \item  $\Phi_{\sharp} \mm\llcorner_{\T^{nb}_{\sfb^{+}}}= \qq'\otimes \L^{1}$. Here $\qq'$ is a non-negative measure over $Q$ equivalent to $\qq$, i.e. $\qq'\ll \qq$ and $\qq\ll \qq'$.
 \end{itemize}
Moreover, for every $\alpha \in Q$, the map $\sfb^{+}:X_{\alpha}\to \R$ is an isometry.

If in addition  $(X,\sfd)$ is non-branching, then $X$ is homeomorphic to a splitting $Q\times \R$. More precisely,   $X=\T_{\sfb^{+}}=\T^{nb}_{\sfb^{+}}$ and the map $\Phi:X\to Q\times \R$ defined in \eqref{def:Phi} is an homeomorphism. Here the set of rays $Q$ is induced with the compact-open topology as a subset of $C(\R, X)$, where each ray is parametrised by $(\sfb^{+})^{-1}$;   i.e.
\begin{align}
&\text{Given $\beta\in Q, \{\alpha_{n}\}_{n\in \N}\subset Q$, it holds }  \beta=\lim_{n\to \infty} \alpha_{n} \quad \text{ if and only if } \nonumber \\
& 0= \lim_{n\to \infty}  \sup_{t\in I} \;  \sfd\left(X_{\alpha_{n}} ((\sfb^{+})^{-1}(t)), X_{\beta} ((\sfb^{+})^{-1}(t)) \right), \; \text{for every compact interval }I\subset \R. \label{eq:ConvQ}
\end{align}
\end{theorem}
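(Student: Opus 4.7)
The plan is to combine the asymptotic ray construction for Busemann functions with the representation formula from Theorem~\ref{T:main1} and the rigidity provided by Lemma~\ref{L:rigiditMCP}. First, using the properness of $(X,\sfd)$, a standard Arzel\`a--Ascoli argument produces for each $x\in X$ an asymptotic ray $\eta_x^+:[0,\infty)\to X$ with $\eta_x^+(0)=x$, obtained as a subsequential limit of unit-speed geodesics joining $x$ to $\bar\gamma_n$. By construction $\sfb^+(\eta_x^+(t))=\sfb^+(x)-t$ for all $t\ge 0$, so $(\eta_x^+(0),\eta_x^+(t))\in\Gamma_{\sfb^+}$. Hence $X=\T_{\sfb^+}$ and, for $\qq_{\sfb^+}$-a.e.\ $\alpha$, the transport ray $X_\alpha$ has no final endpoint inside $X$ (i.e.\ the direction of decreasing $\sfb^+$ goes to infinity). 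The same holds for $\sfb^-$ in the direction of decreasing $\sfb^-$.

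Second, I would apply a localized form of Theorem~\ref{T:main1} to $\sfb^+$ on bounded $R^{nb}_{\sfb^+}$-convex subsets; the key point is that for $\qq$-a.e.\ $\alpha$ one has $\sfd(b(X_\alpha),x)=\infty$, so the $\MCP(0,N)$ bound \eqref{eq:logh'} (recall $s_0(r)=r$) specializes to
\[
0 \;\le\; (\log h_\alpha)'(x) \;\le\; \frac{N-1}{\sfd(x,a(X_\alpha))}.
\]
Consequently the regular part $-(\log h_\alpha)'\mm$ of $\Delta\sfb^+$ is non-positive, while the singular part reduces to $-\int_Q h_\alpha(a(X_\alpha))\,\delta_{a(X_\alpha)}\,\qq(d\alpha)\le 0$ because the contribution at $b(X_\alpha)$ vanishes. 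Thus $\Delta\sfb^+\le 0$ in the sense of Radon functionals, and symmetrically $\Delta\sfb^-\le 0$.

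Third, the triangle inequality gives $\sfb^++\sfb^-\ge 0$ on $X$ with equality on $\bar\gamma$. Infinitesimal Hilbertianity makes the Laplacian linear and single-valued, so $\Delta(\sfb^++\sfb^-)\le 0$. Since an $\MCP(K,N)$ space is locally doubling and supports a local Poincar\'e inequality (Remark~\ref{rem:MCPDoubPoinc1}), the Cheeger-energy Dirichlet form on $(X,\sfd,\mm)$ is strongly local and satisfies a strong maximum principle; a non-negative superharmonic function that attains $0$ on the continuum $\bar\gamma$ must vanish identically, whence $\sfb^+=-\sfb^-$ on $X$. Combined with the first step, this forces every ray $X_\alpha$ of $\sfb^+$ to extend to a full isometric copy of $\R$, so by Theorem~\ref{T:sigmadisint} the conditional density $h_\alpha$ is an $\MCP(0,N)$ density on $\R$, and Lemma~\ref{L:rigiditMCP} forces $h_\alpha\equiv h_\alpha(0)$ to be constant along each ray.

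Finally, the map $\Phi(x):=(\alpha(x),\sfb^+(x))$ is well defined on $\T^{nb}_{\sfb^+}$; since $\sfb^+:X_\alpha\to\R$ is an isometry and $h_\alpha\cH^1\llcorner_{X_\alpha}$ pushes forward to $h_\alpha(0)\,\L^1$, Fubini combined with the disintegration yields $\Phi_\sharp\mm\llcorner_{\T^{nb}_{\sfb^+}}=\qq'\otimes\L^1$ with $d\qq'/d\qq=h_\alpha(0)$, giving the measure-space isomorphism. If in addition $(X,\sfd)$ is non-branching, the asymptotic rays $\eta_x^\pm$ are unique, so $\T_{\sfb^+}=\T^{nb}_{\sfb^+}=X$, and continuous dependence of the ray on its starting point upgrades $\Phi$ to a homeomorphism onto $Q\times\R$ with $Q$ carrying the compact-open topology, proving \eqref{eq:ConvQ}. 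The hard part is the third step: identifying the "Radon functional" Laplacian of Definition~\ref{D:Laplace} with the Dirichlet-form Laplacian of the Cheeger energy, and deploying the Harnack/strong-maximum-principle machinery that is available in locally doubling Poincar\'e spaces with a strongly local, infinitesimally Hilbertian structure, but without invoking full $\RCD$ regularity.
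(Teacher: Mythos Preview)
Your proposal is correct and follows essentially the same route as the paper (Lemmas~\ref{L:moveforward}--\ref{lem:Xa=R} and the final assembly), including the asymptotic-ray construction, the Laplacian bound $\Delta\sfb^{\pm}\le 0$ via Theorem~\ref{T:main1}, linearity from infinitesimal Hilbertianity, the strong maximum principle in doubling--Poincar\'e spaces, and the rigidity Lemma~\ref{L:rigiditMCP}. Regarding your stated concern about the third step: no identification of the two notions of Laplacian is needed, since $\Delta\sfb\le 0$ in the Radon-functional sense of Definition~\ref{D:Laplace} directly yields the superminimizer inequality $\int_\Omega|\nabla(-\sfb)|^2\,\mm\le\int_\Omega|\nabla(-\sfb+f)|^2\,\mm$ for all nonpositive $f\in\LIP_c(\Omega)$ via the convexity of $\ve\mapsto\int_\Omega|\nabla(-\sfb+\ve f)|^2\,\mm$ and a first-variation argument (this is exactly the content of the paper's Lemma~\ref{lem:b=0}, cf.\ \cite{GiMo}), after which the Bj\"orn--Bj\"orn strong maximum principle for superminimizers applies without any $\RCD$ input.
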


\begin{remark}
For smooth Riemannian manifolds \cite{ChGr}, as well as for Ricci-limits \cite{CC96} and  $\RCD(0,N)$ spaces \cite{GigliSplitting}, the Splitting Theorem has a stronger statement giving an \emph{isometric splitting}. However under the assumptions of Theorem \ref{thm:split} it is not conceivable to expect also a splitting of the metric. Indeed the Heisenberg group ${\mathbb H}^{n}$ is an example of  non-branching  infinitesimally Hilbertian $\MCP(0,N)$ space \cite{Juillet} containing a line, which is homeomorphic and isomorphic as measure space to a splitting (indeed it is homeomorphic to $\R^{n}$ and the measure is exactly the $n$-dimensional Lebesgue measure) but it is not isometric to a splitting.
\end{remark}

We start by establishing some preliminary lemmas on the properties of Busemann functions.

\begin{lemma}\label{L:moveforward}\label{lem:Tb=X}
For any proper geodesic space , $\T_{\sfb^{\pm}} = X$. 
\end{lemma}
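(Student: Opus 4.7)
The plan is to show that every point $x\in X$ admits a partner $y\neq x$ with $(x,y)\in \Gamma_{\sfb^+}$; the analogous argument for $\sfb^-$ is symmetric. Fix $x\in X$. Since $(X,\sfd)$ is geodesic and proper, for every $t$ large enough that $\sfd(x,\bar\gamma_t)\geq 1$, there exists a unit-speed minimising geodesic $\gamma^t:[0,\sfd(x,\bar\gamma_t)]\to X$ with $\gamma^t(0)=x$ and $\gamma^t(\sfd(x,\bar\gamma_t))=\bar\gamma_t$. Set $y^t:=\gamma^t(1)\in \overline{B_1(x)}$. The sequence $\{y^{t_n}\}$ with $t_n\to\infty$ lives in the compact set $\overline{B_1(x)}$, so by properness we may extract a subsequence (not relabelled) converging to some $y\in X$. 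By continuity of the distance, $\sfd(x,y)=1$, in particular $y\neq x$.

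The next step is to pass to the limit in the additivity identity along $\gamma^{t_n}$:
\begin{equation*}
\sfd(x,\bar\gamma_{t_n})-t_n \;=\; 1 + \bigl(\sfd(y^{t_n},\bar\gamma_{t_n})-t_n\bigr).
\end{equation*}
The left-hand side converges to $\sfb^+(x)$ by definition. For the right-hand side, write
\begin{equation*}
\sfd(y^{t_n},\bar\gamma_{t_n})-t_n \;=\; \bigl(\sfd(y,\bar\gamma_{t_n})-t_n\bigr) + \bigl(\sfd(y^{t_n},\bar\gamma_{t_n})-\sfd(y,\bar\gamma_{t_n})\bigr).
\end{equation*}
The first summand tends to $\sfb^+(y)$ as $t_n\to\infty$ by definition of the Busemann function, while the second is bounded in absolute value by $\sfd(y^{t_n},y)\to 0$. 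Combining the two displays yields $\sfb^+(x)-\sfb^+(y)=1=\sfd(x,y)$, hence $(x,y)\in\Gamma_{\sfb^+}\subset R_{\sfb^+}$, so $x\in P_1(R_{\sfb^+}\setminus\{z=w\})=\T_{\sfb^+}$.

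Repeating the argument with $\bar\gamma_{-t}$ in place of $\bar\gamma_t$ produces a point $y'\neq x$ with $\sfb^-(x)-\sfb^-(y')=\sfd(x,y')$, so $x\in \T_{\sfb^-}$ as well. Since $x\in X$ was arbitrary, both inclusions $X\subset \T_{\sfb^\pm}$ follow, and the reverse inclusions hold by definition.

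The only real point of care is the limit extraction: properness is used to guarantee existence of minimising geodesics to $\bar\gamma_{\pm t}$ and compactness of $\overline{B_1(x)}$, and then the identity $\sfd(x,\bar\gamma_{t_n})=1+\sfd(y^{t_n},\bar\gamma_{t_n})$ — which holds because $y^{t_n}$ lies on the minimising geodesic — transfers maximal slope of $\sfb^+$ from the sequence $\{\bar\gamma_{t_n}\}$ to the accumulation point $y$. No curvature or branching hypothesis is needed.
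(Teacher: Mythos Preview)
Your proof is correct and follows essentially the same approach as the paper: pick points $y^t=\gamma^t(1)$ on minimising geodesics from $x$ to $\bar\gamma_t$, extract an accumulation point by properness, and pass to the limit in the additivity identity to obtain $\sfb^+(x)-\sfb^+(y)=\sfd(x,y)$. Your treatment of the limit is in fact slightly more explicit than the paper's, which appeals to ``uniform convergence'' of $\sfd(\cdot,\bar\gamma_t)-t$ without spelling out the subsequence argument.
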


\begin{proof}
Fix any $x \in X$ and $s > 0$. 
For each $t\in \R$, consider a unit speed geodesic 
$$
\gamma^{t} : [0, \sfd(x,\bar \gamma_{t}) ] \to X, \qquad 
\textrm{such that} \ 
\gamma_{0}^{t} = x \ \textrm{and} \ \gamma^{t}_{\sfd(x,\bar \gamma_{t})} = \bar \gamma_{t}. 
$$ 
From triangular inequality $\lim_{t\to \pm \infty} \sfd(x,\bar\gamma_{t}) = \infty$. Hence  any fixed $s > 0$, for $|t|$ sufficiently large, 
 belongs to the domain of $\gamma^{t}$.
Consider then the following trivial identities
$$
 \sfd(x,\bar \gamma_{t}) -  t - \sfd(\gamma_{s}^{t},\bar \gamma_{t}) + t =  \sfd(x,\gamma^{t}_{s}) = s>0.
$$
Taking the limit as $t \to + \infty$ and using uniform convergence, gives 
$$
\sfb^{+}(x) - \sfb^{+}(y) = \sfd(x,y)= s > 0
$$
where $y$ is any accumulation point of $\{\gamma_{s}^{t}\}_{t\geq 0}$. In particular this shows that  that each point $x \in X$ can be moved forwardly with respect to $\sfb^{+}$ (into $y$) proving in particular that $x \in \T_{\sfb^{+}}$.
The  proof for $\sfb^{-}$ can be achieved along the same lines.
\end{proof}

The proof of Lemma \ref{L:moveforward} also proves the following Corollary.

\begin{corollary}\label{C:finalpoints}
Let $(X,\sfd)$ be proper and geodesic. 
Then $\mathfrak{b}_{\sfb^{\pm}} = \emptyset$, i.e. the set of final points associated to $\sfb^{+}$ and to $\sfb^{-}$ are both empty.
\end{corollary}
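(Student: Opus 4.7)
The plan is to directly extract the conclusion from the construction used in the proof of Lemma~\ref{L:moveforward}, adding only the observation that the point produced there is strictly distinct from the starting point. More precisely, I would proceed as follows. Fix any $x \in X$ and recall that the argument of Lemma~\ref{L:moveforward}, applied with a (fixed) parameter $s>0$, produces an accumulation point $y$ of the sequence $\{\gamma^{t}_{s}\}_{t\geq 0}$ such that
$$\sfb^{+}(x) - \sfb^{+}(y) \;=\; \sfd(x,y) \;=\; s.$$
This equality has two consequences that together settle the corollary. First, $\sfd(x,y) = s > 0$, so $y\neq x$. Second, the identity $\sfb^{+}(x) - \sfb^{+}(y) = \sfd(x,y)$ is precisely the statement that $(x,y) \in \Gamma_{\sfb^{+}}$.

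Next I would combine this with Lemma~\ref{lem:Tb=X}, which gives $\T_{\sfb^{+}} = X$; in particular the witness $y$ constructed above lies in $\T_{\sfb^{+}}$. Thus for every $x\in\T_{\sfb^{+}}$ there exists $y \in \T_{\sfb^{+}}\setminus\{x\}$ with $(x,y)\in\Gamma_{\sfb^{+}}$, which by definition of the set of final points means $x \notin \mathfrak{b}_{\sfb^{+}}$. Since $x\in X=\T_{\sfb^{+}}$ was arbitrary, this yields $\mathfrak{b}_{\sfb^{+}} = \emptyset$.

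For the Busemann function $\sfb^{-}$ the argument is entirely analogous: the proof of Lemma~\ref{L:moveforward} noted that the same construction works replacing $\bar\gamma_{t}$ with $\bar\gamma_{-t}$ as $t\to +\infty$, producing for each $x\in X$ and each $s>0$ a point $y\neq x$ with $\sfb^{-}(x) - \sfb^{-}(y) = \sfd(x,y) = s$, and hence $(x,y)\in\Gamma_{\sfb^{-}}$; combined again with $\T_{\sfb^{-}} = X$, this gives $\mathfrak{b}_{\sfb^{-}} = \emptyset$.

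There is essentially no obstacle: the only ingredients are the (already established) existence and unboundedness of $\sfd(x,\bar\gamma_{\pm t})$ in $t$, properness of $(X,\sfd)$ to extract a convergent subsequence of $\{\gamma^{t}_{s}\}$, and the remark that $s>0$ forces the limit $y$ to differ from $x$. The statement is really just a repackaging of Lemma~\ref{L:moveforward} once one reads off that the point produced in that lemma actually lies on a forward $\sfb^{\pm}$-ray emanating from $x$.
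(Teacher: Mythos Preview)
Your proof is correct and is precisely the approach the paper intends: the paper simply states that the construction in the proof of Lemma~\ref{L:moveforward} already yields the corollary, and you have spelled out exactly why---namely, that the point $y$ produced there satisfies $\sfd(x,y)=s>0$ (hence $y\neq x$) and $(x,y)\in\Gamma_{\sfb^{\pm}}$, which rules out $x$ being a final point. Your explicit use of $\T_{\sfb^{\pm}}=X$ to ensure the witness $y$ lies in the transport set is a correct and necessary detail.
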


Applying results from Part \ref{part1} we easily obtain the following 
result.

\begin{proposition}\label{prop:Deltableq0}
Let $(X,\sfd,\mm)$ be an e.n.b. metric measure space verifying $\MCP(0,N)$ containing a line. Then $\sfb^{\pm}\in D({\bf \Delta},X)$ and there exists a Radon measure $\Delta \sfb^{\pm}\in {\bf \Delta} \sfb^{\pm}$ satisfying
\begin{equation}\label{eq:Deltableq0}
\Delta \sfb^{\pm} \leq 0.
\end{equation}
\end{proposition}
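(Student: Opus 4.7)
The plan is to apply Theorem \ref{T:main1} directly to $u=\sfb^{\pm}$ with $U=X$ and then to read off the sign of the resulting Radon functional from the one-sided bounds in \eqref{eq:logh'}. The setting is tailor-made for this: $\sfb^{\pm}$ are $1$-Lipschitz; Lemma \ref{lem:Tb=X} gives $\T_{\sfb^{\pm}}=X$, which combined with Lemma \ref{L:nullsetMCP} yields $\mm(X\setminus \T^{nb}_{\sfb^{\pm}})=0$; and Corollary \ref{C:finalpoints} says that $\mathfrak{b}_{\sfb^{\pm}}=\emptyset$.

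The first step is to observe that $\mathfrak{b}_{\sfb^{\pm}}=\emptyset$ forces each transport ray $X_{\alpha}$ to be unbounded on the $b$-side. Indeed, were $b(X_{\alpha})$ at finite distance from $X_{\alpha}$, no forward extension of the ray past $b(X_{\alpha})$ would be possible, contradicting the fact shown in the proof of Lemma \ref{L:moveforward} that every $x\in X$ admits, for every $s>0$, some $y$ with $\sfb^{+}(x)-\sfb^{+}(y)=\sfd(x,y)=s$ (and analogously for $\sfb^{-}$). Consequently $\sfd(a(X_{\alpha}),b(X_{\alpha}))=+\infty$ for $\qq$-a.e.\ $\alpha$, and the integrability hypothesis of Theorem \ref{T:main1} is trivially satisfied. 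That theorem then yields $\sfb^{\pm}\in D(\bold{\Delta},X)$ together with an element $T_{X}\in \bold{\Delta}\sfb^{\pm}\llcorner_{X}$ of the form
\begin{equation*}
T_{X}(f)=-\int_{Q}\int_{X_{\alpha}}f\,h'_{\alpha}\,\cH^{1}\,\qq(d\alpha)\;-\;\int_{Q}(f\,h_{\alpha})(a(X_{\alpha}))\,\qq(d\alpha),
\end{equation*}
the $b$-endpoint singular contribution being zero by the convention recorded in the remark immediately after Theorem \ref{T:main1}.

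It then remains to check that both summands of $T_{X}$ are non-positive. For the regular part, specializing \eqref{eq:logh'} to $K=0$ (so that $s_{0}(t)=t$ and $s_{0}'(t)/s_{0}(t)=1/t$) and using that $b(X_{\alpha})$ is at infinity gives, for $\qq$-a.e.\ $\alpha$ and $\cH^{1}\llcorner_{X_{\alpha}}$-a.e.\ $x$,
\begin{equation*}
(\log h_{\alpha})'(x)\;\geq\;-(N-1)\cdot 0\;=\;0,
\end{equation*}
so $T_{X}^{reg}=-(\log h_{\alpha})'\,\mm$ is a non-positive Radon measure. The singular part $T_{X}^{sing}=-\int_{Q}h_{\alpha}(a(X_{\alpha}))\,\delta_{a(X_{\alpha})}\,\qq(d\alpha)$ is manifestly a non-positive measure as well. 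Taking $\Delta \sfb^{\pm}:=T_{X}$ concludes the argument.

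The only real subtlety, and the step I would be most careful about, is the passage from the qualitative statement $\mathfrak{b}_{\sfb^{\pm}}=\emptyset$ (\emph{no final points inside $\T_{u}$}) to the quantitative claim that each ray $X_{\alpha}$ genuinely has infinite length on the $b$-side, since only in this stronger form do both the integrability hypothesis of Theorem \ref{T:main1} and the vanishing of the unbounded-endpoint factor $s_{0}'/s_{0}$ in \eqref{eq:logh'} hold automatically; once this is in place, the non-positivity of $\Delta\sfb^{\pm}$ follows mechanically.
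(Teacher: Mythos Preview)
Your proof is correct and follows essentially the same route as the paper: both invoke Corollary~\ref{C:finalpoints} to conclude that each ray is unbounded on the $b$-side (hence the integrability hypothesis of Theorem~\ref{T:main1} holds trivially and the $b$-endpoint boundary term vanishes), and then read off $(\log h_{\alpha})'\geq 0$ from the $\MCP(0,N)$ bound to conclude $\Delta\sfb^{\pm}\leq 0$. The only cosmetic difference is that the paper rederives $h'_{\alpha}\geq 0$ directly from \eqref{E:MCPdef2} by sending $b\to\infty$, whereas you cite the packaged lower bound \eqref{eq:logh'}; these are the same computation.
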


\begin{proof}
We only prove the claim for $\sfb^{+}$, the proof for $\sfb^{-}$ being analogous.  
First of all from Theorem \ref{T:sigmadisint}, we have the disintegration
$$
\mm = \int_{Q} h_{\alpha} \cH^{1}\llcorner_{X_{\alpha}}\, \qq(d\alpha). 
$$
Thanks to Corollary \ref{C:finalpoints} we deduce that each ray $X_{\alpha}$ is isomorphic to a right half line (or to a full line), in particular it has infinite length.
\\The combination of  Theorem \ref{T:main1} with Lemma \ref{L:moveforward} thus gives that  $\sfb^{+}\in D({\bf \Delta},X)$ and that 
$$
\Delta \sfb^{+} := - \int_{Q} h_{\alpha}' \H^{1}\llcorner_{X_{\alpha}}\, \qq(d\alpha) 
- \int_{Q} h_{\alpha} \delta_{a(X_{\alpha}) \cap U }\,\qq(d\alpha)
$$
defines  a Radon measure $\Delta \sfb^{+}\in {\bf \Delta} \sfb^{+}$. We are left to show that  $\Delta \sfb^{+}\leq 0$.
\\As above, we identify $X_{\alpha}$ with the right half line $[a_{\alpha},\infty)$ endowed with the  $\MCP(0,N)$ density $h_{\alpha}$.
Using \eqref{E:MCPdef2}, we deduce that  for $a_{\alpha}<x_{0}\leq x_{1}< b<\infty$  it holds
$$
\left( \frac{b - x_{1}}{b -x_{0}} \right)^{N-1} 
\leq \frac{h_{\alpha}(x_{1} ) }{h_{\alpha} (x_{0})}.
$$
Letting $b \to \infty$, it follows that $h_{\alpha}(x_{0}) \leq h_{\alpha}(x_{1})$, showing that $h_{\alpha}' \geq 0$ whenever $h_{\alpha}'$ exists.
\\Thus $\Delta \sfb^{+}\leq 0$ and the proposition follows.
\end{proof}

Observe also that,  by triangle inequality, one has
$$
\sfd(x,\bar \gamma_{t}) - t + \sfd(x,\bar \gamma_{-s}) - s \geq 0.
$$
Setting $\sfb:=\sfb^{+}+\sfb^{-}$ and letting  $t,s\to \infty$, it gives
\begin{equation}\label{eq:bgeq0}
\sfb\geq 0 \text{ on $X$}, \quad \text{and} \quad \sfb\equiv 0  \text{ on $\bar{\gamma}$}.
\end{equation}

From now on we assume $(X,\sfd,\mm)$ to be infinitesimally Hilbertian, which is equivalent to assume that the Laplacian $\Delta$ is single valued (on its domain) and linear.
Proposition \ref{prop:Deltableq0}  then implies  
\begin{equation}\label{eq:bSH}
\sfb:=\sfb^{+}+\sfb^{-}\in  D({\bf \Delta}, X), \quad \Delta \sfb\leq 0.
\end{equation}

It is worth noting that \eqref{eq:bSH} will be the only implication of the paper where infinitesimal Hilbertianity plays a role.
We now want to combine \eqref{eq:bSH} and \eqref{eq:bgeq0} with strong maximum principle in order to infer that $\sfb\equiv 0$. 
The next statement was proved in \cite[Theorem 9.13]{BB}  (actually we report a slightly weaker statement which will suffice to our scopes). 

\begin{theorem}[Strong Maximum Principle]\label{thm:SMP}
Let $(X,\sfd,\mm)$ be a metric measure space supporting a local weak $(1,2)$-Poincar\'e inequality with $\mm$ locally doubling. Let $u\in \LIP(X)$ and  $\Omega\subset X$ be a connected bounded open subset.
\\If  $u$ attains its maximum in an interior point of $\Omega$ and 
\begin{equation}\label{eq:uSHM}
\int_{\Omega} |\nabla u|^{2} \, \mm \leq  \int_{\Omega} |\nabla (u+f)|^{2} \, \mm, \quad \forall f\in \LIP(X), \supp (f)\subset \Omega, \; f\leq 0,
\end{equation}
then $u$ is constant on $\Omega$.
 \end{theorem}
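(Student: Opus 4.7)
The plan is to interpret \eqref{eq:uSHM} as the variational definition of (weak) subharmonicity and then conclude via the weak Harnack inequality for non-negative superminimizers available in doubling $(1,2)$-Poincar\'e metric measure spaces. Indeed \eqref{eq:uSHM} says exactly that $u$ is a \emph{$2$-subminimizer} on $\Omega$ in the sense of nonlinear potential theory on m.m.s.\ (cf.\ \cite[Chapter 7]{BB}); notice that no infinitesimal Hilbertianity of $(X,\sfd,\mm)$ is needed for this reformulation, since the Cheeger-type $2$-energy $\int |\nabla\cdot|^2\,\mm$ is defined using the local Lipschitz constant only.

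The next step is to reduce the statement to the weak Harnack inequality for non-negative superminimizers. Let $M:=\max_\Omega u$, which by hypothesis is finite and attained at some interior point $x_0\in \Omega$, and set $v:=M-u\in \LIP(X)$. Changing the sign of the test function (replace $f$ by $-g$ in \eqref{eq:uSHM}, with $g\geq 0$, $\supp(g)\subset \Omega$) yields at once that $v$ is a non-negative \emph{$2$-superminimizer} on $\Omega$, i.e.
\[
\int_\Omega |\nabla v|^2\,\mm \leq \int_\Omega |\nabla(v+g)|^2\,\mm,\qquad \forall\, g\in \LIP(X), \; \supp(g)\subset \Omega,\; g\geq 0,
\]
and moreover $v(x_0)=0$.

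Once $v$ is identified as a non-negative $2$-superminimizer, the weak Harnack inequality (proved by a Caccioppoli plus Moser iteration scheme that uses only the doubling and $(1,2)$-Poincar\'e assumptions, see \cite[Chapter 8]{BB}) produces exponents $p,C>0$, depending only on the doubling and Poincar\'e constants, such that for every ball $B_{2r}(x_0)\Subset \Omega$
\[
\left(\frac{1}{\mm(B_{r}(x_0))}\int_{B_{r}(x_0)} v^p\,\mm\right)^{1/p} \leq C\, \essinf_{B_{r}(x_0)} v.
\]
Since $v$ is continuous (even Lipschitz) and vanishes at $x_0$, the right-hand side is zero, hence $v\equiv 0$ on $B_r(x_0)$, i.e.\ $u\equiv M$ there. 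Therefore the coincidence set $E:=\{x\in \Omega : u(x)=M\}$ is both non-empty (by assumption), closed (by continuity of $u$), and open (by the previous step), so by connectedness of $\Omega$ one concludes $E=\Omega$, which is the claim.

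The main technical obstacle is the weak Harnack inequality exploited in the third paragraph: in the Euclidean setting it is the classical Moser iteration, and in the present abstract m.m.s.\ setting it is precisely the content developed in \cite[Chapter 9]{BB}, which is the reference supplying the statement of the theorem. Everything else in the argument (the variational reformulation as a subminimizer, the reduction $u\mapsto M-u$, and the open/closed connectedness dichotomy) is elementary and does not require any additional structural hypothesis beyond those already listed.
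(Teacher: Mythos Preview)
The paper does not supply its own proof of this statement: it quotes it verbatim from \cite[Theorem 9.13]{BB} and only comments on why the hypotheses are met in the setting of interest. Your sketch is correct and is precisely the argument underlying that reference---reinterpret \eqref{eq:uSHM} as the $2$-subminimizer condition, pass to the non-negative superminimizer $v=M-u$, invoke the weak Harnack inequality (which in \cite{BB} is developed under exactly the doubling and $(1,2)$-Poincar\'e assumptions), and conclude by the open--closed connectedness argument.
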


Let us discuss the validity of the strong maximum principle in our setting.  Clearly, from Bishop-Gromov inequality it follows that a $\MCP(0,N)$ space is doubling. 
Moreover, essentially non-branching  $\MCP(0,N)$ spaces satisfy  a local weak $(1,1)$-Poincar\'e inequality \cite{VRN} (\cite{VRN} assumes negligibility of cut-locus from $\mm$-a.e. point that is satisfied whenever the space is 
essentially non-branching, see Remark  \ref{rem:MCPDoubPoinc1}),  which in turns implies that the space  supports a local weak $(1,2)$-Poincar\'e inequality.
In conclusion if $(X,\sfd,\mm)$ is an essentially non-branching  $\MCP(0,N)$ space, then the strong maximum principle holds.
The simple link between \eqref{eq:uSHM} and the measure-valued Laplacian  was established in \cite[Theorem 4.3]{GiMo}; for completeness, below we report the argument together with the desired conclusion  $\sfb\equiv 0$.

\begin{lemma}\label{lem:b=0}
Let $(X,\sfd,\mm)$ be an infinitesimally Hilbertian, essentially non-branching,   metric measure space satisfying $\MCP(0,N)$. Assume $(X,\sfd)$ contains a line and let $\sfb:=\sfb^{+}+\sfb^{-}$.
\\Then $\sfb \equiv 0$ on $X$.
\end{lemma}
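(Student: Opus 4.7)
The plan is to apply the strong maximum principle (Theorem \ref{thm:SMP}) to the function $u := -\sfb$. By \eqref{eq:bgeq0} we have $u \leq 0$ and $u \equiv 0$ on the line $\bar\gamma$, so $u$ attains its global maximum at every point of $\bar\gamma$, all of which are interior points of $X$. To invoke Theorem \ref{thm:SMP} the only missing ingredient is the variational inequality \eqref{eq:uSHM}, which must be deduced from the distributional information $\Delta \sfb \leq 0$ provided by \eqref{eq:bSH}.

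First I would exploit infinitesimal Hilbertianity: it implies $D^\pm f(\nabla \sfb) = \langle \nabla f, \nabla \sfb \rangle$ $\mm$-almost everywhere for every $f \in \LIP_c(X)$. Plugging this into Definition \ref{D:Laplace} and using that $\Delta \sfb$, being a non-positive Radon functional, is represented via Riesz by a non-negative Radon measure $\mu$ with $\Delta \sfb = -\mu$, gives
$$\int_X \langle \nabla f, \nabla \sfb\rangle\,\mm = -\int_X f\,d(\Delta \sfb) = \int_X f\,d\mu.$$
For every $f \in \LIP_c(\Omega)$ with $f \leq 0$ this forces $\int_\Omega \langle \nabla(-\sfb), \nabla f\rangle\,\mm \geq 0$, and combining with the Hilbertian expansion
$$\int_\Omega |\nabla(-\sfb + f)|^2\,\mm - \int_\Omega |\nabla(-\sfb)|^2\,\mm = 2\int_\Omega \langle \nabla(-\sfb), \nabla f\rangle\,\mm + \int_\Omega |\nabla f|^2\,\mm$$
yields precisely \eqref{eq:uSHM} for $u = -\sfb$ on every bounded open $\Omega \subset X$.

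To conclude, fix any $y \in X$. Since $(X,\sfd)$ is proper and geodesic, hence path-connected, I can join $\bar\gamma_0$ to $y$ by a continuous path and cover its (compact) image with finitely many open metric balls whose union $\Omega_y$ is a bounded, connected open subset containing both $\bar\gamma_0$ and $y$. By Remark \ref{rem:MCPDoubPoinc1} the essentially non-branching $\MCP(0,N)$ space $(X,\sfd,\mm)$ is locally doubling and supports a local weak $(1,2)$-Poincar\'e inequality; hence Theorem \ref{thm:SMP} applies to $u = -\sfb$ on $\Omega_y$, delivering $u \equiv 0$ on $\Omega_y$ and therefore $\sfb(y) = 0$. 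Arbitrariness of $y$ gives $\sfb \equiv 0$ on $X$. The main obstacle I expect is the second step: converting the mere distributional bound $\Delta \sfb \leq 0$, a priori only tested against $\LIP_c$-functions via the one-sided differentials $D^\pm$, into a genuine energy-minimisation inequality for the Dirichlet energy. It is precisely here that the linearity of the Laplacian granted by infinitesimal Hilbertianity is indispensable, consistently with the authors' reference to \cite[Theorem 4.3]{GiMo}.
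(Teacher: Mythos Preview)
Your proof is correct and follows essentially the same route as the paper: both reduce to verifying \eqref{eq:uSHM} for $u=-\sfb$ from $\Delta\sfb\le 0$ and then apply the strong maximum principle. The only cosmetic difference is that the paper bounds the energy difference via the convexity of $\ve\mapsto\int_\Omega|\nabla(-\sfb+\ve f)|^2\,\mm$ and identifies the derivative at $\ve=0$, whereas you use the exact quadratic expansion available in the infinitesimally Hilbertian setting; you are also more explicit about constructing the connected bounded open set $\Omega_y$, which the paper leaves implicit.
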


\begin{proof}
It is enough to prove that \eqref{eq:bSH} implies \eqref{eq:uSHM} for $u:=-\sfb$, then the claim will follow by the combination of \eqref{eq:bgeq0} with Theorem \ref{thm:SMP}.
\\ Let $\Omega\subset X$ be a connected bounded open subset and $f\in \LIP(X)$ be non-positive with $\supp (f)\subset \Omega$. Since the map $\ve\mapsto \int_{\Omega} |\nabla(-\sfb+\ve f)|^{2} \mm$ is convex and  $\Delta \sfb\leq 0$, we have
\begin{align*}
 \int_{\Omega} |\nabla(-\sfb+ f)|^{2} \mm -  \int_{\Omega} |\nabla(-\sfb)|^{2} \mm&\geq \lim_{\ve\downarrow 0} \int_{\Omega} \frac{|\nabla(-\sfb+ \ve f)|^{2}- |\nabla(-\sfb)|^{2}}{\ve} \mm \nonumber \\
 &=- 2 \int_{\Omega} \langle \nabla \sfb, \nabla f \rangle \mm = 2  \int_{\Omega} f\, \Delta \sfb  \geq 0,
\end{align*}
proving \eqref{eq:uSHM} for $u:=-\sfb$.
\end{proof}

\begin{lemma}\label{lem:Xa=R}
Let $(X,\sfd,\mm)$ be an infinitesimally Hilbertian, essentially non-branching,   metric measure space satisfying $\MCP(0,N)$. Assume $(X,\sfd)$ contains a line.
Let $\T^{nb}_{\sfb^{+}}=\cup_{\alpha \in Q} X_{\alpha}$ be the ray decomposition of the non-branching transport set $\T^{nb}_{\sfb^{+}}$ associated to $\sfb^{+}$.

Then for each $\alpha \in Q$, the ray $X_{\alpha}$ is isometric to $\R$; in other words $a(X_{\alpha})=\emptyset=b(X_{\alpha})$.
\end{lemma}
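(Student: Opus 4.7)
The plan is to exploit the symmetry $\sfb^{+} + \sfb^{-} \equiv 0$ furnished by Lemma \ref{lem:b=0}. This identity forces $\sfb^{-} = -\sfb^{+}$, from which I would deduce that the $\sfd$-cyclically monotone sets associated to the two Busemann functions are reverses of one another:
\[
\Gamma_{\sfb^{-}} = \{(x,y) : \sfb^{+}(y) - \sfb^{+}(x) = \sfd(x,y)\} = (\Gamma_{\sfb^{+}})^{-1}.
\]
Consequently $R_{\sfb^{-}} = R_{\sfb^{+}}$ as symmetric relations on $X$, while from the definition \eqref{E:Gamma} the forward/backward branching sets swap roles: $A_{\pm}^{\sfb^{-}} = A_{\mp}^{\sfb^{+}}$. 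Therefore the non-branching transport sets and their equivalence classes coincide, $\T^{nb}_{\sfb^{+}} = \T^{nb}_{\sfb^{-}}$ and $R^{nb}_{\sfb^{+}} = R^{nb}_{\sfb^{-}}$, so every ray $X_{\alpha}$ for $\sfb^{+}$ is simultaneously a ray for $\sfb^{-}$, but traversed in the reverse direction.

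The key observation is then a swap of endpoint roles: with the orientation induced by $\Gamma$, the set of initial points of $X_{\alpha}$ with respect to $\sfb^{+}$ coincides with the set of \emph{final} points of $X_{\alpha}$ with respect to $\sfb^{-}$, and vice versa. I would now invoke Corollary \ref{C:finalpoints} twice, applied respectively to $\sfb^{+}$ and to $\sfb^{-}$; properness and geodesicity of $(X,\sfd)$ (which hold here since $(X,\sfd,\mm)$ is an e.n.b.\ $\MCP(0,N)$ space) guarantee in both cases that the set of final points is empty. Combining these two instances, we obtain both
\[
b(X_{\alpha}) = \emptyset \quad \text{(no final point for } \sfb^{+}\text{)} \qquad \text{and} \qquad a(X_{\alpha}) = \emptyset \quad \text{(= final point for } \sfb^{-}\text{)}.
\]

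To conclude, recall from the general structural result recalled around \eqref{E:maximalray} that each equivalence class $R^{nb}_{\sfb^{+}}(\alpha) = X_{\alpha}$ is isometric to a (possibly unbounded) closed interval of $\R$. Absence of both endpoints forces this interval to be all of $\R$, so $X_{\alpha}$ is isometric to $\R$, as claimed.

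The main obstacle I anticipate is the careful bookkeeping needed to verify $A_{\pm}^{\sfb^{-}} = A_{\mp}^{\sfb^{+}}$ and the corresponding identification of initial and final points of the two orientations; this is routine but requires one to unwind the definitions of $\Gamma_u$, $R_u$, $A_{\pm}$ and of the endpoint sets $a$ and $b$. Everything else is a clean application of the preliminary results, with Lemma \ref{lem:b=0} (which crucially relied on infinitesimal Hilbertianity through the strong maximum principle) doing all the geometric work.
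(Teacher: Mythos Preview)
Your proposal is correct and follows essentially the same argument as the paper: use Lemma~\ref{lem:b=0} to get $\sfb^{-}=-\sfb^{+}$, deduce that the transport relations coincide with reversed orientation so that initial points for $\sfb^{+}$ are final points for $\sfb^{-}$, and then apply Corollary~\ref{C:finalpoints} to both Busemann functions to conclude that each ray has neither initial nor final point. Your write-up is in fact slightly more explicit than the paper's about the swap $A_{\pm}^{\sfb^{-}}=A_{\mp}^{\sfb^{+}}$ and the ensuing identification $\T^{nb}_{\sfb^{+}}=\T^{nb}_{\sfb^{-}}$, which is a good thing.
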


\begin{proof}
From Lemma \ref{lem:b=0} we know that $\sfb^{+}=-\sfb^{-}$ on all $X$. It follows that 
$$
\{(x,y)\in R^{nb}_{\sfb^{+}}\}=\{(y,x)\in R^{nb}_{\sfb^{-}}\}.
$$
Thus $\T^{nb}_{\sfb^{+}}=\T^{nb}_{\sfb^{-}}$ with the same ray decomposition (from the support sense); clearly, on each ray,  the orientation induced by $\sfb^{+}$ is the opposite from the one induced by $\sfb^{-}$. In particular, the set of initial points for $\sfb^{+}$ coincides with the set of final points for $\sfb^{-}$:
$$
\mathfrak{a}_{\sfb^{+}}:=
\{x\in \T^{nb}_{\sfb^{+}} \, :\, (y,x)\in R^{nb}_{\sfb^{+}} \Leftrightarrow y=x\}=\{x\in \T^{nb}_{\sfb^{-}} \, :\, (x,y)\in R^{nb}_{\sfb^{-}} \Leftrightarrow x=y\}=:\mathfrak{b}_{\sfb^{-}}.
$$
Since from Corollary \ref{C:finalpoints} the set of final points for $\sfb^{-}$ is empty, i.e. $\mathfrak{b}_{\sfb^{-}}=\emptyset$, it follows that the both the sets of initial and final points for $\sfb^{+}$ are empty; in other words, each ray $X_{\alpha}$ is isometric to $\R$.
\end{proof}
\bigskip

\textbf{Proof of the Splitting Theorem \ref{thm:split}.}
\\By combining the lemmas above we can quickly get the first part of Theorem  \ref{thm:split}.
 Indeed, from Lemma \ref{lem:Tb=X} we already know that $X=\T_{\sfb^{+}}$ and, from Lemma \ref{L:nullsetMCP} we know that  $\mm(\T_{\sfb^{+}}\setminus \T^{nb}_{\sfb^{+}})=0$; thus the claim  $\mm(X\setminus \T^{nb}_{\sfb^{+}})=0$ is proved.
\\Moreover, Theorem \ref{T:sigmadisint} ensures that  there exists a disintegration of $\mm$  verifying 
$$
\mm\llcorner_{\T_{\sfb^{+}}^{nb}} = \int_{Q} \mm_{\alpha} \, \qq(d\alpha), \qquad \qq(Q) = 1,
$$
where, for $\qq$-a.e. $\alpha$, $\mm_{\alpha}$ is a Radon measure $\mm_{\alpha}\ll \cH^{1}\llcorner_{X_{\alpha}}$ 
and $(X_{\alpha},\sfd,\mm_{\alpha})$ verifies $\MCP(0,N)$.
\\From Lemma \ref{lem:Xa=R} we know that $(X_{\alpha}, \sfd)$ is isometric to the real line (note that the isometry is simply $\sfb^{+}:X_{\alpha}\to \R$), and thus Lemma \ref{L:rigiditMCP} implies that $\mm_{\alpha}=c_{\alpha} \cH^{1}\llcorner_{X_{\alpha}}$ for some constant $c_{\alpha}>0$, for $\qq$-a.e. $\alpha\in Q$.
\\Define the measure $\qq'$ on $Q$ as
$$
\qq'(B)=\int_{B} c_{\alpha} \qq(d\alpha), \quad \text{for any $\qq$-measurable subset  $B\subset Q$.} 
$$
It is clear that $\qq'\ll \qq$ and that $\qq\ll \qq'$, i.e. they are equivalent measures, and that
$$
\mm\llcorner_{\T_{\sfb^{+}}^{nb}} = \int_{Q} \cH^{1}\llcorner_{X_{\alpha}}  \, \qq'(d\alpha).
$$
The last disintegration formula is equivalent to claiming that the map 
\begin{equation*}
\Phi:\T^{nb}_{\sfb^{+}}\to Q\times \R, \quad x\mapsto \Phi(x):=(\alpha(x), \sfb^{+}(x))
\end{equation*}
is an isomorphism of measures spaces, i.e.  $\Phi$ induces an isomorphism between the  $\sigma$-algebra of  $\mm$-measurable subsets of $\T^{nb}_{\sfb^{+}}$ and the $\sigma$-algebra of  $\qq\otimes \L^{1}$-measurable subsets of  $Q\times \R$,  and 
$\Phi_{\sharp} \mm\llcorner_{\T^{nb}_{\sfb^{+}}}= \qq'\otimes \L^{1}$. It is also clear that  $\Phi:\T^{nb}_{\sfb^{+}}\to Q\times \R$ is bijective,  as $\T^{nb}_{\sfb^{+}}=\cup_{\alpha\in Q} X_{\alpha}$ is a partition, and  $\sfb^{+}:X_{\alpha}\to \R$ is an isometry for every $\alpha\in Q$.
\bigskip

\textbf{Proof of the second part of Theorem \ref{thm:split}.}
\\From the very definition \eqref{eq:defTub} of the non-branched transport set $\T^{nb}_{\sfb^{+}}$, if $(X,\sfd)$ is non-branching then $\T^{nb}_{\sfb^{+}}=\T_{\sfb^{+}}$. Thus, Lemma \ref{lem:Tb=X} gives $X=\T_{\sfb^{+}}=\T^{nb}_{\sfb^{+}}$. 
\\From the first part, we already know that $\Phi:X\to Q\times \R$ is bijective. Since convergence in $Q$ (see \eqref{eq:ConvQ}) is equivalent to the local uniform convergence of the rays, it is  clear that $\Phi^{-1}$ is continuous.
\\It is then enough to show that $\Phi$ is continuous. We argue by contradiction. Assume that there exist a sequence $\{x_{n}\}_{n\in \N}\subset X$ with $x_{n}\to x$ in $X$ such that $\{(\alpha(x_{n}), \sfb^{+}(x_{n}))\}_{n\in \N}$ does not converge to $(\alpha(x), \sfb^{+}(x))$. 
Since $\sfb^{+}:X\to \R$ is continuous (actually it is even 1-Lipschitz), it is clear that $\sfb^{+}(x_{n})\to \sfb^{+}(x)$ and thus it must be that $\{\alpha(x_{n})\}_{n\in \N}$ does not converge to $\alpha(x)$. 
By the definition \eqref{eq:ConvQ} of convergence in $Q$, it follows that, up to subsequences, it holds
\begin{equation}\label{eq:contrConvE}
 0< \ve= \lim_{n\to \infty}  \sup_{t\in I}  \sfd\left(X_{\alpha(x_{n})} ((\sfb^{+})^{-1}(t)), X_{\alpha(x)} ((\sfb^{+})^{-1}(t)) \right), \; \text{for some compact interval }I\subset \R. 
\end{equation}
As already observed, $\sfb^{+}:X_{\beta}\to \R$ is an isometry for every $\beta\in Q$ and thus it can be used to parametrise each ray; in the formula above as well as in the  following we fix such  a parametrisation. 
\\Since by assumption $x_{n}\to x$, for every closed interval $I\subset \R$ containing $\sfb^{+}(x)$, it is clear that the union of the images of the rays $X_{\alpha(x_{n})}$ restricted to $I$ are all contained in a compact subset of $X$. Thus, by Arzel\'a-Ascoli Theorem, such restrictions converge uniformly to a geodesic $\gamma$ of $X$ passing through $x$. By a standard diagonal argument, $\gamma$ can be extended to a geodesic defined on the whole $\R$ and
\begin{equation}\label{eq:Xalphagamma}
X_{\alpha(x_{n})}\to \gamma \text{ uniformly on compact intervals}.
\end{equation}
  Recalling that the relation $R_{\sfb^{+}}$ is closed (see \eqref{E:Gamma} and  \eqref{E:R}) we get that $\gamma$ is a ray passing through $x$, i.e. $\gamma=X_{\beta}$ for some $\beta\in Q$. Since the rays are pairwise disjoint, it follows that $\beta=\alpha(x)$.
\\Therefore \eqref{eq:Xalphagamma} contradicts \eqref{eq:contrConvE}.
\hfill$\Box$

\end{document}